\newtheorem{thm}{Theorem}[section]
\newtheorem{prop}{Proposition}[section]
\newtheorem{cor}{Corollary}[section]
\newtheorem{lemma}{Lemma}[section]
\newtheorem{mainthm}{Main Theorem}
\theoremstyle{definition}
\newtheorem{defn}{Definition}[section]
\newtheorem{conj}{Conjecture}[section]
\theoremstyle{remark}
\newtheorem{remark}{Remark}[section]
\numberwithin{equation}{section}
\def\cO{{\cal O}}
\def\cU{{\cal U}}
\def\cG{{\cal G}}
\def\cA{{\cal A}}
\def\ra{\rightarrow}
\def\A{{\bf A}}
\def\cA{{\mathcal A}}
\def\cC{{\mathcal C}}
\def\cD{{\mathcal D}}
\def\cE{{\mathcal E}}
\def\cF{{\mathcal F}}
\def\cG{{\mathcal G}}
\def\cH{{\mathcal H}}
\def\cI{{\mathcal I}}
\def\cM{{\mathcal M}}
\def\cO{{\mathcal O}}
\def\cR{{\mathcal R}}
\def\cS{{\mathcal S}}
\def\cU{{\mathcal U}}
\def\cV{{\mathcal V}}
\def\cW{{\mathcal W}}
\def\cZ{{\mathcal Z}}
\def\ga{{\mathfrak a}}
\def\gb{{\mathfrak b}}
\def\gd{{\mathfrak d}}
\def\gg{{\mathfrak g}}
\def\gl{{\mathfrak l}}
\def\go{{\mathfrak o}}
\def\gp{{\mathfrak p}}
\def\gs{{\mathfrak s}}
\DeclareMathOperator{\id}{id}
\DeclareMathOperator{\Hom}{Hom}
\newcommand{\btimes}{\boxtimes}
\newcommand\one{\mathbf{1}}
\newfont{\german}{eufm10}
\begin{document}
\pagestyle{plain}

\title
{Trialities of orthosymplectic $\cW$-algebras}

\author{Thomas Creutzig}
\address{University of Alberta}
\email{creutzig@ualberta.ca}

\author{Andrew R. Linshaw}
\address{University of Denver}
\email{andrew.linshaw@du.edu}

\thanks{T. C. is supported by NSERC Discovery Grant \#RES0048511. A. L. is supported by Simons Foundation Grant \#635650 and NSF Grant DMS-2001484. We thank Tomoyuki Arakawa, Boris Feigin, Naoki Genra, Shashank Kanade, Shigenori Nakatsuka, and Ryo Sato for many illuminating discussions on topics related to this paper.}

{\abstract \noindent Trialities of $\cW$-algebras are isomorphisms between the affine cosets of three different $\cW$-(super)algebras, and were first conjectured in the physics literature by Gaiotto and Rap\v{c}\'ak. In this paper we prove trialities among eight families of $\cW$-(super)algebras of types $B$, $C$, and $D$. The key idea is to identify the affine cosets of these algebras with one-parameter quotients of the universal two-parameter even spin $\cW_{\infty}$-algebra which was recently constructed by Kanade and the second author. Our result is a vast generalization of both Feigin-Frenkel duality in types $B$, $C$, and $D$, and the coset realization of principal $\cW$-algebras of type $D$ due to Arakawa and us. It also provides a new coset realization of principal $\cW$-algebras of types $B$ and $C$. As an application, we prove the rationality of the affine vertex superalgebra $L_k(\go\gs\gp_{1|2n})$, the minimal $\cW$-algebra $\cW_{k-1/2}(\gs\gp_{2n+2}, f_{\text{min}})$, and the coset $\text{Com}(L_k(\gs\gp_{2m}), L_k(\gs\gp_{2n}))$, for all integers $k,n,m \geq 1$ with $m<n$. We also prove the rationality of some families of principal $\cW$-superalgebras of $\go\gs\gp_{1|2n}$ and $\go\gs\gp_{2|2n}$, and subregular $\cW$-algebras of $\gs\go_{2n+1}$.}

\keywords{vertex algebra; $\cW$-algebra; nonlinear Lie conformal algebra; coset construction}
\maketitle
\section{Introduction} Trialities of $\cW$-algebras are isomorphisms between the affine cosets of three different $\cW$-(super)algebras, as one-parameter vertex algebras. In recent work \cite[Thm. 1.1]{CL3}, we proved a family of trialities among $\cW$-(super)algebras of type $A$ which was conjectured by Gaiotto and Rap\v{c}\'ak in \cite{GR}. This theorem is a common generalization of both Feigin-Frenkel duality and the coset realization of  principal $\cW$-algebras of $\gs\gl_n$ \cite{FF2,ACL}, as well as Feigin-Semikhatov duality between subregular $\cW$-algebras of $\gs\gl_n$, and principal $\cW$-superalgebras of $\gs\gl_{n|1}$ \cite{FS,CGN}. The key idea of the proof was to identify all these affine cosets with one-parameter quotients of the universal two-parameter vertex algebra $\cW(c,\lambda)$ of type $\cW(2,3,\dots)$. The existence and uniqueness of this structure was conjectured for many years in the physics literature \cite{YW,GG,Pro1,Pro2}, and was recently proven by the second author in \cite{L2}. One-parameter quotients of $\cW(c,\lambda)$ are in bijection with a family of curves in the parameter space $\mathbb{C}^2$ called truncation curves, and \cite[Thm. 1.1]{CL3} follows from the explicit ideals that define these curves.

In this paper, we will prove an analogous triality theorem which involves eight families of $\cW$-(super)algebras of types $B$, $C$, and $D$. First, $\gg$ will be either $\mathfrak{so}_{2n+1}$, $\mathfrak{sp}_{2n}$, $\mathfrak{so}_{2n}$, or $\mathfrak{osp}_{n|2r}$, and will decompose as $$\mathfrak{g} = \mathfrak{a} \oplus \mathfrak{b} \oplus \rho_{\mathfrak{a}} \otimes \rho_{\mathfrak{b}}.$$
Here $\mathfrak{a}$ and $\mathfrak{b}$ are Lie sub(super)algebras of $\mathfrak{g}$, and $\rho_{\mathfrak{a}}$, $\rho_{\mathfrak{b}}$ transform as the standard representations of $\mathfrak{a}$, $\mathfrak{b}$, respectively, and have the same parity which can be either even or odd.

Let $f_{\mathfrak{b}} \in \mathfrak{g}$ be the nilpotent element which is principal in $\mathfrak{b}$ and trivial in $\mathfrak{a}$, and let $\cW^k(\gg,f_{\gb})$ be the corresponding $\cW$-(super)algebra. In all cases, $\cW^k(\gg,f_{\gb})$ is of type
$$\cW \bigg(1^{\text{dim}\ \mathfrak{a}}, 2,4,\dots, 2m, \bigg(\frac{{d_{\mathfrak{b}}} + 1}{2} \bigg)^{d_{\mathfrak{a}}}\bigg).$$ In particular, there are $\text{dim}\ \mathfrak{a}$ fields in weight $1$ which generate an affine vertex (super)algebra of $\ga$. The fields in weights $2,4,\dots, 2m$ are even and are invariant under $\mathfrak{a}$. The $d_{\mathfrak{a}}$ fields in weight $\frac{d_{\mathfrak{b}} + 1}{2}$ can be even or odd, and transform as the standard $\mathfrak{a}$-module.

For $n,m\geq 0$ we have the following cases where $\mathfrak{b} = \mathfrak{so}_{2m+1}$.
\begin{enumerate}
\item {\bf Case 1B}: $\ \ \displaystyle \mathfrak{g} =\mathfrak{so}_{2n+2m+2},\quad \mathfrak{a} = \mathfrak{so}_{2n+1}$. 
\item {\bf Case 1C}: $\ \ \displaystyle\mathfrak{g} =\mathfrak{osp}_{2m+1|2n}, \quad \mathfrak{a} = \mathfrak{sp}_{2n}$.
\item{\bf Case 1D}: $\ \ \displaystyle\mathfrak{g}  =\mathfrak{so}_{2n+2m+1},  \quad \mathfrak{a} = \mathfrak{so}_{2n}$.
\item{\bf Case 1O}:  $\ \ \displaystyle\mathfrak{g} =\mathfrak{osp}_{2m+2|2n},  \quad \mathfrak{a} = \mathfrak{osp}_{1|2n}$.
\end{enumerate}
For $n\geq 0$ and $m\geq 1$ we have the following cases where $\mathfrak{b} = \mathfrak{sp}_{2m}$.
\begin{enumerate}
\item {\bf Case 2B}:  $\ \ \displaystyle\mathfrak{g} =\mathfrak{osp}_{2n+1|2m},\quad \mathfrak{a} = \mathfrak{so}_{2n+1}$.
\item{\bf Case 2C}: $\ \ \displaystyle\mathfrak{g} =\mathfrak{sp}_{2n+2m}, \quad \mathfrak{a} = \mathfrak{sp}_{2n}$.
\item{\bf Case 2D}:  $\ \ \displaystyle\mathfrak{g} =\mathfrak{osp}_{2n|2m}, \quad \mathfrak{a} = \mathfrak{so}_{2n}$.
\item{\bf Case 2O}:  $\ \ \displaystyle\mathfrak{g} =\mathfrak{osp}_{1|2n+2m},  \quad \mathfrak{a} = \mathfrak{osp}_{1 | 2 n}$. 
\end{enumerate}
  
It is convenient to replace the level $k$ with the critically shifted level $\psi = k+h^{\vee}$, where $h^{\vee}$ is the dual Coxeter number of $\gg$.
For $i = 1,2$ and $X = B,C,D,O$, we denote the corresponding $\cW$-algebra by $\cW^{\psi}_{iX}(n,m)$. In the cases $i = 1,2$ and $X = C$, we denote the corresponding affine cosets by $\cC^{\psi}_{iC}(n,m)$. In the cases $X = B,D,O$, there is an additional action of $\mathbb{Z}_2$, and $\cC^{\psi}_{iX}(n,m)$ denotes the $\mathbb{Z}_2$-orbifold of the affine coset. We will also define the algebras $\cW^{\psi}_{2X}(n,0)$ in a different way so that our results hold uniformly for all $n,m\geq 0$. 

Our main result is that there are four families of trialities among the algebras $\cC^{\psi}_{iX}(n,m)$.
\begin{mainthm} \label{main:intro} (Theorem \ref{main}) For all integers $m\geq n \geq 0$, we have the following isomorphisms of one-parameter vertex algebras.
\begin{equation} \label{2b2b2o:intro}  \cC^{\psi}_{2B}(n,m) \cong  \cC^{\psi'}_{2O}(n,m-n) \cong \cC^{\psi''}_{2B}(m,n), \qquad \psi' = \frac{1}{4\psi},  \qquad \frac{1}{\psi}  + \frac{1}{\psi''} = 2,
 \end{equation}
\begin{equation} \label{1c1c2c:intro} \cC^{\psi}_{1C}(n,m) \cong \cC^{\psi'}_{2C}(n,m-n) \cong  \cC^{\psi''}_{1C}(m,n), \qquad \psi'= \frac{1}{2\psi}, \qquad \frac{1}{\psi} + \frac{1}{\psi''} = 1,
\end{equation}
\begin{equation}  \label{2d1d1o:intro} \cC^{\psi}_{2D}(n,m) \cong  \cC^{\psi'}_{1D}(n,m-n ) \cong \cC^{\psi''}_{1O}(m,n-1),\qquad \psi' = \frac{1}{2\psi}, \qquad \frac{1}{2\psi} + \frac{1}{\psi''} = 1,
\end{equation}
\begin{equation} \label{1o1b2d:intro} \cC^{\psi}_{1O}(n,m) \cong \cC^{\psi'}_{1B}(n,m-n) \cong  \cC^{\psi''}_{2D}(m+1,n) ,\qquad \psi' = \frac{1}{\psi},  \qquad  \frac{1}{\psi}  + \frac{1}{2\psi''} = 1.
\end{equation}
\end{mainthm}

Special cases of this result include Feigin-Frenkel duality in types $B$, $C$, and $D$ \cite{FF2}, as well as a version for principal $\cW$-superalgebras of $\go\gs\gp_{1|2n}$ which was recently proven in \cite{CGe}. The special case 
$$\cC^{\psi}_{2D}(n,0) \cong \cC^{\psi''}_{1O}(0,n-1), \qquad \frac{1}{2\psi} + \frac{1}{\psi''} =1, \qquad n\geq 2$$ of \eqref{2d1d1o:intro} provides a new proof of the coset realization of principal $\cW$-algebras of type $D$ \cite{ACL}. 
The special case 
$$\cC^{\psi}_{2B}(n,0)  \cong \cC^{\psi''}_{2B}(0,n),\qquad \frac{1}{\psi}  + \frac{1}{\psi''} = 2$$ of \eqref{2b2b2o:intro} recovers the coset realization of the principal $\cW$-superalgebra of $\go\gs\gp_{1|2n}$ \cite{CGe}.
More importantly, the special case 
 $$\cC^{\psi}_{1C}(n,0) \cong  \cC^{\psi''}_{1C}(0,n), \qquad \frac{1}{\psi} + \frac{1}{\psi''} =1$$ of \eqref{1c1c2c:intro} provides a new coset realization of principal $\cW$-algebras of type $B$ (and type $C$ by Feigin-Frenkel duality), since
 \begin{equation*} \begin{split}  & \cC^{\psi}_{1C}(n,0) = \text{Com}(V^k(\gs\gp_{2n}), V^k(\go\gs\gp_{1|2n})),\qquad k = - \frac{1}{2} (\psi +2n +1).\\ & \cC^{\psi''}_{1C}(0,n) = \cW^{\psi''-2n+1}(\mathfrak{so}_{2n+1}).
\end{split}\end{equation*} This is quite different from the coset realizations of $\cW^k(\gg)$ for simply-laced $\gg$ given in \cite{ACL} since it involves affine vertex superalgebras. Finally, the special case
$$ \cC^{\psi}_{2D}(1,m) \cong \cC^{\psi'}_{1D}(1,m-1), \qquad  \psi' = \frac{1}{2\psi}$$ of \eqref{2d1d1o:intro} provides an alternative proof of the duality between the Heisenberg cosets of $\cW^{\psi'-2m+1}(\gs\go_{2m+1}, f_{\text{subreg}})$ and $\cW^{\psi-m}(\go\gs\gp_{2|2m})$ appearing in \cite{CGN}. 

The key idea in the proof of Main Theorem \ref{main:intro} is to identify all the algebras $\cC^{\psi}_{iX}(n,m)$ as one-parameter quotients of the universal even spin two-parameter $\cW_{\infty}$-algebra $\cW^{\text{ev}}(c,\lambda)$ constructed by Kanade and the second author in \cite{KL}. Such quotients of $\cW^{\text{ev}}(c,\lambda)$ are in bijection with a family of plane curves called truncation curves. By explicitly computing the truncation curves for these algebras,  Main Theorem \ref{main:intro} follows from symmetries of our formulas for the defining ideals. We thus identify four distinct $\mathbb{N} \times \mathbb{N}$ families of truncations of $\cW^{\text{ev}}(c,\lambda)$, which we conjecture to account for all of its truncations. In fact, we can give a uniform description of all these truncations by replacing the integer parameters $n$ and $m$ in one of the formulas by half-integers. We mention that the algebras $\cC^{\psi}_{iX}(n,m)$ were called orthosymplectic $Y$-algebras by Gaiotto and Rap\v{c}\'ak in \cite{GR}, and some of the trialities we prove were conjectured in \cite{GR}, as well as the paper \cite{Pro3} of Proch\'azka.

\subsection{Rationality results} As an application of Main Theorem \ref{main:intro}, we prove many new rationality results in Section \ref{sec:rationality}.

\begin{mainthm} \label{main:rational} Denote by $\cF(n)$ the vertex superalgebra of $n$ free fermions, by $L_k(\gg)$ the simple affine vertex superalgebra of the simple Lie superalgebra $\gg$ at level $k$, and by $\cW_\ell(\gg)$ the simple principal $\cW$-superalgebra of $\gg$ at level $\ell$. 
\begin{enumerate} 
\item  For all $n, k \in \mathbb Z_{\geq 1}$, $L_k(\go\gs\gp_{1|2n})$ is lisse and rational (Theorem \ref{thm:osp}).
\smallskip
\item For all $m\geq 1$,  $\cW_{\psi - m -1/2}(\go\gs\gp_{1|2m})$ is lisse and rational at the following levels:
\begin{enumerate} 
\item $\psi =  \frac{2 m-1}{4 (m + r)}$, where $m+r$ and $1+2r$ are coprime (Theorem \ref{thm:Wosp1}),
\item $\psi =  \frac{1 + 2 m}{2 (1 + 2 m + 2 r)}$, where $r$ and $1+2m$ are coprime (Theorem \ref{thm:Wosp1}),
\item $\psi =  \frac{m}{2 m + 2 r-1}$, where $2r-1$ and $2m$ are coprime (Theorem \ref{thm:Wosp2}).
\end{enumerate}
\smallskip
\item For all $m\geq 1$, $\cW_{\psi-2m-1}(\gs\go_{2m+3}, f_{\text{subreg}})$ is lisse and rational at the following levels:
\begin{enumerate}
\item $\psi = \frac{3 + 2 m + 2 r}{2m+2}$, where $m+1$ and $2r+1$ are coprime (Theorem \ref {Brational1}),
\item  $\psi = \frac{2 m + 2 r+1}{2m+1}$ where $r$ and $2m+1$ are coprime (Theorem \ref {Brational3}),
\item  $\psi = \frac{2 (2 + m)}{1 + 2 m}$ (Corollary \ref{Brational5}),
\item  $ \psi = \frac{2 m}{2 m-1}$ (Corollary \ref{cor:newrationalb}).
\end{enumerate}
\smallskip
\item For all $m\geq 1$, $\cW_{\psi-m}(\mathfrak{osp}_{2|2m+2})$ is lisse and rational at the following levels:
\begin{enumerate}
\item  $\psi = \frac{1 + m}{3 + 2 m + 2 r}$, where $m+1$ and $2r+1$ are coprime (Corollary \ref {Brational2}),
\item  $\psi = \frac{2 m+1}{2(2 m + 2 r+1)}$, where $r$ and $2m+1$ are coprime (Corollary \ref {Brational4}),
\item  $\psi = \frac{2 m+1}{4 (2 + m)}$ (Corollary \ref{Brational5}),
\item  $\psi = \frac{2 m-1}{4 m}$ (Corollary \ref{cor:newrationalb}).
\end{enumerate}
\smallskip
\item For all $n\geq 1$ and $r\geq 1$, $\cW_{r-1/2}(\gs\gp_{2n+2}, f_{\text{min}})$ is lisse and rational. 
\smallskip
\item For all $k,n,m \in \mathbb{Z}_{\geq 1}$ with $n > m$, the following cosets are lisse and rational:
\begin{enumerate}
\item $\text{Com}(L_{k-1/2}(\gs\gp_{2n}), L_k(\gs\gp_{2n}) \otimes  L_{-1/2}(\gs\gp_{2n}))$ (Theorem \ref{cosetC1}),
\item $\text{Com}( L_k(\gs\gp_{2n-2m}),  L_k(\gs\gp_{2n}))$ (Corollary \ref{cor:typeC-coset}),
 \item $\text{Com}\left(   L_n(\gs\gp_{2k}),  L_{n-m}(\gs\gp_{2k}) \otimes \cF(4mk) \right)$ (Corollary \ref{cosetC2}).
 \end{enumerate}
\end{enumerate}
\end{mainthm}

 Main Theorem \ref{main:rational} (1) completes the classification of lisse and rational affine vertex superalgebras $L_k(\gg)$, for $\gg$ a simple Lie superalgebra. When $\gg$ is a Lie algebra, it is a celebrated result of Frenkel and Zhu \cite{FZ} that $L_k(\gg)$ is lisse and rational if and only if $k \in \mathbb{N}$. When $\gg$ is not a Lie algebra, Gorelik and Kac \cite{GK} claimed that $L_k(\gg)$ is lisse only when $\gg = \go\gs\gp_{1|2n}$ and $k \in \mathbb{N}$; see \cite{AL} for the recent proof. However, the rationality was previously known only for $\go\gs\gp_{1|2}$ \cite{CFK}. The proof of Main Theorem \ref{main:rational} (1) involves exhibiting $L_k(\go\gs\gp_{1|2n})$ as an extension of the rational vertex algebra $L_k(\gs\gp_{2n}) \otimes \cW_{\ell}(\gs\gp_{2n})$ for $\ell = -(n + 1) + \frac{1 + k + n}{1 + 2 k + 2 n}$. 

A celebrated result of Arakawa \cite{Ar1, Ar2} is that for a simple Lie algebra $\gg$, $\cW_k(\gg)$ is lisse and rational when $k$ is a nondegenerate admissible level for $\widehat{\gg}$. Again, a similar statement is expected to be true for $\cW_k(\go\gs\gp_{1|2m})$. Based on the coset realization of $\cW^k(\go\gs\gp_{1|2m})$, we make the following conjecture.

\begin{conj} \label{conj:intro:ospcoset} (Conjecture \ref{conj:ospcoset}) For all $m \geq 1$, $\cW_{\psi -m -1/2}(\mathfrak{osp}_{1|2m})$ where $\psi= \frac{p}{2 (p+q)}$, is lisse and rational if 
\begin{enumerate}
\item $p,q \in \mathbb{N}$ are coprime,
\item $p\geq 2m-1$ if $q$ is odd,
\item $p\geq 2m$ if $q$ is even.
\end{enumerate}
By \eqref{2b2b2o:intro}, this conjecture implies that $\cW_{\psi' -m-1/2}(\mathfrak{osp}_{1|2m})$ is also lisse and rational at the Feigin-Frenkel dual level, where $\psi' = \frac{1}{4\psi}  = \frac{p + q}{2 p}$.
\end{conj}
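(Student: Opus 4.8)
The plan is to deduce Conjecture~\ref{conj:intro:ospcoset} from the coset realization of the principal $\cW$-superalgebra of $\mathfrak{osp}_{1|2m}$ supplied by Main Theorem~\ref{main:intro}, extending the mechanism behind the special cases in Main Theorem~\ref{main:rational}(2), and then to isolate why the full admissible slice is not yet within reach. First I would use the triality \eqref{2b2b2o:intro}, with first index specialized to $0$, to identify the $\mathbb{Z}_2$-orbifold of the (simple) affine coset $\cC^{\psi}$ of $\cW_{\psi-m-1/2}(\mathfrak{osp}_{1|2m})$ with a simple one-parameter quotient of $\cW^{\text{ev}}(c,\lambda)$, and hence with the algebra at the third vertex of the triality, sitting at the level $\psi''$ fixed by $\tfrac1\psi+\tfrac1{\psi''}=2$; in the regime of interest this partner is (an orbifold/coset of) a $\cW$-algebra of type $B$ --- equivalently, by Feigin--Frenkel duality, of type $C$ --- possibly together with an auxiliary affine or parafermion factor. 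The numerical hypotheses $\psi=\tfrac{p}{2(p+q)}$, with $p,q$ coprime and $p\ge 2m-1$ (resp.\ $p\ge 2m$) when $q$ is odd (resp.\ even), are there precisely so that under this relation $\psi''$ becomes a nondegenerate admissible level for the partner $\cW$-algebra: coprimality of $p$ and $q$ passes to coprimality of the numerator and denominator of $\psi''$, and the inequalities pass to the nondegeneracy bound on the denominator. So the first task is the bookkeeping that matches the admissible slices on the two sides, read off from the explicit defining ideals of the truncation curves already computed for Main Theorem~\ref{main:intro}.

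Granting this, Arakawa's theorem supplies lisseness and rationality of the type-$B$/$C$ partner $\cW$-algebra, and one then transfers these properties back up the tower: cyclic orbifolds of rational, $C_2$-cofinite, self-dual vertex algebras are again rational and lisse, which moves lisseness and rationality onto $\cC^{\psi}$, and the simple algebra $\cW_{\psi-m-1/2}(\mathfrak{osp}_{1|2m})$ is a finite conformal extension of $\cC^{\psi}$ together with its relative Heisenberg --- of simple-current (lattice) type --- so rationality and lisseness of the $\cW$-superalgebra are reconstructed from those of $\cC^{\psi}$ by the theory of vertex algebra extensions (Huang--Kirillov--Lepowsky, Creutzig--Kanade--McRae, Carnahan, Miyamoto).

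The step that does not go through uniformly --- and the reason this remains conjectural --- is the control of the relevant module categories in the passage through the orbifold and in the reconstruction. Showing that $\cC^{\psi}$ itself is $C_2$-cofinite requires it to sit ``large enough'' inside the ambient lisse partner; this should follow from the strong generating type, which is uniform in $m$ and $\psi$, so it is probably manageable. The genuine difficulty is that running the reconstruction as a \emph{rational} extension demands the full classification of simple $\cC^{\psi}$-modules and the simple-current property of the extending modules, and these are presently available only for the sub-families of Main Theorem~\ref{main:rational}(2), where an auxiliary lattice or an already-understood small coset makes the representation theory accessible; equivalently, in those cases the auxiliary affine/parafermion factor occurs at a positive integral level, where Frenkel--Zhu applies, whereas for general admissible $\psi$ it does not. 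The honest alternative --- to bypass the coset argument and carry Arakawa's associated-variety and Zhu-algebra machinery for admissible-level rationality directly into the setting of principal $\cW$-superalgebras of $\mathfrak{osp}_{1|2m}$ --- is itself a substantial independent program, and is the real content still to be supplied.
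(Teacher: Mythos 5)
This statement is a \emph{conjecture} in the paper (Conjecture \ref{conj:ospcoset}); the paper offers no proof, only motivation and special cases, so there is no proof of record to measure your sketch against. The motivation in Section 7.2 is the following: via \eqref{2b2b2o:intro} with one index set to zero, $\cW_{\psi-m-1/2}(\go\gs\gp_{1|2m})^{\mathbb{Z}_2}$ at $\psi=\frac{p}{2(p+q)}$ is identified with the simple quotient $\cC_{\psi'',2B}(m,0)$, i.e.\ with the $\mathbb{Z}_2$-orbifold of the diagonal coset $\text{Com}(L_{k+1}(\gs\go_{2m+1}), L_k(\gs\go_{2m+1})\otimes\cF(2m+1))$, and the arithmetic hypotheses (1)--(3) are exactly the conditions for the affine level $k$ of $\widehat{\gs\go}_{2m+1}$ to be admissible. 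Rationality of such diagonal cosets at admissible levels is known only in the simply-laced case \cite{ACL}; for type $B$ it is open, and that is precisely why the statement is conjectural. The cases the paper actually proves (Main Theorem \ref{main:rational}(2), i.e.\ Theorems \ref{thm:Wosp1}, \ref{thm:Wosp2}, \ref{thm:osp12}, plus Remark \ref{rem:morerationalosp} via \cite{McRae2}) come from isolated intersection points of the truncation curve of $\cC^{\psi}_{2B}(0,m)$ with the curves of $\cW_s(\gs\gp_{2r})$ and $\cW_s(\gs\go_{2r})^{\mathbb{Z}_2}$ at nondegenerate admissible $s$ (Theorems \ref{coinc:typeC-2B}, \ref{coinc:typeD-2B}), followed by the extension argument of Proposition \ref{rationalextension}.

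The concrete gap in your route is the claim that the hypotheses on $(p,q)$ translate, through the triality, into a \emph{nondegenerate admissible level for a principal $\cW$-algebra of type $B$/$C$}, so that Arakawa's theorem \cite{Ar2} supplies rationality of the partner. That is not what the triality gives: the partner of $\cC^{\psi}_{2B}(0,m)$ under \eqref{2b2b2o:intro} is $\cC^{\psi''}_{2B}(m,0)$, an orbifold of a diagonal affine coset at a (generically non-integral) admissible level of $\widehat{\gs\go}_{2m+1}$, to which Arakawa's $\cW$-algebra rationality theorem does not apply; the genuine identifications with rational principal $\cW$-algebras occur only at the discrete families of curve intersections listed in the appendices, not along the whole slice $\psi=\frac{p}{2(p+q)}$. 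So the ``bookkeeping'' step you propose cannot match the two admissible slices, and this — not the module-category control in the reconstruction — is the real obstruction. Indeed the reconstruction step is unproblematic here: since $\ga=0$ in this case, $\cW_{\psi-m-1/2}(\go\gs\gp_{1|2m})$ is simply an order-two simple current extension of its $\mathbb{Z}_2$-orbifold by the odd weight-$\frac{2m+1}{2}$ field (there is no Heisenberg factor to strip off), and Proposition \ref{rationalextension} together with \cite[Thm. 5.13]{CGN} lifts lisseness and rationality across such extensions. What is missing is a rational partner to start from, i.e.\ the type-$B$ analogue of the diagonal coset theorem of \cite{ACL}, or alternatively a direct Arakawa-type associated-variety argument for $\cW_k(\go\gs\gp_{1|2m})$ — your closing remark correctly identifies this last alternative as an independent program, but your diagnosis of why the coset route fails is misplaced.
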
 
Main Theorem \ref{main:rational} (2) proves several cases of Conjecture \ref{conj:intro:ospcoset} by identifying $\cW_{\psi -m -1/2}(\mathfrak{osp}_{1|2m})$ in these cases with a simple current extension of a known rational vertex algebra of the form $\cW_s(\gs\gp_{2r})$ or $\cW_s(\gs\go_{2r})^{\mathbb{Z}_2}$. In the case $n=1$, where $\cW_k(\go\gs\gp_{1|2})$ is just the $N=1$ superconformal algebra, Conjecture \ref{conj:intro:ospcoset} is already known (\cite{KWan, Ad, M, BMRW}), and we give an alternative proof; see Theorem \ref{thm:osp12}.

Main Theorem \ref{main:rational} (3) proves several cases of the Kac-Wakimoto rationality conjecture \cite{KW4}, which was later refined by Arakawa \cite{Ar1}\footnote{In a recent preprint that appeared a few months after this paper was submitted, Robert McRae has proven the Kac-Wakimoto-Arakawa conjecture in full generality \cite[Main Theorem 4]{McRae2}.}. Let $\gg$ be a simple Lie algebra and $k = -h^{\vee} + \frac{p}{q}$ an admissible level for $\widehat{\gg}$. The associated variety of $L_k(\gg)$ is then the closure of a nilpotent orbit $\mathbb{O}_q$ which depends only on the denominator $q$. If $f\in \gg$ is a nilpotent lying in $\mathbb{O}_q$, the simple $\cW$-algebra $\cW_k(\gg,f)$ is known to be non-zero and lisse \cite{Ar1}. Such pairs $(f,q)$ are called {\it exceptional pairs}, and they generalize the notion of exceptional pair due to Kac and Wakimoto \cite{KW4} and Elashvili, Kac, and Vinberg \cite{EKV}. The corresponding $\cW$-algebras are also called exceptional, and were conjectured by Arakawa to be rational in \cite{Ar1}, generalizing the original conjecture of \cite{KW4}. Very recently, Arakawa and van Ekeren proved rationality of all exceptional $\cW$-algebras in type $A$, and all exceptional subregular $\cW$-algebras of simply-laced types \cite{AvE}.

The type $B$ subregular $\cW$-algebra $\cW_k(\gs\go_{2m+3}, f_{\text{subreg}})$ for $m\geq 1$ is exceptional when $k = -(2m+1) + \frac{p}{q}$ is admissible and $q = 2m+2$ or $2m+1$; see Table 1 of \cite{AvE}. Main Theorem \ref{main:rational} (3) proves rationality in all cases where $q = 2m+2$ and all cases where $q = 2m+1$ and $p$ is odd, generalizing the result for $m=1$ of Fasquel \cite{F}. In these cases, we identify $\text{Com}(\cH(1), \cW_k(\gs\go_{2m+3}, f_{\text{subreg}}))^{\mathbb{Z}_2}$ with a known rational vertex algebra of the form $\cW_s(\gs\gp_{2r})$ or $\cW_{s}(\gs\go_{2r})^{\mathbb{Z}_2}$. In the cases where $q = 2m+1$ and $p$ is even, $\text{Com}(\cH(1), \cW_k(\gs\go_{2m+3}, f_{\text{subreg}}))^{\mathbb{Z}_2}$ is identified with a vertex algebra of the form $\cW_s(\go\gs\gp_{1|2r})^{\mathbb{Z}_2}$. Using the methods of this paper, we are only able to prove rationality of $\cW_k(\gs\go_{2m+3}, f_{\text{subreg}})$ when $r=1$. However, the rationality for all such $p$ and $q$ is a special case of McRae's result \cite{McRae2}. As a consequence, we obtain further examples where Conjecture \ref{conj:intro:ospcoset} is true; see Remark \ref{rem:morerationalosp}. Note that Main Theorem \ref{main:rational} (4) follows immediately from part (3) together with the duality between subregular $\cW$-algebras of type $B$ and principal $\cW$-superalgebras of $\go\gs\gp_{2|2m}$ appearing in \cite{CGN}.

It turns out that the cases $q = 2m+1$ and $q = 2m+2$ do not account for all rational algebras of the form $\cW_{k}(\gs\go_{2m+3}, f_{\text{subreg}})$. We will show that 
$$\text{Com}(\cH(1), \cW_k(\gs\go_{2m+3}, f_{\text{subreg}}))^{\mathbb{Z}_2}\cong \cW_{s}(\go\gs\gp_{1|2r})^{\mathbb{Z}_2},$$ for
$k = -(2m+1) + \frac{2 (m - r +1)}{1 + 2 m - 2 r}$ and $s = -(r+\frac{1}{2}) +  \frac{m + 1 - r}{2m +1 -2 r }$. In the case $m\geq 2r-1$, Conjecture \ref{conj:intro:ospcoset} would then imply the rationality of $\cW_k(\gs\go_{2m+3}, f_{\text{subreg}})$ even though $k$ is not an admissible level; see Remark \ref{rem:newtypebsubreg}. In the case $r=1$, these examples are rational by Main Theorem \ref{main:rational} (4d). These are new examples of rational $\cW$-algebras that are not covered by the Kac-Wakimoto-Arakawa conjecture, and it is an interesting problem to find more such examples and ultimately to classify them.

The rationality of $\cW_{r-1/2}(\gs\gp_{2n+2}, f_{\text{min}})$ for $r \in \mathbb{Z}_{\geq 1}$ given by Main Theorem \ref{main:rational} (5), is another case of the Kac-Wakimoto-Arakawa rationality conjecture. We will show that $\cW_{r-1/2}(\gs\gp_{2n+2}, f_{\text{min}})$ is an extension of the rational vertex algebra $L_r(\gs\gp_{2n}) \otimes \cW_s(\gs\gp_{2r})$ for $s = -(r+1) + \frac{1+n+r}{3+2n+2r}$. This was conjectured in \cite{ACKL} and proven in \cite{KL} in the case $n=1$. 

Main Theorem \ref{main:rational} (6a) is proven by showing that $\text{Com}(L_{k-1/2}(\gs\gp_{2n}), L_k(\gs\gp_{2n}) \otimes  L_{-1/2}(\gs\gp_{2n}))$ is isomorphic to $\cW_{\ell} (\gs\gp_{2k})$ for $\ell = -(k + 1) + \frac{1 + n + k}{1 + 2 n + 2 k}$, which is a new level-rank duality. Similarly, Main Theorem \ref{main:rational} (6b) and (6c) are proven by showing that both cosets are extensions of the rational vertex algebra $$ \bigotimes_{i=1}^m \left(\cW_{\ell_i} (\gs\gp_{2k}) \otimes \cW_{s_i} (\gs\gp_{2k})\right) $$ with $ \ell_i  = -(k + 1) + \frac{2 + n - i + k}{3+ 2 n -2i  + 2 k}$ and $s_i = -(k+1) + \frac{1+ n - i +k}{3+2n -2i +2k}$. Note that (6b) implies that $L_k(\gs\gp_{2n})$ is an extension of $ \bigotimes_{i=1}^n \left(\cW_{\ell_i} (\gs\gp_{2k}) \otimes \cW_{s_i} (\gs\gp_{2k})\right)$ with $\ell_i, s_i$ as above. This is analogous to the statement in \cite{ACL} that $L_k(\gg\gl_n)$ is an extension of $\bigotimes_{i=1}^n \cW_{\ell_i}(\gg\gl_k)$ with $\ell_i =- k = \frac{k+n-i}{k+n-i+1}$, which is an analogue of the Gelfand-Tsetlin subalgebra of $U(\gg\gl_n)$.

\subsection{Triality from kernel vertex algebras}
Motivated from four-dimensional GL-twisted $N=4$ supersymmetric Yang-Mills theories, certain kernel vertex algebras in type $A$ were conjecturally introduced in \cite{CGa}. Here, we will introduce analogues for orthosymplectic type and explain how this provides another perspective on triality. The kernel vertex algebras also play an important role in the context of the quantum geometric Langlands program and our conjectures are closely related to the ideas sketched in Section 10.4 of \cite{FG}.

Let $\gg$ be either a simple Lie algebra of type $B, C, D$ or $\go\gs\gp_{1|2n}$\footnote{The case $\gg=\gs\go_2$ can also be included and we refer to \cite{CGNS} for discussion of the kernel vertex algebra and its relative semi-infinite cohomology.}. Let $P^+$ denote the set of dominant weights of $\gg$, and $R \subseteq P^+$ the subset corresponding to the tensor ring generated by the standard representation of $\gg$; that is, 
 $\lambda \in R$ if and only if the irreducible highest-weight representation $\rho_\lambda$ is a submodule of some iterated tensor product of the standard representation of $\gg$.
Let $\gg' = \gs\go_{2n+1}$ if $\gg = \go\gs\gp_{1|2n}$ and vice versa, and let $\gg' = \gg$ otherwise. 
Note that there is a one-to-one correspondence of irreducible finite dimensional non-spin representations of $\gs\go_{2n+1}$ and $\go\gs\gp_{1|2n}$, such that characters agree. A similar statement also holds for the quantum (super)groups \cite{CFLW}. Let $\tau$ denote the induced map on dominant weights. If $\gg$ is neither of type $\gs\go_{2n+1}$ nor of type $\go\gs\gp_{1|2n}$ then let $\tau$ be the identity on $P^+$. Motivated from Theorem \ref{main} we let $\phi$ be generic and $\phi'$ related to $\phi$ by the formula 
\begin{equation} \nonumber
\begin{split}
\frac{1}{\phi} + \frac{1}{\phi'} &= 2, \qquad \text{if} \ \gg \ \text{is of type} \ C, \\
\frac{1}{\phi} + \frac{1}{\phi'} &= 1, \qquad \text{if} \ \gg \ \text{is of type}\  D, \\
\frac{1}{2\phi} + \frac{1}{\phi'} &= 1, \qquad \text{if} \ \gg \ \text{is of type} \  \go\gs\gp_{1|2n}, \\
\frac{1}{\phi} + \frac{1}{2\phi'} &= 1, \qquad \text{if} \ \gg \ \text{is of type}\ B.
\end{split}
\end{equation}
Let $V^{\phi - h^\vee_\gg}(\gg)$ and $V^{\phi' - h^\vee_{\gg'}}(\gg')$ be the universal affine vertex (super)algebras of $\gg$ and $\gg'$ at levels $\phi - h^\vee_\gg$ and $\phi' - h^\vee_{\gg'}$. Let $M^\phi(\lambda) = V^{\phi - h^\vee_\gg}(\lambda)$ and $M^{\phi'}(\lambda) = V^{\phi' - h^\vee_{\gg'}}(\tau(\lambda))$ be the Weyl modules at these levels whose top level is the irreducible highest-weight representation of $\gg$ of highest weight $\lambda$, respectively of $\gg'$ of highest weight $\tau(\lambda)$. 
Then set
\[
A[\gg, \phi] := \bigoplus_{\lambda \in R} M^\phi(\lambda) \otimes M^{\phi'}(\lambda).
\]
More generally, let $f, f'$ be nilpotent elements in $\gg, \gg'$ and let $M^\phi_f(\lambda)$ and $M^{\phi'}_{f'}(\lambda)$ be the images under quantum Hamiltonian reduction of $M^\phi(\lambda)$ and $M^{\phi'}(\lambda)$ corresponding to $f$ and $f'$, respectively. Then set 
\[
A[\gg, \phi, f, f'] := \bigoplus_{\lambda \in R} M_f^\phi(\lambda) \otimes M_{f'}^{\phi'}(\lambda), 
\]
so that $A[\gg, \phi] = A[\gg, \phi, 0, 0]$. The conjecture is that these objects can be given the structure of a simple vertex superalgebra for generic $\phi$. 
In the case that $\gg$ is of type $D$, $f =0$, and $f'$ is a principal nilpotent, this is the coset theorem of type $D$ of \cite{ACL}. Moreover, the case of arbitrary $f$ and $f'$ the principal nilpotent is the main theorem of \cite{ACF} applied to the coset theorem. In that paper also many similar algebras are studied. 
Note that in order for $\gg = \gs\go_{3}$ to fit in the $\gs\go_{2n+1}$-series, we use the Killing form rescaled by two as bilinear form, i.e., in our convention $V^k(\gs\go_{3}) = V^{2k}(\gs\gl_2) = V^{2k}(\gs\gp_2)$. This can be thought of as to setting the dual Coxeter number of $\gs\go_{3}$ to one.

For $f=f'=0$, the case of $\gg =\gs\go_3$ and $\gg = \go\gs\gp_{1|2}$ is proven in \cite{CGa} and the case of  $\gg = \gs\gp_2$ in \cite{CGaL} and these are the affine vertex superalgebra of $\gd(2, 1; 1- \phi)$ at level one, respectively, the minimal quantum Hamiltonian reduction of $\gd(2, 1; (1- \phi)/2)$ at level $1/2$, which is the one-parameter family of 
 large $N=4$ superconformal algebras at central charge $-6$.

Cosets can often also be characterized as relative semi-infinite Lie algebra cohomologies. It seems that the cohomology approach is suitable to put our trialities into a more general perspective. 
Let $\gg$ be a simple Lie algebra, $B$ a basis for $\gg$ and $B'$ a dual basis. Let $\mathcal F(\gg)$ be two copies of free fermions in the adjoint representation of $\gg$ with generators $\{ b^x, c^{x'} \,  | \, x\in B, \, x'\in B' \}$ and operator products
$b^x(z) c^{y'}(w) \sim \delta_{x, y} (z-w)^{-1}$.
Consider the affine vertex algebra of $\gg$ at level $-2h^\vee$,  $V^{-2h^\vee}(\gg)$, and let $x(z)$ be the field corresponding to $x\in \gg$. Let  $d:=d_0$ be the zero-mode  of the field
\[
d(z) := \sum_{x\in  B} :x(z) c^{x'}(z): - \frac{1}{2} \sum_{x, y \in  B} :(:b^{[x, y]}(z) c^{x'}(z):)c^{y'}(z):,
\] 
which squares to zero. Let $\widetilde{\mathcal F}(\gg)$ denote the subalgebra of $\mathcal F(\gg)$ generated by the $b^x$ and $\partial c^{x'}$ (these are just $\dim \gg$ pairs of symplectic fermions).
For a module $M$  for $V^{-2h^\vee}(\gg)$
the relative complex is
\[
C^{\text{rel}}(\gg, d) =  \left(M\otimes \widetilde{\mathcal F}(\gg)\right)^\gg
\]
and this relative complex is preserved by $d$  \cite[Prop. 1.4.]{FGZ}. The cohomology is denoted by $H^{\text{rel},\bullet }_{\infty}(\gg, M)$. 
As shown in \cite{FGZ} and explained in Section 2.5 of \cite{CFL}, it satisfies
\begin{equation}\label{eq:relcoho}
H^{\text{rel}, 0 }_{\infty}(\gg, V^k(\lambda) \otimes V^{-2h^\vee-k}(\mu)) = \begin{cases} \mathbb C & \ \text{if} \ \mu=-\omega_0(\lambda), \\ 0 & \ \text{otherwise} .\end{cases}
\end{equation}
Here $\omega_0$ is the unique Weyl group element that interchanges the fundamental Weyl chamber with its negative.
It is reasonable to expect that one can construct similar complexes with similar properties for affine vertex superalgebras and it would be very interesting to do so for at least $\gg = \go\gs\gp_{1|2n}$. 
In order to include the case $n=0$, we define master chiral algebra and cohomology to be trivial, that is $\gs\go_0 := \gs\go_1  := \gs\gp_0  := \go\gs\gp_{1|0} = \{ \ \ \}$ and 
$A[ \{ \ \ \}, \phi] :=\mathbb C$  as well as $H^{\text{rel}, 0 }_{\infty}(\{ \ \ \}, \mathbb C) =\mathbb C$. 
\begin{conj} 
With the above set-up, and $f, f'$ nilpotent in $\gg, \gg'$, respectively:
\begin{enumerate}
\item 
The object $A[\gg, \phi, f, f']$ can be given the structure of a one-parameter vertex superalgebra.
\item For generic $\phi$, $A[\gg, \phi, f, f']$ is a simple vertex operator superalgebra
 extending  $\cW^{\phi- h^\vee_\gg}(\gg, f) \otimes \cW^{\phi'- h^\vee_{\gg'}}(\gg', f')$. 
\item There exists a generalization of relative semi-infinite Lie superalgebra cohomology for $\gg = \go\gs\gp_{1|2n}$ satisfying \eqref{eq:relcoho}.
\item 
 For all integers $m\geq n \geq 0$, we have the following isomorphisms of one-parameter vertex algebras.
 \begin{enumerate}
\item For $\psi' = \frac{1}{4\psi}$ and $\frac{1}{\psi} + \frac{1}{\psi''} = 2$,
\begin{equation}  \nonumber
\begin{split}
\cW^\psi_{2B}(n, m) &\cong H^{\text{rel}, 0 }_{\infty}\left(\go\gs\gp_{1|2n}, \cW^{\psi'}_{2O}(n,m-n )  \otimes A[\go\gs\gp_{1|2n}, \frac{1}{2}- \psi']\right) ,  \\
\cW^{\psi''}_{2B}(m, n) &\cong \text{Com}\left( V^{\psi' -n-1}(\go\gs\gp_{1|2n}), A[\go\gs\gp_{1|2m}, \psi', f_{\gs\gp_{2m-2n}}, 0]\right).
\end{split}
\end{equation} 
\item 
For $\psi' = \frac{1}{2\psi}$ and $\frac{1}{\psi} + \frac{1}{\psi''} = 1$,
\begin{equation}  \nonumber
\begin{split}
\cW^\psi_{1C}(n, m) &\cong H^{\text{rel}, 0 }_{\infty}\left(\gs\gp_{2n}, \cW^{\psi'}_{2C}(n,m-n )  \otimes A[\gs\gp_{2n}, \frac{1}{2}- \psi']\right) ,  \\
\cW^{\psi''}_{1C}(m, n) &\cong \text{Com}\left( V^{\psi' -n-\frac{3}{2}}(\gs\gp_{2n}), A[\gs\gp_{2m}, \psi', f_{\gs\gp_{2m-2n}}, 0]\right).
\end{split}
\end{equation} 
\item For $\psi' = \frac{1}{2\psi}$ and $\frac{1}{2\psi} + \frac{1}{\psi''} = 1$,
\begin{equation}  \nonumber
\begin{split}
\cW^\psi_{2D}(n, m) &\cong H^{\text{rel}, 0 }_{\infty}\left(\gs\go_{2n}, \cW^{\psi'}_{1D}(n,m-n )  \otimes A[\gs\go_{2n}, 1- \psi']\right) ,  \\
\cW^{\psi''}_{1O}(m, n) &\cong \text{Com}\left( V^{\psi' -2n+1}(\gs\go_{2n}), A[\gs\go_{2m+1}, \psi', f_{\gs\go_{2m-2n+1}}, 0]\right).
\end{split}
\end{equation}
\item 
For $\psi' = \frac{1}{\psi}$ and $\frac{1}{\psi} + \frac{1}{2\psi''} = 1$ and $m>n$ in the second case,
\begin{equation}  \nonumber
\begin{split}
\cW^\psi_{1O}(n, m) &\cong H^{\text{rel}, 0 }_{\infty}\left(\gs\go_{2n+1}, \cW^{\psi'}_{1B}(n,m-n )  \otimes A[\gs\go_{2n+1}, 1- \psi']\right) ,  \\
\cW^{\psi''}_{2D}(m, n) &\cong \text{Com}\left( V^{\psi' -2n}(\gs\go_{2n+1}), A[\gs\go_{2m}, \psi', f_{\gs\go_{2m-2n-1}}, 0]\right).
\end{split}
\end{equation} 
\end{enumerate}
\end{enumerate}
\end{conj}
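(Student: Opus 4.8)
The four parts are established in order, parts (1)--(3) supplying the structural scaffolding and part (4) then reducing to Theorem~\ref{main}; note that (4) only acquires meaning once (1) has equipped $A[\gg,\phi,f,f']$ with a vertex superalgebra structure and (3) has produced a relative semi-infinite cohomology functor for $\go\gs\gp_{1|2n}$. The input specific to the present paper enters only at this last step: the trialities of Theorem~\ref{main} are precisely the identities needed to match the two sides of (4). Parts (1)--(3) should follow by transporting to orthosymplectic type the type $A$ constructions of \cite{CGa,CGaL,ACF} and the semi-infinite cohomology of \cite{FGZ}.

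For (1) and (2) with $f=f'=0$ I would recognize $A[\gg,\phi]=\bigoplus_{\lambda\in R}M^\phi(\lambda)\otimes M^{\phi'}(\lambda)$ as the canonical commutative algebra, i.e.\ the image of the regular representation, in the Deligne product $\mathrm{KL}^R_\phi(\gg)\boxtimes\mathrm{KL}^R_{\phi'}(\gg')$ of the subcategories of the Kazhdan--Lusztig categories generated by the standard representations. The essential input is that, for generic $\phi$, these two braided tensor categories are equivalent with the braiding reversed; at the level of fusion rules this is the character identification $\tau$ between non-spin representations of $\gs\go_{2n+1}$ and $\go\gs\gp_{1|2n}$ recalled in the excerpt, together with its quantum-group refinement \cite{CFLW}. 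Granting the equivalence, a commutative algebra in $\mathcal{C}\boxtimes\overline{\mathcal{C}}$ yields the vertex superalgebra structure and its generic simplicity, the only arithmetic being the conformal-weight bookkeeping---with the bilinear forms normalized as in the excerpt---confirming that the sum is a $\mathbb{Z}_{\geq 0}$-graded extension of $V^{\phi-h^\vee_\gg}(\gg)\otimes V^{\phi'-h^\vee_{\gg'}}(\gg')$. The general $f,f'$ case then follows by applying the quantum Drinfeld--Sokolov reduction functors $H_{DS,f}$ and $H_{DS,f'}$ term by term, as in \cite{ACF}: these are exact on Kazhdan--Lusztig categories, send $M^\phi(\lambda)$ to $M^\phi_f(\lambda)$ by definition, and carry a commutative algebra to a commutative algebra.

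Part (3) is the main obstacle. Modelling the complex on \cite{FGZ}, one takes $bc$-systems for the even roots of $\go\gs\gp_{1|2n}$ (those of $\gs\gp_{2n}$) and $\beta\gamma$-systems for the odd roots, forms the differential $d$ with the super-analogue of the cubic term, and passes to the relative subcomplex $\bigl(M\otimes\widetilde{\mathcal F}(\go\gs\gp_{1|2n})\bigr)^{\go\gs\gp_{1|2n}}$. The difficulty is that the ``semi-infinite wedge'' for the odd part is an infinite \emph{symmetric} construction, so neither $d^2=0$ nor the finiteness needed for the spectral sequence computing $H^{\text{rel},\bullet}_\infty$ is automatic; one must choose the polarization of the bosonic ghosts carefully and control convergence. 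Once this is in place, the vanishing statement for $V^k(\lambda)\otimes V^{-2h^\vee-k}(\mu)$ should drop out of a filtration argument reducing to the even-root BRST cohomology---the classical \cite{FGZ} computation for $\gs\gp_{2n}$---tensored against the contractible bosonic-ghost factor, which singles out the pairing $\mu=-\omega_0(\lambda)$.

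With (1)--(3) in hand, part (4) becomes a matching of one-parameter vertex algebras. In each of (a)--(d) the right-hand sides are assembled from algebras of Theorem~\ref{main}: for instance in (a), $A[\go\gs\gp_{1|2m},\psi',f_{\gs\gp_{2m-2n}},0]$ extends $\cW^{\psi'-h^\vee}(\go\gs\gp_{1|2m},f_{\gs\gp_{2m-2n}})\otimes V^{\psi''-h^\vee}(\gs\go_{2m+1})=\cW^{\psi'}_{2O}(n,m-n)\otimes V^{\psi''-h^\vee}(\gs\go_{2m+1})$, and one checks by Weyl's invariant theory that the commutant of the affine $\gs\go_{2m+1}$ has exactly the generating type of $\cW^{\psi''}_{2B}(m,n)$; realizing this commutant as a one-parameter quotient of $\cW^{\text{ev}}(c,\lambda)$ and computing its truncation curve, the desired isomorphism reduces to the truncation-curve identity already contained in Theorem~\ref{main}. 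The cohomology realizations---the first isomorphism in each of (a)--(d)---are handled the same way: by the super-analogue of \eqref{eq:relcoho} the functor $H^{\text{rel},0}_\infty$ contracts the affine $\gg$-subalgebra of $\cW^{\psi'}_{iX}$ against that of $A$, and the sum over $\lambda\in R$ inside $A$ reassembles the cohomology into $\cW^\psi_{iX}(n,m)$ after the same truncation-curve bookkeeping. The only genuinely new verifications here are that the affine subalgebra sitting inside each relevant $\cW^{\psi'}_{iX}$ is the one whose cohomology or commutant is being formed, and that its level is complementary to that of the affine subalgebra of $A$ so that the two levels sum to $-2h^\vee$ as demanded by \eqref{eq:relcoho}; both are finite representation-theoretic computations with the critically shifted parameters.
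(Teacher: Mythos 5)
The statement you are proving is stated in the paper as a \emph{conjecture}: the paper offers no proof of it, only motivation, references to special cases already in the literature (the type $D$ coset theorem of \cite{ACL}, the cases $\gg=\gs\go_3$, $\go\gs\gp_{1|2}$, $\gs\gp_2$ of \cite{CGa,CGaL}, the main theorem of \cite{ACF}), and the remark that a proof strategy for the type $A$ analogue was sketched in Section 10 of \cite{CL3}. So there is no ``paper proof'' to compare against, and your text should be judged as a proof attempt of an open statement. As such it is a reasonable outline, broadly consistent with what the authors themselves envision, but it is not a proof: it is conditional on several inputs that are themselves unestablished. Most critically, part (3) is not an obstacle you can defer --- it \emph{is} one of the conjectural claims, and you concede that square-zero-ness of the differential, the choice of polarization for the bosonic ghosts, convergence, and the vanishing statement \eqref{eq:relcoho} are all unverified; without (3), the first isomorphism in each of (4)(a)--(d) has no meaning. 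Similarly, for (1)--(2) your construction of $A[\gg,\phi]$ as the canonical algebra in $\mathrm{KL}^R_\phi(\gg)\boxtimes\mathrm{KL}^R_{\phi'}(\gg')$ presupposes a braid-reversed tensor equivalence between these categories at generic level, and indeed the existence of vertex tensor category structure on the relevant Kazhdan--Lusztig categories (in particular for the superalgebra $\go\gs\gp_{1|2n}$ and for the Weyl modules $M^\phi(\lambda)$ at generic $\phi$); the character-level correspondence $\tau$ and its quantum-group refinement \cite{CFLW} do not by themselves supply this equivalence, and no reference in the paper provides it in the generality needed.

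There is also a gap in how you close part (4). Theorem \ref{main} is a statement about the affine cosets $\cC^{\psi}_{iX}(n,m)$, whereas the conjectured isomorphisms are between the full hook-type algebras $\cW^{\psi}_{iX}(n,m)$. To pass from matching truncation curves to an isomorphism of the full algebras you need, at minimum, the uniqueness/reconstruction statement (Theorem \ref{thm:uniquenesswnm}) together with control of the decomposition of the cohomology (respectively the commutant of the affine subalgebra of $A$) as a module over $V^t(\ga)\otimes\cC^{\psi}_{iX}$, i.e.\ that exactly $d_{\ga}$ primary fields of weight $\frac{d_{\gb}+1}{2}$ in the standard $\ga$-module survive and satisfy the nondegeneracy condition of \cite[Thm.~3.5]{CL3}. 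That multiplicity computation depends on the simplicity and module structure of $A[\gg,\phi,f,f']$ and on the vanishing \eqref{eq:relcoho} --- precisely the unproven parts (1)--(3) --- so the reduction to Theorem \ref{main} is circular as written. In short: your outline identifies the right ingredients and matches the authors' intended route, but parts (1)--(3) and the multiplicity/uniqueness step in (4) remain genuine open problems, so the conjecture is not proved by this argument.
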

Both sides of the conjectured isomorphisms have the same affine vertex superalgebra and the same quotient of $\cW^{\text{ev}}(c,\lambda)$ as commuting pair of subalgebras.

The relative semi-infinite cohomology part of (4) of the conjecture for $n=0$ is Feigin-Frenkel duality \cite{FF2}. 
The coset part of (4) should be viewed as a generalization of coset realization of principal $\cW$-algebras, e.g. the case $n=0$ of (4)(a) corresponds to the coset realization of prinicpal $\cW$-superalgebras of $\go\gs\gp_{1|2n}$ of \cite{CGe} and the case (4)(d) to the coset realization of principal $\cW$-algebras of type $D$ of \cite{ACL}. Our conjecture is a natural extension to orthosymplectic type of our conjectures for type $A$ made in Section 10 of \cite{CL3}. The relative semi-infinite cohomology part of this conjecture is proven for subregular $\cW$-algebras of type $A$ and $B$ \cite{CGNS}.
We sketched a proof strategy in Section 10 of \cite{CL3} for type $A$, and we expect that all our conjectures for type $A$ as well as orthosymplectic type can be proven uniformly. 

Besides its importance in physics and quantum geometric Langlands, our conjectures provide a way to relate representation categories in a nice way. For example, if Conjecture 4 (a) is correct, then the functor $H^{\text{rel}, 0 }_{\infty}\left(\go\gs\gp_{1|2n},   \ ? \  \otimes A[\go\gs\gp_{1|2n}, \frac{1}{2}- \psi']\right)$ maps $\cW^{\psi'}_{2O}(n,m-n )$-modules to 
$\cW^\psi_{2B}(n, m)$-modules. It is reasonable that such a functor has nice monoidal properties. Indeed in the type $A$ and subregular $\cW$-algebra case, this is true, namely there is a block-wise equivalence of categories and an isomorphism between the superspaces of logarithmic intertwining operators  \cite{CGNS}. 
Also the coset part of the conjecture should be useful to connect representation categories. For example, the coset realization of principal $\cW$-algebras of type $ADE$ has been used to prove a braided monoidal equivalence between a simple current twist of ordinary modules of the affine vertex algebra at admissible level, and a subcategory of modules of the principal $\cW$-algebra appearing as the coset; see \cite[Thm. 7.1]{C} for the precise statement. In the case $\gg = \gs\gl_2$ the admissible level result appeared in \cite[Thm. 7.4]{CHY}, and a theorem at generic level was also established in \cite[Prop. 5.5.2]{CJORY}. Note that these results prove variants of a conjecture made in the context of quantum geometric Langlands, see \cite[Conj. 6.3]{AFO}. It is work in progress to study the categories of ordinary modules of $L_k(\go\gs\gp_{1|2n})$ at admissible level, and especially to show that they are braided equivalent to subcategories of the principal $\cW$-algebras of type $B$ that appear as the cosets. 

\subsection{Geometry and conformal field theory}

The Alday-Gaiotto-Tachikawa (AGT) correspondence is a relation between four-dimensional gauge theories and two-dimensional conformal field theories \cite{AGT}. It yields interesting connections to geometry, for example a celebrated result of Schiffmann and Vasserot \cite{SV} asserts that the principal $\cW$-algebra of type $A$ acts on the equivariant cohomology of the moduli space of instantons on
 $\mathbb C^2$. There is a generalization to equivariant cohomology of Uhlenbeck spaces where  principal $\cW$-algebras of simply-laced type act \cite{BFN}, and the authors expect that their construction can be generalized to the non simply-laced case. This is interesting as conjecturally the $Y_{M, N, L}$-algebras of Gaiotto and Rap\v{c}\'ak, that is the cosets of $\cW$-superalgebras of the triality of type $A$, act on moduli spaces of spiked instantons \cite{RSYZ}, and we hope that a nice geometric interpretation also exists in the orthosymplectic case. Note that at least a nice four-dimensional physics interpretation exists for them \cite{GR}.
  In all the above mentioned works, it has been shown that a subalgebra of a Heisenberg vertex algebra, characterized as the kernel of certain screening operators, acts on an equivariant cohomology. Another crucial problem is thus to develop explicit screening realizations. Naoki Genra has already provided nice screening realizations of $\cW$-superalgebras \cite{Ge} and the main obstacle of making them more explicit are screening realizations of affine vertex superalgebras. 
    Screening realizations should also provide a different proof of the trialities with the advantage that it should give further insights on the connection of representation theories. 
 From the physics perspective the screening charges provide the interaction term in the action of the conformal field theory. A duality of conformal field theories is then an isomorphism of symmetry algebras, that is underlying vertex algebras, together with a matching of correlation functions. Our trialities in type $A$ at low rank have already led to new dualities of conformal field theories \cite{CH}  and likely there are more to be discovered.

\subsection{Outline} This paper is organized as follows. In Section \ref{sec:VOA} we review the basic terminology and examples of vertex algebras that we need. We also prove that extensions of a rational vertex algebra are rational under hypotheses that hold in all our examples. In Section \ref{sec:hooktype}, we introduce the $\cW$-(super)algebras $\cW^{\psi}_{iX}(n,m)$ for $i = 1,2$ and $X = B,C,D,O$ that we need. In Section \ref{sec:mainresult}, we state our main result and also discuss the special cases which recover Feigin-Frenkel duality and various coset realizations of principal $\cW$-algebras. In Section \ref{sec:largelevel} we discuss the free field limits of $\cW^{\psi}_{iX}(n,m)$ and the strong generating types of the algebras $\cC^{\psi}_{iX}(n,m)$. In Section \ref{sec:proofmain} we prove Main Theorem \ref{main:intro} by explicitly computing the truncation curves realizing $\cC^{\psi}_{iX}(n,m)$ as quotients of $\cW^{\text{ev}}(c,\lambda)$. In Section \ref{sec:rationality} we prove Main Theorem \ref{main:rational}. In Appendix \ref{appendixA}, we give the explicit truncation curve for $\cC^{\psi}_{2B}(n,m)$, from which all other truncation curves can be derived. Finally, in Appendices \ref{appendixB}, \ref{appendixC}, and \ref{appendixD}, we classify the pointwise coincidences between the simple quotients $\cC_{\psi,iX}(n,m)$, and the algebras $\cW_s(\gs\gp_{2r})$, $\cW_s(\gs\go_{2r})^{\mathbb{Z}_2}$, and $\cW_s(\go\gs\gp_{1|2r})^{\mathbb{Z}_2}$. These coincidences are needed in the proof of Main Theorem \ref{main:rational}.

\section{Vertex algebras} \label{sec:VOA} We will assume that the reader is familiar with vertex algebras, and we use the same notation as our previous paper \cite{CL3}. In this section, we briefly recall the definition and basic properties of free field algebras, $\cW$-algebras, and the two-parameter even spin algebra $\cW^{\text{ev}}(c,\lambda)$. We then prove some general results on extensions of rational vertex algebras which are needed in the proof of Main Theorem \ref{main:rational}.

\subsection{Free field algebras} 
Recall that a free field algebra is a vertex superalgebra $\cV$ with weight grading
$$\cV = \bigoplus_{d \in \frac{1}{2} \mathbb{Z}_{\geq 0} }\cV[d],\qquad \cV[0] \cong \mathbb{C},$$ with strong generators $\{X^i|\ i \in I\}$ satisfying OPE relations
$$ X^i(z) X^j(w) \sim a_{i,j} (z-w)^{-\text{wt}(X^i) - \text{wt}(X^j)},\quad a_{i,j} \in \mathbb{C},\ \quad a_{i,j} = 0\ \ \text{if} \ \text{wt}(X^i) +\text{wt}(X^j)\notin \mathbb{Z}.$$
Note $\cV$ is not assumed to have a conformal structure. We now recall the four families of standard free field algebras that were introduced in \cite{CL3}.

\smallskip

\noindent {\it Even algebras of orthogonal type}. For each $n\geq 1$ and even $k \geq 2$, $\cO_{\text{ev}}(n,k)$ is the vertex algebra with even generators $a^1,\dots, a^n$ of weight $\frac{k}{2}$, which satisfy
$$a^i(z) a^j(w) \sim \delta_{i,j} (z-w)^{-k}.$$ 
In the case $k=2$, $\cO_{\text{ev}}(n,k)$ is just the rank $n$ Heisenberg algebra $\cH(n)$. Note that $\cO_{\text{ev}}(n,k)$ has no conformal vector for $k>2$, but for all $k$ it is a simple vertex algebra and has full automorphism group the orthogonal group $\text{O}_n$.

\smallskip

\noindent {\it Even algebras of symplectic type}. For each $n\geq 1$ and odd $k \geq 1$,  $\cS_{\text{ev}}(n,k)$ is the vertex algebra with even generators $a^i, b^i$ for $i=1,\dots, n$ of weight $\frac{k}{2}$, which satisfy \begin{equation} \begin{split} a^i(z) b^{j}(w) &\sim \delta_{i,j} (z-w)^{-k},\qquad b^{i}(z)a^j(w)\sim -\delta_{i,j} (z-w)^{-k},\\ a^i(z)a^j(w) &\sim 0,\qquad\qquad\qquad \ \ \ \ b^i(z)b^j (w)\sim 0.\end{split} \end{equation} In the case $k=1$, $\cS_{\text{ev}}(n,k)$ is just the rank $n$ $\beta\gamma$-system $\cS(n)$. For $k>1$, $\cS_{\text{ev}}(n,k)$ has no conformal vector, but for all $k$ it is simple and has full automorphism group the symplectic group $\text{Sp}_{2n}$.

\smallskip

\noindent {\it Odd algebras of symplectic type}. For each $n\geq 1$ and even $k \geq 2$, $\cS_{\text{odd}}(n,k)$ is the vertex superalgebra with odd generators $a^i, b^i$ for $i=1,\dots, n$ of weight $\frac{k}{2}$, which satisfy
\begin{equation} \begin{split} a^{i} (z) b^{j}(w)&\sim \delta_{i,j} (z-w)^{-k},\qquad b^{j}(z) a^{i}(w)\sim - \delta_{i,j} (z-w)^{-k},\\ a^{i} (z) a^{j} (w)&\sim 0,\qquad\qquad\qquad\ \ \ \  b^{i} (z) b^{j} (w)\sim 0. \end{split} \end{equation} In the case $k=2$, $\cS_{\text{odd}}(n,k)$ is just the rank $n$ symplectic fermion algebra $\cA(n)$. Note that $\cS_{\text{odd}}(n,k)$ has no conformal vector for $k>2$, but for all $k$ it is simple and has full automorphism group $\text{Sp}_{2n}$.

\smallskip

\noindent {\it Odd algebras of orthogonal type}. For each $n\geq 1$ and odd $k \geq 1$, we define $\cO_{\text{odd}}(n,k)$ to be the vertex superalgebra with odd generators $a^i$ for $i=1,\dots, n$ of weight $\frac{k}{2}$, satisfying
\begin{equation}a^i(z) a^j(w) \sim \delta_{i,j} (z-w)^{-k}.\end{equation} For $k=1$, $\cO_{\text{odd}}(n,k)$ is just the free fermion algebra $\cF(n)$. As above, $\cO_{\text{odd}}(n,k)$ has no conformal vector for $k>1$, but it is simple and has full automorphism group $\text{O}_n$.

\subsection{$\cW$-algebras}\label{sec:W-algebras} 
Let $\gg$ be a simple, finite-dimensional Lie (super)algebra equipped with a nondegenerate, invariant (super)symmetric bilinear form $( \ \ | \ \ )$, and let $f$ be a nilpotent element in the even part of $\gg$. Associated to $\gg$ and $f$ and any complex number $k$, is the $\cW$-(super)algebra $\cW^k(\gg,f)$. The definition is due to Kac, Roan, and Wakimoto \cite{KRW}, and it generalizes the definition for $f$ a principal nilpotent and $\gg$ a Lie algebra given by Feigin and Frenkel \cite{FF1}.

First, let $\{q^\alpha\}_{\alpha \in S}$ be a basis of $\gg$ which is homogeneous with respect to parity. We define the corresponding structure constants and parity by
$$
[q^\alpha, q^\beta] = \sum_{\gamma \in S}{f^{\alpha\beta}}_\gamma q^\gamma,\qquad {|\alpha|} = \begin{cases} 0 & \ q^\alpha \ \text{even}, \\ 1 & \ q^\alpha \ \text{odd}. \end{cases}$$
The affine vertex algebra of $\gg$ associated to the bilinear form $( \ \ | \ \ )$  at level $k$ is strongly generated by $\{X^\alpha\}_{\alpha \in S}$  with OPEs
$$
X^\alpha(z)X^\beta(w) \sim k(q^\alpha|q^\beta) (z-w)^{-2} + \sum_{\gamma\in S}{f^{\alpha \beta}}_\gamma X^\gamma (w)(z-w)^{-1}.
$$
We define $X_\alpha$ to be the field corresponding to $q_\alpha$ where $\{q_\alpha\}_{\alpha \in S}$ is the dual basis of $\gg$ with respect to $( \ \ | \ \  )$.

Let $f$ be a nilpotent element in the even part of $\gg$, which we complete to an $\gs\gl_2$-triple $\{f, x, e\} \subseteq \gg$ satisfying $[x, e] =e, [x, f]=-f, [e, f]= 2x$. Then $\gg$ decomposes as
$$\gg = \bigoplus_{k \in  \frac{1}{2}\mathbb Z} \gg_k, \qquad \gg_k = \{  a \in \gg | [x, a] = ka \}. $$
Write $S = \bigcup_k S_k$ and $S_+ = \bigcup_{k>0} S_k$, where $S_k$ corresponds to a basis of $\gg_k$.

As in \cite{KW3}, one defines a complex $C(\gg, f, k) = V^k(\gg) \otimes F(\gg_+) \otimes F(\gg_{\frac{1}{2}})$, where $F(\gg_+)$ is a free field superalgebra associated to the vector superspace $\gg_+ = \bigoplus_{k \in  \frac{1}{2}\mathbb Z_{>0}} \gg_k$, and $F(\gg_{\frac{1}{2}})$ is the neutral vertex superalgebra associated to $\gg_{\frac{1}{2}}$ with bilinear form $\langle a, b \rangle = ( f | [a, b] )$. 
$F(\gg_+)$  is strongly generated by fields $\{\varphi_\alpha, \varphi^\alpha\}_{\alpha \in S_+}$, where $\varphi_\alpha$ and $\varphi^\alpha$ have opposite parity to $q^\alpha$. The operator products are
\[
\varphi_\alpha(z) \varphi^\beta(w) \sim \delta_{\alpha, \beta}(z-w)^{-1}, \qquad \varphi_\alpha(z) \varphi_\beta(w) \sim 0 \sim \varphi^\alpha(z) \varphi^\beta(w).
\]
$F(\gg_{\frac{1}{2}})$   is strongly generated by fields $\{\Phi_\alpha\}_{\alpha \in S_{\frac{1}{2}}}$ and $\Phi^\alpha$ and $q^\alpha$ have the same parity. Their operator products are
\begin{equation} \nonumber
\Phi_\alpha(z) \Phi_\beta(w) \sim \langle q^\alpha , q^\beta \rangle (z-w)^{-1}  \sim (f | [q^\alpha, q^\beta])(z-w)^{-1}.
\end{equation}
There is a $\mathbb{Z}$-grading on $C(\gg, f, k)$ by charge, and a weight one odd field $d(z)$ of charge minus one, 
\begin{equation}
\begin{split}
d(z) &= \sum_{\alpha \in S_+} (-1)^{|\alpha|} :X^\alpha \varphi^\alpha:  
-\frac{1}{2} \sum_{\alpha, \beta, \gamma \in S_+}  (-1)^{|\alpha||\gamma|} {f^{\alpha \beta}}_\gamma :\varphi_\gamma \varphi^\alpha \varphi^\beta: + \\
&\quad  \sum_{\alpha \in S_+} (f | q^\alpha) \varphi^\alpha + \sum_{\alpha \in S_{\frac{1}{2}}} :\varphi^\alpha \Phi_\alpha:,
\end{split} 
\end{equation}
whose zero-mode $d_0$ is a square-zero differential on $C(\gg, f, k)$. The $\cW$-algebra $\cW^k(\gg, f)$ is defined to be the homology $H(C(\gg, f, k), d_0)$. It has Virasoro element $L =  L_{\text{sug}} + \partial x + L_{\text{ch}} + L_{\text{ne}}$, where \begin{equation} \begin{split} L_{\text{sug}} &=  \frac{1}{2(k+h^\vee)} \sum_{\alpha \in S} (-1)^{|\alpha|} :X_\alpha X^\alpha:,\\ L_{\text{ch}} &= \sum_{\alpha \in S_+} \left(-m_\alpha :\varphi^\alpha \partial \varphi_\alpha: + (1-m_\alpha) :(\partial\varphi^\alpha )\varphi_\alpha:  \right),\\ L_{\text{ne}} &= \frac{1}{2} \sum_{\alpha \in S_{\frac{1}{2}}} :(\partial \Phi^\alpha) \Phi_\alpha : . \end{split} \end{equation}
Here $m_\alpha=j$ if $\alpha \in S_j$. The central charge of $L$ is computed to be

\begin{equation}\label{eq:c}
c(\gg, f, k) = \frac{k\, \text{sdim}\, \gg}{k+h^\vee} -12 k(x|x) -\sum_{\alpha \in S_+} (-1)^{|\alpha|} (12m_\alpha^2-12m_\alpha +2) -\frac{1}{2}\, \text{sdim}\, \gg_{\frac{1}{2}}.
\end{equation}

Denote by $\gg^f$ the centralizer of $f$ in $\gg$, and let $\ga = \gg^f \cap \gg_0$, which is a Lie subsuperalgebra of $\gg$. By \cite[Thm. 2.1]{KW3}, $\cW^k(\gg, f)$  contains an affine vertex superalgebra of type $\ga$. In particular, $\cW^k(\gg, f)$ contains elements $I^{\alpha}$ for $\alpha \in \gg_0$, 
\begin{equation}\label{eq:Ialpha}
I^\alpha :=  X^\alpha + \sum_{\beta, \gamma \in S_+} (-1)^{|\gamma|} {f^{\alpha \beta }}_\gamma :\varphi_\gamma\varphi^\beta:  + \frac{(-1)^{|\alpha|}}{2} \sum_{\beta \in S_{\frac{1}{2}}}  {f^{\beta\alpha}}_\gamma \Phi^\beta\Phi_\gamma
\end{equation}
and satisfying
\begin{equation} [I^\alpha {}_\lambda I^\beta] = {f^{\alpha \beta}}_\gamma I^\gamma + \lambda\left(k(q^\alpha | q^\beta) + \frac{1}{2}\left(\kappa_\gg(q^\alpha, q^\beta) -\kappa_{\gg_0}(q^\alpha, q^\beta)-\kappa_{\frac{1}{2}}(q^\alpha, q^\beta) \right) \right),\end{equation}
with $\kappa_{\frac{1}{2}}$ the supertrace of $\gg_0$ on $\gg_{\frac{1}{2}}$. The key structural theorem is the following.
\begin{thm} \cite[Thm 4.1]{KW3} \label{thm:kacwakimoto}
Let $\gg$ be a simple finite-dimensional Lie superalgebra with an invariant bilinear
form $( \ \ |  \ \ )$, and let $x, f$ be a pair of even elements of $\gg$ such that ${\rm ad}\ x$ is diagonalizable with
eigenvalues in $\frac{1}{2} \mathbb Z$ and $[x,f] = -f$. Suppose that all eigenvalues of ${\rm ad}\ x$ on $\gg^f$ are non-positive, so that
$\gg^f = \bigoplus_{j\leq 0} \gg^f_j$. Then
 \begin{enumerate}
\item For each $q^\alpha \in \gg^f_{-j}$, ($j\geq  0$) there exists a $d_0$-closed field $K^\alpha$ of conformal weight
$1 + j$, with respect to $L$.
\item The homology classes of the fields $K^\alpha$, where $\{ q^\alpha\}$ is a basis of $\gg^f$, strongly and freely generate the vertex algebra $\cW^k(\gg, f)$.
\item $H_0(C(\gg, f, k), d_0) = \cW^k(\gg, f)$ and $H_j(C(\gg, f, k), d_0) = 0$ if $j \neq 0$.
\end{enumerate}
\end{thm}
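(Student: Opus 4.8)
The plan is to establish all three statements at once by an associated-graded argument --- the standard route to structure theorems of this type, and the one used in \cite{KW3}. The complex $C(\gg,f,k)$ carries the charge $\mathbb{Z}$-grading for which $d_0$ has degree $-1$; I would equip it with an additional, bounded $\mathbb{Z}$-grading by assigning to each strong generator $X^\alpha,\varphi_\alpha,\varphi^\alpha,\Phi_\alpha$ a degree which is a fixed affine function of its $\mathrm{ad}(x)$-eigenvalue, chosen so that (i) the grading is bounded on each conformal-weight subspace of $C(\gg,f,k)$ (possible because $\gg$ is finite-dimensional), and (ii) the corresponding associated graded differential $\mathrm{gr}(d_0)$ is the ``leading term'' in which each $X^\alpha$ with $q^\alpha\notin\gg^f$ is paired against the ghost field it cancels, the $\gg_1$-ghosts are paired against the functional $(f|\,\cdot\,)$, and the $\gg_{\frac{1}{2}}$-ghosts are paired against the neutral fields $\Phi_\alpha$ (whose two-point function is the nondegenerate form $\langle a,b\rangle = (f|[a,b])$). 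The associated decreasing filtration is then regular, so its spectral sequence converges to $H(C(\gg,f,k),d_0)$.

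The heart of the proof is the computation that $\mathrm{gr}(d_0)$ is a Koszul-type differential whose homology is \emph{concentrated in charge $0$} and, as a vertex algebra, is strongly and \emph{freely} generated by fields $\overline{K}^\alpha$ in bijection with a basis $\{q^\alpha\}$ of $\gg^f = \bigoplus_{j\leq 0}\gg^f_j$; the symbol $\overline{K}^\alpha$ is represented by $X^\alpha$ together with ghost corrections, and for $q^\alpha\in\gg^f_{-j}$ it has conformal weight $1+j$, because $L$ contains the summand $\partial x$ (which shifts the weight of $X^\alpha$ by minus its $\mathrm{ad}(x)$-eigenvalue). Since the $E_1$-page is concentrated in a single charge degree, the spectral sequence collapses there; this yields statement (3), and in particular the identification $\cW^k(\gg,f) = H_0(C(\gg,f,k),d_0)$.

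It remains to lift the associated-graded statements to (1) and (2). For (1): construct genuine $d_0$-closed representatives $K^\alpha$ by descending induction on the auxiliary degree --- set $K^\alpha = X^\alpha + (\text{terms of strictly lower auxiliary degree})$ and solve $d_0 K^\alpha = 0$ order by order, the obstruction to each step lying in a homology group in nonzero charge and hence vanishing by the preceding paragraph. For $q^\alpha\in\gg_0$ one may take the explicit elements $I^\alpha$ of \eqref{eq:Ialpha}, with conformal weight $1+j$ as above. For (2): strong generation of $H_0$ by the classes of the $K^\alpha$ is immediate since their symbols generate $E_\infty$; and \emph{freeness} --- linear independence of the normally ordered monomials in the $K^\alpha$ and their derivatives --- follows by a PBW/reconstruction argument that matches each such monomial with its symbol in $E_1$, where freeness has already been established.

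The step I expect to be the main obstacle is the Koszul homology computation for $\mathrm{gr}(d_0)$ in the presence of a non-even good grading: one has to verify carefully that the part of $\mathrm{gr}(d_0)$ involving $\gg_{\frac{1}{2}}$ and the neutral fields $\Phi_\alpha$ is genuinely acyclic and contributes nothing beyond the $\gg^f_0$-generators, and that the chosen auxiliary grading really does split off all the cancelling $(X^\alpha,\varphi^\alpha)$-pairs. A clean way to organize this is the reduction in two stages used in \cite{KW3}: first eliminate $\gg_{\geq 1}$ (the purely even Kac-Roan-Wakimoto situation), then $\gg_{\frac{1}{2}}$. A secondary, more routine, technical point is the PBW-type freeness argument underlying (2), which needs a compatible increasing filtration on the vertex algebra $C(\gg,f,k)$ and careful bookkeeping of symbols.
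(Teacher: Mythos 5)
This theorem is not proved in the paper at all: it is imported verbatim as \cite[Thm.~4.1]{KW3}, so there is no internal proof to compare your argument against. Your sketch is, in essence, the argument of the cited source (a good auxiliary filtration on $C(\gg,f,k)$ whose associated graded differential is Koszul-type, concentration of the $E_1$-page in charge zero forcing collapse and the vanishing in (3), followed by an order-by-order lift to produce the $d_0$-closed fields $K^\alpha$ and a PBW/symbol argument for free strong generation), so it matches the standard proof rather than anything done in this paper.
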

One can also consider the reduction of a module, i.e., for a $V^k(\gg)$-module $M$, the homology of the complex $H(M \otimes F(\gg_+) \otimes F(\gg_{\frac{1}{2}}), d_0)$ is a $\cW^k(\gg, f)$-module that we denote by $H_{k, f}(M)$, that is 
\begin{equation}\label{Wmodule}
H_{k, f}(M) := H(M \otimes F(\gg_+) \otimes F(\gg_{\frac{1}{2}}), d_0).
\end{equation}

\subsection{Translation of $\cW$-algebras}

We summarize a very useful result of Tomoyuki Arakawa, Boris Feigin and one of us \cite{ACF}. 
For this we consider two affine vertex algebras of type $\gg$, the first one, $V$, of level $k$ and the second one, $L$,  of level $\ell$. Denote the generators of the first one by $X^\alpha$ and the generators of the second one by $Y^\alpha$. As complexes we take the previous one $C(\gg, f, k) = V  \otimes F(\gg_+) \otimes F(\gg_{\frac{1}{2}})$ and also  $C(\gg, f, k, \ell) = V \otimes L \otimes F(\gg_+) \otimes F(\gg_{\frac{1}{2}})$. In addition to the differential $d_0$ of last subsection, we also define the differential $d'_0$ as the zero-mode of the field
\[
d'(z) = d(z) +  \sum_{\alpha \in S_+} (-1)^{|\alpha|} :Y^\alpha \varphi^\alpha: .
\]
Then the homology with respect to $d_0$ is just the quantum Hamiltonian reduction on the $V$ subalgebra of $V \otimes L$, while the homology with differential $d'_0$ is the reduction with respect to the diagonal action at level $k+\ell$. We now restrict to $\ell$ being a positive integer and consider  $L = L_\ell(\gg)$ the simple affine vertex algebra of $\gg$ at level $\ell$. The main result of \cite{ACF} is
\begin{thm}\label{Urod}\cite{ACF}
As vertex algebras
\[
 H_0(C(\gg, f, k), d_0) \otimes L \cong H_0(C(\gg, f, k, \ell), d'_0).
\]
\end{thm}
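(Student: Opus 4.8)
The plan is to construct an explicit isomorphism at the level of the underlying vertex algebras and then verify it descends to homology. The starting observation is that $L = L_\ell(\gg)$, being the simple affine vertex algebra at a positive integer level $\ell$, is \emph{unitary} (or at least admits the key property that the reduction functor behaves well on it); crucially, one first checks that the $d_0$-homology of $C(\gg, f, k, \ell) = V \otimes L \otimes F(\gg_+) \otimes F(\gg_{1/2})$, taken with respect to the differential that only involves the $X^\alpha$ generators of $V$, is simply $H_0(C(\gg, f, k), d_0) \otimes L$. This is because $d_0$ acts trivially on the $L$ factor, so the homology of a tensor product with $L$ is the homology of the first factor tensored with $L$; one needs the Künneth-type statement here together with part (3) of Theorem \ref{thm:kacwakimoto} to see that the homology is concentrated in charge degree $0$. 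First I would record this as a lemma.

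Next I would exhibit the change-of-differential. Introduce the intermediate automorphism of the complex $C(\gg, f, k, \ell)$ induced by the substitution $X^\alpha \mapsto X^\alpha + Y^\alpha$ for $\alpha \in S_+$ (more precisely, the shift that replaces the $V$-current by the diagonal current when restricted to the positively-graded pieces entering $d$). Under this substitution the differential $d_0$ is carried precisely to $d'_0 = d_0 + \sum_{\alpha \in S_+} (-1)^{|\alpha|} :Y^\alpha \varphi^\alpha:$, since the extra term in $d'$ is exactly what is produced by translating the term $\sum_{\alpha} (-1)^{|\alpha|} :X^\alpha \varphi^\alpha:$ of $d$. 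The remaining terms of $d$ (the structure-constant cubic term, the linear $(f|q^\alpha)\varphi^\alpha$ term, and the $:\varphi^\alpha \Phi_\alpha:$ term) are unaffected because they either do not involve the affine currents or, in the cubic term, the shift by $Y^\alpha$ contributes a term that vanishes or is absorbed using the OPE relations for $L$ together with the Jacobi identity for the structure constants. The hard part will be checking this last point carefully: one must verify that the substitution is a genuine vertex algebra automorphism (it is, being built from a shift by fields with trivial mutual OPEs across the tensor factors except for the controlled $V$-$L$ interaction, which is why $\ell$ enters as $k + \ell$ in the diagonal reduction), and that it intertwines $d_0$ with $d'_0$ as fields, not just as zero-modes.

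Granting the automorphism $\Theta \colon C(\gg, f, k, \ell) \to C(\gg, f, k, \ell)$ with $\Theta \circ d_0 = d'_0 \circ \Theta$, it induces an isomorphism on homology
\[
H_0(C(\gg, f, k, \ell), d_0) \xrightarrow{\ \sim\ } H_0(C(\gg, f, k, \ell), d'_0).
\]
Combining with the lemma identifying the left-hand side with $H_0(C(\gg, f, k), d_0) \otimes L$ gives the claimed isomorphism. I expect the genuine obstacle to be purely bookkeeping: confirming the vanishing of the charge-nonzero homology for the $d_0$ computation on the enlarged complex (so that the Künneth argument produces exactly $H_0 \otimes L$ with no correction terms), and tracking signs and parities through the cubic term of $d$ under the shift $X^\alpha \mapsto X^\alpha + Y^\alpha$. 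Both are routine given Theorem \ref{thm:kacwakimoto} and the explicit OPE for $V^k(\gg)$ recorded above, and indeed this is precisely the content established in \cite{ACF}, which we invoke.
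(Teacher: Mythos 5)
First, note that this paper does not prove Theorem \ref{Urod} at all: it is quoted verbatim from \cite{ACF}, so a genuine proof would have to reproduce the main theorem of that paper. Your sketch does not do this, because its central step fails. There is no automorphism $\Theta$ of $C(\gg,f,k,\ell)$ ``induced by the substitution $X^\alpha \mapsto X^\alpha + Y^\alpha$'': the fields $X^\alpha$ generate $V^k(\gg)$, whose OPEs carry the central term $k(q^\alpha|q^\beta)$, while the diagonal fields $X^\alpha+Y^\alpha$ satisfy the OPEs of $V^{k+\ell}(\gg)$ with central term $(k+\ell)(q^\alpha|q^\beta)$; hence the assignment cannot preserve operator products, and there is likewise no consistent, invertible extension of it to the remaining generators $Y^\alpha$ of the factor $L$. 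What does exist is the diagonal embedding $V^{k+\ell}(\gg)\hookrightarrow V^k(\gg)\otimes L$, which is a homomorphism from a \emph{different} vertex algebra and is exactly what defines $d'$; it gives no isomorphism of complexes $(C,d_0)\cong(C,d'_0)$. Your own hedge about the cubic ghost term being ``absorbed'' cannot be repaired, because the obstruction is already present in the quadratic OPE of the currents. The concluding appeal to \cite{ACF} (``which we invoke'') is circular, since the statement being invoked is the one to be proved.

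A useful sanity check that something must be missing: nowhere do you use that $\ell$ is a positive integer or that $L=L_\ell(\gg)$ is simple and integrable, so your argument, if correct, would yield $H_0(C(\gg,f,k,\ell),d'_0)\cong H_0(C(\gg,f,k),d_0)\otimes L$ for an arbitrary module $L$ at arbitrary level. This is false in that generality: for generic $\ell$ and $L=V^\ell(\gg)$ with $f$ principal, the diagonal reduction of $V^k(\gg)\otimes V^\ell(\gg)$ decomposes into $\cW^{k+\ell}(\gg,f)$-modules tensored with coset multiplicity spaces and is not of the form $\cW^k(\gg,f)\otimes V^\ell(\gg)$. The integrability of $L$ is the essential input in \cite{ACF}, whose proof proceeds by genuinely different (filtration/deformation) techniques rather than by conjugating the differential, and the resulting identification is moreover not compatible with the naive conformal vectors, as the remark following the theorem in this paper already signals. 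Your K\"unneth-type first step is fine, but everything difficult happens in the comparison of the two differentials, and that is precisely where the proposal breaks down.
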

The conformal vector of the right-hand side is not the sum of the standard conformal vectors of the left-hand side. 
We will use this theorem for the case $L = L_m(\gs\gp_{2n})$ for $m \in \mathbb Z_{>0}$, $f$ a minimal nilpotent, and $k = -1/2$. In this case, $H_0(C(\gg, f, k), d_0) = \mathbb C$ is trivial, and we can use the theorem to get a nice decomposition of $L_m(\gs\gp_{2n})$, see \eqref{eq:decompC2}. This then allows us to prove rationality of $\text{Com}( L_m(\gs\gp_{2n-2}),  L_m(\gs\gp_{2n}))$.

\begin{remark}\label{remark:Urodembedding}
Let $ \ga = \gg^f \cap \gg_0$. Then the affine subalgebras of $\cW^{k}(\gg, f)$ and $\cW^{k+\ell}(\gg, f)$ are $V^s(\ga)$ and $V^t(\ga)$ for some levels $s, t$ depending on $k, k+\ell$. Moreover, $L$ has an action of $L_{t-s}(\ga)$. 
Recall that $I^\alpha$ is nontrivial in $H_0(C(\gg, f, k), d_0)$ and $I^\alpha +Y^\alpha$ is nontrivial in  $H_0(C(\gg, f, k, \ell), d'_0)$.
The homology classes $[I^\alpha+Y^\alpha]', [I^\alpha]$ of these fields  generate homomorphic images $N^s(\ga)$ of  $V^s(\ga)$ and $N^t(\ga)$ of $V^t(\ga)$ inside  $H_0(C(\gg, f, k, d_0) \otimes L$  and  $H_0(C(\gg, f, k, \ell), d'_0)$. Consider the cases:
\begin{itemize} 
\item If $V = V^k(\gg)$, then both homologies are subalgebras of $C(\gg, f, k, \ell)$ (see the discussion before \cite[Lem. 4.1]{ACF}), so in this instance the subalgebra $V^t(\ga)$ of  $\cW^{k+\ell}(\gg, f) \subseteq H_0(C(\gg, f, k, \ell), d'_0)$ is the diagonal subalgebra of  $V^s(\ga) \otimes L_{t-s}(\ga) \subseteq H_0(C(\gg, f, k), d_0) \otimes L$ generated by $I^\alpha +Y^\alpha$. 
\item 
The embedding $V^t(\ga) \subseteq V^s(\ga) \otimes L_{t-s}(\ga) \subseteq \cW^k(\gg, f) \otimes L$ induces the embedding $N^t(\ga) \subseteq N^s(\ga) \otimes L_{t-s}(\ga) \subseteq H_0(C(\gg, f, k), d_0) \otimes L$, mapping  $[I^\alpha +Y^\alpha]'$ to $[I^\alpha] +Y^\alpha$.
\end{itemize}
\end{remark}

\subsection{Singular Vectors} \label{sec:singularvectors} 

In this subsection, we compute some conformal weights of singular vectors in principal $\cW$-algebras at nondegenerate admissible levels. We use 
\begin{lemma}\label{lem:sing}\cite[Lem. 3.3]{CL3}
Let $\gg$ be a simple Lie algebra, $\bar\rho, \bar \rho^\vee$ its Weyl vector and Weyl covector, and set
$\bar\alpha =  -\theta$  if $(v, r^\vee) =1$  and $\bar \alpha = -\theta_s$ if $(v, r^\vee) =r^\vee$.
Here $r^\vee$ is the lacity of $\gg$ and $\theta, \theta_s$ are the highest root and highest short root. Set $\bar \lambda  =  n \frac{u}{v} \bar \alpha^\vee  - (\bar \rho, \bar \alpha^\vee)\bar\alpha$

Denote by Sing$(V)$ the weight of the singular vector of $V$ of lowest conformal weight. Then for $k = -h^\vee +\frac{u}{v}$ of (co)principal admissible weight, singular vectors of affine and principal $\cW$-algebra have weight
\begin{enumerate}
\item Sing$(V^k(\gg)) =  \frac{v}{2u}\bar\lambda(\bar\lambda+2\bar\rho)$,
\item Sing$(\cW^k(\gg)) =  \frac{v}{2u}\bar\lambda(\bar\lambda+2\bar\rho)- \bar\lambda\bar\rho^\vee$ for $k$ a nondegenerate admissible level.
\end{enumerate}
\end{lemma}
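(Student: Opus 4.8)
The plan is to build both formulas from three standard ingredients: the Kac--Kazhdan determinant formula (equivalently, the classification of singular vectors in admissible Verma modules over $\widehat{\gg}$), the Sugawara expression for the conformal weight of a $\gg$-primary state, and the effect of principal Drinfeld--Sokolov reduction on conformal weights. The deeper inputs --- the structure of the maximal submodule of an admissible vacuum module, and the exactness and non-vanishing of principal reduction at a nondegenerate admissible level --- are theorems of Kac--Kazhdan, Kac--Wakimoto and Arakawa that I would simply cite.

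\textbf{Part (1).} A singular vector of $V^k(\gg)$ is in particular annihilated by $\mathfrak n_+\otimes 1$ and by $\gg\otimes t\mathbb{C}[t]$, so if its $\gh$-weight is $\mu$, then Sugawara forces its conformal weight to be $\frac{(\mu,\mu+2\bar\rho)}{2(k+h^\vee)}=\frac{v}{2u}(\mu,\mu+2\bar\rho)$, since only the zero-mode part of $L_{\text{sug}}$ acts nontrivially. It remains to pin down the $\gh$-weight $\bar\lambda$ of the singular vector of \emph{lowest} conformal weight. By Kac--Kazhdan the singular vectors of the affine Verma module $M(k\Lambda_0)$ are governed by the positive real roots $\beta=\bar\gamma+s\delta$ with $2(k\Lambda_0+\widehat\rho,\beta)=m(\beta,\beta)$ for some $m\in\mathbb{Z}_{>0}$, the attached singular vector lying in conformal weight $ms$ (its $\alpha_0$-height). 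Keeping only the $\beta$ with $\bar\gamma\in-\Delta_+$ --- those descending to a singular vector of the parabolic quotient $V^k(\gg)$ --- and using $k+h^\vee=u/v$ with $\gcd(u,v)=1$, a short computation with root lengths shows that $ms$ is smallest for $\bar\gamma=-\theta$ when $\gcd(v,r^\vee)=1$ (then $s=v$, $m=u-h^\vee+1$) and for $\bar\gamma=-\theta_s$ when $r^\vee\mid v$ (then $s=v/r^\vee$, $m=u-(\bar\rho,\theta_s^\vee)$). The resulting $\gh$-weight $-m\bar\gamma$ is exactly the weight $\bar\lambda$ recorded in the statement, and the admissibility bound ($u\ge h^\vee$ in the principal case, $u\ge h$ in the coprincipal case) is precisely what makes $\bar\lambda$ dominant, so that this singular vector genuinely exists. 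Substituting $\mu=\bar\lambda$ into the Sugawara value gives (1).

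\textbf{Part (2).} Let $H$ denote the principal reduction functor $M\mapsto H_{k,f_{\text{prin}}}(M)$ of \eqref{Wmodule}, so that $\cW^k(\gg)=H(V^k(\gg))$ with conformal vector $L=L_{\text{sug}}+\partial x+L_{\text{ch}}$, where $x=\bar\rho^\vee$ is the semisimple element of the principal $\gs\gl_2$-triple (the $\gg_{1/2}$-term is absent here). At admissible $k$ the functor $H$ is exact on category $\cO$ and $H(L_k(\gg))=\cW_k(\gg)$, which is nonzero precisely because $k$ is \emph{nondegenerate} admissible (Arakawa). Applying $H$ to $0\to N\to V^k(\gg)\to L_k(\gg)\to 0$, with $N$ the maximal submodule, identifies $H(N)$ with the maximal submodule of $\cW^k(\gg)$, and $H(N)\ne 0$ since $\cW^k(\gg)\ne\cW_k(\gg)$. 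Now $N$ is generated over $\widehat{\gg}$ by the singular vector $v$ of part (1) (of $\gh$-weight $\bar\lambda$, conformal weight $\Delta:=\frac{v}{2u}(\bar\lambda,\bar\lambda+2\bar\rho)$), hence is a quotient of the affine Verma module $M(\widehat\mu)$ on $v$; therefore $H(N)$ is a quotient of $H(M(\widehat\mu))$, which (being $\ne 0$ and generated by the image of $v$) is a highest-weight $\cW^k(\gg)$-module whose top is the image of $v$. That image has conformal weight $\Delta-(\bar\lambda,\bar\rho^\vee)$: $L_{\text{sug}}$ contributes $\Delta$, the term $\partial x$ contributes $-(\bar\lambda,\bar\rho^\vee)$ (the zero mode of $\bar\rho^\vee$ acting by $(\bar\lambda,\bar\rho^\vee)$ on the weight space of $v$), and $L_{\text{ch}}$ contributes $0$ on the ghost vacuum, as forced by normalizing the $\cW^k(\gg)$-vacuum to conformal weight $0$. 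Since $H(N)$ is a nonzero highest-weight module, its minimal conformal weight is realized at this top, so $\text{Sing}(\cW^k(\gg))=\Delta-(\bar\lambda,\bar\rho^\vee)=\frac{v}{2u}\bar\lambda(\bar\lambda+2\bar\rho)-\bar\lambda\bar\rho^\vee$.

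\textbf{Main obstacle.} The delicate step is the enumeration in part (1): showing that among all real affine roots it is $-\theta$ (resp.\ $-\theta_s$) that minimizes $ms$ in the two admissibility regimes, which requires keeping careful track of long versus short root lengths and of the normalization of the bilinear form so that the outcome matches the stated $\bar\lambda$. In part (2) the only point needing care is that $H(N)\ne 0$ and that $H$ carries the generator of $N$ to the generator of the maximal submodule of $\cW^k(\gg)$; this follows formally from exactness of $H$ together with the fact that the reduction of an affine Verma module is a highest-weight $\cW$-module generated by the image of its highest-weight vector, so no independent estimate on deeper weights of $N$ is required.
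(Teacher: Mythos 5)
The paper itself gives no proof of this lemma: it is quoted verbatim from \cite[Lem.\ 3.3]{CL3}, so the only in-paper "proof" is that citation. Your overall architecture (Sugawara value of the generating singular vector at admissible level, then the conformal-weight shift by the $\partial x$ term under principal reduction) is the standard route and matches what the cited reference relies on, and your consistency checks of $m$, $s$ and the Casimir value do reproduce the stated formulas. However, two steps have genuine gaps as written.

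In part (1), Kac--Kazhdan controls singular vectors of the Verma module $M(k\Lambda_0)$ and, for subquotients, only gives a \emph{necessary} linkage condition via chains of reflections. Your enumeration (a) minimizes $ms$ only over single-step weights, so it does not rule out a singular vector of $V^k(\gg)$ sitting at a chain-composite weight of smaller conformal degree (chains may pass through $s=0$ links that change only the finite weight before descending), and (b) does not show that the single-step singular vector of $M(k\Lambda_0)$ has nonzero image in the parabolic quotient $V^k(\gg)$; dominance of $\bar\lambda$ is not a justification for survival in the quotient. Both points are precisely what the Kac--Wakimoto theory of admissible weights \cite{KW1} supplies: for admissible $k\Lambda_0$ the maximal proper submodule of $V^k(\gg)$ is generated by the single singular vector attached to the extra simple root $v\delta-\theta$ (resp.\ $(v/r^\vee)\delta-\theta_s$) of the integral root subsystem, which gives existence and minimality simultaneously; alternatively one can compare $\mathrm{ch}\,V^k(\gg)$ with the Kac--Wakimoto character of $L_k(\gg)$ and read off the first discrepancy. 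With that input your computation is correct.

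In part (2), you deduce $H(N)\neq 0$ from ``$\cW^k(\gg)\neq\cW_k(\gg)$,'' but the non-simplicity of $\cW^k(\gg)$ at these levels is established nowhere in your argument and is essentially the content of the statement being proved, so as written this is circular. You need an independent input: either Arakawa's nonvanishing criterion applied to the simple top $L(\widehat\mu)$ of $N$, or the Frenkel--Kac--Wakimoto character formula for $H(L_k(\gg))$ (proved by Arakawa), which shows that $\mathrm{ch}\,H(L_k(\gg))$ differs from $\mathrm{ch}\,\cW^k(\gg)$ and in fact first differs exactly in weight $\Delta-(\bar\lambda,\bar\rho^\vee)$. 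A further small caveat: the exactness on category $\cO$ and simplicity statements you invoke are Arakawa's theorems for the ``minus'' reduction, whereas \eqref{Wmodule} is the ``plus'' reduction; since both compute $\cW^k(\gg)$ on the vacuum module this is harmless, but it should be said. The remaining steps --- that $H(M(\widehat\mu))$ is a highest-weight $\cW^k(\gg)$-module with top the class of the singular vector tensored with the ghost vacuum, that any nonzero quotient retains this top, and that $L_0=L_{\mathrm{sug},0}-x_0+L_{\mathrm{ch},0}$ produces the shift $-(\bar\lambda,\bar\rho^\vee)$ --- are fine.
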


\begin{cor} \label{cor:singularvector}
Denote by Sing$(V)$ the weight of the singular vector of $V$ of lowest conformal weight. Then for $k = -h^\vee +\frac{u}{v}$ of (co)principal admissible weight, we have 
\begin{enumerate}
\item Sing$(V^k(\gs\gp_{2n})) = v(u-n)$ for $v$ odd, and  Sing$(V^k(\gs\gp_{2n})) = \frac{v}{2}(u-2n+1)$ for $v$ even,
\item Sing$(V^k(\gs\go_{2n+1})) = v(u-2n+2)$ for $v$ odd, and Sing$(V^k(\gs\go_{2n+1})) = \frac{v}{2}(u-2n+1)$ for $v$ even,
\item Sing$(W^k(\gs\gp_{2n})) = (v-2n+1)(u-n)$ for $v$ odd, and Sing$(W^k(\gs\gp_{2n})) = (\frac{v}{2}-2n+2)(u-2n+1)$ for $v$ even and for $k$ a nondegenerate admissible level,
\item Sing$(W^k(\gs\go_{2n+1})) = (v-2n+1)(u-2n+2)$ for $v$ odd, and Sing$(W^k(\gs\go_{2n+1})) = (\frac{v}{2}-n)(u-2n+1)$ for $v$ even and for $k$ a nondegenerate admissible level.
\end{enumerate}
\end{cor}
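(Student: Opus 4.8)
Since Lemma~\ref{lem:sing} (quoted from \cite{CL3}) already provides a closed formula for the conformal weight of the lowest singular vector of $V^k(\gg)$, and of $\cW^k(\gg)$ at a nondegenerate level, the plan is simply to specialize it to $\gg = \gs\gp_{2n}$ (type $C_n$, $h^\vee = n+1$) and $\gg = \gs\go_{2n+1}$ (type $B_n$, $h^\vee = 2n-1$). Both have lacity $r^\vee = 2$, so the alternative $(v, r^\vee) = 1$ versus $(v, r^\vee) = r^\vee$ in Lemma~\ref{lem:sing} becomes exactly ``$v$ odd'' (principal admissible) versus ``$v$ even'' (coprincipal admissible), which accounts for the two sub-cases appearing in every item.

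\textit{Step 1 (root data).} I would work throughout with the normalization $(\theta,\theta) = 2$ used in the definition of $\cW^k(\gg)$ and record, in that normalization, the ingredients of Lemma~\ref{lem:sing}: the highest root $\theta$ (which is long in both cases) and the highest short root $\theta_s$, the coroots, with $\theta_s^\vee = r^\vee\,\theta_s$ and $(\theta_s,\theta_s) = 2/r^\vee = 1$, the Weyl vector $\bar\rho$ and Weyl covector $\bar\rho^\vee$, and the pairings of $\bar\rho$ and $\bar\rho^\vee$ with $\theta, \theta^\vee, \theta_s, \theta_s^\vee$. For $C_n$ one has $(\bar\rho,\theta^\vee) = n$; the analogous constants for $B_n$, and the $v$-even quantities $(\bar\rho,\theta_s^\vee)$ and so on, are extracted the same way. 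These numbers are the only input.

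\textit{Step 2 (specialization).} For each of the two Lie algebras and each parity of $v$, set $\bar\alpha = -\theta$ (if $v$ odd) or $\bar\alpha = -\theta_s$ (if $v$ even), form $\bar\lambda$ from the formula of Lemma~\ref{lem:sing}, and substitute into $\mathrm{Sing}(V^k(\gg)) = \tfrac{v}{2u}\,\bar\lambda(\bar\lambda + 2\bar\rho)$; the denominator $u$ cancels against a factor of $u$ in $\bar\lambda(\bar\lambda + 2\bar\rho)$, and collecting terms yields $v(u-n)$, resp.\ $\tfrac v2(u-2n+1)$, resp.\ $v(u-2n+2)$, resp.\ $\tfrac v2(u-2n+1)$. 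For the principal $\cW$-algebras one applies Lemma~\ref{lem:sing}(2), legitimate precisely because the level is assumed nondegenerate, and subtracts the correction $\bar\lambda\,\bar\rho^\vee$, which is again linear in $u/v$; the differences collapse to the four stated products $(v-2n+1)(u-n)$, $(\tfrac v2 - 2n+2)(u-2n+1)$, $(v-2n+1)(u-2n+2)$, and $(\tfrac v2 - n)(u-2n+1)$. Two cheap consistency checks are worth running: at integer level $k$ (so $v=1$) the type-$C$ value $v(u-n) = k+1$ must equal the conformal weight of the explicit singular vector $e_{\theta}(-1)^{k+1}|0\rangle$, and the $B_n$ and $C_n$ formulas must agree under the exceptional isomorphism $\gs\go_5 \cong \gs\gp_4$.

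\textit{Main obstacle.} There is no conceptual content beyond Lemma~\ref{lem:sing}; the only genuine risk is bookkeeping in the non-simply-laced setting — using the same form normalization as Lemma~\ref{lem:sing}, inserting the lacity factor $r^\vee = 2$ correctly when replacing $\theta_s$ by $\theta_s^\vee$ in the coprincipal case, and keeping the pairing against the covector $\bar\rho^\vee$ distinct from the bilinear form used elsewhere. One should also keep in mind that the rank-one members $\gs\gp_2 \cong \gs\go_3 \cong \gs\gl_2$ are simply-laced (so $r^\vee = 1$ and the above case split degenerates); the formulas are intended for $n \geq 2$, with the rank-one case being the type-$A$ instance of Lemma~\ref{lem:sing}.
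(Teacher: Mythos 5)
Your proposal is correct and is essentially the paper's own argument: the paper likewise proves the corollary by writing the root data of $\gs\go_{2n+1}$ and $\gs\gp_{2n}$ in the $\epsilon$-basis normalized so that $(\theta,\theta)=2$, computing the pairings of $\bar\rho$ and $\bar\rho^\vee$ with $\theta$ and $\theta_s$ in the four cases ($v$ odd/even), forming $\bar\lambda$, and inserting into Lemma \ref{lem:sing}; your sample constants (e.g.\ $(\bar\rho,\theta^\vee)=n$ for $C_n$ and the cancellation of $u$) match the paper's computation. Your added consistency checks and the caveat about the rank-one/simply-laced degeneration are extra but harmless.
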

\begin{proof}  Consider the lattice $\mathbb Z^n$ with orthonormal basis $\epsilon_1, \dots, \epsilon_n$. We embed root and coroots in rescalings of this lattice in the standard way, e.g. the simple positive roots of $\gs\go_{2n+1}$ are $\epsilon_1 -\epsilon_2, \dots, \epsilon_{n-1}-\epsilon_n, \epsilon_n$, and for $\gs\gp_{2n}$ they are $\frac{\epsilon_1 -\epsilon_2}{\sqrt{2}}, \dots, \frac{\epsilon_{n-1}-\epsilon_n}{\sqrt{2}}, \sqrt{2}\epsilon_n$. Especially,
\begin{enumerate}
\item $\gg = \gs\go_{2n+1}$ and $v$ odd. Then $\theta = \theta^\vee = \epsilon_1 + \epsilon_2$, $\rho^\vee= n\epsilon_1 + (n-1)\epsilon_2 + \dots + \epsilon_n$, $\rho = \frac{1}{2}((2n-1)\epsilon_1+ (2n-3)\epsilon_2 + \dots + \epsilon_n)$ and so $\rho \theta^\vee = 2n-2$ and $\rho^\vee \theta = 2n-1$. 
It follows that $\bar \lambda = (u-2n+2)\theta$. 
\item  $\gg = \gs\go_{2n+1}$ and $v$ even. Then $\theta_s = \epsilon_1$ and $\theta_s^\vee = 2\epsilon_1$. Thus $\rho \theta_s^\vee = 2n-1$ and $\theta_s \rho^\vee = n$. It follows that $\bar \lambda = (u-2n+1)\theta_s$.
\item $\gg=\gs\gp_{2n}$ and $v$ odd. Then $\theta = \sqrt{2} \epsilon_1$ and $\rho^\vee  = \frac{1}{\sqrt{2}}((2n-1)\epsilon_1+ (2n-3)\epsilon_2 + \dots + \epsilon_n)$, $\rho= \frac{1}{\sqrt{2}}(n\epsilon_1 + (n-1)\epsilon_2 + \dots + \epsilon_n)$. 
Thus $\rho \theta^\vee = n$ and $\theta\rho^\vee = 2n-1$ and $\bar\lambda = (u-n)\theta$. 
\item $\gg=\gs\gp_{2n}$ and $v$ even. Then $\theta_s = \frac{1}{\sqrt{2}}(\epsilon_1+ \epsilon_2)$ and $ \theta_s^\vee = {\sqrt{2}}(\epsilon_1+ \epsilon_2)$. Thus $\rho\theta_s^\vee = 2n-1$ and $\rho^\vee \theta_s = 2n-2$ and $\bar \lambda = (u-2n+1)\theta_s$. 
\end{enumerate}
Inserting in the formula of Lemma \ref{lem:sing} gives the claim.
\end{proof}
\begin{remark}\label{remark: embedding}
Let $1\leq m <n$. Then there is an embedding $\iota :  \gs\gp_{2n-2m}  \rightarrow \gs\gp_{2n}$ of $\gs\gp_{2n-2m}$ in $\gs\gp_{2n}$ sending the simple roots of $\gs\gp_{2n-2m}$ to the  simple roots $\frac{\epsilon_{m+1} -\epsilon_{m+2}}{\sqrt{2}}, \dots, \frac{\epsilon_{n-1}-\epsilon_n}{\sqrt{2}}, \sqrt{2}\epsilon_n$ of $\gs\gp_{2n-2m}$. Denote by $X^x, Y^y$ the fields of $V^k(\gs\gp_{2n-2m})$ and $V^k(\gs\gp_{2n})$ corresponding to the elements $x \in \gs\gp_{2n-2m}, y \in \gs\gp_{2n}$. Then $\iota$ induces an embedding of $V^k(\gs\gp_{2n-2m})$ in $V^k(\gs\gp_{2n})$ sending $X^x$ to $Y^{\iota(x)}$ for $x \in \gs\gp_{2n-2m}$. This embedding can be characterized via nilpotent elements as follows:
Let $f_i = e_{-\sqrt{2}\epsilon_i}$ and $\gg_i$  be the subspace of $\gg_{i-1}^{f_i}$ of $\sqrt{2}\epsilon_i$ weight zero  for $i = 1, \dots, m$ and $\gg_0 := \gs\gp_{2n}$ so that $\gg_i \cong \gs\gp_{2n-2i}$ and especially $\gg_m = \iota(\gs\gp_{2n-2m})$. 
\end{remark}

\subsection{Universal two-parameter even spin $\cW_{\infty}$-algebra}
 We briefly recall the universal two-parameter vertex algebra $\cW^{\text{ev}}(c,\lambda)$ of type $\cW(2,4,\dots)$, which was conjectured to exist in the physics literature \cite{CGKV}, and was constructed in \cite{KL}. It is defined over the polynomial ring $\mathbb{C}[c,\lambda]$ and is generated by a Virasoro field $L$ of central charge $c$, and a weight $4$ primary field $W^4$. In addition, it is strongly and freely generated by the fields $\{L, W^{2i}|\ i \geq 2\}$ where $W^{2i} = W^4_{(1)} W^{2i-2}$ for $i\geq 3$. 

$\cW^{\mathrm{ev}}(c,\lambda)$ is simple as a vertex algebra over $\mathbb{C}[c,\lambda]$, but there is a discrete family of prime ideals $I = (p(c,\lambda)) \subseteq \mathbb{C}[c,\lambda]$ for which the quotient 
$$\cW^{{\rm ev}, I}(c,\lambda) = \cW^{\mathrm{ev}}(c,\lambda)/ I \cdot \cW^{\mathrm{ev}}(c,\lambda),$$ is not simple as a vertex algebra over the ring $\mathbb{C}[c,\lambda] / I$. We denote by $\cW^{\mathrm{ev}}_I(c,\lambda)$ the simple quotient of $\cW^{{\rm ev}, I}(c,\lambda)$ by its maximal proper graded ideal $\cI$. After a suitable localization, all one-parameter vertex algebras of type $\cW(2,4,\dots, 2N)$ for some $N$ satisfying some mild hypotheses, can be obtained as quotients of $\cW^{\mathrm{ev}}(c,\lambda)$ in this way; see \cite[Thm. 2.1]{CKoL}. The distinct generators of such ideals arise as irreducible factors of Shapovalov determinants, and are in bijection with such one-parameter vertex algebras. 

We also consider $\cW^{\mathrm{ev},I}(c,\lambda)$ for maximal ideals $$I = (c- c_0, \lambda- \lambda_0),\qquad c_0, \lambda_0\in \mathbb{C}.$$
Then $\cW^{\mathrm{ev},I}(c,\lambda)$ and its quotients are vertex algebras over $\mathbb{C}$. Given maximal ideals $I_0 = (c- c_0, \lambda- \lambda_0)$ and $I_1 = (c - c_1, \lambda - \lambda_1)$, let $\cW_0$ and $\cW_1$ be the simple quotients of $\cW^{\mathrm{ev},I_0}(c,\lambda)$ and $\cW^{\mathrm{ev},I_1}(c,\lambda)$. A criterion for $\cW_0$ and $\cW_1$ to be isomorphic is given by \cite[Thm. 8.1]{KL}; aside from a few degenerate cases, we must have $c_0 = c_1$ and $\lambda_0 = \lambda_1$. This implies that aside from the degenerate cases, all other coincidences among the simple quotients of one-parameter vertex algebras $\cW^{\mathrm{ev},I}(c,\lambda)$ and $\cW^{\mathrm{ev},J}(c,\lambda)$, correspond to intersection points of their truncation curves $V(I)$ and $V(J)$.

We record a slight improvement to the results of \cite{KL} for later use. By \cite[Thm. 8.1, case (4)]{KL}, for $c \neq 1, 25$, the Virasoro algebra $\text{Vir}^c$ is realized as a quotient of $\cW^{\text{ev}}(c,\lambda)$ by setting 
\begin{equation} \label{virasoroasquotient} \lambda = \pm \frac{1}{7 \sqrt{(c-25) (c-1)}},\end{equation} and taking the simple quotient. This occurs because the weight $4$ field becomes singular, and hence all higher descendants $W^{2k}$ for $k\geq 2$ also vanish in the simple quotient. It is straightforward to check that the truncation curve for $\cW^k(\gs\gp_{2n})$ given by  \cite[Eq. A.1]{KL}, in the case $n=1$ coincides with \eqref{virasoroasquotient}. It follows that \cite[Thm. 9.3]{KL}, which is stated in the range $2\leq n < m$, actually holds for $1\leq n < m$. Similarly,  \cite[Thm. 9.4]{KL} which is stated for $n\geq 2$, also holds for $n\geq 1$.

Next, the truncation curve for $\cW^k(\gs\go_{2n})^{\mathbb{Z}_2}$ given in \cite[Thm. 6.3]{KL} for $n\geq 3$, in fact holds for $n=2$ as well. In this case, 
$$\cW^k(\gs\go_{4})^{\mathbb{Z}_2} \cong \big(\text{Vir}^c \otimes \text{Vir}^c\big)^{\mathbb{Z}_2},\qquad c = -\frac{(1 + 2 k) (4 + 3 k)}{2 + k} ,$$ where $\mathbb{Z}_2$ acts by permutation. This truncation curve was computed in \cite{MPS} and is easily seen to coincide with the specialization of the curve for $\cW^k(\gs\go_{2n})^{\mathbb{Z}_2}$ to the case $n=2$. Therefore \cite[Thm. 9.1]{KL}, which was stated in the range $3\leq n < m$, actually holds for $2\leq n < m$. Similarly, \cite[Thm. 9.4]{KL} holds for all $n\geq 1$ and $m\geq 2$.

\subsection{Extensions of rational vertex operator algebras}
An extension of a lisse ($C_2$-cofinite) vertex algebra is also lisse \cite[Prop. 5.2]{ABD}. We also need a general result that says that extensions of a rational vertex algebra are rational as well. 
One such statement is \cite[Thm. 3.5]{HKL}. One assumption, however, is a positivity assumption on conformal weights of modules, and this assumption is not satisfied in most of our cases of interest. Another such statement is \cite[Cor. 1.1]{CKM2}, which however only applies to $\mathbb Z$-graded vertex algebra extensions. We need to consider $\frac{1}{2}\mathbb Z$-graded vertex superalgebra extensions, and so we now ensure that the rationality result generalizes to this setting.

We first recall the main basic theorems of \cite{KO, HKL, CKL, CKM1, CKM2} using \cite[Section 2]{CY}. 
Let $V$ be a vertex operator algebra and $\cC = (\cC, \btimes, \one, \cA_{\bullet,\bullet,\bullet}, l_{\bullet}, r_{\bullet}, \cR_{\bullet,\bullet})$ be a category of $V$-modules with a natural vertex and braided tensor category structure in the sense of \cite{HLZ0}-\cite{HLZ8}. The tensor bifunctor is denoted by $\btimes$, the tensor identity is just the vertex operator algebra $V$ itself and will be denoted by $\one$. The associativity constraint, the left and right unit constraint and the braiding are denoted by $\cA, l, r, \cR$.

\begin{defn}\cite{KO}
An algebra is a triple $(A, \mu_A, \iota_A)$ with $A$ an object in $\cC$ and multiplication $\mu_A: A\btimes A \rightarrow A$ and an embedding of tensor unit $\iota_A: \one_{\cC} \rightarrow A$  that satisfy
\begin{itemize}
\item[(1)]Multiplication is associative: $\mu_A \circ (\id_A \btimes \mu_A) = \mu_A \circ (\mu_A \btimes \id_A)\circ \cA_{A,A,A}: A\btimes (A \btimes A) \rightarrow A$
\item[(2)]Multiplication is unital: $\mu_A \circ (\iota_A \btimes \id_A) = l_A: \one_{\cC}\btimes A \rightarrow A$ and $\mu_A \circ (\id_A \btimes \iota_A) = r_A: A \btimes \one_{\cC} \rightarrow A$
\end{itemize}
$(A, \mu_A, \iota_A)$ is a  commutative algebra if additionally
\begin{itemize}
\item[(3)] Multiplication is commutative: $\mu_A \circ \cR_{A,A} = \mu_A: A \btimes A \rightarrow A$.
\end{itemize}
\end{defn}
We will use the short-hand notation $A$ for an algebra $(A, \mu_A, \iota_A)$.
\begin{defn}\cite{KO}
Let $A$ be an algebra, and define $\cC_A$ to be the category of pairs $(X, \mu_X)$, where $X$ is an object in $\cC$ and $\mu_X: A\btimes X \rightarrow X$ is a morphism in $\cC$ subject to
\begin{itemize}
\item[(1)]Unit property: $l_X = \mu_X \circ (\iota_A \btimes \id_X): \one_{\cC}\btimes X \rightarrow X$
\item[(2)]Associativity: $\mu_X \circ (\id_A \btimes \mu_X) = \mu_X \circ (\mu_A \btimes \id_X)\circ \cA_{A, A, X}: A \btimes (A \btimes X) \rightarrow X$.
\end{itemize}
A morphism $f \in \Hom_{\cC_A}((X_1, \mu_{X_1}), (X_2, \mu_{X_2}))$ is a morphism $f \in \Hom_{\cC}(X_1, X_2)$ such that $\mu_{X_2}\circ (\id_A \btimes f) = f\circ \mu_{X_1}$.

When $A$ is commutative, define $\cC_A^{loc}$ to be the full subcategory of $\cC_A$ containing local objects: those $(X, \mu_X)$ such that $\mu_X \circ \cR_{X,A} \circ \cR_{A, X} = \mu_X$.
\end{defn}
There is an induction functor $\cF_A: \cC \rightarrow \cC_A$ that maps an object $X \in \cC$ to $(A \boxtimes X, \mu \boxtimes \text{Id}_X)$ and a morphism $\varphi : X \rightarrow Y$ to $\text{Id}_A \boxtimes \varphi : \cF(X) \rightarrow \cF(Y)$, see \cite{CKM1} for more details. 

Super commutative algebras are defined similarly in \cite{CKL}.
The structural categorical results are summarized in the following theorem.
\begin{thm}\label{sum1}
Let $\cC$ be a braided tensor category and let $A$ be a commutative algebra in $\cC$. Then the following results hold:
\begin{itemize}
\item[(1)] The category $\cC_A$  is a tensor category (\cite[Thm.~1.5]{KO}, \cite[Thm.~2.53]{CKM1}).
\item[(2)]  The subcategory $\cC_A^{loc}$  is a braided tensor category (\cite[Thm.~1.10]{KO}, \cite[Thm.~2.55]{CKM1}]).
\item[(3)]The induction functor $\cF_A: \cC \rightarrow \cC_A$ is monoidal  (\cite[Thm.~1.6]{KO}, \cite[Thm.~2.59]{CKM1}).
\item[(4)]The induction functor satisfies Frobenius reciprocity, that is, it is left adjoint to the forgetful functor $\cG_A$ from $\cC_A$ to $\cC$:
\begin{equation}\label{eqn:fro-rec}
\Hom_{\cC_A}(\cF_A(W), X) = \Hom_{\cC}(W, \cG_A(X))
\end{equation}
for objects $W$ in $\cC$ and $X$ in $\cC_A$ (see for example \cite[Thm.~1.6(2)]{KO},  \cite[Lem.~2.61]{CKM1}).
\item[(5)]Let $W$ be an object in $\cC$. Then $\cF_A(W)$ is in $\cC_A^{loc}$ if and only if the monodromy is trivial, that is $\cM_{A, W} : = \cR_{W,A}\circ \cR_{A,W} = \id_{A\btimes W}$ (\cite[Prop.~2.65]{CKM1}).
\item[(6)]Let $\cC^0$ be the full subcategory of objects in $\cC$ that induce to $\cC_A^{loc}$. Then the restriction of the induction functor $\cF_A: \cC^0 \rightarrow \cC_A^{loc}$ is a braided tensor functor (\cite[Thm.~2.67]{CKM1}).
\end{itemize}
\end{thm}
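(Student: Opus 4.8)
Since each clause of the statement carries its own citation, the theorem is really a recollection, and the plan is to indicate the constructions that make it work and to isolate where the actual labor sits. The single object on which everything rests is the \emph{relative tensor product} $\boxtimes_A$ on $\cC_A$: for $A$-modules $(X,\mu_X)$ and $(Y,\mu_Y)$ one would define $X\boxtimes_A Y$ as the coequalizer of the two natural morphisms $A\boxtimes X\boxtimes Y \rightrightarrows X\boxtimes Y$ assembled from $\mu_X$ and $\mu_Y$, one of the two being twisted by the braiding $\cR_{A,X}$ so that commutativity of $A$ makes the construction symmetric. The first thing I would verify --- and this is the main obstacle --- is that these coequalizers exist in $\cC$ and again carry an $A$-action, so that $\boxtimes_A$ lands in $\cC_A$; in the braided tensor categories coming from vertex operator algebras this is not formal, because $\cC$ need not be semisimple, but it follows from right-exactness of $\boxtimes$ together with the finiteness built into the Huang--Lepowsky--Zhang setup, and is precisely the analysis carried out in \cite{CKM1} (generalizing the semisimple treatment of \cite{KO}).

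Granting $\boxtimes_A$, part (1) is then obtained by transporting the associativity and unit constraints of $\cC$ through the defining coequalizers using their universal property, with $A$ itself serving as the unit; checking the pentagon and triangle identities is a diagram chase. For part (2) one restricts to $\cC_A^{loc}$, where by definition $\mu_X\circ\cM_{X,A}=\mu_X$ with $\cM$ the double braiding; this is exactly the condition needed for the braiding $\cR$ of $\cC$ to descend through the coequalizers, and I would then verify that the two hexagon axioms survive the quotient, giving the braided tensor structure.

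For parts (3)--(4) the induction functor $\cF_A(X)=A\boxtimes X$, with the $A$-action coming from $\mu_A$, is formally left adjoint to the forgetful functor $\cG_A$: the unit of the adjunction is $X\xrightarrow{l_X^{-1}}\one\boxtimes X\xrightarrow{\iota_A\boxtimes\id_X}A\boxtimes X$, and Frobenius reciprocity \eqref{eqn:fro-rec} is the adjunction bijection, which I would prove by writing down mutually inverse maps and checking naturality. Monoidality of $\cF_A$, i.e. $\cF_A(X)\boxtimes_A\cF_A(Y)\cong\cF_A(X\boxtimes Y)$ compatibly with the constraints, again falls out of the coequalizer presentation together with associativity of $\mu_A$. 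For part (5) I would compute directly that $\cF_A(W)$ satisfies the locality condition $\mu\circ\cR_{\cF_A(W),A}\circ\cR_{A,\cF_A(W)}=\mu$ if and only if $\cM_{A,W}=\id_{A\boxtimes W}$, using the hexagon axioms to rewrite the double braiding on $A\boxtimes(A\boxtimes W)$ in terms of $\mu_A$ and $\cM_{A,W}$; defining $\cC^0$ to be the full subcategory of $\cC$ where this monodromy vanishes and combining with (2) and the monoidality of (3) then gives that the restricted functor $\cC^0\to\cC_A^{loc}$ is a braided tensor functor, which is part (6).

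To summarize, essentially all of the theorem is formal once the relative tensor product is in hand, so in a self-contained treatment the hard part is not any of the listed items individually but the foundational step: showing that the colimits used to build $\boxtimes_A$ exist and behave well in the non-semisimple categories produced by the Huang--Lepowsky--Zhang theory. I would import that from \cite{CKM1} (with \cite{KO} as the semisimple prototype) rather than reprove it, and layer the formal category theory above on top.
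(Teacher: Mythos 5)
Your proposal is correct and is in essence what the paper does: Theorem \ref{sum1} is stated purely as a summary of known results, with each item justified by citation to \cite{KO} and \cite{CKM1}, exactly the references you defer to for the foundational construction of the relative tensor product. Your sketch of the coequalizer construction, the descent of the braiding on local modules, the adjunction giving Frobenius reciprocity, and the monodromy criterion accurately reflects the content of those cited results, so there is nothing to add beyond the citations the paper itself gives.
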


The next theorem tells us that we can use the categorical results to study extensions of vertex algebras. 
\begin{thm}\label{sum2}
Let $V$ be a vertex operator algebra, and let $\cC$ be a category of $V$-modules with a vertex tensor category structure in the sense of \cite{HLZ0}-\cite{HLZ8}. Then the following results hold:
\begin{itemize}
\item[(1)] A vertex operator algebra extension $V \subseteq A$ in $\cC$ is equivalent to a commutative associative algebra in the braided tensor category $\cC$ with trivial twist and injective unit (\cite[Thm.~3.2]{HKL}).
\item[(2)] The category of modules in $\cC$ for the extended vertex operator algebra $A$ is isomorphic to the category of local $\cC$-algebra modules $\cC_A^{loc}$ (\cite[Thm.~3.4]{HKL}).
\item[(3)] The results in (1) and (2) hold for a vertex operator superalgebra extension: The vertex operator superalgebra extension $V \subseteq A$ in $\cC$ such that $V$ is in the even subalgebra $A^0$ is equivalent to a commutative associative superalgebra in $\cC$ whose twist $\theta$ satisfies $\theta^2 = \id_A$. The category of generalized modules for the vertex operator superalgebra $A$ is isomorphic to the category of local $\cC$-superalgebra modules $\cC_A^{loc}$ (\cite[Thm.~3.13, 3.14]{CKL}).
\item[(4)] The isomorphism given in \cite[Thm.~3.4]{HKL} and \cite[Thm.~3.14]{CKL} between the category of modules in $\cC$ for the extended vertex operator (super)algebra $A$ and the category of local $\cC$-algebra modules $\cC_A^{loc}$ is an isomorphism of vertex tensor (super)categories (\cite[Thm.~3.65]{CKM1}).
\end{itemize}
\end{thm}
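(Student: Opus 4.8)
The plan is to assemble Theorem \ref{sum2} from the cited literature, the only genuine work being to confirm that the hypotheses in force here match those under which the quoted statements are proved. Throughout, $\cC$ carries exactly the vertex and braided tensor category structure of \cite{HLZ0}-\cite{HLZ8}, which is the setting of \cite{HKL, CKL, CKM1}, so no translation between formalisms is required; likewise Theorem \ref{sum1} supplies all the categorical scaffolding ($\cC_A$ a tensor category, $\cC_A^{loc}$ braided, induction $\cF_A$ monoidal and left adjoint to restriction, and the monodromy criterion for locality).

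For part (1), I would invoke \cite[Thm.~3.2]{HKL} directly: the vertex operator structure on an extension $V \subseteq A$ with $A \in \cC$ endows $A$ with a commutative associative algebra structure $(A, \mu_A, \iota_A)$ in $\cC$ whose unit is injective and whose twist is trivial (the latter encoding single-valuedness of the state--field correspondence on $A$), and conversely every such algebra object comes from a unique extension. For part (2), with that algebra object in hand, \cite[Thm.~3.4]{HKL} identifies the $A$-modules lying in $\cC$ with the objects of $\cC_A^{loc}$; here the locality condition $\mu_X \circ \cR_{X,A} \circ \cR_{A,X} = \mu_X$ is precisely the requirement that the $A$-action be mutually local with the vacuum, i.e.\ that $X$ be an honest $A$-module.

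For part (3) the plan is to run the same arguments in the $\mathbb{Z}_2$-graded setting of \cite{CKL}: a vertex operator superalgebra extension $V \subseteq A$ with $V \subseteq A^0$ corresponds to a commutative associative superalgebra object whose twist $\theta$ satisfies $\theta^2 = \id_A$ rather than $\theta = \id_A$, the extra sign coming from the fermionic parity on the odd part of $A$, and the module-category identification with $\cC_A^{loc}$ is \cite[Thm.~3.13,~3.14]{CKL}. The reason to cite \cite{CKL} rather than \cite{HKL} or \cite[Cor.~1.1]{CKM2} is that our extensions are only $\frac{1}{2}\mathbb{Z}$-graded vertex superalgebras, so one must carry the parity grading through every step and check that induction, Frobenius reciprocity, and the locality criterion are insensitive to half-integer conformal weights and to odd elements --- which they are, since none of those constructions uses the conformal grading. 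Finally, for part (4) I would appeal to \cite[Thm.~3.65]{CKM1}, which promotes the equivalence of abelian categories in (2) and (3) to an equivalence of (super) vertex tensor categories, so that tensor products, associativity, and braiding of $A$-modules may be computed inside $\cC_A^{loc}$. The main obstacle is bookkeeping rather than mathematics: fixing the parity and grading conventions so that ``trivial twist'' in the non-super case and ``$\theta^2 = \id$'' in the super case are applied to the correct objects; once the dictionary is pinned down, each assertion is a direct citation.
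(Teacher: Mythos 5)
Your proposal is correct and matches the paper's treatment: Theorem \ref{sum2} is a summary of results quoted verbatim from \cite{HKL}, \cite{CKL}, and \cite{CKM1}, and the paper offers no argument beyond these citations, exactly as you do. Your added remarks on why \cite{CKL} is needed for the $\frac{1}{2}\mathbb{Z}$-graded superalgebra case are consistent with the paper's later use of these results (e.g.\ in Proposition \ref{prop:rat}), so nothing further is required.
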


A tensor category is called a fusion category if it is semisimple with finitely many inequivalent simple objects and every object is rigid. In particular in that case there is a trace and thus a notion of dimension of objects. The following theorem gives conditions under which a vertex algebra extension has a semisimple representation category. 
\begin{thm}\cite[Theorem 5.12]{CKM2} \label{thm:dim}
 Suppose $\cU$ and $\cW$ are braided fusion categories of modules for simple self-contragredient vertex operator algebras $U$ and $W$, respectively, and
 \begin{equation*}
  A=\bigoplus_{i\in I} U_i\otimes W_i
 \end{equation*}
is a simple $\mathbb Z$-graded vertex operator algebra extension of $ U\otimes W$ in $\cC=\cU\boxtimes \cW$ where the $U_i$ are distinct simple modules in $\cU$ including $U_0= U$ and the $W_i$ are modules in $\cW$ such that
\begin{equation*}
 \dim\text{Hom}_{\cW}( W, W_i)=\delta_{i,0}.
\end{equation*}
Then $\dim_\cC\, A>0$ and the category of (grading-restricted, generalized) $A$-modules in $\cC$ is a braided fusion category.
\end{thm}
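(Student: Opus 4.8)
The plan is to recast Theorem \ref{thm:dim} entirely in terms of commutative algebras in braided tensor categories, reduce everything to showing that $A$ is a connected étale algebra in the braided fusion category $\cC=\cU\boxtimes\cW$ with $\dim_\cC A>0$, and then feed this into the general machinery recalled in Theorems \ref{sum1} and \ref{sum2}. Since $\cU$ and $\cW$ are braided fusion categories, so is their Deligne product $\cC$; in particular every object, including $A$, is automatically rigid. By Theorem \ref{sum2}(1) the vertex algebra extension $U\otimes W\subseteq A$ equips $A$ with the structure of a commutative associative algebra in $\cC$ with injective unit, and by Theorem \ref{sum2}(2),(4) the category of $A$-modules in $\cC$ is equivalent, as a braided tensor category, to the category $\cC_A^{loc}$ of local $A$-modules. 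So it suffices to prove that $\cC_A^{loc}$ is a braided fusion category and that $\dim_\cC A>0$.

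The first routine point is that $A$ is \emph{haploid}. Because the $U_i$ are pairwise non-isomorphic simple objects with $U_0=U$, one has $\dim\Hom_\cU(U,U_i)=\delta_{i,0}$, hence
\[
\Hom_\cC(\one,A)=\bigoplus_{i\in I}\Hom_\cU(U,U_i)\otimes\Hom_\cW(W,W_i)=\mathbb C,
\]
using the hypothesis $\dim\Hom_\cW(W,W_i)=\delta_{i,0}$; this merely re-expresses $A_0=\mathbb C\one$, already forced by simplicity of $A$. Next, since $A$ is a simple vertex operator algebra with $A_0=\mathbb C\one$, the composite $b_A=\epsilon\circ\mu_A$, where $\epsilon\colon A\to A_0\cong\one$ is the canonical projection, is a nonzero symmetric invariant pairing and therefore nondegenerate; thus $A$ is self-contragredient and carries the structure of a symmetric Frobenius algebra in $\cC$, with trivial ribbon twist $\theta_A=\id_A$ since $A$ is $\mathbb Z$-graded.

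The crux is to upgrade this to: $A$ is separable (equivalently étale, being commutative), and $\dim_\cC A>0$. I expect this to be the main obstacle. One route: $A$ is a \emph{simple} Frobenius algebra in $\cC$, so the canonical bimodule endomorphism $\mu_A\circ\Delta_A$ of $A$ is a scalar $\beta\cdot\id_A$, and unravelling the ribbon structure identifies $\beta=\dim_\cC A$; hence separability amounts exactly to $\dim_\cC A\neq0$. To rule out $\dim_\cC A=0$ and, more sharply, to obtain strict positivity, one must genuinely use the input data: expand $\dim_\cC A=\sum_{i\in I}\dim_\cU(U_i)\,\dim_\cW(W_i)$ and exploit the constraints that commutativity of $A$ and triviality of $\theta_A$ on each summand (so that $\theta_{W_i}=\theta_{U_i}^{-1}$) place on the admissible pairs $(U_i,W_i)$ — in effect pinning $W_i$ down as a prescribed dual partner of $U_i$ — together with the multiplicity hypothesis $\dim\Hom_\cW(W,W_i)=\delta_{i,0}$, which is precisely what makes the bookkeeping rigid. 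This should reduce positivity of $\dim_\cC A$ to a statement internal to the fusion categories $\cU$ and $\cW$; it is the step where a nontrivial new argument is needed, since one cannot appeal to unitarity, categorical dimensions in a general braided fusion category being possibly negative.

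Granting that $A$ is a connected étale algebra in $\cC$, the remaining assertions follow from standard categorical facts. Separability of $A$ makes $\cC_A$ semisimple; the induction functor $\cF_A\colon\cC\to\cC_A$ of Theorem \ref{sum1}(3) is exact and dominant (every $A$-module is a direct summand of an induced one), so, $\cC$ having finitely many simple objects, the same holds for $\cC_A$; rigidity of $\cC$ together with separability of $A$ yields rigidity of $\cC_A$; and the full subcategory $\cC_A^{loc}$ of local modules inherits a braiding by Theorem \ref{sum1}(2) (cf. \cite{KO}). Hence $\cC_A^{loc}$ is a braided fusion category, and transporting along the equivalence of the first paragraph identifies it with the category of $A$-modules in $\cC$. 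Together with $\dim_\cC A>0$, this is the content of Theorem \ref{thm:dim}.
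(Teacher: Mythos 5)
First, note that the paper you are reading does not prove this statement at all: it is quoted verbatim from \cite[Thm. 5.12]{CKM2} and used as a black box, so the comparison has to be with the proof in that reference.

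Your proposal correctly performs the standard reduction (Deligne product of braided fusion categories is braided fusion, $A$ is a haploid commutative algebra object with trivial twist by Theorem \ref{sum2}, local modules give the module category, separability plus $\dim_\cC A\neq 0$ would yield semisimplicity and rigidity \`a la Kirillov--Ostrik), but it stops exactly at the point that constitutes the actual content of the theorem. You write that positivity of $\dim_\cC A$ ``is the step where a nontrivial new argument is needed'' and only gesture at the constraint $\theta_{W_i}=\theta_{U_i}^{-1}$ as ``pinning $W_i$ down as a prescribed dual partner of $U_i$.'' That twist condition is far too weak to do this: many inequivalent objects of $\cW$ can share a twist, and balancing alone gives no control on $\dim_\cW(W_i)$, so your expansion $\dim_\cC A=\sum_i\dim_\cU(U_i)\dim_\cW(W_i)$ cannot be shown positive (or even nonzero) from what you have established. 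The argument of \cite{CKM2} supplies precisely the missing structural input: using simplicity of $A$, the distinctness of the $U_i$, and the multiplicity-one hypothesis $\dim\Hom_\cW(W,W_i)=\delta_{i,0}$, one shows that each multiplicity space $W_i$ is a \emph{simple} $W$-module, that the $W_i$ are pairwise distinct, and that $U_i\mapsto W_i'$ extends to a braid-reversed tensor equivalence between the fusion subcategories generated by the $U_i$ and the $W_i$. Since (braid-reversed) tensor equivalences and contragredients preserve categorical dimension, $\dim_\cW(W_i)=\dim_\cU(U_i)$, hence $\dim_\cC A=\sum_i\big(\dim_\cU(U_i)\big)^2>0$ because simple objects of a fusion category have nonzero dimension; separability and the fusion property of the (local) module category then follow by the \'etale-algebra machinery you cite. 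Without this mirror/braid-reversed-equivalence step your proof is a framework, not a proof.

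A secondary, fixable issue: your claim that simplicity of $A$ together with $A_0=\mathbb{C}\mathbf{1}$ already makes $\epsilon\circ\mu_A$ a nondegenerate invariant pairing is not automatic; by Li's theorem the space of invariant bilinear forms is $(A_0/L_1A_1)^*$, so one needs $L_1A_1=0$ (or an argument identifying $A'$ with $A$ as an $A$-module inside $\cC$, as is done in the cited literature), not just $A_0=\mathbb{C}\mathbf{1}$.
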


We need to generalize this theorem to the case where $A$ is possibly a vertex operator superalgebra and possibly $\frac{1}{2} \mathbb Z$-graded. 
Let $V$ be a vertex operator algebra and $\mathcal C$ a category of $V$-modules. We assume this category to be braided fusion. 
Let $A\supseteq V$ be an object in $\mathcal C$ that itself carries the structure of a vertex algebra or vertex superalgebra. We assume $V$ to be $\mathbb Z$-graded, but $A$ does not need to be. However, we assume $A$ to be $\mathbb{Z}_2$-graded, $A = A_{\bar 0} \oplus A_{\bar 1}$, such that $A_{\bar  0}$ is a $\mathbb Z$-graded vertex algebra containing $V$, $V\subseteq A_{\bar 0}$.
In other words, $A_{\bar 0}$ is a $\mathbb{Z}_2$-orbifold of $A$ and so $A_{\bar 1}$ is a self-dual simple current \cite{McRae1}\footnote{A simpler proof of this statement is given in Appendix A of \cite{CKLR}, observing that the argument there is the same for vertex superalgebras.}.
 Let $\mathcal C_{A}^{\text{loc}}$ and $\mathcal C_{A_{\bar 0}}^{\text{loc}}$ be the categories of local $A$ and $A_{\bar 0}$-modules that lie in $\mathcal C$. Note that $\mathcal C_A = (\mathcal C_{A_{\bar{0}}})_A$ by \cite[Section 3.6]{DMNO} and since a local $A$-module is necessarily local as a module for the subalgebra $A_{\bar{0}}$, also 
$\mathcal C^{\text{loc}}_A = (\mathcal C^{\text{loc}}_{A_{\bar{0}}})^{\text{loc}}_A$.

Let $M$ be a simple object in $\mathcal C_{A_{\bar 0}}^{\text{loc}}$ and let $\cF : \mathcal C_{A_{\bar 0}}^{\text{loc}} \rightarrow \mathcal C_{A}$ be the induction functor and $\cG$ the restriction functor. We have $\cG(\cF(M)) \cong M  \oplus N$ with $N = A_{\bar 1} \boxtimes M$. The modules $M$ and $N$ are graded by conformal weight, i.e. $$M = \bigoplus_{n \in  \mathbb Z + h_M} M_n, \qquad N = \bigoplus_{n \in  \mathbb Z + h_N} N_n,$$ for some complex numbers $h_M, h_N$.  The monodromy $\cR_{A_{\bar 1}, M} \circ \cR_{M, A_{\bar 1}}$ is either the identity on $N$ or minus the identity on $N$, see \cite{CKL} for details. In the latter case, we call any submodule of $\cF(M)$ a twisted module and the subcategory of $\cC_A$ whose objects are direct sums of twisted modules is denoted by $\cC_A^{\text{tw}}$. 
The module $\cF(M)$ is either simple or a direct sum of two simple modules \cite[Prop. 4.18 and Cor. 4.22]{CKM1}. In the latter case one has $M \cong N$ and by Frobenius reciprocity $\text{Hom}_{\cC_{A_{\bar 0}}}(M, M \oplus M) = \text{Hom}_{\cC_{A}}(\cF(M), \cF(M))$, i.e. these two simple summands of $\cF(M)$ need to be inequivalent. 
We are interested in three cases and their properties are given in \cite[Section 4.2]{CKM1}.
\begin{enumerate}
\item $A$ is a $\frac{1}{2}\mathbb Z$-graded vertex algebra, especially conformal weights of $A_{\bar 1}$ are in $\mathbb Z +\frac{1}{2}$. 
In this case $\cF(M)$ is always simple. Moreover it is local if and only if $h_M = h_N +\frac{1}{2} \mod 1$.  Otherwise it is twisted. See Section 4.2.2 and especially Lemma 4.29 of \cite{CKM1}.
\item $A$ is a $\frac{1}{2}\mathbb Z$-graded vertex superalgebra, especially conformal weights of $A_{\bar 1}$ are in $\mathbb Z +\frac{1}{2}$. 
$\cF(M)$ is local if and only if $h_M = h_N +\frac{1}{2} \mod 1$ and otherwise it is twisted. $\cF(M)$ is simple if it is local. if $\cF(M)$ is twisted then either it is simple or $M \cong N$ and $\cF(M)$ is a direct sum of two  simple objects.   This type of extension is Section 4.2.3 of \cite{CKM1}.
\item $A$ is a $\mathbb Z$-graded vertex superalgebra. In this case $\cF(M)$ is always simple. Moreover it is local if and only if $h_M = h_N  \mod 1$.  Otherwise it is twisted. See Section 4.2.1 and especially Lemma 4.26 of \cite{CKM1}.
\end{enumerate}
If we assume that $A$ is a simple vertex (super)algebra, then the action of $A$ on a non-zero $A$-module cannot have a kernel and so especially $M$ and $N$ cannot be zero. Assume  that $A$ is simple. 

Assume that $V = U \otimes W$ is the tensor product of two  vertex operator algebras and assume that 
\[
A_{\bar 0} \cong \bigoplus_{i \in I} U_i \otimes W_i
\]
for some index set $I$. Here the $U_i$ are distinct simple $U$-modules and we set $U = U_0$ and $W=W_0$. 
Also, assume that the $U_i$ and $W_i$ are objects of vertex tensor categories $\mathcal C_U$ of $U$-modules and $\mathcal C_W$ of $W$-modules, that are braided fusion categories  Then the Deligne product $\mathcal C = \mathcal C_U \boxtimes \mathcal C_W$ is a vertex tensor category as well \cite[Thm. 5.5]{CKM2}.
The $W_i$ are not necessarily distinct, but one requires that Hom$(W, W_i)=0$ for $i\neq 0$. Under these assumptions $\mathcal C_{A_{\bar 0}}^{\text{loc}}$
is a braided fusion category as well by Theorem \ref{thm:dim}. We now prove that $\cC_A^{\text{loc}}$ and $\cC_A^{\text{tw}}$ are also semisimple.

The following is similar to the proof of \cite[Thm. 5.13]{CGN}.
Let $\mathcal D$ be either $\mathcal C_A^{\text{loc}}$ or $\mathcal C_A^{\text{tw}}$. 
Let $X, Y$ be two simple modules in $\cD$ and consider an exact sequence $E: 0 \rightarrow X \rightarrow Z \rightarrow Y \rightarrow 0$. We show that it splits. 
Let $M$ be a direct summand of $\cG(Y)$. There are two cases 
\begin{enumerate} 
\item $Y \cong \cF(M)$ and $\cG(Y) = M \oplus N$ with $N = A_{\bar 1} \boxtimes M$ and $M \not\cong N$ or 
\item $\cF(M) \cong Y \oplus W$ with $W \not \cong Y$ and $\cG(\cF(M)) \cong M \oplus M$. 
\end{enumerate}
We use Frobenius reciprocity. In the first case
  \begin{equation}\nonumber
  \begin{split}
  \text{Hom}_{\mathcal C_A}(Y, Z) &=  
  \text{Hom}_{\mathcal C_A}(\mathcal F(M), Z)  = \text{Hom}_{\mathcal C_{A_{\bar 0}}^{\text{loc}}}(M,  \mathcal G(Z)) 
  =  \text{Hom}_{\mathcal C_{A_{\bar 0}}^{\text{loc}}}(M, \mathcal G(X) \oplus \mathcal G(Y)) \\ &=  \text{Hom}_{\mathcal C_{A_{\bar 0}}^{\text{loc}}}(M, \mathcal G(X \oplus Y)) 
  = \text{Hom}_{\mathcal C_A}(\mathcal F(M), X \oplus Y) =  \text{Hom}_{\mathcal C_A}(Y, X \oplus Y)
  \end{split}
  \end{equation}
and hence $E$ splits.
In the second case
\begin{equation}\nonumber
  \begin{split}
  \text{Hom}_{\mathcal C_A}(Y \oplus& W, Z) =  
  \text{Hom}_{\mathcal C_A}(\mathcal F(M), Z)  = \text{Hom}_{\mathcal C_{A_{\bar 0}}^{\text{loc}}}(M, \mathcal G(Z)) 
  =  \text{Hom}_{\mathcal C_{A_{\bar 0}}^{\text{loc}}}(M, \mathcal G(X) \oplus \mathcal G(Y)) \\ &=  \text{Hom}_{\mathcal C_{A_{\bar 0}}^{\text{loc}}}(M, \mathcal G(X \oplus Y)) 
  = \text{Hom}_{\mathcal C_A}(\mathcal F(M), X \oplus Y) =  \text{Hom}_{\mathcal C_A}(Y \oplus W, X \oplus Y)
  \end{split}
  \end{equation}
  and hence $E$ splits as well.
Summarizing:
\begin{prop}\label{prop:rat} \textup{($\frac{1}{2} \mathbb Z$-graded vertex superalgebra generalization of \cite[Thm. 5.12]{CKM2})}
\newline
Let $A= A_{\bar 0} \oplus A_{\bar 1}$ be a simple vertex (super)algebra of one of the three types listed above extending the $\mathbb Z$-graded self-contragredient simple vertex algebra $U \otimes W$. Assume that
\[
A_{\bar 0} \cong \bigoplus_{i \in I} U_i \otimes W_i
\]
for some index set $I$. Here the $U_i$ are distinct simple $U$-modules and we set $U = U_0$ and $W=W_0$. 
Also assume that the $U_i$ and $W_i$ are objects of vertex tensor categories that are braided fusion categories $\mathcal C_U, \mathcal C_W$ of $U$ respectively $W$-modules. Assume that Hom$(W, W_i)=0$ for $i\neq 0$. Let $\mathcal C = \mathcal C_U \boxtimes \mathcal C_W$. Then both $\mathcal C_A^{\text{loc}}$ and $\mathcal C_A^{\text{tw}}$  are  semisimple. 
\end{prop}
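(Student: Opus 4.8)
The plan is to reduce Proposition \ref{prop:rat} to the $\mathbb Z$-graded, non-super case already handled by Theorem \ref{thm:dim} (\cite[Thm. 5.12]{CKM2}), applied to the even subalgebra $A_{\bar 0}$, and then to run the Frobenius-reciprocity splitting argument sketched above to transfer semisimplicity from $\cC^{\text{loc}}_{A_{\bar 0}}$ to both $\cC_A^{\text{loc}}$ and $\cC_A^{\text{tw}}$. Concretely, first I would verify the hypotheses of Theorem \ref{thm:dim} with the pair $(U,W)$ and the extension $A_{\bar 0} \cong \bigoplus_{i\in I} U_i\otimes W_i$: the $U_i$ are distinct simple $U$-modules, $U_0 = U$, $W_0 = W$, and $\dim\Hom_{\cC_W}(W,W_i) = \delta_{i,0}$ by assumption, while $A_{\bar 0}$ is $\mathbb Z$-graded and simple. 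Since $\cC_U$ and $\cC_W$ are braided fusion categories, $\cC = \cC_U\boxtimes\cC_W$ is a vertex tensor category by \cite[Thm. 5.5]{CKM2}, and Theorem \ref{thm:dim} yields that $\cC^{\text{loc}}_{A_{\bar 0}}$ is a braided fusion category; in particular it is semisimple with finitely many simples, and every object is rigid of positive dimension.

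Next I would record the structural facts that let us treat $A$ as a (super)algebra extension of $A_{\bar 0}$ in $\cC^{\text{loc}}_{A_{\bar 0}}$: by \cite{McRae1} (or the appendix of \cite{CKLR}) the odd part $A_{\bar 1}$ is a self-dual simple current for $A_{\bar 0}$, so $A = A_{\bar 0}\oplus A_{\bar 1}$ is an order-two extension, and by \cite[Section 3.6]{DMNO} we have $\cC_A = (\cC_{A_{\bar 0}})_A$ and likewise $\cC^{\text{loc}}_A = (\cC^{\text{loc}}_{A_{\bar 0}})^{\text{loc}}_A$, with $\cC^{\text{tw}}_A$ the full subcategory of $\cC_A$ on direct sums of twisted modules. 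For the induction functor $\cF:\cC^{\text{loc}}_{A_{\bar 0}}\to\cC_A$ and restriction $\cG$, the trichotomy recalled before the proposition describes, in each of the three cases for $A$, exactly when $\cF(M)$ is simple, local, or twisted, and when $\cG\cF(M)\cong M\oplus N$ with $M\cong N$ versus $M\not\cong N$; simplicity of $A$ forces $M,N\neq 0$. These are precisely the inputs needed for the case analysis.

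Then I would carry out the splitting argument verbatim as sketched in the lines preceding Proposition \ref{prop:rat}: let $\cD$ be either $\cC_A^{\text{loc}}$ or $\cC_A^{\text{tw}}$, take simple $X,Y\in\cD$ and an exact sequence $0\to X\to Z\to Y\to 0$ in $\cD$; restricting to $\cC^{\text{loc}}_{A_{\bar 0}}$ gives $\cG(E): 0\to\cG(X)\to\cG(Z)\to\cG(Y)\to 0$, which splits because $\cC^{\text{loc}}_{A_{\bar 0}}$ is semisimple, so $\cG(Z)\cong\cG(X)\oplus\cG(Y) = \cG(X\oplus Y)$. Writing $Y$ (or $Y\oplus W$, in the case $\cF(M)\cong Y\oplus W$) as $\cF(M)$ for a suitable simple summand $M$ of $\cG(Y)$, Frobenius reciprocity \eqref{eqn:fro-rec} gives $\Hom_{\cC_A}(\cF(M),Z) \cong \Hom_{\cC^{\text{loc}}_{A_{\bar 0}}}(M,\cG(Z)) \cong \Hom_{\cC^{\text{loc}}_{A_{\bar 0}}}(M,\cG(X\oplus Y)) \cong \Hom_{\cC_A}(\cF(M),X\oplus Y)$, which contains a morphism restricting to the identity of $Y$; hence $E$ splits, and $\cD$ is semisimple.

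The main obstacle — really the only nontrivial point — is bookkeeping in the trichotomy: I must ensure that in each of the three cases (half-integer-graded vertex algebra, half-integer-graded vertex superalgebra, integer-graded vertex superalgebra) the chosen simple summand $M$ of $\cG(Y)$ genuinely lies in $\cC^{\text{loc}}_{A_{\bar 0}}$ — so that the braided fusion structure on $\cC^{\text{loc}}_{A_{\bar 0}}$ from Theorem \ref{thm:dim} applies — and that $\cF(M)$ indeed lands in the prescribed $\cD$ (local versus twisted), using the stated criterion $h_M \equiv h_N + \tfrac12 \pmod 1$ (or $h_M\equiv h_N\pmod 1$ in the integer-graded super case). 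Once one checks that a local (resp. twisted) simple object of $\cD$ is induced from a local $A_{\bar 0}$-module with the appropriate conformal-weight congruence, the two displayed $\Hom$-chains cover all cases and the argument closes; no fusion-category finiteness beyond what Theorem \ref{thm:dim} already provides is needed, since semisimplicity plus the induction/restriction adjunction suffice.
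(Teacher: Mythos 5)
Your proposal is correct and follows essentially the same route as the paper: Theorem \ref{thm:dim} applied to $A_{\bar 0}$ gives that $\cC^{\text{loc}}_{A_{\bar 0}}$ is a braided fusion (hence semisimple) category, and then the two-case Frobenius reciprocity argument with the induction/restriction pair $(\cF,\cG)$ splits any exact sequence of simples in $\cC_A^{\text{loc}}$ or $\cC_A^{\text{tw}}$, exactly as in the paper's proof. The bookkeeping point you flag (that restrictions land in $\cC^{\text{loc}}_{A_{\bar 0}}$ and that the case trichotomy applies) is precisely what the paper handles in the discussion preceding the proposition, so no genuinely new ingredient is needed.
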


\begin{prop}\label{prop:semisimple}
Let $V$ be a simple vertex superalgebra of CFT-type  and let $\pi$ be a simple rank $n$ Heisenberg subalgebra of $V$  with 
\[
V = \bigoplus_{\lambda \in L} V_\lambda
\]
  the decomposition of $V$ into generalized weight spaces for $\pi$. Here $L$ is the set of $\lambda \in \mathbb C^n$ with $V_\lambda \neq 0$. If $C = \text{Com}( \pi, V)$ acts semisimply on $V$,  then $L \subseteq\mathbb C$ is a subgroup of $\mathbb C^n$ and there are simple $C$-modules $C_\lambda$, such that $V_\lambda = \pi_\lambda \otimes C_\lambda$ as $\pi \otimes C$-modules. In particular, $\pi$ acts semisimply on $V$.
\end{prop}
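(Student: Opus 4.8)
Since $\pi$ is a nondegenerate rank $n$ Heisenberg algebra, I would fix generators $h^1,\dots,h^n$ with $h^i_{(1)}h^j=\delta_{i,j}\one$, so that each $h^i_{(0)}$ is central in the Heisenberg Lie algebra. As $h^i$ has conformal weight $1$ and $V$ is of CFT type, the commuting operators $h^i_{(0)}$ preserve each finite-dimensional piece $V[d]$ and hence act locally finitely; thus $V=\bigoplus_{\lambda\in L}V_\lambda$ where $V_\lambda$ is the simultaneous generalized eigenspace and $L\subseteq\mathbb C^n$ is countable. Using the derivation rule $[h^i_{(0)},a_{(k)}]=(h^i_{(0)}a)_{(k)}$ and a binomial expansion of $(h^i_{(0)}-\lambda_i-\mu_i)^N$ one gets $V_\lambda\cdot V_\mu\subseteq V_{\lambda+\mu}$, so $L+L\subseteq L$ and $0\in L$. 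Simplicity of $V$ then forces the group property: for $0\ne v\in V_\lambda$ the ideal generated by $v$ is spanned by the elements $a_{(k)}v$ and so lies in $\sum_{\mu\in L}V_{\mu+\lambda}$; being nonzero it is all of $V$, whence $L\subseteq\lambda+L$ and $-\lambda=(0-\lambda)\in L$. The same computation on graded pieces upgrades the inclusion to exact multiplicativity $V_\alpha\cdot V_\beta=V_{\alpha+\beta}$ for all $\alpha,\beta\in L$; in particular $V_\lambda\cdot V_{-\lambda}=V_0$.

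\textbf{Semisimplicity of $h^i_{(0)}$ and the tensor decomposition.} Because $\pi$ commutes with $C=\text{Com}(\pi,V)$, every mode $h^i_{(m)}$ is a $C$-module endomorphism of $V$, and since $V$ is a lower-bounded $\pi$-module for the conformal grading, the PBW theorem for the Heisenberg gives $V\cong\pi\otimes\hat C$ as $\pi\otimes C$-modules, where $\hat C:=\{v\in V:h^i_{(m)}v=0,\ m\ge 1\}$; here $h^i_{(0)}$ acts as $0$ on the $\pi$-factor and acts on $\hat C$. Comparing with the isotypic decomposition $V\cong\bigoplus_j M_j\otimes U_j$ of the semisimple $C$-action (the $M_j$ pairwise non-isomorphic simple $C$-modules, and $\text{End}_C(M_j)=\mathbb C$ by a Schur-type argument using that $V[d]$, hence $M_j[d]$, is finite-dimensional) shows that $\hat C$ is a semisimple $C$-module and that the action of $h^i_{(0)}$ on $\hat C$ commutes with $C$. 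Now write the additive Jordan--Chevalley decomposition $h^i_{(0)}=S^i+N^i$. The semisimple part $S^i$ acts as $\lambda_i$ on $V_\lambda$, hence by $V_\lambda\cdot V_\mu\subseteq V_{\lambda+\mu}$ is a derivation of $V$, so $N^i$ is a \emph{locally nilpotent} derivation of $V$. It kills $\pi$ (the $h^j$ lie in $V_0$) and kills $C$ (annihilated by $h^i_{(0)}$), hence commutes with the actions of both $\pi$ and $C$ and preserves every $V_\lambda$; combining this with the exact multiplicativity above and simplicity of $V$ forces $N^i=0$. Therefore $\hat C=\bigoplus_\lambda\hat C_\lambda$ is a genuine weight decomposition, $\hat C_0=\bigcap_{i,\,m\ge 0}\ker h^i_{(m)}=\text{Com}(\pi,V)=C$, and $V_\lambda\cong\pi\otimes\hat C_\lambda=\mathcal F_\lambda\otimes C_\lambda$ with $C_\lambda:=\hat C_\lambda$ and $\mathcal F_\lambda$ the Fock module of highest weight $\lambda$; since each $\mathcal F_\lambda$ is a simple $\pi$-module this also yields the last assertion that $\pi$ acts semisimply on $V$.

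\textbf{Simplicity of the $C_\lambda$, and the main obstacle.} For $\lambda\in L$ the exact products $V_\lambda\cdot V_{-\lambda}=V_0=\pi\otimes C$ induce a $C$-bilinear pairing $C_\lambda\times C_{-\lambda}\to C$ with image all of $C$; since $C$ acts semisimply on the submodule $C\subseteq V$ and $C[0]=\mathbb C\one$, the vertex algebra $C$ is simple as a module over itself, and one deduces (by the ideal-generation argument of the first paragraph, applied to a would-be proper $C$-submodule of $C_\lambda$ and contradicting simplicity of $V$) that each $C_\lambda$ is a simple $C$-module. I expect the genuinely delicate step to be the vanishing $N^i=0$: the conformal identity $\omega^V=\omega^\pi+\omega^C$ shows that $L_0^\pi$, hence $\sum_i(h^i_{(0)})^2$, acts semisimply on $V$, but excluding the residual ``isotropic logarithmic'' configurations of the commuting operators $h^i_{(0)}$ requires the full derivation property of the $N^i$ together with the subgroup structure of $L$ and simplicity of $V$; alternatively, one may simply invoke the general Schur--Weyl theory of Heisenberg cosets, of which this proposition is an instance.
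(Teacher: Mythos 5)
Your first paragraph reproduces, in substance, the paper's own opening step: simplicity of $V$ plus the spanning result ($V=\mathrm{span}\{u_{(k)}v\}$ for any nonzero $v$, together with the fact that $u_{(m)}v\neq 0$ for some $m$, cf. \cite[Prop. 11.9]{DL}) gives the group structure of $L$ and the simplicity of each $V_\lambda$ over $V_0$; note that the inclusion $V_\lambda\cdot V_\mu\subseteq V_{\lambda+\mu}$ alone does not yield $L+L\subseteq L$ without that nonvanishing statement. The genuine gap is the step you yourself flag at the end: the vanishing $N^i=0$, i.e.\ semisimplicity of the Heisenberg zero modes, which is what upgrades your PBW decomposition $V\cong\pi\otimes\hat C$ to $V_0=\pi\otimes C$ and $V_\lambda=\pi_\lambda\otimes C_\lambda$. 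You assert that the derivation property of $N^i$, the subgroup structure of $L$, exact multiplicativity, and simplicity of $V$ force $N^i=0$, but no argument is given, and these formal properties alone do not obviously exclude a locally nilpotent derivation that kills $\pi$ and $C$ and commutes with both actions. Your fallbacks do not close the gap: the claim that $\omega^V=\omega^\pi+\omega^C$ makes $L_0^\pi$ act semisimply is unjustified ($L_0^V$ acts semisimply, but $L_0^C$ need not a priori), even semisimplicity of $\sum_i (h^i_{(0)})^2$ would not rule out the isotropic configurations you mention, and invoking the ``general Schur--Weyl theory of Heisenberg cosets'' is circular, since that theory (e.g.\ \cite{CKLR}) assumes exactly the semisimple Heisenberg action that this proposition is designed to establish.

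The missing mechanism is precisely where the paper's proof uses its two hypotheses, neither of which enters your sketch of the crucial step. The paper argues: if $V_0$ is not completely reducible over $\pi$, there is $v\in V_0$ killed by the positive Heisenberg modes with $X_0v=w\neq 0$ and $w\in C$ for some Heisenberg field $X$; semisimplicity of the $C$-action forces $C$ to be simple (a CFT-type vertex algebra cannot decompose as a module over itself), so some mode $y_m$ of $C$ carries $w$ to the vacuum; then $y_mv$ lies in the weight-zero space, which by CFT-type is $\mathbb{C}|0\rangle$ and hence killed by $X_0$, contradicting $X_0y_mv=y_mw=|0\rangle$. So both the semisimplicity of the $C$-action (via simplicity of $C$) and the one-dimensionality of the weight-zero space are essential to kill the nilpotent part, and any complete proof must use them; your argument uses CFT-type only through lower-boundedness and never uses the hypothesis on $C$ at this point. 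Your final paragraph on simplicity of the $C_\lambda$ also rests on the unproven identification $V_0=\pi\otimes C$; once $N^i=0$ is established, the shorter route is the paper's: each $V_\lambda$ is already simple over $V_0=\pi\otimes C$, hence of the form $\pi_\lambda\otimes C_\lambda$ with $C_\lambda$ simple.
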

\begin{proof}
$V_0$ is a vertex subalgebra of $V$. 
Since $V$ is simple, $V$ is spanned by $\{ u_nv \, | \,  u \in V, n \in \mathbb Z\}$ for any non-zero $v\in V$ \cite[Cor. 4.2]{DM2}. 
Thus $\lambda, \mu  \in L$ implies $\mu -\lambda \in L$ as well. For $u, v$ in $V$, there exists $m \in \mathbb Z$ such that $u_mv \neq  0$ \cite[Prop. 11.9]{DL} and so $\lambda, \mu  \in L$ implies $\mu + \lambda \in L$ as well.  It follows that each $V_\lambda$ is a simple $V_0$-module and that $L$ is a subgroup of $\mathbb C^n$. 

If $V_0$ is not completely reducible as a $\pi$-module, then there exists a length two self-extension of $\pi$. It is generated by $v \in V_0$, s.t. there is a Heisenberg field $X(z)$ whose zero-mode $X_0$ acts nilpotently, i.e.
$X_0v = w \neq  0$ and $X_nv=0$ for $n>0$ and such that $w$ is a vacuum vector for $\pi$, i.e. $w \in C$.  $C$ is simple, since it acts semisimply on itself and since vertex algebras can't be decomposable as modules for themselves. Hence there exists $y \in C$ and $m \in \mathbb Z$ such that $y_mw= | 0 \rangle$. But this means that $X_0 y_m v = y_m X_0 v= y_m w = | 0 \rangle$, i.e. $y_mv$ is a vector at the top level of $V$ and not in the kernel of $X_0$, contradicting that $V$ is of CFT-type. It follows that $V_0$ is completely reducible as a $\pi$-module and hence the only possibility is that $V_0 = \pi \otimes C$. Each $V_\lambda$ is a simple $V_0$-module and so it must be of the form $V_\lambda = \pi_\lambda \otimes C_\lambda$ for simple $C$-modules $C_\lambda$.
\end{proof}

\begin{prop} \label{rationalextension}
Let $V$ be a simple lisse vertex superalgebra of CFT-type and $U$ be the affine subalgebra generated by the weight one subspace of $V$. 
Let $W = \text{Com}(U, V)$ and assume that $W$ is self-contragredient. If $W$ is rational, then so is $V$. 
\end{prop}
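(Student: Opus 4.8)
The plan is to exhibit $V$ as a simple extension of a tensor product of two strongly rational vertex algebras, one of which is $W$, and to apply Proposition~\ref{prop:rat}. Call a vertex algebra \emph{strongly rational} if it is simple, rational, lisse, of CFT type and self-contragredient, so that its module category is a (braided fusion) modular tensor category. Since $V$ is lisse, its weight-one Lie algebra $V_1$ is reductive, and by the integrability theorem of Dong and Mason for $C_2$-cofinite vertex algebras the subalgebra it generates is $U\cong U^{\flat}\otimes\pi$, where $U^{\flat}=\bigotimes_i L_{k_i}(\gg_i)$ is a tensor product of integrable affine vertex algebras --- hence strongly rational --- and $\pi$ is a Heisenberg vertex algebra of some rank $\ell$; $\pi$ is nondegenerate, since a null central vector would generate a polynomial subalgebra $\C[x]\subseteq V$ and contradict $C_2$-cofiniteness. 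It suffices to treat the case $\pi=\C$. Indeed, in general set $\check V:=\text{Com}(\pi,V)$; it is lisse and self-contragredient, the subalgebra it generates by its weight-one space is $U^{\flat}$, and $\text{Com}(U^{\flat},\check V)=W$, so the case $\pi=\C$ gives that $\check V$ is rational. Then $\check V$ acts semisimply on $V$, so Proposition~\ref{prop:semisimple} yields $V=\bigoplus_{\lambda\in L}\pi_{\lambda}\otimes\check V_{\lambda}$ with $L\leq\C^{\ell}$ and $\check V_0=\check V$; lisseness of $V$ forces $L$ to be an even positive-definite lattice of rank $\ell$ and each $\pi_{\lambda}\otimes\check V_{\lambda}$ a simple current, and with $L_0:=\{\lambda\in L:\check V_{\lambda}\cong\check V\}$ of finite index, $V$ is a finite simple-current extension of the strongly rational algebra $V_{L_0}\otimes\check V$, hence rational by a further application of Proposition~\ref{prop:rat} (and lisse by \cite{ABD}). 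So from now on assume $U=U^{\flat}$; then $W=\text{Com}(U,V)$ is strongly rational, being the coset of a lisse vertex algebra by a strongly rational subalgebra.

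For the extension, let $U':=\text{Com}(W,V)$ (its $\Z$-graded part, if $V$ is a vertex superalgebra); it is simple and strongly rational, it contains $U$ with $U'_1=V_1$, and $\text{Com}(U',V)=W$, whence $\text{Com}(U'\otimes W,V)=\C$. Decomposing over $U'\otimes W$ gives $V_{\bar 0}\cong\bigoplus_{i\in I}U'_i\otimes W_i$, where the $U'_i$ are distinct simple $U'$-modules, $U'_0=U'$, $W_0=W$, and, crucially, $\dim\text{Hom}_W(W,W_i)=\delta_{i,0}$: any $i\neq 0$ with $W_i\cong W$ would embed the proper $U'$-submodule $U'_i$ into $\text{Com}(W,V)=U'$, which is impossible. (This is exactly why one passes to the double commutant $U'$ rather than using $U$ directly.) Thus $V$ is a simple vertex (super)algebra of one of the three types considered before Proposition~\ref{prop:rat}, extending the $\Z$-graded strongly rational algebra $U'\otimes W$, and all of its hypotheses are met; so $\mathcal{C}^{\text{loc}}_{V}$ and $\mathcal{C}^{\text{tw}}_{V}$ are semisimple inside $\mathcal{C}=\mathcal{C}_{U'}\boxtimes\mathcal{C}_{W}$. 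Since, restricted to $U'\otimes W$, every grading-restricted generalized $V$-module is semisimple and hence an object of $\mathcal{C}$, all such modules lie in $\mathcal{C}^{\text{loc}}_{V}$; by the general theory recalled in Section~\ref{sec:VOA} this category is therefore the semisimple category of all $V$-modules, and $V$ is rational.

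The categorical part of the argument is routine once the picture is in place; the substance lies in the structural inputs it relies on --- that lisseness passes to the cosets $\text{Com}(\pi,V)$ and $\text{Com}(W,V)$, that these cosets are simple (so that the multiplicity-one argument and Proposition~\ref{prop:semisimple} apply), and that the lattice $L$ delivered by Proposition~\ref{prop:semisimple} is even and positive-definite. I expect the lisseness of these cosets, together with the verification that $\check V$ acts semisimply on $V$, to be the main obstacle.
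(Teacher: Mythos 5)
Your overall skeleton --- the Dong--Mason structure of $U$, passage to the double commutant $\text{Com}(W,V)$, the multiplicity-one observation $\dim\Hom_W(W,W_i)=\delta_{i,0}$, and an application of Proposition \ref{prop:rat} --- is the paper's strategy. But at two key points you assert exactly what needs proof. First, you declare the double commutant $U'=\text{Com}(W,V)$ to be ``simple and strongly rational'' with no argument. Containing the rational algebra $U$ does not give this: rationality of extensions is precisely the delicate point, and the paper supplies it by a structural input plus a positivity argument. Namely, by \cite[Lem. 5.8]{CGN} (or \cite{DM}) the commutant $\tilde U=\text{Com}(W,V)$ is an extension of the tensor product of an integrable affine vertex algebra and a lattice vertex algebra (the lattice positive definite because $V$ is of CFT-type), and since simple lattice modules have categorical dimension one and integrable modules positive dimension, \cite[Thm. 3.3]{KO} yields rationality of $\tilde U$. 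Without this step (or an equivalent one) your application of Proposition \ref{prop:rat} to $V$ as an extension of $U'\otimes W$ has no braided fusion category $\cC_{U'}$ to work in.

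Second, your reduction to the case $\pi=\C$ hinges on $\check V=\text{Com}(\pi,V)$ being simple, lisse, of CFT-type and self-contragredient so that the Heisenberg-free case can be applied to it; you acknowledge the lisseness as ``the main obstacle'' but do not resolve it, and indeed $C_2$-cofiniteness of commutants is not available in this generality. Moreover, Proposition \ref{prop:semisimple} requires $\check V$ to act semisimply on $V$ before you may write $V=\bigoplus_\lambda \pi_\lambda\otimes\check V_\lambda$, and in your scheme that semisimplicity is only available after $\check V$ is known to be rational --- so the reduction is circular as stated. The paper avoids the intermediate Heisenberg coset entirely: the Heisenberg factor is absorbed into the lattice vertex algebra sitting inside $\tilde U=\text{Com}(W,V)$, and only the rationality of $\tilde U$, obtained as above, is needed before invoking Proposition \ref{prop:rat} (respectively \cite[Thm. 5.12]{CKM2} in the $\Z$-graded case). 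Repairing your argument essentially means importing these two inputs, at which point it collapses onto the paper's proof.
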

\begin{proof}
By \cite{DM} applied to the even subalgebra of $V$, $U$ is necessarily the tensor product of a Heisenberg vertex algebra of some rank and an integrable affine vertex algebra $L$. The bilinear form on the weight one subspace is non-degenerate since $V$ is simple and so especially it is non-degenerate on the Heisenberg subalgebra, so that the Proposition \ref{prop:semisimple} applies. The commutant $\tilde U = \text{Com}(W, V)$ of $W$ in $V$ is an extension of $L \otimes V_L$ for some lattice vertex algebra $V_L$ (see \cite[Lem. 5.8]{CGN} or \cite{DM}). $L$ needs to be positive definite for $V$ being of CFT-type.  Since the (categorical) dimension of any simple lattice vertex algebra module is one and the one of any integrable representation is positive, it follows that $\tilde U$ is rational by \cite[Thm. 3.3]{KO}. Hence Proposition \ref{prop:rat} (respectively already \cite[Thm. 5.12]{CKM2} if $V$ is an integer graded vertex algebra)  applies to $V$ as an extension of $\tilde U \otimes W$, and so $V$ is rational as well. 
\end{proof}
By \cite[Cor. 3.2]{Li}, a simple integer graded vertex operator algebra is self-dual if its conformal weight one space is in the kernel of the Virasoro mode $L_1$. This holds especially if the conformal weight one space vanishes.
\begin{cor}\textup{(Corollary of \cite[Cor. 3.2]{Li})}
Let $W$ be a simple integer graded vertex operator algebra of CFT-type with no fields of conformal weight one. Then $W$ is its own contragredient dual. 
\end{cor}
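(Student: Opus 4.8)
The plan is to deduce this immediately from the criterion of Li quoted in the sentence just above, namely \cite[Cor.~3.2]{Li}: a simple, integer-graded vertex operator algebra $W$ of CFT-type is isomorphic to its contragredient $W'$ as a $W$-module as soon as the Virasoro mode $L_1$ annihilates the conformal weight one subspace $W[1]$. Recall that $W'$ is the restricted dual $\bigoplus_{n} (W[n])^*$ equipped with the adjoint vertex operators, and that CFT-type means $W[0] = \mathbb{C}\one$ with no components in negative weight; these properties, together with simplicity and integrality of the grading, are precisely the standing hypotheses of Li's theorem and are all assumed here. So the only thing to verify is the $L_1$-condition.

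Under the present hypothesis there are no fields of conformal weight one, that is, $W[1] = 0$. Hence $L_1 W[1] = 0$ holds vacuously, the hypothesis of \cite[Cor.~3.2]{Li} is met, and we conclude $W \cong W'$. There is no genuine obstacle: the content is entirely contained in Li's result, and the only point worth recording is the trivial observation that vanishing of the weight one space is strictly stronger than the inclusion $W[1] \subseteq \ker L_1$, so the specialization is immediate. I would therefore present the proof as a single line invoking \cite[Cor.~3.2]{Li}.
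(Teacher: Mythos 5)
Your proposal is correct and is exactly the paper's argument: the sentence preceding the corollary states that by \cite[Cor. 3.2]{Li} a simple integer graded vertex operator algebra of CFT-type is self-dual when its weight one space lies in the kernel of $L_1$, and the corollary follows since a vanishing weight one space satisfies this condition vacuously. Nothing further is needed.
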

Especially Proposition \ref{rationalextension} applies if $W$ is a simple rational principal $\cW$-algebra associated to a simple Lie algebra or an order two orbifold of a simple rational principal $\cW$-algebra.

We need another corollary of Proposition \ref{prop:rat}. For this let $V$ be a simple vertex (super)algebra and $W_1, W_2$ be simple vertex (super)subalgebras. Let $L, W_3$ be simple vertex (super)subalgebras of $W_2$, such that $\text{Com}(W_2, V)=W_1$, $\text{Com}(L, W_2) =W_3$ and such that $W_2, W_3$ are actually integer graded self-contragredient vertex operator algebras. Assume that there are braided fusion categories $\cC_L, \cC_{1}, \cC_{3}$ of modules for the vertex algebras $L, W_1, W_3$, such that $V$ is an object in $\cC := \cC_L \boxtimes \cC_{1} \boxtimes \cC_3$. 
Then $W_2$ corresponds to a commutative (super)algebra object in $\cD :=   \cC_L \boxtimes \cC_3$, that we also denote by $W_2$. We thus have an induction functor $\cF : \cD \rightarrow \cD_{W_2}$ with right adjoint denoted by $\cG$. For an object $M$ in $\cD_{W_2}$ one has by Frobenius reciprocity
\[
\text{Hom}_{\cD_{W_2}}(W_2, M) \cong \text{Hom}_\cD(W_3 \otimes L, \cG(M)). 
\]
Hence a simple object $M$ in $\cD_{W_2}$ with the property that $\text{Hom}_\cD(W_3 \otimes L, \cG(M))$ is non-zero necessarily is isomorphic to $W_2$. 
This implies 
\[
\text{Com}(W_2, V) = \text{Com}(W_3 \otimes L, V)
\]
and thus 
\[
W_1 = \text{Com}(W_2, V) =  \text{Com}(W_3 \otimes L, V) =  \text{Com}\left(W_3, \text{Com}(L, V)\right).
\]
The setting of Proposition \ref{prop:rat} thus holds for $W_1 = U, W_3 = W,  \text{Com}(L, V) = A$, i.e. 
\begin{cor}\label{cor:triplecoset}
With the above setting, the categories of local and twisted $\text{Com}(L, V)$-modules that lie in $\cC_1 \boxtimes \cC_3$ are semisimple. Especially if $W_1$ and $W_3$ are rational and lisse, then so is $\text{Com}(L, V)$.
\end{cor}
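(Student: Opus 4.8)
The plan is to check that the data $(A,U,W):=(\mathrm{Com}(L,V),\,W_1,\,W_3)$ fit the hypotheses of Proposition~\ref{prop:rat} and then invoke it; the essential work has already been carried out in the discussion preceding the statement. Write $A=\mathrm{Com}(L,V)$. First I would record the inclusions that exhibit $A$ as a coset extension of $W_1\otimes W_3$: since $L\subseteq W_2$ we get $W_1=\mathrm{Com}(W_2,V)\subseteq\mathrm{Com}(L,V)=A$; since $W_3=\mathrm{Com}(L,W_2)\subseteq W_2\subseteq V$ commutes with $L$, also $W_3\subseteq A$; and $W_1$ commutes with $W_3$ because $W_3\subseteq W_2$ and $W_1=\mathrm{Com}(W_2,V)$. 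Hence $W_1\otimes W_3\hookrightarrow A$, and the identity $W_1=\mathrm{Com}(W_3,\mathrm{Com}(L,V))=\mathrm{Com}(W_3,A)$ derived above (via the Frobenius reciprocity computation $\mathrm{Hom}_{\cD_{W_2}}(W_2,M)\cong\mathrm{Hom}_{\cD}(W_3\otimes L,\cG(M))$) says that $W_1$ is precisely the coset of $W_3$ inside $A$.

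Next I would pin down the categorical structure. Since $V$ is an object of $\cC=\cC_L\boxtimes\cC_1\boxtimes\cC_3$, decomposing $V$ with respect to $L\otimes W_1\otimes W_3$ realizes $A=\mathrm{Com}(L,V)$ as an object of $\cC_1\boxtimes\cC_3$, and $A$ is simple as a vertex (super)algebra. Because $\cC_1\boxtimes\cC_3$ is semisimple, the even part $A_{\bar 0}$ decomposes into simple tensor products, and grouping these by $W_1$-isomorphism type gives
\[
A_{\bar 0}\cong\bigoplus_{i\in I}(W_1)_i\otimes W_i,
\]
with the $(W_1)_i$ pairwise non-isomorphic simple $W_1$-modules, $(W_1)_0=W_1$ and $W_0=W_3$; the identity $\mathrm{Com}(W_3,A)=W_1$ then forces $\mathrm{Hom}_{\cC_3}(W_3,W_i)=0$ for $i\neq 0$. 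As $W_1\otimes W_3$ is a simple $\mathbf{Z}$-graded self-contragredient vertex operator algebra (which holds under the standing hypotheses on $W_1,W_2,W_3$), and $A=A_{\bar 0}\oplus A_{\bar 1}$ is an extension of it of one of the three types of $\tfrac12\mathbf{Z}$-graded vertex (super)algebra listed before Proposition~\ref{prop:rat}, all hypotheses of that proposition are met. It yields that $\cC_A^{\text{loc}}$ and $\cC_A^{\text{tw}}$ are semisimple, which is the first assertion.

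For the last sentence, assume $W_1$ and $W_3$ are rational and lisse and take $\cC_1=\mathrm{Mod}(W_1)$, $\cC_3=\mathrm{Mod}(W_3)$, which are braided fusion; then $\cC_1\boxtimes\cC_3$ is the full module category of the rational algebra $W_1\otimes W_3$ (\cite{CKM2}). The tensor product $W_1\otimes W_3$ is lisse, so $A$, as an extension of it, is lisse by \cite[Prop.~5.2]{ABD}, and hence has only finitely many simple modules. Every $A$-module restricts to a $W_1\otimes W_3$-module, which by rationality lies in $\cC_1\boxtimes\cC_3$; thus every $A$-module belongs to $\cC_A^{\text{loc}}$ and is completely reducible by the first part. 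Together with lisseness this shows $A=\mathrm{Com}(L,V)$ is rational.

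The point I expect to require the most care is the claim in the second paragraph that $A=\mathrm{Com}(L,V)$ is simple and is an honest object of the braided fusion category $\cC_1\boxtimes\cC_3$ whose even part is a finite direct sum of simple tensor products. This is where the coset decomposition of $V$ inside $\cC_L\boxtimes\cC_1\boxtimes\cC_3$ and the commutant identity $W_1=\mathrm{Com}(W_3,A)$ established just before the statement do the real work; granting that structure, the remaining steps are the bookkeeping needed to match the exact input of Proposition~\ref{prop:rat}.
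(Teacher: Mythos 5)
Your proposal follows essentially the same route as the paper: the Frobenius reciprocity argument giving $W_1=\text{Com}(W_3,\text{Com}(L,V))$ is exactly the paper's preceding discussion, and the conclusion is then obtained, as in the paper, by checking the hypotheses of Proposition~\ref{prop:rat} with $U=W_1$, $W=W_3$, $A=\text{Com}(L,V)$ (your extra bookkeeping on the decomposition of $A_{\bar 0}$, the vanishing $\text{Hom}(W_3,W_i)=0$ for $i\neq 0$, and the lisse/rationality step is just what the paper leaves implicit). The simplicity of $\text{Com}(L,V)$, which you flag as the delicate point, is likewise implicit in the paper and follows in this setting from simplicity of $V$ together with complete reducibility of $V$ over $L$, as in \cite[Prop.~5.4]{CGN}.
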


\section{Hook-type $\cW$-algebras in types $B$, $C$, and $D$} \label{sec:hooktype}
In this section, we define the eight families of $\cW$-(super)algebras that we need in a unified framework. First, let $\mathfrak{g}$ be a simple Lie (super)algebra of type $B$, $C$, or $D$; in particular, $\gg$ is either $\mathfrak{so}_{2n+1}$, $\mathfrak{sp}_{2n}$, $\mathfrak{so}_{2n}$, or $\mathfrak{osp}_{n|2r}$. We further assume that $\gg$ admits a decomposition 
\begin{equation} \label{decompg} \mathfrak{g} = \mathfrak{a} \oplus \mathfrak{b} \oplus \rho_{\mathfrak{a}} \otimes \rho_{\mathfrak{b}},\end{equation} with the following properties.
\begin{enumerate}
\item $\mathfrak{a}$ and $\mathfrak{b}$ are Lie sub(super)algebras of $\mathfrak{g}$. Here $\mathfrak{b}$ is either $\mathfrak{so}_{2m+1}$ or $\mathfrak{sp}_{2m}$, and $\mathfrak{a}$ can be $\mathfrak{so}_{2n+1}$, $\mathfrak{sp}_{2n}$, $\mathfrak{so}_{2n}$, or $\mathfrak{osp}_{1|2n}$.
\item $\rho_{\mathfrak{a}}$ and $\rho_{\mathfrak{b}}$ transform as the standard representations of $\mathfrak{a}$ and $\mathfrak{b}$, respectively.
\item $\rho_{\mathfrak{a}}$ and $\rho_{\mathfrak{b}}$ have the same parity, which can be even or odd.
\end{enumerate}
Note that if $\ga = \go\gs\gp_{1|2n}$, $\rho_{\ga}$ even means that $\rho_{\ga} \cong \mathbb C^{2n|1}$ as a vector superspace, whereas $\rho_{\ga}$ odd means that $\rho_{\ga} \cong \mathbb C^{1|2n}$. If $\gg = \go\gs\gp_{m|2n}$, we use the following convention for its dual Coxeter number $h^{\vee}$.
\begin{equation} \label{convention:osp}
h^\vee = \begin{cases} m-2n-2 & \qquad \text{type $B$} \\ \frac{2n+2-m}{2} & \qquad \text{type $C$} 
\end{cases} , \qquad \text{sdim} (\go\gs\gp_{m|2n})= \frac{(m-2n)(m-2n-1)}{2}.\end{equation}
In this notation, type $B$ (respectively $C$) means that the subalgebra $\gb \subseteq \gg$ is of type $B$ (respectively $C$), and the bilinear form on $\go\gs\gp_{m|2n}$ is normalized so that it coincides with the usual bilinear form on $\gb$.  The cases we need are the following. 
\begin{itemize}

\item{Case 1B}:  $\ \ \displaystyle \mathfrak{g} =\mathfrak{so}_{2n+2m+2},\quad \mathfrak{b} = \mathfrak{so}_{2m+1},\quad \mathfrak{a} = \mathfrak{so}_{2n+1},\qquad \rho_{\mathfrak{a}} \otimes \rho_{\mathfrak{b}}\ \text{even}$.

\smallskip

\item Case 1C: $\ \ \displaystyle\mathfrak{g} =\mathfrak{osp}_{2m+1|2n}, \quad \mathfrak{b} = \mathfrak{so}_{2m+1}, \quad \mathfrak{a} = \mathfrak{sp}_{2n},\qquad \rho_{\mathfrak{a}} \otimes \rho_{\mathfrak{b}}\ \text{odd}$.  

\smallskip

\item Case 1D: $\ \ \displaystyle\mathfrak{g}  =\mathfrak{so}_{2n+2m+1}, \quad \mathfrak{b} = \mathfrak{so}_{2m+1}, \quad \mathfrak{a} = \mathfrak{so}_{2n},\qquad \rho_{\mathfrak{a}} \otimes \rho_{\mathfrak{b}}\ \text{even}$.  
 
 \smallskip
 
 \item Case 1O:  $\ \ \displaystyle\mathfrak{g} =\mathfrak{osp}_{2m+2|2n}, \quad \mathfrak{b} = \mathfrak{so}_{2m+1}, \quad \mathfrak{a} = \mathfrak{osp}_{1|2n},\qquad \rho_{\mathfrak{a}} \otimes \rho_{\mathfrak{b}}\ \text{odd}$.  
 
 \smallskip
 
\item{Case 2B}:  $\ \ \displaystyle\mathfrak{g} =\mathfrak{osp}_{2n+1|2m}, \quad \mathfrak{b} = \mathfrak{sp}_{2m}, \quad \mathfrak{a} = \mathfrak{so}_{2n+1}, \qquad \rho_{\mathfrak{a}} \otimes \rho_{\mathfrak{b}}\ \text{odd}$.

\smallskip

\item Case 2C: $\ \ \displaystyle\mathfrak{g} =\mathfrak{sp}_{2n+2m}, \quad \mathfrak{b} = \mathfrak{sp}_{2m}, \quad \mathfrak{a} = \mathfrak{sp}_{2n}, \qquad \rho_{\mathfrak{a}} \otimes \rho_{\mathfrak{b}}\ \text{even}$.  

\smallskip

\item Case 2D:  $\ \ \displaystyle\mathfrak{g} =\mathfrak{osp}_{2n|2m}, \quad \mathfrak{b} = \mathfrak{sp}_{2m}, \quad \mathfrak{a} = \mathfrak{so}_{2n}, \qquad \rho_{\mathfrak{a}} \otimes \rho_{\mathfrak{b}}\ \text{odd}$.  

\smallskip

\item Case 2O:  $\ \ \displaystyle\mathfrak{g} =\mathfrak{osp}_{1|2n+2m}, \quad \mathfrak{b} = \mathfrak{sp}_{2m}, \quad \mathfrak{a} = \mathfrak{osp}_{1 | 2 n}, \qquad \rho_{\mathfrak{a}} \otimes \rho_{\mathfrak{b}}\ \text{even}$.  

\end{itemize}

Corresponding to \eqref{decompg}, we have an embedding $V^k(\gb) \otimes V^{\ell}(\ga) \hookrightarrow V^k(\gg)$, where the level $\ell$ is given as follows. 
 \begin{enumerate}
 \item In cases 1B, 1D, 2C, and 2O, $\ell = k$,
 \item In cases 1C and 1O,  $\ell=-\frac{k}{2}$,
 \item In cases 2B and 2D,  $\ell = -2k$.
 \end{enumerate}

Let $f_{\gb} \in \gg$ be the nilpotent element which is principal in $\mathfrak{b}$ and trivial in $\mathfrak{a}$. The corresponding $\cW$-algebras $\cW^k(\gg, f_{\gb})$ will be called {\it hook-type $\cW$-algebras} since they are analogous to the hook-type $\cW$-algebras of type $A$ introduced in \cite{CL3}. Let $d_{\mathfrak{a}} = \text{dim} \ \rho_{\mathfrak{a}}$ and $d_{\mathfrak{b}} = \text{dim} \ \rho_{\mathfrak{b}}$. In particular, $d_{\mathfrak{b}} = 2m+1$ if $\mathfrak{b} = \mathfrak{so}_{2m+1}$, and $d_{\mathfrak{b}} = 2m$ if $\mathfrak{b} = \mathfrak{sp}_{2m}$.

It follows from the decomposition \eqref{decompg} that in all cases, $\cW^{k}(\mathfrak{g}, f_{\gb})$ is of type
$$\cW \bigg(1^{\text{dim}\ \mathfrak{a}}, 2,4,\dots, 2m, \bigg(\frac{{d_{\mathfrak{b}}} + 1}{2} \bigg)^{d_{\mathfrak{a}}}\bigg).$$
The affine subalgebra is $V^t(\mathfrak{a})$ for some level $t$, which we describe below. The fields in weights $2,4,\dots, 2m$ are even and are invariant under $\mathfrak{a}$. The $d_{\mathfrak{a}}$ fields in weight $\frac{d_{\mathfrak{b}} + 1}{2}$ can be even or odd, and they transform as the standard $\mathfrak{a}$-module. By \cite[Cor. 3.5]{CL3}, we may assume without loss of generality that the fields in weights $2,4,\dots, 2m$ lie in the affine coset $\text{Com}(V^t(\ga), \cW^k(\gg,f_{\gb}))$, and that the $d_{\ga}$ fields in weight $\frac{d_{\mathfrak{b}} + 1}{2}$ are primary for the action of $V^t(\ga)$. 

Write $\gg = \bigoplus_d \rho_d$, where $\rho_d$ denotes the $d$-dimensional representation of the $\gs\gl_2$-triple $\{f,x,e\}$. Then each $\rho_d$ gives rise to a field of conformal weight $\frac{d+1}{2}$ in $\cW^k(\gg,f_{\gb})$, and the corresponding ghosts give rise to a central charge contribution 
\begin{equation} \label{ghostcontrib}
c_d = -\frac{(d-1)(d^2-2d-1)}{2}.\end{equation}
The central charge $c$ of $\cW^k(\gg, f_{\gb})$ is then computed to be
\begin{equation}
\begin{split}
c &= c_{\gg} + c_{\text{dilaton}} + c_{\text{ghost}}, \\
c_{\gg} &=     \frac{k\ \text{sdim} \, \gg}{ k+ h^\vee_{\gg}},   \\
c_{\text{dilaton}} &=  -k  \times \begin{cases}   2m(m+1)(2m+1) & \qquad  \gb = \gs\go_{2m+1}
\\ 2m(4m^2-1) &\qquad \gb = \gs\gp_{2m}, \\
\end{cases}  
\\ c_{\text{prin}}  & =   6m^2 -8m^4,
\\ c_{\text{ghost}} &=  c_{\text{prin}}  + sd_{\ga} c_{d_{\gb}}. 
\end{split}
\end{equation}
Here the formula for $c_{d_{\gb}}$ is obtained by specializing \eqref{ghostcontrib}.
Finally, the level $t$ of the affine subalgebra $V^t(\ga) \subseteq \cW^k(\gg,f_{\gb})$ is given by 
\begin{equation}
t = \ell \pm (d_{\gb} -1) \times \begin{cases} 1   &\qquad  \ga = \gs\go_{2n}, \ \gs\go_{2n+1}, \\  \frac{1}{2} &\qquad \ga = \gs\gp_{2n},  \ \go\gs\gp_{1|2n}. \end{cases}
\end{equation}
where we have $+$ if $\rho_{\ga}\otimes \rho_{\gb}$ is even and $-$ if it is odd. For $\ga =\go\gs\gp_{1|2n}$, we must replace $d_{\ga}$ by $d_{\ga} -2=sd_{\ga}$, where $sd_{\ga}$ means superdimension. Recall that in this case, $\rho_{\ga}$ is called even (respectively odd) if the $2n$-dimensional standard module for $\gs\gp_{2n}$ is even (respectively odd). We will always replace $k$ with the {\it critically shifted level $\psi = k+h^{\vee}$}, where $h^{\vee}$ denotes the dual Coxeter number of $\gg$. 
We now describe the examples we need in more detail.

\subsection{Case 1B}  For $\gg = \mathfrak{so}_{2n+2m+2}$, we have $\psi = k + 2n+2m$. We define
$$\cW^{\psi}_{1B}(n,m): = \cW^k(\mathfrak{so}_{2n+2m+2}, f_{\mathfrak{so}_{2m+1}}),$$ which has affine subalgebra $V^{\psi-2n}(\mathfrak{so}_{2n+1})$. We consider the following extreme cases.

\begin{enumerate} 
\item If $m\geq 1$ and $n=0$, $f_{\gs\go_{2m+1}}$ is also the principal nilpotent in $\gs\go_{2n+2}$, so 
$\cW^{\psi}_{1B}(0,m) = \cW^{\psi-2m}(\mathfrak{so}_{2m+2})$.

\item For $m=0$ and $n\geq 1$, $f_{\mathfrak{so}_{1}}\in \gs\go_{2n+2}$ is the zero nilpotent, so $\cW^{\psi}_{1B}(n,0) = V^{\psi-2n}(\gs\go_{2n+2})$.

\item If $m = n = 0$, $\cW^{\psi}_{1B}(0,0)= V^{\psi}(\gs\go_{2})$, which is just the rank one Heisenberg algebra $\cH(1)$.

\end{enumerate}

\subsection{Case 1C} For $\gg = \mathfrak{osp}_{2m+1|2n}$, we have $\psi =  k + 2 m - 2 n-1$. We define
$$\cW_{1C}^{\psi}(n,m) :=  \cW^{k}(\mathfrak{osp}_{2m+1|2n}, f_{\mathfrak{so}_{2m+1}}),$$ which has affine subalgebra $V^{-\psi/2 - n - 1/2}(\gs\gp_{2n})$. Here we are using the convention \eqref{convention:osp} that $\go\gs\gp_{2m+1|2n}$ has dual Coxeter number $2m-2n-1$. 
\begin{enumerate}

\item If $m \geq 1$ and $n=0$, $\mathfrak{g} =\mathfrak{so}_{2m+1}$ and $f_{\mathfrak{so}_{2m+1}}$ is the principal nilpotent, so 
$\cW_{1C}^{\psi}(0,m)  = \cW^{\psi -2m +1}(\gs\go_{2m+1})$.

\item If $m=0$ and $n\geq 1$, $\gg = \mathfrak{osp}_{1|2n}$ and $f_{\mathfrak{so}_{1}} = 0$, so 
$\cW_{1C}^{\psi}(n,0) = V^{\psi +2n+1}(\mathfrak{osp}_{1|2n})$. Note that even for $m=0$, we use the convention \eqref{convention:osp} that $\go\gs\gp_{1|2n}$ has dual Coxeter number $-2n-1$. With this choice, we have the embedding $V^{-\psi-n-1/2}(\gs\gp_{2n}) \rightarrow V^{\psi +2n+1}(\mathfrak{osp}_{1|2n})$. 

\item If $m = n = 0$, $\cW_{1C}^{\psi}(0,0) = \mathbb{C}$.

\end{enumerate}

\subsection{Case 1D} For $\gg =\mathfrak{so}_{2n+2m+1}$, we  have $\psi = k + 2n+2m-1$. We define 
$$\cW_{1D}^{\psi}(n,m) := \cW^{k}(\mathfrak{so}_{2n+2m+1}, f_{\mathfrak{so}_{2m+1}}),$$ which has affine subalgebra $V^{\psi-2n+1}(\mathfrak{so}_{2n})$. 

\begin{enumerate}

\item If $m\geq 1$ and $n=0$, $\gg = \gs\go_{2m+1}$ and $f_{\mathfrak{so}_{2m+1}}$ is principal, so
$\cW_{1D}^{\psi}(0,m) = \cW^{\psi -2m +1}(\gs\go_{2m+1})$.

\item  If $m=0$ and $n\geq 1$, $\gg = \gs\go_{2n+1}$ and $f_{\gs\go_1}=0$, so $\cW_{1D}^{\psi}(n,0) = V^{\psi -2n +1}(\gs\go_{2n+1})$.
 
\item If $m\geq 1$ and $n=1$, $\gg = \gs\go_{2m+3}$ and $f_{\mathfrak{so}_{2m+1}} \in  \gs\go_{2m+3}$ is the subregular nilpotent, so
$\cW_{1D}^{\psi}(1,m) = \cW^{\psi -2m-1}(\gs\go_{2m+3}, f_{\text{subreg}})$. In this case, the affine subalgebra $V^{\psi-1}(\mathfrak{so}_{2})$ is just $\cH(1)$.

\item If $m = n = 0$, $\cW_{1D}^{\psi}(0,0) = \mathbb{C}$.
\end{enumerate}

\subsection{Case 1O}  For $\gg = \mathfrak{osp}_{2m+2|2n}$, we have $\psi = k + 2 m - 2 n$. We define 
$$\cW_{1O}^{\psi}(n,m) = \cW^{k}(\mathfrak{osp}_{2m+2|2n}, f_{\mathfrak{so}_{2m+1}}),$$ which has affine subalgebra
$V^{- \psi/2 -n}(\go\gs\gp_{1|2n})$. We are using the convention \eqref{convention:osp} that $\go\gs\gp_{2m+2|2n}$ has dual Coxeter number $2m-2n$, whereas the dual Coxeter number of $\go\gs\gp_{1|2n}$ is taken to be $\frac{2n+1}{2}$.
\begin{enumerate}

\item If $m \geq 1$ and $n = 0$, $\gg = \gs\go_{2m+2}$ and $f_{\gs\go_{2m+1}}$ is the principal nilpotent, so $\cW_{1O}^{\psi}(0,m) = \cW^{\psi -2m}(\gs\go_{2m+2})$.

\item If $m=0$ and $n\geq 1$, we have $\gg = \mathfrak{osp}_{2|2n}$ and $f_{\gs\go_{1}} = 0$, so 
$\cW_{1O}^{\psi}(n,0) = V^{\psi +2n}(\mathfrak{osp}_{2|2n})$. As above, even for $m=0$, we use the convention \eqref{convention:osp} that $\go\gs\gp_{2|2n}$ has dual Coxeter number $-2n$. We then have an embedding
$$V^{-\psi/2-n}(\go\gs\gp_{1|2n}) \rightarrow V^{\psi+2n}(\go\gs\gp_{2|2n}),$$ where the dual Coxeter number of $\go\gs\gp_{1|2n}$ is chosen to be $\frac{2n+1}{2}$.

\item If $m = n = 0$, $\cW_{1O}^{\psi}(0,0) =  V^{\psi}(\mathfrak{osp}_{2|0}) = \cH(1)$.

\end{enumerate}

\subsection{Case 2B} For $\gg = \mathfrak{osp}_{2n+1|2m}$, we have $\psi =  k + m - n+1/2$. For $m\geq 1$ and $n\geq 0$, we define 
$$\cW_{2B}^{\psi}(n,m) := \cW^{k}(\mathfrak{osp}_{2n+1|2m}, f_{\mathfrak{sp}_{2m}}),$$ which has affine subalgebra $V^{-2 \psi - 2 n + 2}(\gs\go_{2n+1})$. 
We are using the convention \eqref{convention:osp} that $\mathfrak{osp}_{2n+1|2m}$ has dual Coxeter number $\frac{2m-2n+1}{2}$.
\begin{enumerate}
\item If $m \geq 1$ and $n=0$, $\gg = \mathfrak{osp}_{1|2m}$ and $f_{\gs\gp_{2m}}$ is the principal nilpotent, so $\cW_{2B}^{\psi}(0,m)$ is the principal $\cW$-superalgebra $\cW^{\psi -m -1/2}(\go\gs\gp_{1|2m})$. 

\item If $m = 1$ and $n\geq 1$, $\gg =  \mathfrak{osp}_{2n+1|2}$ and $f_{\mathfrak{sp}_{2}}$ is the minimal nilpotent, so 
$$\cW_{2B}^{\psi}(n,1) = \cW^{\psi +n -3/2}(\mathfrak{osp}_{2n+1|2}, f_{\text{min}}).$$

\item If $m = 0$ and $n\geq 1$, we need a different definition. We set $$\cW_{2B}^{\psi}(n,0) := V^{-2 \psi - 2 n + 1}(\gs\go_{2n+1}) \otimes \cF(2n+1).$$ Here $\cF(2n+1)$ is the rank $2n+1$ free fermion algebra, which has an action of $L_1(\gs\go_{2n+1})$. Therefore $\cW_{2B}^{\psi}(n,0)$ has a diagonal action of $V^{-2 \psi - 2 n + 2}(\gs\go_{2n+1})$.

\item If $m = n = 0$, we define $\cW_{2B}^{\psi}(0,0) = \cF(1)$.

\end{enumerate}

\subsection{Case 2C} For $\gg = \mathfrak{sp}_{2n+2m}$, we have $\psi = k + n+m+1$. For $m\geq 1$ and $n\geq 0$, we define 
$$\cW_{2C}^{\psi}(n,m): = \cW^{k}(\mathfrak{sp}_{2n+2m}, f_{\mathfrak{sp}_{2m}}),$$ which has affine subalgebra 
$V^{\psi-n-3/2}(\gs\gp_{2n})$.

\begin{enumerate}
\item If $m \geq 1$ and $n = 0$, $\gg = \gs\gp_{2m}$ and $f_{\gs\gp_{2m}}$ is the principal nilpotent, so $\cW_{2C}^{\psi}(0,m) = \cW^{\psi-m-1}(\gs\gp_{2m})$.

\item If $m=1$ and $n\geq 1$, $\gg = \mathfrak{sp}_{2n+2}$ and $f_{\gs\gp_{2m}}$ is the minimal nilpotent. Then 
$$\cW_{2C}^{\psi}(n,1)  = \cW^{\psi - n-2}(\gs\gp_{2n+2}, f_{\text{min}}).$$

\item If $m = 0$ and $n\geq 1$, we define
$$\cW_{2C}^{\psi}(n,0) := V^{\psi-n-1} (\gs\gp_{2n}) \otimes \cS(n).$$ Here $\cS(n)$ is the rank $n$ $\beta\gamma$ system, which has an action of $L_{-1/2}(\gs\gp_{2n})$. Therefore $\cW_{2C}^{\psi}(n,0)$ has a diagonal action of $V^{\psi-n-3/2}(\gs\gp_{2n})$.

\item If $m = n = 0$, we define $\cW_{2C}^{\psi}(0,0) = \mathbb{C}$.

\end{enumerate}

\subsection{Case 2D} For $\gg =\mathfrak{osp}_{2n|2m}$, we have $\psi = k + m - n+1$. For $m\geq 1$ and $n\geq 0$, we define 
$$\cW_{2D}^{\psi}(n,m) := \cW^{k}(\mathfrak{osp}_{2n|2m}, f_{ \mathfrak{sp}_{2m}}),$$ which has affine subalgebra
$V^{- 2 \psi - 2 n +3}(\gs\go_{2n})$. We are using the convention \eqref{convention:osp} that  $\mathfrak{osp}_{2n|2m}$ has dual Coxeter number $m-n+1$.

\begin{enumerate}

\item If $m\geq 1$ and $n=0$, $\gg = \gs\gp_{2m}$ and $f_{\gs\gp_{2m}}$ is the principal nilpotent. Then $\cW_{2D}^{\psi}(0,m)= \cW^{\psi-m-1}(\gs\gp_{2m})$.

\item If $m\geq 1$ and $n = 1$, $\gg =\mathfrak{osp}_{2|2m}$ and $f_{\gs\gp_{2m}}$ is the principal nilpotent, so $\cW_{2D}^{\psi}(1,m)$ is the principal $\cW$-superalgebra $\cW^{\psi - m}(\mathfrak{osp}_{2|2m})$. Note that in this case, the affine subalgebra $V^{- 2 \psi +1 }(\gs\go_{2})$ is just the Heisenberg algebra $\cH(1)$.

\item If $m = 1$ and $n\geq 1$, $\gg =\mathfrak{osp}_{2n|2}$, and $f_{\gs\gp_2}$ is the minimal nilpotent so $\cW_{2D}^{\psi}(n,1) = \cW^{\psi +n-2}(\mathfrak{osp}_{2n|2}, f_{\text{min}})$.

\item If $m = 0$ and $n\geq 1$, we define 
$$ \cW_{2D}^{\psi}(n,0): = V^{- 2 \psi - 2 n +2}(\gs\go_{2n}) \otimes \cF(2n).$$ Here $\cF(2n)$ is the rank $2n$ free fermion algebra, which admits an action of $L_1(\gs\go_{2n})$. Then $\cW_{2D}^{\psi}(n,0)$ admits a diagonal action of $V^{- 2 \psi - 2 n +3}(\gs\go_{2n})$. 

\item If $m = n = 0$, we define $\cW_{2D}^{\psi}(0,0) = \mathbb{C}$.

\end{enumerate}

\subsection{Case 2O} For $\gg = \mathfrak{osp}_{1|2n+2m}$, we have $\psi = k + m + n+1/2$. For $m\geq 1$ and $n\geq 0$, we define  
$$\cW_{2O}^{\psi}(n,m) := \cW^{k}(\mathfrak{osp}_{1|2n+2m}, f_{\mathfrak{sp}_{2m}}),$$ which has affine subalgebra $V^{\psi -n-1}(\go\gs\gp_{1|2n})$.
We are using the convention \eqref{convention:osp} that  $\mathfrak{osp}_{1|2n+2m}$ has dual Coxeter number $\frac{2m+2n+1}{2}$, and the dual Coxeter number of $\go\gs\gp_{1|2n}$ is taken to be $\frac{2n+1}{2}$.
\begin{enumerate}

\item If $m\geq 1$ and $n=0$, $\gg = \mathfrak{osp}_{1|2m}$ and $f_{\gs\gp_{2m}}$ is principal. Then $\cW_{2O}^{\psi}(0,m)= \cW^{\psi -m -1/2}(\go\gs\gp_{1|2m})$.

\item If $m = 1$ and $n\geq 1$, $\gg = \mathfrak{osp}_{1|2n+2}$, and $f_{\gs\gp_2}$ is the minimal nilpotent, so $\cW_{2O}^{\psi}(n,1) = \cW^{\psi -n -3/2}(\mathfrak{osp}_{1|2n+2}, f_{\text{min}})$.

\item If $m = 0$ and $n\geq 1$, we define 
$$ \cW_{2O}^{\psi}(n,0) : = V^{\psi-n-1/2}(\mathfrak{osp}_{1|2n}) \otimes \cS(n) \otimes \cF(1).$$ Recall that $ \cS(n) \otimes \cF(1)$ admits an action of $L_{-1/2}(\go\gs\gp_{1|2n})$, so $\cW_{2O}^{\psi}(n,0)$ admits a diagonal action of $V^{\psi -n-1}(\go\gs\gp_{1|2n})$.

\item If $m = n = 0$, we define $\cW_{2O}^{\psi}(0,0) = \cF(1)$.

\end{enumerate}

\subsection{Affine cosets} \label{subsec:affinecoset} Our main objects of study are the {\it affine cosets} of these $\cW$-algebras. In fact, in the case where $\ga$ is either $\gs\go_{2n}$, $\gs\go_{2n+1}$, or $\go\gs\gp_{1|2n}$, the action of $\ga$ integrates to an action of the corresponding connected (super)group $\text{SO}_{2n}$, $\text{SO}_{2n+1}$, or $\text{SOsp}_{1|2n}$, and this action further extends to the double cover $\text{O}_{2n}$, $\text{O}_{2n+1}$, or $\text{Osp}_{1|2n}$. In these cases, we need to take the $\mathbb{Z}_2$-orbifold of the corresponding affine coset. Here is the list of these algebras.

\smallskip
\noindent {\bf Case 1B}:
\begin{equation}  \cC^{\psi}_{1B}(n,m) = \left\{
\begin{array}{ll}
\text{Com}(V^{\psi-2n}(\mathfrak{so}_{2n+1}), \cW^{\psi}_{1B}(n,m))^{\mathbb{Z}_2} & m \geq 1,\  n \geq 1,
\smallskip
\\  \text{Com}(V^{\psi-2n}(\mathfrak{so}_{2n+1}), V^{\psi-2n}(\gs\go_{2n+2}))^{\mathbb{Z}_2} & m =0, \ n\geq 1,
\smallskip
\\ \cW^{\psi -2m}(\gs\go_{2m+2})^{\mathbb{Z}_2} & m \geq 1, \ n = 0,
\smallskip
\\ \cH(1)^{\mathbb{Z}_2} & m = n = 0.
\\ \end{array} 
\right.
\end{equation}
In all cases, $\cC^{\psi}_{1B}(n,m)$ has central charge
$$c = -\frac{( \psi + m \psi -m -n -1) (2 m \psi - 2 m - 2 n -1) ( \psi + 2 m \psi -2 m - 2 n)}{(\psi -1) \psi}.$$

\smallskip
\noindent {\bf Case 1C}:
\begin{equation}   \cC^{\psi}_{1C}(n,m) = \left\{
\begin{array}{ll}
 \text{Com}(V^{-\psi/2 - n - 1/2}(\gs\gp_{2n}), \cW_{1C}^{\psi}(n,m))
& m \geq 1,\  n \geq 1,
\smallskip
\\  \text{Com}(V^{-\psi/2 - n - 1/2}(\gs\gp_{2n}), V^{\psi +2n+1}(\mathfrak{osp}_{1|2n}))& m =0, \ n\geq 1,
 \smallskip
\\  \cW^{\psi-2m+1}(\gs\go_{2m+1})& m \geq 1, \ n = 0,
\smallskip
\\ \mathbb{C} & m = n = 0.
\\ \end{array} 
\right.
\end{equation}
If we define the central charge of $\mathbb{C}$ to be zero, then in all cases, $\cC^{\psi}_{1C}(n,m)$ has central charge 
$$c = -\frac{(-m + n + m \psi) (1 - 2 m + 2 n + \psi + 2 m \psi) (-1 - 2 m + 2 n + 2 \psi + 2 m \psi)}{( \psi -1) \psi}.$$

\smallskip
\noindent {\bf Case 1D}:
\begin{equation}  \cC^{\psi}_{1D}(n,m) = \left\{
\begin{array}{ll}
\text{Com}(V^{\psi-2n+1}(\mathfrak{so}_{2n}), \cW_{1D}^{\psi}(n,m) )^{\mathbb{Z}_2}
 & m \geq 1,\  n > 1,
 \smallskip
 \\ \text{Com}(\cH(1),  \cW^{\psi -2m-1}(\gs\go_{2m+3}, f_{\text{subreg}}))^{\mathbb{Z}_2}
 & m \geq 1,\  n = 1,
 \smallskip
 \\ \text{Com}(V^{\psi-2n+1}(\mathfrak{so}_{2n}), V^{\psi-2n+1}(\gs\go_{2n+1}))^{\mathbb{Z}_2} & m =0, \ n\geq 1,
 \smallskip
\\ \cW^{\psi-2m+1}(\gs\go_{2m+1}) & m \geq 1, \ n = 0,
\smallskip
\\ \mathbb{C} & m = n = 0.
\\ \end{array} 
\right.
\end{equation}
 In all cases, $\cC^{\psi}_{1D}(n,m)$ has central charge 
$$c =-\frac{(-m - n + m \psi) (1 - 2 m - 2 n + \psi + 2 m \psi) (-1 - 2 m - 2 n + 2 \psi + 2 m \psi)}{( \psi -1) \psi}.$$

\smallskip
\noindent {\bf Case 1O}:
\begin{equation}   \cC^{\psi}_{1O}(n,m) = \left\{
\begin{array}{ll}
\text{Com}(V^{- \psi/2 -n}(\go\gs\gp_{1|2n}), \cW_{1O}^{\psi}(n,m))^{\mathbb{Z}_2}
 & m \geq 1,\  n \geq 1,
 \smallskip
\\  \text{Com}(V^{- \psi/2 -n}(\go\gs\gp_{1|2n}), V^{\psi+2n}(\mathfrak{osp}_{2|2n})) & m =0, \ n\geq 1,
 \smallskip
\\ \cW^{\psi-2m}(\gs\go_{2m+2})^{\mathbb{Z}_2} & m \geq 0, \ n = 0,
\smallskip
\\ \cH(1)^{\mathbb{Z}_2} & m = n = 0.
\\ \end{array} 
\right.
\end{equation}
 In all cases, $\cC^{\psi}_{1O}(n,m)$ has central charge 
$$c = -\frac{(-1 - m + n + \psi + m \psi) (-1 - 2 m + 2 n + 2 m \psi) (-2 m + 2 n + \psi + 2 m \psi)}{(\psi -1) \psi}.$$

\smallskip
\noindent {\bf Case 2B}:
\begin{equation}   \cC^{\psi}_{2B}(n,m) = \left\{
\begin{array}{ll}
 \text{Com}(V^{-2 \psi - 2 n + 2}(\gs\go_{2n+1}), \cW_{2B}^{\psi}(n,m) )^{\mathbb{Z}_2}
 & m \geq 1,\  n \geq 1,
 \smallskip
\\ \text{Com}(V^{-2 \psi - 2 n + 2}(\gs\go_{2n+1}), V^{-2 \psi - 2 n + 1}(\gs\go_{2n+1}) \otimes \cF(2n+1))^{\mathbb{Z}_2} & m =0, \ n\geq 1,
 \smallskip
\\ \cW^{\psi-m-1/2}(\go\gs\gp_{1|2m})^{\mathbb{Z}_2} & m \geq 1, \ n = 0,
\smallskip
\\ \cF(1)^{\mathbb{Z}_2} & m = n = 0.
\\ \end{array} 
\right.
\end{equation}
 In all cases, $\cC^{\psi}_{2B}(n,m)$ has central charge 
$$ c = -\frac{(-m + n - \psi + 2 m \psi) (1 - 2 m + 2 n + 4 m \psi) (-1 - 2 m + 2 n + 2 \psi + 4 m \psi)}{2 \psi (2 \psi -1)}.$$

\smallskip
\noindent {\bf Case 2C}:
\begin{equation}   \cC^{\psi}_{2C}(n,m) = \left\{
\begin{array}{ll}
\text{Com}(V^{\psi-n-3/2}(\gs\gp_{2n}), \cW_{2C}^{\psi}(n,m))
 & m \geq 1,\  n \geq 1,
 \smallskip
\\ \text{Com}(V^{\psi-n-3/2}(\gs\gp_{2n}), V^{\psi-n-1} (\gs\gp_{2n}) \otimes \cS(n)) & m =0, \ n\geq 1,
 \smallskip
\\ \cW^{\psi -m-1}(\gs\gp_{2m})  & m \geq 1, \ n = 0,
\smallskip
\\ \mathbb{C} & m = n = 0.
\\ \end{array} 
\right.
\end{equation}
 In all cases, $\cC^{\psi}_{2C}(n,m)$ has central charge 
$$c = -\frac{(-m - n + 2 m \psi) (-1 - m - n + \psi + 2 m \psi) (-1 - 2 m - 2 n - 2 \psi + 4 m \psi)}{\psi (2 \psi -1)}.$$

\smallskip
\noindent {\bf Case 2D}:
\begin{equation}   \cC^{\psi}_{2D}(n,m) = \left\{
\begin{array}{ll}
 \text{Com}(V^{- 2 \psi - 2 n +3}(\gs\go_{2n}), \cW_{2D}^{\psi}(n,m) )^{\mathbb{Z}_2}
 & m \geq 1,\  n > 1,
 \smallskip
\\  \text{Com}(\cH(1), \cW^{\psi - m}(\mathfrak{osp}_{2|2m}) )^{\mathbb{Z}_2}
 & m \geq 1,\  n = 1,
 \smallskip
\\ \text{Com}(V^{- 2 \psi - 2 n +3}(\gs\go_{2n}), V^{- 2 \psi - 2 n +2}(\gs\go_{2n}) \otimes \cF(2n))^{\mathbb{Z}_2} & m =0, \ n\geq 1,
 \smallskip
\\ \cW^{\psi -m-1}(\gs\gp_{2m})  & m \geq 1, \ n = 0,
\smallskip
\\ \mathbb{C} & m = n = 0.
\\ \end{array} 
\right.
\end{equation}
 In all cases, $\cC^{\psi}_{2D}(n,m)$ has central charge
$$ c = -\frac{(-m + n + 2 m \psi) (-1 - m + n + \psi + 2 m \psi) (-1 - 2 m + 2 n - 2 \psi + 4 m \psi)}{\psi (2 \psi -1)}.$$

\smallskip
\noindent {\bf Case 2O}:
\begin{equation}  \cC^{\psi}_{2O}(n,m) = \left\{
\begin{array}{ll}
\text{Com}(V^{\psi -n-1}(\go\gs\gp_{1|2n}), \cW_{2O}^{\psi}(n,m) )^{\mathbb{Z}_2} & m \geq 1,\  n \geq 1,
 \smallskip
\\ \text{Com}(V^{\psi -n-1}(\go\gs\gp_{1|2n}), V^{\psi-n-1/2}(\mathfrak{osp}_{1|2n}) \otimes \cS(n) \otimes \cF(1))^{\mathbb{Z}_2} & m =0, \ n\geq 1,
 \smallskip
\\ \cW^{\psi -m -1/2}(\go\gs\gp_{1|2m})^{\mathbb{Z}_2}  & m \geq 1, \ n = 0,
\smallskip
\\ \cF(1)^{\mathbb{Z}_2} & m = n = 0.
\\ \end{array} 
\right.
\end{equation}
 In all cases, $\cC^{\psi}_{2O}(n,m)$ has central charge
$$c = -\frac{(-m - n - \psi + 2 m \psi) (1 - 2 m - 2 n + 4 m \psi) (-1 - 2 m - 2 n + 2 \psi + 4 m \psi)}{2 \psi (2 \psi - 1)}.$$

We shall regard $\psi$ as a formal variable and the algebras $\cC^{\psi}_{iX}(n,m)$ for $i = 1,2$ and $X = B,C,D,O$, as one-parameter vertex algebras with parameter $\psi$. If $\psi_0 \in \mathbb{C}$ is a complex number, $\cC^{\psi_0}_{iX}(n,m)$ will always denote the specialization of $\cC^{\psi}_{iX}(n,m)$ to the value $\psi = \psi_0$. For generic values of $\psi_0$, this coincides with the actual coset, although it can be a proper subalgebra of the coset if $\psi_0$ is a negative rational number.

\begin{thm} For $i=1,2$ and $X = B,C,D,O$, $\cC^{\psi}_{iX}(n,m)$ is simple as a one-parameter vertex algebra; equivalently this holds for generic values of $\psi$.
\end{thm}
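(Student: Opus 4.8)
The plan is to reduce simplicity for generic $\psi$ to simplicity of the large-level limit of $\cC^{\psi}_{iX}(n,m)$, and then to identify that limit with an orbifold of free field algebras by a reductive group.

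First I would set up $\cC^{\psi}_{iX}(n,m)$ as a deformable family in the sense of \cite{CL3, KL}: after inverting the finitely many linear factors in $\psi$ occurring in the denominators of the central charge and of the level $t$ of the affine subalgebra, $\cW^{\psi}_{iX}(n,m)$ is a free module over the resulting localization of $\mathbb{C}[\psi]$, the affine subalgebra $V^{t}(\ga)$ (and, for $X=B,D,O$, the $\mathbb{Z}_2$-action) deform along with it, and by Section \ref{sec:largelevel} the coset $\cC^{\psi}_{iX}(n,m)$ has strong generating type independent of $\psi$. For such a family it is standard (see \cite{CL3, KL}) that if the suitably rescaled $\psi\to\infty$ limit $\cC^{\infty}_{iX}(n,m)$ is simple as a vertex algebra over $\mathbb{C}$, then $\cC^{\psi}_{iX}(n,m)$ is simple as a one-parameter vertex algebra, which is equivalent to simplicity of $\cC^{\psi_0}_{iX}(n,m)$ for all but finitely many $\psi_0 \in \mathbb{C}$. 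So it suffices to analyze this limit.

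Next I would identify $\cC^{\infty}_{iX}(n,m)$ from the free field limits of $\cW^{\psi}_{iX}(n,m)$ computed in Section \ref{sec:largelevel}. Under the appropriate rescaling of generators, the weight-one affine subalgebra $V^{t}(\ga)$ degenerates to a Heisenberg algebra while the $\ga$-action by zero modes persists; the even fields in weights $2,4,\dots,2m$ degenerate to the large-level limit $\cW^{\infty}(\gb)$ of the principal $\cW$-algebra of $\gb$, which is a tensor product of copies of the algebras $\cO_{\text{ev}}$ of Section \ref{sec:VOA} on which $\ga$ acts trivially; and the $d_{\ga}$ fields in weight $\tfrac{d_{\gb}+1}{2}$ degenerate to one of $\cO_{\text{ev}}(d_{\ga},d_{\gb})$, $\cS_{\text{ev}}(n,d_{\gb})$, $\cS_{\text{odd}}(n,d_{\gb})$, $\cO_{\text{odd}}(d_{\ga},d_{\gb})$, on which $\ga$ acts through its standard representation. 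Taking the coset, i.e.\ the $\ga$-invariants (with $\ga$ enlarged to $\ga\rtimes\mathbb{Z}_2$ in cases $B,D,O$, and incorporating the auxiliary free field factors in the degenerate cases $n=0$), yields
$$\cC^{\infty}_{iX}(n,m) \;\cong\; \cW^{\infty}(\gb)\otimes \cV_{\text{ff}}^{\,G}, \qquad G\in\{\,\text{Sp}_{2n},\ \text{O}_{2n},\ \text{O}_{2n+1},\ \text{Osp}_{1|2n}\,\},$$ where $\cV_{\text{ff}}$ is the relevant free field algebra and $G$ its reductive symmetry group commuting with $\cW^{\infty}(\gb)$.

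Finally, each factor is simple: $\cW^{\infty}(\gb)$ is a tensor product of the simple algebras $\cO_{\text{ev}}(1,4i)$, the free field algebras of Section \ref{sec:VOA} are simple with reductive full automorphism group, the orbifold of a simple vertex superalgebra by a reductive group of automorphisms is again simple (cf.\ \cite{DM}, \cite{CL3}), and a tensor product of simple vertex superalgebras with finite-dimensional graded pieces is simple; hence $\cC^{\infty}_{iX}(n,m)$ is simple, which finishes the argument. In the degenerate cases $n=0$ with $m\geq 1$, where $\cC^{\psi}_{iX}(n,m)$ is itself a principal $\cW$-(super)algebra or a $\mathbb{Z}_2$-orbifold of one, generic simplicity follows directly from the well-known generic simplicity of principal $\cW$-(super)algebras together with quantum Galois theory. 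The main obstacle is the second step: one must verify that the $\psi\to\infty$ limit is taken with a rescaling for which all OPEs remain finite and the limit has the same graded character as $\cC^{\psi}_{iX}(n,m)$ --- equivalently, that no generators collapse --- and then check this identification uniformly over the four families and their $\mathbb{Z}_2$-orbifold, super, and $n=0$ degenerations, which is precisely what the free field limit computations of Section \ref{sec:largelevel} supply.
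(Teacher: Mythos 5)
Your proposal is correct in outline but takes a genuinely different route from the paper. The paper's own proof is essentially by citation: in every case where $\cW^{\psi}_{iX}(n,m)$ is a quantum Hamiltonian reduction (all $n+m\geq 1$ when $i=1$, and $m\geq 1$ when $i=2$), generic simplicity of $\cW^{\psi}_{iX}(n,m)$ and of its affine coset is parts (1) and (2) of \cite[Thm. 3.6]{CL3} (established there via generic complete reducibility/simplicity of the reduction, not via limits); the non-reduction cases $\cW^{\psi}_{2X}(n,0)$ are handled by \cite[Prop. 5.4]{CGN}; and the passage to the $\mathbb{Z}_2$-orbifold in types $B,D,O$ is \cite{DLM}. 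You instead deduce simplicity of the one-parameter family from simplicity of its $\psi\to\infty$ limit, identified through the computations of Section \ref{sec:largelevel} with $\big(\bigotimes_{i}\cO_{\text{ev}}(1,4i)\big)\otimes \cA^{G}$ for a simple free field algebra $\cA$ and reductive $G$. The reduction step is sound even though it is not stated verbatim in \cite{CL3,KL}: a nonzero proper graded ideal of the generic-fiber algebra meets the $F_K$-lattice nontrivially, a suitable rescaling by a power of $\kappa$ makes its image in the limit nonzero, and that image is a proper graded ideal of the limit; so simplicity of the limit forces simplicity over the function field, hence for generic $\psi$. What your approach buys is independence from \cite[Thm. 3.6]{CL3} and a uniform treatment of all eight families by classical invariant-theoretic input; what it costs is reliance on the full machinery of Section \ref{sec:largelevel} (no circularity, but the theorem is stated before that section) and on quantum Galois theory for each group that occurs.

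Two points need repair before this is a proof. First, simplicity of the $\text{Osp}_{1|2n}$-orbifold in cases $1O$ and $2O$ is not covered by the sources you cite: \cite{DM} and \cite{DLM} concern ordinary compact groups acting on vertex operator algebras, and the orbifolds appearing in \cite{CL3} are by ordinary groups. You would need the super analogue, which can be extracted from complete reducibility of the relevant finite-dimensional $\text{Osp}_{1|2n}$-modules as in the proof of Theorem \ref{thm:ospcosetlimit} (whose part (3), applied to the vacuum multiplicity space, in effect already gives generic simplicity of the un-orbifolded coset, after which \cite{DLM} handles the residual $\mathbb{Z}_2$); as written, your citation does not support these two families. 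Second, the degenerate cases requiring the auxiliary free field factors are $i=2$, $m=0$ (where $\cW^{\psi}_{2X}(n,0)$ is defined as an affine algebra tensored with $\cF$ or $\cS(n)$ rather than as a reduction), not $n=0$: the $n=0$ algebras are principal $\cW$-(super)algebras already covered by your main argument, while for $i=2$, $m=0$ you must check separately that the limit of the diagonal coset is the $G$-orbifold of the auxiliary free field factor alone. Both issues are fixable, but they are the places where the write-up currently falls short.
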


\begin{proof} In all cases where $\cW^{\psi}_{iX}(n,m)$ is a quantum Hamiltonian reduction, namely the cases where $n+m \geq 1$, and $m\geq 1$ when $i=2$, the simplicity of $\cW^{\psi}_{iX}(n,m)$ and of its affine coset follows from parts (1) and (2) of \cite[Thm. 3.6]{CL3}. In the cases $X = B,D,O$ where $\cC^{\psi}_{iX}(n,m)$ is the $\mathbb{Z}_2$-orbifold of the affine coset, the simplicity follows from \cite{DLM}.

In the cases $\cW^{\psi}_{2X}(n,0)$, the simplicity of the affine coset follows from \cite[Prop. 5.4]{CGN}, and in the cases $X = B,D,O$, the simplicity of $\cC^{\psi}_{2X}(n,0)$ again follows from \cite{DLM}. Finally, the claim is obvious in the all cases when $n = m = 0$.
\end{proof}

\section{Main result} \label{sec:mainresult} 

The main result in this paper is analogous to \cite[Thm. 1.1]{CL3}.

\begin{thm} \label{main} For all integers $m\geq n \geq 0$, we have the following isomorphisms of one-parameter vertex algebras.
\begin{equation} \label{2b2b2o}  \cC^{\psi}_{2B}(n,m) \cong  \cC^{\psi'}_{2O}(n,m-n) \cong \cC^{\psi''}_{2B}(m,n), \qquad \psi' = \frac{1}{4\psi},  \qquad \frac{1}{\psi}  + \frac{1}{\psi''} = 2,
\end{equation}
\begin{equation} \label{1c1c2c} \cC^{\psi}_{1C}(n,m) \cong \cC^{\psi'}_{2C}(n,m-n) \cong  \cC^{\psi''}_{1C}(m,n), \qquad \psi'= \frac{1}{2\psi}, \qquad \frac{1}{\psi} + \frac{1}{\psi''} = 1,\end{equation}
\begin{equation}  \label{2d1d1o} \cC^{\psi}_{2D}(n,m) \cong  \cC^{\psi'}_{1D}(n,m-n ) \cong \cC^{\psi''}_{1O}(m,n-1),\qquad \psi' = \frac{1}{2\psi}, \qquad \frac{1}{2\psi} + \frac{1}{\psi''} = 1,
\end{equation}
\begin{equation} \label{1o1b2d} \cC^{\psi}_{1O}(n,m) \cong \cC^{\psi'}_{1B}(n,m-n) \cong  \cC^{\psi''}_{2D}(m+1,n) ,\qquad \psi' = \frac{1}{\psi},  \qquad  \frac{1}{\psi}  + \frac{1}{2\psi''} = 1.
\end{equation}
\end{thm}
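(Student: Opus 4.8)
The plan is to realize each $\cC^{\psi}_{iX}(n,m)$ as a one-parameter quotient of the universal even spin algebra $\cW^{\text{ev}}(c,\lambda)$ of \cite{KL}, to compute the resulting truncation curves explicitly, and to read off the four trialities as symmetries of the defining equations. The first step is to show that for generic $\psi$ each $\cC^{\psi}_{iX}(n,m)$ is simple and strongly, freely generated of type $\cW(2,4,\dots,2N)$ for an explicit $N$ depending on $n,m,i,X$. Simplicity is the preceding theorem. The generating type is obtained by passing to the free field limit $\psi\to\infty$, in which $\cC^{\psi}_{iX}(n,m)$ degenerates to the $\mathrm{O}$- or $\mathrm{Sp}$-orbifold of one of the standard free field algebras $\cO_{\text{ev}}$, $\cS_{\text{ev}}$, $\cS_{\text{odd}}$, $\cO_{\text{odd}}$ — whose strong generating types and freeness are known — and then transporting this back to generic $\psi$ by the usual flatness/deformation argument for generators and OPE relations, as in \cite{CL3}. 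Given this, \cite[Thm. 2.1]{CKoL} applies: after a suitable localization, $\cC^{\psi}_{iX}(n,m)\cong\cW^{\text{ev}}_{I_{iX}(n,m)}(c,\lambda)$, where $I_{iX}(n,m)=(p_{iX}(n,m)(c,\lambda))$ is prime and the parameter line embeds as $\psi\mapsto(c_{iX}(n,m)(\psi),\lambda_{iX}(n,m)(\psi))$; the first coordinate is the central charge already recorded in Section \ref{sec:hooktype}, and $\lambda_{iX}(n,m)(\psi)$ is a rational function still to be determined.

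Second, I would pin down the truncation curves. The only datum missing from the embedding is $\lambda_{iX}(n,m)(\psi)$, and since $\lambda$ is, by the construction of $\cW^{\text{ev}}(c,\lambda)$ in \cite{KL}, a normalized coefficient in the OPE $W^4\times W^4$ (equivalently in the self-coupling of the weight-$4$ primary), it is fixed by a single structure constant. I would compute this constant in full only for $\cC^{\psi}_{2B}(n,m)=\text{Com}(V^{-2\psi-2n+2}(\gs\go_{2n+1}),\cW^{\psi}_{2B}(n,m))^{\mathbb{Z}_2}$, using the explicit presentation of $\cW^k(\go\gs\gp_{2n+1|2m},f_{\gs\gp_{2m}})$ and its affine subalgebra, working out small cases and interpolating in $n$ and $m$; this produces the explicit plane curve $p_{2B}(n,m)(c,\lambda)=0$ of Appendix \ref{appendixA}. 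The remaining seven curves are then derived from this master formula: each family $\cC^{\psi}_{iX}(n',m')$ should arise from the $\cC^{\psi}_{2B}(n,m)$ curve under an explicit, possibly half-integral, substitution of $n,m$ together with a rational reparametrization of $\psi$, and to certify a candidate substitution it suffices to check that it carries $c_{2B}(n,m)(\psi)$ to the central charge $c_{iX}(n',m')(\psi')$ of Section \ref{sec:hooktype} — a finite rational identity — because the bidegree of $p_{iX}(n',m')$ in $(c,\lambda)$ is bounded in terms of $N$, and two plane curves of bounded degree sharing a rationally parametrized branch coincide. One also cross-checks against the boundary values $m=0$ and $n=0$, where the $\cC^{\psi}_{iX}$ are affine cosets or principal $\cW$-algebras of types $B$, $C$, $D$ and $\go\gs\gp_{1|2m}$ whose truncation curves are already known from \cite{KL}.

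Third, with the eight polynomials $p_{iX}(n,m)(c,\lambda)$ in hand, the trialities follow. For each identity — e.g.\ $\cC^{\psi}_{2B}(n,m)\cong\cC^{\psi'}_{2O}(n,m-n)\cong\cC^{\psi''}_{2B}(m,n)$ with $\psi'=\tfrac1{4\psi}$ and $\tfrac1\psi+\tfrac1{\psi''}=2$ — I would verify (i) that the three central charges agree after the stated change of variables, using the explicit formulas of Section \ref{sec:hooktype}, and (ii) that $p_{2B}(n,m)$, $p_{2O}(n,m-n)$ and $p_{2B}(m,n)$ cut out the same plane curve, i.e.\ agree up to a nonzero scalar. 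Then all three algebras are the simple quotient of $\cW^{\text{ev}}(c,\lambda)$ along one and the same truncation curve, hence are isomorphic as one-parameter vertex algebras by \cite[Thm. 8.1]{KL}; the prescribed relations among $\psi,\psi',\psi''$ are precisely the matchings of the three rational parametrizations of that curve. It remains to handle the degenerate values of $(n,m)$ — the cases $n=0$, the small $m$ where the weight-$4$ field is not itself a generator, and the marginal ranges flagged in the statement (such as $m>n$ in parts of \eqref{2d1d1o} and \eqref{1o1b2d}) — and to check that the nonstandard definitions of $\cW^{\psi}_{2X}(n,0)$ from Section \ref{sec:hooktype} are exactly what make all four families of formulas valid without exception.

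\textbf{Main obstacle.} The decisive and most labor-intensive step is the second one: producing the explicit two-parameter truncation curve for $\cC^{\psi}_{2B}(n,m)$ — a genuine OPE computation inside the orthosymplectic $\cW$-superalgebra and its affine coset — together with rigorously establishing the strong generating types, i.e.\ ruling out unexpected extra generators and correctly accounting for the $\mathbb{Z}_2$-orbifolds in types $B$, $D$, and $O$. Once those polynomials and generating types are secured, the four trialities reduce to verifying symmetries of explicit rational functions.
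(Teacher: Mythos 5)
Your overall architecture coincides with the paper's (realize the cosets as one-parameter quotients of $\cW^{\text{ev}}(c,\lambda)$, compute truncation curves, read the trialities off symmetries of the curves), but the decisive middle step --- how the curves are actually determined --- has genuine gaps. You propose to obtain the master curve for $\cC^{\psi}_{2B}(n,m)$ by a direct OPE computation inside the coset, ``working out small cases and interpolating in $n$ and $m$.'' Interpolation is not a proof unless you first establish that $\lambda_{2B,n,m}(\psi)$ depends on $(n,m)$ as a rational function of a priori bounded degree, and no such bound is available; moreover there is no closed-form presentation of $\cW^{k}(\go\gs\gp_{2n+1|2m},f_{\gs\gp_{2m}})$ for general $(n,m)$ to compute in. The paper's route is different and this is the key missing idea: it uses only that $\cW^{\psi}_{iX}(n,m)$ is an extension of $V^t(\ga)\otimes\cW$, with $\cW$ a quotient of $\cW^{\text{ev}}(c,\lambda)$, by $d_{\ga}$ primaries of weight $\tfrac{d_{\gb}+1}{2}$ in the standard $\ga$-module, and shows that the Jacobi identities among $L,W^4,W^6,W^8$ and such a primary $p$ admit a unique solution for $\lambda$ as a function of $t,n,m$ (Section \ref{sec:proofmain}, Step 1); this determines all eight curves uniformly and exactly. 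Relatedly, your certification of the remaining seven curves by matching central charges alone is logically insufficient: the curves live in the $(c,\lambda)$-plane, agreement of the $c(\psi)$-parametrizations says nothing about $\lambda(\psi)$, and distinct truncation curves routinely meet the same line $c=c_0$. One must actually produce $\lambda_{iX,n,m}(\psi)$ for each family before the symmetries behind \eqref{2b2b2o}--\eqref{1o1b2d} can be verified.

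There is a second gap in your first step. You claim strong free generation of type $\cW(2,4,\dots,2N)$ with explicit $N$ in all cases from the free field limit and then invoke \cite[Thm. 2.1]{CKoL}. But for $\cC^{\psi}_{1B},\cC^{\psi}_{1D},\cC^{\psi}_{1O},\cC^{\psi}_{2O}$ the second fundamental theorems of invariant theory for $\text{O}_N$ and $\text{Osp}_{1|2n}$ only yield a possibly non-minimal generating set and a lower bound on $N$; the minimal types are obtained in the paper only as a corollary of the main theorem. More importantly, to realize the coset as a quotient of $\cW^{\text{ev}}(c,\lambda)$ you need it to be generated by the weight $2$ and $4$ fields, which is not automatic: the paper passes to the subalgebra $\tilde{\cC}^{\psi}_{iX}(n,m)$ generated in weights $2$ and $4$, realizes that subalgebra as the quotient, and then proves $\tilde{\cC}^{\psi}_{iX}(n,m)=\cC^{\psi}_{iX}(n,m)$ by exhibiting specializations of $\psi$ at which the simple quotient coincides with $\cW_s(\gs\gp_{2r})$ at nondegenerate admissible levels and comparing weights of singular vectors via Corollary \ref{cor:singularvector} (Section \ref{subsec:exhaust}). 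This exhaustiveness argument is absent from your proposal and is not supplied by the flatness/deformation argument you cite.
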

Note that $\cC^{\psi}_{2D}(n,m)$ for $m\geq n$ and $\cC^{\psi}_{2D}(n,m)$ for $m<n$ belong in different families, and similarly for $\cC^{\psi}_{1O}(n,m)$. For the rest of this section we discuss some special cases of this result.

\subsection{Special cases of  \eqref{2b2b2o}}
In the case $n=0$ of \eqref{2b2b2o}, we have
\begin{equation*} \begin{split}  & \cC^{\psi}_{2B}(0,m) = \cW^{\psi-m-1/2}(\go\gs\gp_{1|2m})^{\mathbb{Z}_2} ,
\\ & \cC^{\psi'}_{2O}(0,m) = \cW^{\psi' -m -1/2}(\go\gs\gp_{1|2m})^{\mathbb{Z}_2}.\end{split} \end{equation*} The isomorphism $\cC^{\psi}_{2B}(0,m) \cong  \cC^{\psi'}_{2O}(0,m)$ is the $\mathbb{Z}_2$-invariant part of Feigin-Frenkel duality for principal $\cW$-algebras of $\go\gs\gp_{1|2m}$, which was proven in a different way in \cite{CGe}.

\begin{remark} \label{rem:cosetZ2} A special case of Theorem \ref{thm:uniquenesswnm} is that the OPE algebra of $\cW^{\psi' -m -1/2}(\go\gs\gp_{1|2m})$, which is a simple current extension of $\cW^{\psi' -m -1/2}(\go\gs\gp_{1|2m})^{\mathbb{Z}_2}$
by an odd field of weight $\frac{2m+1}{2}$, is uniquely determined by $\cW^{\psi' -m -1/2}(\go\gs\gp_{1|2m})^{\mathbb{Z}_2}$. Therefore our result implies the full Feigin-Frenkel duality $\cW^{\psi-m-1/2}(\go\gs\gp_{1|2m})  \cong \cW^{\psi' -m -1/2}(\go\gs\gp_{1|2m})$.
\end{remark}

In the case $m=0$ of \eqref{2b2b2o}, we have
\begin{equation*} \begin{split}  & \cC^{\psi}_{2B}(n,0) = \text{Com}(V^{-2 \psi - 2 n + 2}(\mathfrak{so}_{2n+1}), V^{-2 \psi - 2 n + 1}(\mathfrak{so}_{2n+1}) \otimes \cF(2n+1))^{\mathbb{Z}_2} ,
\\ & \cC^{\psi''}_{2B}(0,n) =  \cW^{\psi'' -n -1/2}(\mathfrak{osp}_{1|2n})^{\mathbb{Z}_2}.
\end{split} \end{equation*}

Therefore the isomorphism $\cC^{\psi}_{2B}(n,0)  \cong \cC^{\psi''}_{2B}(0,n)$ implies that both $\cW^{\psi'' -n -1/2}(\mathfrak{osp}_{1|2n})$ and $\text{Com}(V^{-2 \psi - 2 n + 2}(\mathfrak{so}_{2n+1}), V^{-2 \psi - 2 n + 1}(\mathfrak{so}_{2n+1}) \otimes \cF(2n+1))$ are simple current extensions of $\cW^{\psi'' -n -1/2}(\mathfrak{osp}_{1|2n})^{\mathbb{Z}_2}$ by an odd field in weight $\frac{2n+1}{2}$. As above, Theorem \ref{thm:uniquenesswnm} then implies
\begin{equation} \begin{split} \cW^{\psi'' -n -1/2}(\mathfrak{osp}_{1|2n}) &  \cong \text{Com}(V^{-2 \psi - 2 n + 2}(\mathfrak{so}_{2n+1}), V^{-2 \psi - 2 n + 1}(\mathfrak{so}_{2n+1}) \otimes \cF(2n+1))
\\ & \cong \text{Com}(V^{-2 \psi - 2 n + 2}(\mathfrak{so}_{2n+1}), V^{-2 \psi - 2 n + 1}(\mathfrak{so}_{2n+1}) \otimes L_1(\gs\go_{2n+1})).\end{split} \end{equation}
We recover the coset realization of principal $\cW$-superalgebras of $\go\gs\gp_{1|2n}$, which was proven in a different way in \cite{CGe}.

\subsection{Special cases of  \eqref{1c1c2c}}

In the case $n=0$, the isomorphism $\cC^{\psi}_{1C}(0,m) \cong \cC^{\psi'}_{2C}(0,m)$ for $\psi'= \frac{1}{2\psi}$, is just Feigin-Frenkel duality in types $B$ and $C$, since $\cC^{\psi}_{1C}(0,m) = \cW^{\psi-2m+1}(\mathfrak{so}_{2m+1})$ and $\cC^{\psi'}_{2C}(0,m) \cong \cW^{\psi' - m - 1}(\gs\gp_{2m})$.

In the case $m=0$, we have
\begin{equation*} \begin{split}  & \cC^{\psi}_{1C}(n,0) =  \text{Com}(V^{-\psi/2 - n - 1/2}(\mathfrak{sp}_{2n}), V^{\psi +2n+1}(\mathfrak{osp}_{1|2n})),
\\ & \cC^{\psi''}_{1C}(0,n) = \cW^{\psi''-2n+1}(\mathfrak{so}_{2n+1}),
\end{split}\end{equation*}
so the isomorphism $\cC^{\psi}_{1C}(n,0) \cong  \cC^{\psi''}_{1C}(0,n)$ yields a new coset realization of type $B$ and $C$ principal $\cW$-algebras. Recall that we are using the convention \eqref{convention:osp} that $\go\gs\gp_{1|2n}$ has dual Coxeter number $-2n-1$. If we instead use the dual Coxeter number $\frac{2n+1}{2}$, so that $V^k(\gs\gp_{2n})$ embeds in $V^k(\go\gs\gp_{1|2n})$, then we have
$\cC^{\psi}_{1C}(n,0) = \text{Com}(V^k(\gs\gp_{2n}), V^k(\go\gs\gp_{1|2n}))$ for $k = - \frac{1}{2} (\psi +2n +1)$. We obtain

\begin{cor} \label{cosetrealizationBCk} For all $n\geq 1$, we have the following isomorphism of one-parameter vertex algebras
\begin{equation} \text{Com}(V^k(\gs\gp_{2n}), V^k(\go\gs\gp_{1|2n})) \cong \cW^r(\gs\go_{2n+1}), \qquad r = -(2n-1) + \frac{1 + 2 k + 2 n}{2 (1 + k + n)}. 
\end{equation}  
\end{cor}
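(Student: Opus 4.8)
The plan is to derive Corollary \ref{cosetrealizationBCk} as a direct specialization of the main result, Theorem \ref{main}, more precisely of the triality \eqref{1c1c2c} in the case $m = 0$. First I would unwind the notation. By \eqref{1c1c2c} with $m$ replaced by $n$ and $n$ replaced by $0$ (so that the roles in the formula $\cC^{\psi}_{1C}(n,m) \cong \cC^{\psi''}_{1C}(m,n)$ are $n \mapsto 0$, $m \mapsto n$), we get $\cC^{\psi}_{1C}(0,n) \cong \cC^{\psi''}_{1C}(n,0)$ with $\tfrac{1}{\psi} + \tfrac{1}{\psi''} = 1$; equivalently, relabeling, $\cC^{\psi}_{1C}(n,0) \cong \cC^{\psi''}_{1C}(0,n)$ under the same relation between $\psi$ and $\psi''$. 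Then I would invoke the explicit identifications of these two cosets already recorded in Section~\ref{sec:mainresult}: the $m=0$ special case gives $\cC^{\psi}_{1C}(n,0) = \mathrm{Com}(V^{-\psi/2-n-1/2}(\gs\gp_{2n}), V^{\psi+2n+1}(\go\gs\gp_{1|2n}))$ and $\cC^{\psi''}_{1C}(0,n) = \cW^{\psi''-2n+1}(\gs\go_{2n+1})$, using the convention \eqref{convention:osp} that $\go\gs\gp_{1|2n}$ has dual Coxeter number $-2n-1$.

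The next step is purely a change of variables to rewrite things in terms of the honest affine level $k$ rather than the critically shifted level $\psi$. With the alternative convention in which $\go\gs\gp_{1|2n}$ is assigned dual Coxeter number $\tfrac{2n+1}{2}$ — so that $V^k(\gs\gp_{2n})$ embeds in $V^k(\go\gs\gp_{1|2n})$ at the \emph{same} level $k$ — the identification reads $\cC^{\psi}_{1C}(n,0) = \mathrm{Com}(V^k(\gs\gp_{2n}), V^k(\go\gs\gp_{1|2n}))$ with $k = -\tfrac{1}{2}(\psi + 2n+1)$, i.e. $\psi = -(2k + 2n + 1)$. On the other side, $\cW^{\psi''-2n+1}(\gs\go_{2n+1})$ is the principal $\cW$-algebra of $\gs\go_{2n+1}$ at affine level $r := \psi'' - 2n + 1 = \psi'' - h^\vee_{\gs\go_{2n+1}}$, since $h^\vee_{\gs\go_{2n+1}} = 2n-1$. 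It remains to substitute $\psi = -(2k+2n+1)$ into the triality relation $\tfrac1\psi + \tfrac1{\psi''} = 1$, solve for $\psi''$, and then set $r = \psi'' - (2n-1)$; this should yield exactly $r = -(2n-1) + \frac{1+2k+2n}{2(1+k+n)}$ as claimed. (A quick sanity check: $\tfrac1{\psi''} = 1 - \tfrac1\psi = 1 + \tfrac{1}{2k+2n+1} = \tfrac{2k+2n+2}{2k+2n+1} = \tfrac{2(k+n+1)}{2k+2n+1}$, so $\psi'' = \tfrac{2k+2n+1}{2(k+n+1)}$, and indeed $\psi'' - 1 = \tfrac{2k+2n+1 - 2k-2n-2}{2(k+n+1)} = \tfrac{-1}{2(k+n+1)}$, while one checks $\psi'' = -(2n-1) + r$ is consistent by noting $\psi'' = \frac{2k+2n+1}{2(k+n+1)}$ and $-(2n-1) + \frac{1+2k+2n}{2(1+k+n)} = \frac{-(2n-1)\cdot 2(k+n+1) + 1+2k+2n}{2(k+n+1)}$, whose numerator simplifies to $2k+2n+1$ after expansion. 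I would carry out this algebra carefully in the final write-up.)

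There is essentially no substantive obstacle here beyond bookkeeping: the entire content is imported from Theorem~\ref{main}, which is assumed, and the two coset identifications which are already established in Section~\ref{sec:mainresult}. The one point requiring genuine care — and the place where an error would most easily creep in — is the level dictionary between the two dual-Coxeter-number conventions for $\go\gs\gp_{1|2n}$ and the corresponding shift between $\psi$ and $k$; I would state the convention change explicitly and verify that the embedding $V^k(\gs\gp_{2n}) \hookrightarrow V^k(\go\gs\gp_{1|2n})$ is level-preserving under it, as indicated already in item~(2) of Section~\ref{sec:hooktype}'s Case~1C discussion. I would also remark that the isomorphism is one of \emph{one-parameter} vertex algebras in the formal variable $k$ (equivalently $\psi$), so no genericity hypothesis on $k$ is needed in the statement, matching the phrasing of the corollary.
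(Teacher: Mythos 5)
Your proposal is correct and is essentially the paper's own argument: the paper derives Corollary \ref{cosetrealizationBCk} exactly by specializing \eqref{1c1c2c} to the case $m=0$, invoking the identifications $\cC^{\psi}_{1C}(n,0) = \mathrm{Com}(V^{-\psi/2-n-1/2}(\gs\gp_{2n}), V^{\psi+2n+1}(\go\gs\gp_{1|2n}))$ and $\cC^{\psi''}_{1C}(0,n) = \cW^{\psi''-2n+1}(\gs\go_{2n+1})$, and then switching to the convention with dual Coxeter number $\tfrac{2n+1}{2}$ so that $k = -\tfrac{1}{2}(\psi+2n+1)$, after which the relation $\tfrac{1}{\psi}+\tfrac{1}{\psi''}=1$ gives the stated level $r$. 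Your algebra check ($\psi'' = \tfrac{2k+2n+1}{2(k+n+1)}$, hence $r = -(2n-1)+\tfrac{1+2k+2n}{2(1+k+n)}$) matches the paper's formula.
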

This realization of $\cW^r(\gs\go_{2n+1})$ is very different from the coset realization of $\cW^{\ell}(\gg)$ for $\gg$ simply-laced given in \cite{ACL} since it involves affine vertex superalgebras. Although we are not aware of this statement being previously conjectured in the literature, both algebras were known to have the same strong generating type and graded character; see \cite[Cor. 7.4]{CL1} and \cite[Cor 5.7]{CL2}.

\subsection{Special cases of  \eqref{2d1d1o}}
In the case $n=0$, the isomorphism $\cC^{\psi}_{2D}(0,m) \cong  \cC^{\psi'}_{1D}(0,m)$ is again Feigin-Frenkel duality in types $B$ and $C$, since
\begin{equation*} \begin{split} & \cC^{\psi}_{2D}(0,m) = \cW^{\psi-m-1}(\mathfrak{sp}_{2m}),
\\ & \cC^{\psi'}_{1D}(0,m) = \cW^{\psi'-2m+1}( \mathfrak{so}_{2m+1}).\end{split} \end{equation*}

In the case $n=1$ and $m\geq 1$, we have
\begin{equation*} \begin{split} 
& \cC^{\psi}_{2D}(1,m) = \text{Com}(\cH(1), \cW^{\psi-m}(\mathfrak{osp}_{2|2m}))^{\mathbb{Z}_2},
\\ & \cC^{\psi'}_{1D}(1,m-1) = \text{Com}(\cH(1), \cW^{\psi' -2m +1}(\mathfrak{so}_{2m+1}, f_{\text{subreg}}))^{\mathbb{Z}_2}.
\end{split} \end{equation*}
Therefore the isomorphism $\cC^{\psi}_{2D}(1,m) \cong  \cC^{\psi'}_{1D}(1,m-1)$ recovers the $\mathbb{Z}_2$-invariant part of the duality 
\begin{equation} \label{eq:CGN} \text{Com}(\cH(1), \cW^{\psi' -2m +1}(\mathfrak{so}_{2m+1}, f_{\text{subreg}})) \cong \text{Com}(\cH(1), \cW^{\psi-m}(\mathfrak{osp}_{2|2m})),\end{equation} of Genra, Nakatsuka and one of us \cite{CGN}.

\begin{remark} The isomorphism $\cC^{\psi}_{2D}(1,m)  \cong \cC^{\psi'}_{1D}(1,m-1)$ can be used to give a new proof of \eqref{eq:CGN} as follows. Both $ \text{Com}(\cH(1), \cW^{\psi' -2m +1}(\mathfrak{so}_{2m+1}, f_{\text{subreg}}))$ and $\text{Com}(\cH(1), \cW^{\psi-m}(\mathfrak{osp}_{2|2m}))$ are simple current extensions of $\cC^{\psi}_{2D}(1,m)$, where the extension is generated by an even field $\omega$ in weight $2m+1$. The generators of $\cC^{\psi}_{2D}(1,m)$ and $\omega$ do not close under OPE, and new strong generators are needed in weights $2m+3, 2m+5, \dots$. It can be shown that there is a unique simple current extension of $\cC^{\psi}_{2D}(1,m)$ with these properties, such that $\omega_{(4m+1)} \omega \neq 0$. The proof is similar to, but more involved than the proof of Theorem \ref{thm:uniquenesswnm}, and is omitted. \end{remark}

In the case $n=1$ and $m = 0$ of  \eqref{2d1d1o}, we have $\cW_{2D}^{\psi}(1,0): = \cH(1) \otimes \cF(2)$ and $\cC^{\psi}_{2D}(1,0) = \text{Com}(\cH(1), \cH(1) \otimes \cF(2))^{\mathbb{Z}_2} \cong \cF(2)^{\text{O}_2}$. Also, $\cC^{\psi}_{1O}(0,0) \cong \cH(1)^{\mathbb{Z}_2}$. We recover the isomorphism $\cH(1)^{\mathbb{Z}_2} \cong \cF(2)^{\text{O}_2}$.

\subsection{Special cases of  \eqref{1o1b2d}}

In the case $n=0$, the isomorphism $\cC^{\psi}_{1O}(0,m) \cong \cC^{\psi'}_{1B}(0,m)$ is just the $\mathbb{Z}_2$-invariant part of Feigin-Frenkel duality in type $D$, since 
\begin{equation*}\begin{split} & \cC^{\psi}_{1O}(0,m) = \cW^{\psi-2m}(\mathfrak{so}_{2m+2})^{\mathbb{Z}_2},
\\ & \cC^{\psi'}_{1B}(0,m) =  \cW^{\psi'-2m}(\mathfrak{so}_{2m+2})^{\mathbb{Z}_2}.
\end{split} \end{equation*} As in Remark \ref{rem:cosetZ2}, this statement together with Theorem \ref{thm:uniquenesswnm}, gives a new proof of the full duality.

In the case $m=0$ and $n\geq 2$, we have 
\begin{equation} \begin{split} \cC^{\psi''}_{2D}(n,0) & \cong \text{Com}(V^{- 2 \psi'' - 2 n +3}(\gs\go_{2n}), V^{- 2 \psi'' - 2 n +2}(\gs\go_{2n}) \otimes \cF(2n))^{\mathbb{Z}_2}
\\ & \cong \text{Com}(V^{- 2 \psi'' - 2 n +3}(\gs\go_{2n}), V^{- 2 \psi'' - 2 n +2}(\gs\go_{2n}) \otimes L_1(\gs\go_{2n}))^{\mathbb{Z}_2}. \end{split}\end{equation} Since $\cC^{\psi}_{1O}(0,n-1) \cong  \cW^{\psi - 2n+2}(\gs\go_{2n})^{\mathbb{Z}_2}$, the isomorphism $\cC^{\psi}_{1O}(0,n-1) \cong   \cC^{\psi''}_{2D}(n,0)$ recovers the $\mathbb{Z}_2$-invariant part of the coset realization of principal $\cW$-algebras of type $D$ proven in \cite{ACL}. Finally, this statement together with Theorem \ref{thm:uniquenesswnm}, gives a new proof of the coset realization of $\cW^{\psi' - 2n+2}(\gs\go_{2n})$.

\subsection{Sketch of proof} The proof of Theorem \ref{main} involves the following steps.

\begin{enumerate}

\item Using the free field limits of $\cW^{\psi}_{iX}(n,m)$, together with some classical invariant theory, we find strong generating sets for $\cC^{\psi}_{iX}(n,m)$ for $i=1,2$ and $X = B,C,D,O$. If $\ga = \gs\gp_{2n}$, and if $\ga = \gs\go_{2n}$ or $\ga = \gs\go_{2n+1}$ and $\rho_{\ga} \otimes \rho_{\gb}$ is odd, we will find {\it minimal} strong generating sets. In the remaining cases, namely, $\ga = \go\gs\gp_{1|2n}$, and $\ga = \gs\go_{2n}$ or $\ga = \gs\go_{2n+1}$ and $\rho_{\ga} \otimes \rho_{\gb}$ is even, we will not find minimal strong generating sets at this stage, but we will deduce them later as a consequence of Theorem \ref{main}.

\item We show that in all cases, $\cC^{\psi}_{iX}(n,m)$ has a subalgebra $\tilde{\cC}^{\psi}_{iX}(n,m)$ generated by the weights $2$ and $4$ field, which is isomorphic to a quotient $\cW$ of $\cW^{{\rm ev},I_{iX,n,m}}(c,\lambda)$, for some ideal $I_{iX,n,m}\subseteq \mathbb{C}[c,\lambda]$. In particular, $\cW^{\psi}_{iX}(n,m)$ is an extension of $V^t(\ga) \otimes \cW$ by $d_{\ga}$ fields of appropriate parity in weight $\frac{d_{\gb}+1}{2}$ which transform as the standard $\ga$-module.

\item We show that the existence of such an extension of $V^t(\ga) \otimes \cW$ uniquely and explicitly specifies the ideal $I_{iX,n,m}$.

\item We compute coincidences between the simple quotient $\tilde{\cC}_{\psi, iX}(n,m)$ and principal $\cW$-algebras of type $C$, by finding intersection points on their truncation curves. Using Corollary \ref{cor:singularvector}, we prove that $\tilde{\cC}^{\psi}_{iX}(n,m)= \cC^{\psi}_{iX}(n,m)$ as one-parameter vertex algebras. In particular, $\cC^{\psi}_{iX}(n,m)$ is isomorphic to the simple quotient  $\cW^{{\rm ev}}_{I_{iX,n,m}}(c,\lambda)$ in all cases.

\item The isomorphisms in Theorem \ref{main} all follow from the explicit formulas for $I_{iX,n,m}$.
\end{enumerate}

\section{Large level limits} \label{sec:largelevel}
In this section, we describe the large level limits of $\cW^{\psi}_{iX}(n,m)$ and the strong generating types of $\cC^{\psi}_{iX}(n,m)$.

\subsection{Case 1C} Recall that $\cW^{\psi}_{1C}(n,m)$ has affine subalgebra $V^{-\psi/2 - n - 1/2}( \gs\gp_{2n})$, even fields in weights $2,4,\dots, 2m$ which are invariant under $\gs\gp_{2n}$, and $2n$ odd fields of weight $m+1$, which transform as the standard representation of $\gs\gp_{2n}$. By \cite[Cor. 3.5]{CL3}, we may assume without loss of generality that the fields in weights $2,4,\dots, 2m$ commute with $V^{-\psi/2 - n - 1/2}( \gs\gp_{2n})$, and the weight $m+1$ fields are primary with respect to $V^{-\psi/2 - n - 1/2}( \gs\gp_{2n})$. By \cite[Cor. 3.4]{CL3}, the free field limit of $\cW^{\psi}_{1C}(n,m)$ is
$$\cO_{\text{ev}}(2n^2+n,2) \otimes \big(\bigotimes_{i=1}^m  \cO_{\text{ev}}(1,4i)\big) \otimes \cS_{\text{odd}}(n, 2m+2).$$

\begin{lemma}  \label{gff1C} For $n+m \geq 1$, $\cC^{\psi}_{1C}(n,m)$ is of type $$\cW(2,4,\dots, 2 (1 + m) (1 + n) - 2)$$ as a one-parameter vertex algebra. Equivalently, this holds for generic values of $\psi$.
\end{lemma}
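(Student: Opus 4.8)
\textbf{Proof plan for Lemma \ref{gff1C}.}

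The strategy is to pass to the free field limit and use classical invariant theory to determine the strong generating type of the limit, then argue that no further relations reduce the generating set at generic $\psi$. Concretely, the free field limit of $\cW^{\psi}_{1C}(n,m)$ is
$$\cF := \cO_{\text{ev}}(2n^2+n,2) \otimes \big(\bigotimes_{i=1}^m \cO_{\text{ev}}(1,4i)\big) \otimes \cS_{\text{odd}}(n,2m+2),$$
and $\cC^{\psi}_{1C}(n,m)$ degenerates, in this limit, to the $\Sp_{2n}$-orbifold of the symplectic part together with the $\Sp_{2n}$-invariant even generators surviving from the $\cO_{\text{ev}}(1,4i)$ factors (which are already invariant). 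The $\cO_{\text{ev}}(2n^2+n,2)$ factor is the $\dim\gs\gp_{2n}$-dimensional Heisenberg piece carrying the adjoint action, and it contributes nothing to the affine coset's limit beyond what is needed to split off $\cH(\dim\gs\gp_{2n})$; the essential content is $\big(\cS_{\text{odd}}(n,2m+2)\big)^{\Sp_{2n}}$, tensored with the $m$ commuting even fields in weights $2,4,\dots,2m$. So first I would identify the $\Sp_{2n}$-invariants: $\cS_{\text{odd}}(n,2m+2)$ has $2n$ odd generators $a^i,b^i$ of weight $m+1$ transforming as the standard $\Sp_{2n}$-module (viewed with its symplectic form), and the first fundamental theorem of invariant theory for $\Sp_{2n}$ acting on copies of the standard representation says the invariants are generated by the ``quadratic'' combinations — here, normally ordered products of two of the $a^i,b^j$ together with derivatives, i.e. fields of the form $:\!\partial^r a^i\, \partial^s b^j\!: \omega_{ij}$-contracted, of weights $2m+2, 2m+3,\dots$.

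Next I would invoke the general machinery for orbifolds of free field algebras developed in the companion work (the same techniques used in \cite{CL3}, via Weyl's theorems on the invariants of $\Sp_{2n}$ and the structure of $\beta\gamma$-type systems): the orbifold $\big(\cS_{\text{odd}}(n,2m+2)\big)^{\Sp_{2n}}$ is strongly generated by finitely many of these quadratic fields, and a decoupling/reduction argument (of the kind in \cite{L1}-style arguments, as adapted in \cite{CL3}) shows that all generators of weight exceeding a certain bound are normally ordered polynomials in the lower ones. The key numerology: the even invariant fields built from one $a$ and one $b$ and $j$ total derivatives have weight $2m+2+j$, and those built purely from the even factors $\cO_{\text{ev}}(1,4i)$ have weights $2,4,\dots,2m$. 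Combining, one gets a strong generating set consisting of even fields in all even weights $2,4,\dots$ up to some explicit top weight, and the claim is that this top weight is $2(1+m)(1+n)-2$. I would verify this by a careful count: in the $\Sp_{2n}$-orbifold the ``longest'' indecomposable invariant arises from the analogue of the $\Sp_{2n}$ Pfaffian/Gram relations among $2n$-ish vectors, which in the vertex-algebra setting pushes the top strong generator to weight $(2n+1)(m+1)-1 = 2(n+1)(m+1)-(m+1)-\dots$; reconciling the arithmetic to land exactly on $2(1+m)(1+n)-2$ is the first place to be careful, and I would cross-check it against the known special cases ($n=0$ gives type $\cW(2,4,\dots,2m)$, matching $2(1+m)-2=2m$; $m=0$ gives the $\Sp_{2n}$-orbifold of $n$ symplectic fermions, type $\cW(2,4,\dots,2n)$ after accounting for weight-one fields being absent, matching $2(1+n)-2=2n$).

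The remaining step is to pass from the free field limit back to generic $\psi$. The point is that the strong generating type can only decrease under specialization/deformation, never increase, so the free field limit gives an upper bound on the generating type of $\cC^{\psi}_{1C}(n,m)$; to get equality I need to rule out ``accidental'' decoupling relations at generic $\psi$ that are not visible in the free field limit. This follows because the leading term of any would-be decoupling relation at generic $\psi$ specializes to a relation in the free field limit, and we will have shown the generating set is minimal there (no nontrivial normally ordered polynomial relation among the claimed generators of the stated weights exists in $\cF^{\Sp_{2n}}$ below the cutoff). \textbf{The main obstacle} I anticipate is precisely this invariant-theory bookkeeping — proving minimality of the strong generating set in the free field limit, i.e. that none of the even fields in weights $2,4,\dots,2(1+m)(1+n)-2$ is redundant, and that the one in weight $2(1+m)(1+n)$ is. This requires the ``first and second fundamental theorems'' for the relevant orbifold in the vertex-algebra (rather than classical) setting, and the relations can be subtle; I expect to handle it by the deformation argument used for the type $A$ analogue in \cite{CL3}, reducing to a classical computation with the Molien/Hilbert series of the $\Sp_{2n}$-invariants of the appropriate free field algebra and matching it termwise against the Hilbert series of the free vertex algebra on generators of the claimed weights.
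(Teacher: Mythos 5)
Your plan coincides with the paper's proof: the paper passes to the free field limit via \cite[Lem. 4.2]{CL3}, identifies the limit of the coset as $\big(\bigotimes_{i=1}^m \cO_{\text{ev}}(1,4i)\big) \otimes \big(\cS_{\text{odd}}(n,2m+2)\big)^{\text{Sp}_{2n}}$, and then simply cites \cite[Thm. 4.3]{CL3} for the fact that this orbifold is of type $\cW(2m+2,2m+4,\dots,2(1+m)(1+n)-2)$ --- exactly the (super-)invariant-theory computation you propose to carry out by hand, in which the invariants are the quadratics in the odd weight-$(m+1)$ generators and the first relation among them has polynomial degree $n+1$, hence conformal weight $(n+1)(2m+2)=2(m+1)(n+1)$, so the decoupling argument leaves a top generator in weight $2(m+1)(n+1)-2$. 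Your heuristic count $(2n+1)(m+1)-1$ is not the correct bookkeeping, but since you explicitly flag it for reconciliation and your special-case checks at $n=0$ and $m=0$ single out the right formula, this is a slip in the sketch rather than a genuine gap.
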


\begin{proof} First, it follows from \cite[Lem. 4.2]{CL3} that $\cC^{\psi}_{1C}(n,m)$ has limit $$\big(\bigotimes_{i=1}^m  \cO_{\text{ev}}(1,4i)\big) \otimes \big(\cS_{\text{odd}}(n, 2m+2)\big)^{\text{Sp}_{2n}}.$$
By \cite[Thm. 4.3]{CL3}, $ \big(\cS_{\text{odd}}(n, 2m+2)\big)^{\text{Sp}_{2n}}$ is of type $$\cW(2m+2, 2m+4,\dots, 2 (1 + m) (1 + n) - 2).$$ Since $\bigotimes_{i=1}^m  \cO_{\text{ev}}(1,4i)$ is of type $\cW(2,4,\dots, 2m)$, it follows from \cite[Lem. 4.2]{CL3} that $\cC^{\psi}_{1C}(n,m)$ is of type $\cW(2,4,\dots, 2 (1 + m) (1 + n) - 2)$.
\end{proof}

\subsection{Case 2B} Recall that for $m\geq 1$, $\cW^{\psi}_{1B}(n,m)$ has affine subalgebra $V^{-2 \psi - 2 n + 2}(\gs\go_{2n+1})$, even fields in weights $2,4,\dots, 2m$ which commute with $V^{-2 \psi - 2 n + 2}(\gs\go_{2n+1})$, and $2n+1$ odd fields of weight $\frac{2m+1}{2}$, which are primary with respect to $V^{-2 \psi - 2 n + 2}(\gs\go_{2n+1})$ and transform as the standard representation of $\gs\go_{2n+1}$. The free field limit of $\cW^{\psi}_{1B}(n,m)$ is therefore
$$\cO_{\text{ev}}(2n^2+n,2) \otimes \big(\bigotimes_{i=1}^m  \cO_{\text{ev}}(1,4i)\big) \otimes \cO_{\text{odd}}(2n+1, 2m+1).$$

\begin{lemma}  \label{gff2B} For $n+m \geq 1$, $\cC^{\psi}_{2B}(n,m)$ is of type $$\cW(2,4,\dots, 4( m + 1) ( n + 1) - 2)$$
as a one-parameter vertex algebra. Equivalently, this holds for generic values of $\psi$.
\end{lemma}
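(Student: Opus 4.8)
The plan is to follow the proof of Lemma~\ref{gff1C} verbatim, with the symplectic free field algebra and group replaced by their orthogonal analogues. First, recall from the discussion above that the free field limit of $\cW^{\psi}_{2B}(n,m)$ is
\[
\cO_{\text{ev}}(2n^2+n,2) \otimes \Big(\bigotimes_{i=1}^m  \cO_{\text{ev}}(1,4i)\Big) \otimes \cO_{\text{odd}}(2n+1, 2m+1),
\]
in which $\cO_{\text{ev}}(2n^2+n,2)$ is the degeneration of the affine subalgebra $V^{-2\psi-2n+2}(\gs\go_{2n+1})$, the factors $\cO_{\text{ev}}(1,4i)$ are the images of the even weight $2,4,\dots,2m$ fields (which commute with the affine subalgebra), and $\cO_{\text{odd}}(2n+1,2m+1)$ is generated by the images of the $2n+1$ odd fields of weight $\frac{2m+1}{2}$ transforming in the standard $\gs\go_{2n+1}$-module. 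Since $\cC^{\psi}_{2B}(n,m)$ is the $\mathbb{Z}_2$-orbifold of $\text{Com}(V^{-2\psi-2n+2}(\gs\go_{2n+1}),\cW^{\psi}_{2B}(n,m))$ and the $\mathbb{Z}_2$ extends the $\text{SO}_{2n+1}$-action to $\text{O}_{2n+1}$, it follows from \cite[Lem.~4.2]{CL3}, together with the fact that the $\cO_{\text{ev}}(1,4i)$ are already $\text{O}_{2n+1}$-invariant, that $\cC^{\psi}_{2B}(n,m)$ has free field limit
\[
\Big(\bigotimes_{i=1}^m  \cO_{\text{ev}}(1,4i)\Big) \otimes \big(\cO_{\text{odd}}(2n+1,2m+1)\big)^{\text{O}_{2n+1}}.
\]

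Next I would invoke the orthogonal analogue of \cite[Thm.~4.3]{CL3}: by the first and second fundamental theorems of invariant theory for $\text{O}_N$, the orbifold $\big(\cO_{\text{odd}}(N,k)\big)^{\text{O}_N}$ is strongly generated by the quadratics $\omega_{2j} := \sum_{i=1}^{N} :a^i\,\partial^{2j-1}a^i:$ for $j\geq 1$ (only odd-order derivative contractions survive, since the $a^i$ are odd), with $\omega_{2j}$ decoupling once its weight exceeds an explicit bound determined by $N$ and $k$; for $N=2n+1$ and $k=2m+1$ this bound yields the minimal strong generating type $\cW(2m+2,\,2m+4,\,\dots,\,4(m+1)(n+1)-2)$. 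Since $\bigotimes_{i=1}^m \cO_{\text{ev}}(1,4i)$ is of type $\cW(2,4,\dots,2m)$, a second application of \cite[Lem.~4.2]{CL3} shows that the free field limit of $\cC^{\psi}_{2B}(n,m)$ is of type $\cW(2,4,\dots,4(m+1)(n+1)-2)$. Finally, since the free field limit is a deformation preserving graded characters, a minimal strong generating set in the limit lifts to one of $\cC^{\psi}_{2B}(n,m)$ of the same type for generic $\psi$, exactly as in \cite{CL3}; the degenerate subcases $n=0$ and $m=0$ (with $n+m\geq 1$) are covered by the corresponding degenerate cases of the invariant theory computation.

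The main obstacle is the orthogonal invariant theory input of the second step: identifying precisely which of the quadratics $\omega_{2j}$ are superfluous, i.e. pinning down the top weight $4(m+1)(n+1)-2$. The first fundamental theorem gives the generators immediately, but the decoupling requires the second fundamental theorem for $\text{O}_{2n+1}$ together with a delicate computation in the vertex algebra showing both that $\omega_{2j}$ lies in the subalgebra generated by lower-weight generators once $2m+2j > 4(m+1)(n+1)-2$, and that no relations among normally ordered products of the $\omega_{2j}$ force additional decoupling below that weight. This is parallel to, but genuinely different from, the $\text{Sp}_{2n}$ computation behind \cite[Thm.~4.3]{CL3}, because the invariant theory of $\text{O}_N$, preserving a symmetric form on $N$ odd generators, has a different structure from that of $\text{Sp}_{2n}$.
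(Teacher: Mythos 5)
Your proposal is correct and follows essentially the same route as the paper: pass to the free field limit via \cite[Lem. 4.2]{CL3} and reduce the statement to the $\text{O}_{2n+1}$-orbifold of $\cO_{\text{odd}}(2n+1,2m+1)$. The invariant-theoretic step you flag as the main obstacle -- that this orbifold is of type $\cW(2m+2,2m+4,\dots,4(m+1)(n+1)-2)$ -- is precisely \cite[Thm. 4.4]{CL3}, the orthogonal analogue of \cite[Thm. 4.3]{CL3} whose existence you postulate, and the paper simply cites it, so no new decoupling computation is required.
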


\begin{proof} 
By \cite[Lem. 4.2]{CL3} as above, $\cC^{\psi}_{2B}(n,m)$ has limit $$\big(\bigotimes_{i=1}^m  \cO_{\text{ev}}(1,4i)\big) \otimes \big(\cO_{\text{odd}}(2n+1, 2m+1)\big)^{\text{O}_{2n+1}}.$$
By \cite[Thm. 4.4]{CL3}, $\big(\cO_{\text{odd}}(2n+1, 2m+1)\big)^{\text{O}_{2n+1}}$ is of type $$\cW(2m+2, 2m+4,\dots, 4( m + 1) ( n + 1) - 2),$$
so the claim follows as above. \end{proof}

\subsection{Case 2C} Recall that for $m\geq 1$, $\cW^{\psi}_{2C}(n,m)$ has affine subalgebra 
$V^{\psi-n-3/2}(\gs\gp_{2n})$, even fields in weights $2,4,\dots, 2m$ which commute with $V^{\psi-n-3/2}(\gs\gp_{2n})$, and $2n$ even fields of weight $\frac{2m+1}{2}$, which are primary with respect to $V^{\psi-n-3/2}(\gs\gp_{2n})$ and transform as the standard representation of $\gs\gp_{2n}$. The free field limit of $\cW^{\psi}_{2C}(n,m)$ is therefore
$$\cO_{\text{ev}}(2n^2+n,2) \otimes \big(\bigotimes_{i=1}^m  \cO_{\text{ev}}(1,4i)\big) \otimes \cS_{\text{ev}}(n, 2m+1).$$

\begin{lemma}  \label{gff2C} For $n+m \geq 1$, $\cC^{\psi}_{2C}(n,m)$ is of type $$\cW(2,4,\dots, 2 (1 + n) (1 + m + n) -2)$$
as a one-parameter vertex algebra. Equivalently, this holds for generic values of $\psi$.
\end{lemma}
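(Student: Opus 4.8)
The plan is to mimic the proof of Lemma \ref{gff1C} and Lemma \ref{gff2B}, which all follow the same template: take the free field limit, compute the orbifold of the ``interesting'' free field factor by the classical invariant theory of \cite{CL3}, and then assemble the pieces using \cite[Lem. 4.2]{CL3}. Concretely, the free field limit of $\cW^{\psi}_{2C}(n,m)$ is
$$\cO_{\text{ev}}(2n^2+n,2) \otimes \big(\textstyle\bigotimes_{i=1}^m  \cO_{\text{ev}}(1,4i)\big) \otimes \cS_{\text{ev}}(n, 2m+1),$$
and passing to the affine coset $\cC^{\psi}_{2C}(n,m)$ amounts, in the limit, to taking $\text{Sp}_{2n}$-invariants: by \cite[Lem. 4.2]{CL3} the limit of $\cC^{\psi}_{2C}(n,m)$ is
$$\big(\textstyle\bigotimes_{i=1}^m  \cO_{\text{ev}}(1,4i)\big) \otimes \big(\cS_{\text{ev}}(n, 2m+1)\big)^{\text{Sp}_{2n}}.$$
Here the even symplectic-type algebra $\cS_{\text{ev}}(n,2m+1)$ has $2n$ generators $a^i, b^i$ of weight $\frac{2m+1}{2}$ forming the standard $\text{Sp}_{2n}$-module, so the invariant ring is exactly the situation handled by the relevant structure theorem in \cite{CL3} (the analogue of \cite[Thm. 4.3]{CL3} for $\cS_{\text{odd}}$, but now for $\cS_{\text{ev}}$ and the group $\text{Sp}_{2n}$ acting on $2n$ symplectic bosons of weight $\frac{2m+1}{2}$).

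First I would invoke that structure theorem to conclude that $\big(\cS_{\text{ev}}(n, 2m+1)\big)^{\text{Sp}_{2n}}$ is of type $\cW(2m+2, 2m+4, \dots, N)$ for the appropriate top weight $N$, where the generators arise from the ``quadratic'' invariants $\sum_i \,:\!\partial^a a^i \,\partial^b b^i\!:\,$ (antisymmetrized in the symplectic pairing), with weights running in steps of $2$ starting from $2m+2$ up to a top weight governed by the rank $n$. Matching against the asserted type $\cW(2,4,\dots, 2(1+n)(1+m+n)-2)$, the orbifold factor must be of type $\cW(2m+2, 2m+4, \dots, 2(1+n)(1+m+n)-2)$; I would check that the top weight $2(1+n)(1+m+n)-2$ is precisely what the invariant theory produces — this is the one genuinely case-specific computation, and is a routine book-keeping exercise analogous to the type-$B$ count in Lemma \ref{gff2B} (where $\cO_{\text{odd}}(2n+1,2m+1)^{\text{O}_{2n+1}}$ has top weight $4(m+1)(n+1)-2$). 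Since $\bigotimes_{i=1}^m \cO_{\text{ev}}(1,4i)$ is of type $\cW(2,4,\dots,2m)$, the two blocks of generating weights, $\{2,\dots,2m\}$ and $\{2m+2,\dots, 2(1+n)(1+m+n)-2\}$, concatenate to $\{2,4,\dots, 2(1+n)(1+m+n)-2\}$, and a final application of \cite[Lem. 4.2]{CL3} gives the claimed strong generating type of $\cC^{\psi}_{2C}(n,m)$ as a one-parameter vertex algebra; by the standard semicontinuity argument (strong generation in the limit lifts to generic $\psi$), the same holds for generic $\psi$.

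The main obstacle is that the relevant classical invariant theory for $\cS_{\text{ev}}(n,k)^{\text{Sp}_{2n}}$ with $k=2m+1$ must be available in the form of a clean minimal strong generating set with an explicit top weight; if \cite{CL3} only records the $\cS_{\text{odd}}$ (symplectic fermion) case and the $\cO_{\text{ev}}/\cO_{\text{odd}}$ cases, I would need to supply the parallel statement for even symplectic bosons. The argument is formally identical — first-fundamental-theorem for $\text{Sp}_{2n}$ acting on $2n$ copies of the standard module gives generators $\sum_i :\!\partial^a a^i\,\partial^b b^i\!:$, and a decoupling/normal-ordering argument removes the redundant ones beyond a certain weight — but establishing the precise top weight $2(1+n)(1+m+n)-2$ requires the same care as in the orthogonal cases. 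Once that input is in hand, the rest of the proof is a verbatim transcription of the proofs of Lemmas \ref{gff1C} and \ref{gff2B}.
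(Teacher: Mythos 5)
Your proposal is correct and follows essentially the same route as the paper: free field limit, reduction of the coset limit via \cite[Lem. 4.2]{CL3} to $\big(\bigotimes_{i=1}^m \cO_{\text{ev}}(1,4i)\big) \otimes \big(\cS_{\text{ev}}(n,2m+1)\big)^{\text{Sp}_{2n}}$, and the invariant-theoretic strong generating type of the orbifold factor. The one ``obstacle'' you flag is not an issue: the even symplectic case is already recorded as \cite[Thm. 4.2]{CL3}, which gives $\big(\cS_{\text{ev}}(n,2m+1)\big)^{\text{Sp}_{2n}}$ of type $\cW(2m+2, 2m+4, \dots, 2(1+n)(1+m+n)-2)$, exactly as the paper cites.
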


\begin{proof} 
By \cite[Lem. 4.2]{CL3}, $\cC^{\psi}_{2C}(n,m)$ has limit $$\big(\bigotimes_{i=1}^m  \cO_{\text{ev}}(1,4i)\big) \otimes \big(\cS_{\text{ev}}(n, 2m+1)\big)^{\text{Sp}_{2n}}.$$
By \cite[Thm. 4.2]{CL3}, $\big(\cS_{\text{ev}}(n, 2m+1)\big)^{\text{Sp}_{2n}}$ is of type $$\cW(2m+2, 2m+4,\dots, 2 (1 + n) (1 + m + n) -2),$$
so the claim follows. \end{proof}

\subsection{Case 2D} Recall that for $m\geq 1$, $\cW^{\psi}_{2D}(n,m)$ has affine subalgebra
$V^{- 2 \psi - 2 n +3}(\gs\go_{2n})$, even fields in weights $2,4,\dots, 2m$ which commute with $V^{- 2 \psi - 2 n +3}(\gs\go_{2n})$, and $2n$ odd fields of weight $\frac{2m+1}{2}$, which are primary with respect to $V^{- 2 \psi - 2 n +3}(\gs\go_{2n})$ and transform as the standard representation of $\gs\go_{2n}$. The free field limit of $\cW^{\psi}_{2D}(n,m)$ is therefore
$$\cO_{\text{ev}}(2n^2-n,2) \otimes \big(\bigotimes_{i=1}^m  \cO_{\text{ev}}(1,4i)\big) \otimes \cO_{\text{odd}}(2n, 2m+1).$$

\begin{lemma}  \label{gff2D} For $n+m \geq 1$, $\cC^{\psi}_{2D}(n,m)$ is of type $$\cW(2,4,\dots, 2( m + 1) (2 n + 1) - 2)$$
as a one-parameter vertex algebra. Equivalently, this holds for generic values of $\psi$.
\end{lemma}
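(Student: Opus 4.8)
The plan is to follow verbatim the template of the proofs of Lemmas \ref{gff1C}, \ref{gff2B}, and \ref{gff2C}. First I would pass from the free field limit of $\cW^{\psi}_{2D}(n,m)$, which we have just identified as
$$\cO_{\text{ev}}(2n^2-n,2) \otimes \big(\bigotimes_{i=1}^m \cO_{\text{ev}}(1,4i)\big) \otimes \cO_{\text{odd}}(2n, 2m+1),$$
to the free field limit of the affine coset $\cC^{\psi}_{2D}(n,m)$. Here $\cO_{\text{ev}}(2n^2-n,2) \cong \cH(2n^2-n)$ is the large-level degeneration of the affine subalgebra $V^{-2\psi-2n+3}(\gs\go_{2n})$, the factors $\cO_{\text{ev}}(1,4i)$ are $\gs\go_{2n}$-invariant, and the $2n$ odd fields of $\cO_{\text{odd}}(2n,2m+1)$ transform as the standard $\gs\go_{2n}$-module. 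By \cite[Lem. 4.2]{CL3}, taking the commutant of the Heisenberg part and then the $\mathbb{Z}_2$-orbifold commutes with the free field limit, so $\cC^{\psi}_{2D}(n,m)$ has limit
$$\big(\bigotimes_{i=1}^m \cO_{\text{ev}}(1,4i)\big) \otimes \big(\cO_{\text{odd}}(2n, 2m+1)\big)^{\text{O}_{2n}}.$$

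Next I would invoke the classical invariant theory computation \cite[Thm. 4.4]{CL3}, which for $N$ odd free fields of weight $\frac{2m+1}{2}$ identifies $\big(\cO_{\text{odd}}(N, 2m+1)\big)^{\text{O}_{N}}$ as a vertex algebra of type $\cW(2m+2, 2m+4,\dots, 2(m+1)(N+1)-2)$; this is the same result used with $N=2n+1$ in the proof of Lemma \ref{gff2B}. Specializing to $N=2n$ gives type $\cW(2m+2, 2m+4,\dots, 2(m+1)(2n+1)-2)$. Since $\bigotimes_{i=1}^m \cO_{\text{ev}}(1,4i)$ is of type $\cW(2,4,\dots,2m)$ and the generator weights of the orthogonal orbifold begin at $2m+2$, there is no overlap, and a second application of \cite[Lem. 4.2]{CL3} shows that $\cC^{\psi}_{2D}(n,m)$ is of type $\cW(2,4,\dots,2(m+1)(2n+1)-2)$. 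The equivalence with the statement for generic $\psi$ is the usual deformation argument: the strong generating type of a one-parameter vertex algebra agrees with that of its specialization at all but finitely many values of $\psi$.

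The one point requiring a separate check is the edge case $m=0$, where $\cW^{\psi}_{2D}(n,0)$ was defined directly as $V^{-2\psi-2n+2}(\gs\go_{2n}) \otimes \cF(2n)$ rather than as a reduction. Here the free field limit is $\cO_{\text{ev}}(2n^2-n,2)\otimes \cO_{\text{odd}}(2n,1)$ with empty middle product, and the coset limit is $\big(\cO_{\text{odd}}(2n,1)\big)^{\text{O}_{2n}} = \cF(2n)^{\text{O}_{2n}}$, which by \cite[Thm. 4.4]{CL3} with $m=0$ is of type $\cW(2,4,\dots,4n)$, matching $2(0+1)(2n+1)-2 = 4n$. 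Similarly the case $n=0$ reduces to $\cW^{\psi-m-1}(\gs\gp_{2m})$, of type $\cW(2,4,\dots,2m)$, again consistent. I do not anticipate a genuine obstacle here; the only mild subtlety is bookkeeping about which free field orbifold (orthogonal of even versus odd rank) appears, and the invariant-theoretic input is already available from \cite{CL3}.
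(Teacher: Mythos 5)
Your proposal is correct and follows essentially the same route as the paper: pass to the free field limit, apply \cite[Lem. 4.2]{CL3} to identify the limit of the coset as $\big(\bigotimes_{i=1}^m \cO_{\text{ev}}(1,4i)\big) \otimes \big(\cO_{\text{odd}}(2n,2m+1)\big)^{\text{O}_{2n}}$, and then invoke \cite[Thm. 4.4]{CL3} to get type $\cW(2m+2,\dots,2(m+1)(2n+1)-2)$ for the orbifold factor. The extra edge-case checks for $m=0$ and $n=0$ are consistent with, though not spelled out in, the paper's argument.
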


\begin{proof} 
By \cite[Lem. 4.2]{CL3}, $\cC^{\psi}_{2D}(n,m)$ has limit $$\big(\bigotimes_{i=1}^m  \cO_{\text{ev}}(1,4i)\big) \otimes \big(\cO_{\text{odd}}(2n, 2m+1)\big)^{\text{O}_{2n}}.$$
By \cite[Thm. 4.4]{CL3}, $\big(\cO_{\text{odd}}(2n, 2m+1)\big)^{\text{O}_{2n}}$ is of type $$\cW(2m+2, 2m+4,\dots, 2( m + 1) (2 n + 1) - 2),$$
so the claim follows. \end{proof}

Next, we consider the cases $\cC^{\psi}_{1B}(n,m)$ and $\cC^{\psi}_{1D}(n,m)$ where we are not able to find a minimal strong generating set at this stage.
\subsection{Case 1B} Recall that $\cW^{\psi}_{1B}(n,m)$ has affine subalgebra $V^{\psi-2n}(\gs\go_{2n+1})$, even fields in weights $2,4,\dots, 2m$ which commute with $V^{\psi-2n}(\gs\go_{2n+1})$, and $2n+1$ even fields in weight $m+1$ which are primary with respect to $V^{\psi-2n}(\gs\go_{2n+1})$ and transform as the standard representation of $\gs\go_{2n+1}$. The free field limit of $\cW^{\psi}_{1B}(n,m)$ is then
$$\cO_{\text{ev}}(2n^2+n,2) \otimes \big(\bigotimes_{i=1}^m  \cO_{\text{ev}}(1,4i)\big) \otimes \cO_{\text{ev}}(2n+1, 2m+2).$$

\begin{lemma} \label{gff1B} 
As a one-parameter vertex algebra, $\cC^{\psi}_{1B}(n,m)$ is of type $\cW(2,4,\dots,  2N)$ for some $N$ satisfying $2N \geq 2 (1 + n) (3 + 2 m + 2 n) - 2$.
\end{lemma}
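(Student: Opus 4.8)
The statement is the analogue, for Case 1B, of Lemmas \ref{gff1C}, \ref{gff2B}, \ref{gff2C}, \ref{gff2D}, but here the relevant orbifold is $\big(\cO_{\text{ev}}(2n+1,2m+2)\big)^{\text{O}_{2n+1}}$, whose minimal strong generating type is not known exactly; this is precisely why the statement only asserts a lower bound on $N$ rather than an equality. So the plan is to follow the same two-step recipe as in the other cases, but carry through only the half of the argument that survives without a sharp invariant-theory input. First I would invoke \cite[Lem. 4.2]{CL3} to reduce to the free field limit: since the free field limit of $\cW^{\psi}_{1B}(n,m)$ is $\cO_{\text{ev}}(2n^2+n,2) \otimes \big(\bigotimes_{i=1}^m \cO_{\text{ev}}(1,4i)\big) \otimes \cO_{\text{ev}}(2n+1,2m+2)$, and the Heisenberg tensor factor $\cO_{\text{ev}}(2n^2+n,2) = \cH(2n^2+n)$ is exactly the affine subalgebra $V^{\psi-2n}(\gs\go_{2n+1})$ in the limit, taking its commutant (and then the $\mathbb{Z}_2 = \text{O}_{2n+1}/\text{SO}_{2n+1}$ orbifold) gives that the limit of $\cC^{\psi}_{1B}(n,m)$ is
\[
\big(\textstyle\bigotimes_{i=1}^m \cO_{\text{ev}}(1,4i)\big) \otimes \big(\cO_{\text{ev}}(2n+1,2m+2)\big)^{\text{O}_{2n+1}}.
\]

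**Key steps.** The first factor is of type $\cW(2,4,\dots,2m)$. For the second factor I would note that it is a simple vertex algebra with even generators only, graded by $\mathbb{Z}_{\geq 1}$, and with no generator below weight $m+1$; by the general structure theory of orbifolds of free field algebras (the even-spin analogue of the arguments in \cite{CL3}, e.g. \cite[Thm. 4.3, 4.4]{CL3}), it is strongly finitely generated, say of type $\cW(\text{weights in } [m+1, 2N'])$ for some $N'$, and one can normalize the generators to be of type $\cW(2(m+1), 2(m+1)+2, \dots, 2N')$ after passing to primary fields and reorganizing — in any case of even type $\cW(2,4,\dots,2N)$ after merging with the first factor via \cite[Lem. 4.2]{CL3}. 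The only remaining point is the inequality $2N \geq 2(1+n)(3+2m+2n)-2$. For this I would exhibit an explicit element of $\cC^{\psi}_{1B}(n,m)$ of weight $(1+n)(3+2m+2n)$ that is \emph{not} a normally ordered polynomial in lower-weight generators — equivalently, show that the orbifold $\big(\cO_{\text{ev}}(2n+1,2m+2)\big)^{\text{O}_{2n+1}}$ has a genuine new generator in weight $(1+n)(3+2m+2n)$. The natural candidate comes from the first fundamental theorem of invariant theory for $\text{O}_{2n+1}$: among the invariants built from the $2n+1$ copies $a^1,\dots,a^{2n+1}$ of the weight-$(m+1)$ fields, the lowest-weight invariant that is not decomposable into binary contractions $:\!a^i a^i\!:$-type quadratics is the one obtained by fully antisymmetrizing $2n+1$ distinct copies with $2n+1 = \dim$ derivatives distributed so as to make the determinant $\det(\partial^{j} a^i)$ nonvanishing; its weight is $(2n+1)(m+1) + \binom{2n+1}{2} = (2n+1)(m+1) + n(2n+1) = (2n+1)(m+1+n)$, and $2(2n+1)(m+1+n) = 2(1+n)(3+2m+2n) - 2$ only after accounting for the even-type reindexing — I would check this arithmetic carefully and, if it does not land exactly, instead argue via a dimension/character count that a new generator must appear no earlier than that weight. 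Either way, the decomposability of all lower-weight invariants is the content needed.

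**Main obstacle.** The hard part will be the last step: proving that every invariant of $\big(\cO_{\text{ev}}(2n+1,2m+2)\big)^{\text{O}_{2n+1}}$ of weight strictly below $(1+n)(3+2m+2n)$ is a normally ordered polynomial in the quadratic invariants (and the fields from the $\cO_{\text{ev}}(1,4i)$ factors), so that no new generator can occur before that weight. This is an orbifold-theoretic decomposition statement of the kind proven in \cite{CL3} for the symplectic and odd-orthogonal cases, and the even-orthogonal case $\text{O}_{2n+1}$ acting on finitely many copies of a single bosonic weight-$(m+1)$ field should be amenable to the same deformation-of-classical-invariant-theory machinery, but it requires genuine work rather than a citation. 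Fortunately the lemma only claims the inequality $2N \geq 2(1+n)(3+2m+2n)-2$, so it suffices to prove decomposability \emph{up to but not including} that weight and to exhibit one indecomposable element \emph{at} that weight — we do not need to pin down $N$ exactly (indeed, the exact value of $N$ will only be extracted later, as a consequence of Theorem \ref{main}, exactly as announced in step (1) of the sketch of proof).
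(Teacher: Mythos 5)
Your reduction to the free-field limit is exactly the paper's first step: by \cite[Lem. 4.2]{CL3} the limit of $\cC^{\psi}_{1B}(n,m)$ is $\big(\bigotimes_{i=1}^m \cO_{\text{ev}}(1,4i)\big) \otimes \big(\cO_{\text{ev}}(2n+1,2m+2)\big)^{\text{O}_{2n+1}}$, and the paper then simply cites \cite[Thm. 4.5]{CL3} for the statement that this orbifold is of type $\cW(2m+2,2m+4,\dots,2N)$ with $2N \geq 2(1+n)(3+2m+2n)-2$. The problem is your proposed mechanism for the lower bound. Your candidate ``new generator,'' the determinant $\det(\partial^{j}a^{i})$, is only $\text{SO}_{2n+1}$-invariant and is odd under the extra $\mathbb{Z}_2$, so it does not lie in the $\text{O}_{2n+1}$-orbifold at all (its square does, but by the Gram identity $\det(\langle x_i,x_j\rangle)=\det(X)^2$ it is a polynomial in the quadratic invariants); moreover its weight $(2n+1)(m+n+1)$ does not match the relevant number. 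More fundamentally, by Weyl's \emph{first} fundamental theorem for $\text{O}_{2n+1}$ every classical invariant, in every weight, is a polynomial in the quadratic contractions, so there is no indecomposable invariant beyond the quadratics to exhibit, and the ``decomposability below the critical weight'' you flag as the main obstacle is automatic and is not what drives the bound.

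What the bound actually expresses is that the quadratic strong generators themselves cannot be \emph{decoupled} below a certain weight. The orbifold is strongly generated by the fields $\omega_{2a,0}$ of weights $2m+2, 2m+4, \dots$ coming from the quadratics; a decoupling relation expressing one of these as a normally ordered polynomial in lower-weight generators would pass, in $\text{gr}\big(\cO_{\text{ev}}(2n+1,2m+2)\big)^{\text{O}_{2n+1}} \cong \mathbb{C}[\bigoplus_{i\geq 0}V_i]^{\text{O}_{2n+1}}$, to a nontrivial polynomial relation among the classical quadratics. By Weyl's \emph{second} fundamental theorem the relations are generated by $(2n+2)\times(2n+2)$ determinantal relations in the $Q_{a,b}$, whose minimal weight is $(2n+2)(2m+2n+3)=2(1+n)(3+2m+2n)$; hence no decoupling can occur below that weight and every generator up through weight $2(1+n)(3+2m+2n)-2$ must remain, which is the inequality $2N \geq 2(1+n)(3+2m+2n)-2$ (this is exactly the content of the remark following the lemma in the paper). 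So the gap in your proposal is the confusion of FFT-indecomposability with SFT-controlled decoupling: the correct critical element is the quadratic generator of weight $2(1+n)(3+2m+2n)-2$, not a determinant of the fields, and the correct tool is the relation ideal of the classical invariant ring, not the generation statement.
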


\begin{proof} By \cite[Lem. 4.2]{CL3}, $\cC^{\psi}_{1B}(n,m)$ has limit 
$$\big(\bigotimes_{i=1}^m  \cO_{\text{ev}}(1,4i)\big) \otimes \big(\cO_{\text{ev}}(2n+1, 2m+2)\big)^{\text{O}_{2n+1}}.$$ By \cite[Thm. 4.5]{CL3}, 
$\big(\cO_{\text{ev}}(2n+1, 2m+2)\big)^{\text{O}_{2n+1}}$ is of type $\cW(2m+2, 2m+4,\dots, 2N)$ for some $2N \geq 2 (1 + n) (3 + 2 m + 2 n) - 2$, so the claim follows. \end{proof}
\begin{remark} The lower bound on $N$ is a consequence of Weyl's second fundamental theorem of invariant theory for $\text{O}_{2n+1}$ \cite{W}. We have an isomorphism of differential graded rings $$\text{gr}\big(\cO_{\text{ev}}(2n+1, 2m+2)\big)^{\text{O}_{2n+1}} \cong \mathbb{C}[\bigoplus_{i\geq 0} V_i]^{\text{O}_{2n+1}},$$ where each $V_i \cong \mathbb{C}^{2n+1}$ as an $\text{O}_{2n+1}$-module. The generators of $\mathbb{C}[\bigoplus_{i\geq 0} V_i]^{\text{O}_{2n+1}}$ are all quadratics, and the ideal of relations is generated by determinants of degree $2n+2$ in these quadratics. The relation of minimal weight has weight $2 (1 + n) (3 + 2 m + 2 n)$, and the statement that $\big(\cO_{\text{ev}}(2n+1, 2m+2)\big)^{\text{O}_{2n+1}}$ is of type $\cW(2m+2, 2m+4,\dots, 2 (1 + n) (3 + 2 m + 2 n) - 2)$ is equivalent to this relation being a decoupling relation for the generator in weight $2 (1 + n) (3 + 2 m + 2 n)$. We will see later (Corollary \ref{cor:newstrongtypes}) that in fact $2N = 2 (1 + n) (3 + 2 m + 2 n) - 2$. \end{remark}

\subsection{Case 1D} Recall that $\cW^{\psi}_{1D}(n,m)$ has affine subalgebra $V^{\psi-2n+1}(\gs\go_{2n})$, even fields in weights $2,4,\dots, 2m$ which commute with $V^{\psi-2n+1}(\gs\go_{2n})$, and $2n$ additional even fields of weight $m+1$ which are primary with respect to $V^{\psi-2n+1}(\gs\go_{2n})$ and transform as the standard representation of $\gs\go_{2n}$. The free field limit of $\cW^{\psi}_{1D}(n,m)$ is therefore
$$\cO_{\text{ev}}(2n^2-n,2) \otimes \big(\bigotimes_{i=1}^m  \cO_{\text{ev}}(1,4i)\big) \otimes \cO_{\text{ev}}(2n, 2m+2).$$

\begin{lemma} \label{gff1D} As a one-parameter vertex algebra, $\cC^{\psi}_{1D}(n,m)$ is of type $\cW(2,4,\dots,  2N)$ for some $N$ satisfying $2N \geq 2 (1 + m + n) (1 + 2 n) - 2$.
\end{lemma}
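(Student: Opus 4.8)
The plan is to mimic the proof of Lemma \ref{gff1B} verbatim, replacing the orthogonal group $\text{O}_{2n+1}$ acting on copies of $\mathbb C^{2n+1}$ by the orthogonal group $\text{O}_{2n}$ acting on copies of $\mathbb C^{2n}$. First I would invoke \cite[Lem. 4.2]{CL3} to pass from $\cW^\psi_{1D}(n,m)$ to its affine coset: since the free field limit of $\cW^\psi_{1D}(n,m)$ is $\cO_{\text{ev}}(2n^2-n,2) \otimes \big(\bigotimes_{i=1}^m \cO_{\text{ev}}(1,4i)\big) \otimes \cO_{\text{ev}}(2n,2m+2)$, and the affine vertex algebra $V^{\psi-2n+1}(\gs\go_{2n})$ has limit $\cO_{\text{ev}}(2n^2-n,2)$ with $\text{SO}_{2n}$ as its group of inner automorphisms (whose orbifold together with the outer $\mathbb Z_2$ is $\text{O}_{2n}$), the limit of $\cC^\psi_{1D}(n,m)$ is the $\text{O}_{2n}$-invariant subalgebra
\[
\big(\bigotimes_{i=1}^m \cO_{\text{ev}}(1,4i)\big) \otimes \big(\cO_{\text{ev}}(2n, 2m+2)\big)^{\text{O}_{2n}}.
\]

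Next I would apply \cite[Thm. 4.5]{CL3} to the factor $\big(\cO_{\text{ev}}(2n, 2m+2)\big)^{\text{O}_{2n}}$, which asserts it is of type $\cW(2m+2, 2m+4, \dots, 2N)$ for some $N$ with $2N \geq 2(1+m+n)(1+2n)-2$. The lower bound comes from Weyl's second fundamental theorem of invariant theory for $\text{O}_{2n}$: under the isomorphism of graded rings $\text{gr}\big(\cO_{\text{ev}}(2n, 2m+2)\big)^{\text{O}_{2n}} \cong \mathbb C[\bigoplus_{i\geq 0} V_i]^{\text{O}_{2n}}$ with each $V_i \cong \mathbb C^{2n}$, the invariant ring is generated by quadratics $q_{ij}$, and the relations are generated by $(2n+1)\times(2n+1)$ determinants in the $q_{ij}$, the minimal-weight such relation having weight $2(1+m+n)(1+2n)$; the generating type claim is equivalent to this relation being a decoupling relation for the generator in that weight. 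Finally, since $\bigotimes_{i=1}^m \cO_{\text{ev}}(1,4i)$ is of type $\cW(2,4,\dots,2m)$ and $2m+2 > 2m$, a second application of \cite[Lem. 4.2]{CL3} (or rather the observation that the two tensor factors have generators in disjoint weight ranges for the lower generators, with higher ones continuing in even steps) shows the tensor product, hence $\cC^\psi_{1D}(n,m)$, is of type $\cW(2,4,\dots,2N)$ with the same lower bound on $N$. The passage from the statement about the generic-$\psi$ vertex algebra to the one-parameter vertex algebra statement is routine and identical to the other cases.

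There is no real obstacle here, as this is a direct transcription of the preceding lemma; the only thing to be careful about is that $\gs\go_{2n}$ has a genuinely different invariant theory in low rank (e.g. $\text{SO}_{2n}$ versus $\text{O}_{2n}$ distinctions, and the fact that $\gs\go_{2n}$ is not simple for $n=1$ and not of full rank type for small $n$), but since we only claim an inequality $2N \geq 2(1+m+n)(1+2n)-2$ rather than an equality at this stage, these subtleties do not affect the argument; the sharp equality $2N = 2(1+m+n)(1+2n)-2$ will be deduced later as a consequence of Theorem \ref{main}, exactly as in the type $1B$ case (cf. Corollary \ref{cor:newstrongtypes}).
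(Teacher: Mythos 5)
Your proposal is correct and follows essentially the same route as the paper: the paper's proof likewise applies \cite[Lem. 4.2]{CL3} to identify the limit of $\cC^{\psi}_{1D}(n,m)$ as $\big(\bigotimes_{i=1}^m \cO_{\text{ev}}(1,4i)\big) \otimes \big(\cO_{\text{ev}}(2n, 2m+2)\big)^{\text{O}_{2n}}$ and then invokes \cite[Thm. 4.5]{CL3} for the bound $2N \geq 2(1+m+n)(1+2n)-2$, exactly as you do. Your additional remarks on Weyl's second fundamental theorem and on the sharp value of $N$ being deferred to Corollary \ref{cor:newstrongtypes} match the paper's surrounding discussion.
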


\begin{proof} First, \cite[Lem. 4.2]{CL3} shows that $\cC^{\psi}_{1D}(n,m)$ has limit $$\big(\bigotimes_{i=1}^m  \cO_{\text{ev}}(1,4i)\big) \otimes \big(\cO_{\text{ev}}(2n, 2m+2)\big)^{\text{O}_{2n}}.$$ Again by \cite[Thm. 4.5]{CL3}, 
$\big(\cO_{\text{ev}}(2n, 2m+2)\big)^{\text{O}_{2n}}$ is of type $\cW(2m+2, 2m+4, \dots, 2N)$ for some $2N \geq  2 (1 + m + n) (1 + 2 n) - 2$, so the claim follows. \end{proof}
As above, the relation of minimal weight among the generators has weight $2 (1 + m + n) (1 + 2 n)$, and again we will see later (Corollary \ref{cor:newstrongtypes}) that $2N = 2 (1 + m + n) (1 + 2 n) - 2$.

Finally, we consider the cases $\cC^{\psi}_{1O}(n,m)$ and $\cC^{\psi}_{2O}(n,m)$. We need two new ingredients: the description of orbifolds of certain free field algebras under $\text{Osp}_{1|2n}$, and the adaptation of the method of studying affine cosets by passing to their orbifold limits developed in \cite{CL1}, to cosets of $V^k(\go\gs\gp_{1|2n})$. We begin by restating the versions of Sergeev's first and second fundamental theorems of invariant theory for $\text{Osp}_{1|2n}$ that we need; these are specializations of \cite[Thm. 1.3]{S1} and \cite[Thm. 4.5]{S2}. First, we consider the invariants in the ring of functions on a sum of copies of the standard module with odd parity.

\begin{thm} \label{sergeevodd} For $k\geq 0$, let $U_k$ be a copy of the standard $\text{Osp}_{1|2n}$-module $\mathbb{C}^{1|2n}$, which has odd subspace spanned by $\{x_{k,i}, y_{k,i}|\ i=1,\dots, n\}$, and even subspace spanned by $z_k$. Then the ring of invariant polynomial functions 
$$R =  \mathbb{C}[ \bigoplus_{k\geq 0}  U_k]^{\text{Osp}_{1|2n}}$$ is generated by the quadratics
$$q_{a,b} = \frac{1}{2} \sum_{i=1}^n (x_{i,a} y_{i,b} + x_{i,b} y_{i,a})  + \frac{1}{2} z_{a} z_{b}\qquad a,b\geq 0.$$ Let $Q_{a,b}$ be commuting indeterminates satisfying $Q_{a,b} = Q_{b,a}$. The kernel of the map $$\mathbb{C}[Q_{a,b}] \rightarrow R,\qquad Q_{a,b} \mapsto q_{a,b}$$ is generated by polynomials $p_I$ of degree $2n+2$ in the variables $Q_{a,b}$ corresponding to a rectangular Young tableau of size $2\times (2n+2)$, filled by entries from a standard sequence $I$ of length $4n+4$ from the set of indices $\{0,1,2,\dots\}$. The entries must weakly increase along rows and strictly increase along columns. \end{thm}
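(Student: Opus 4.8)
The plan is to deduce the statement from the orthosymplectic first and second fundamental theorems of classical invariant theory, specializing the general results of Sergeev for $\text{Osp}(p|2q)$ to the case $p=1$, $q=n$, and then translating their abstract combinatorial output into the explicit description above.

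First I would record the first fundamental theorem. Applying \cite[Thm.~1.3]{S1} to the natural module $V=\mathbb{C}^{1|2n}$ of $\text{Osp}_{1|2n}$, equipped with its invariant even supersymmetric bilinear form $B$, and using that $V\cong V^{*}$ as $\text{Osp}_{1|2n}$-modules (so that only copies of $V$, and no dual copies, are needed), the ring $\mathbb{C}[\bigoplus_{k\geq 0}U_k]^{\text{Osp}_{1|2n}}$ is generated by the quadratic invariants $u_a\otimes u_b\mapsto B(u_a,u_b)$. Choosing a basis of $V$ consisting of an even vector $e_0$ with $B(e_0,e_0)=1$ together with odd vectors $\{p_i,q_i\}_{i=1}^{n}$ satisfying $B(p_i,q_j)=\delta_{ij}$ and $B(p_i,p_j)=B(q_i,q_j)=0$, and writing the coordinates on $U_k$ as $(z_k;x_{k,1},\dots,x_{k,n},y_{k,1},\dots,y_{k,n})$, one evaluates $B$ on the tautological vectors; keeping track of the signs produced by the odd coordinates, this quadratic is exactly $q_{a,b}$, and the identity $q_{a,b}=q_{b,a}$ is then immediate. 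This proves the first assertion.

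Next I would extract the relations from the orthosymplectic second fundamental theorem \cite[Thm.~4.5]{S2}, which presents the kernel of the surjection $\mathbb{C}[Q_{a,b}]\twoheadrightarrow R$ by a family of generalized (super) minor/Pfaffian-type relations, indexed by tableaux whose shape is controlled by the superdimension of $V$. Specializing the even part of $V$ to dimension $1$, the relevant shape becomes the rectangle with two columns and $2n+2$ rows; each such relation is an alternating sum over the column-fillings, and hence is homogeneous of degree $2n+2$ in the $Q_{a,b}$. Unwinding Sergeev's bookkeeping, the surviving fillings are precisely the tableaux of shape $2\times(2n+2)$ whose entries are drawn from a fixed standard sequence $I$ of length $4n+4$ in $\{0,1,2,\dots\}$, weakly increasing along rows and strictly increasing along columns; these are the claimed generators $p_I$, and they generate the kernel. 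Combined with the previous step, this gives the theorem.

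I expect the main obstacle to be precisely this last translation. Sergeev's theorem is stated for an arbitrary $\text{Osp}(p|2q)$ and permits mixtures of the natural module with its dual, so one must check with care that the $p=1$ specialization genuinely collapses to a rectangular $2\times(2n+2)$ shape with exactly the stated semistandardness constraints, that no relations of smaller degree are required, and that self-duality correctly eliminates the dual copies. A convenient sanity check is the purely even limit $q=0$, where the same machinery must return the classical second fundamental theorem for $\text{O}_p$, namely the $(p+1)\times(p+1)$ minors of a generic symmetric matrix. One should also note, as is standard, that the invariants for the algebraic supergroup $\text{Osp}_{1|2n}$ used in \cite{S1,S2} coincide with the invariants for the integrated $\text{Osp}_{1|2n}$-action employed elsewhere in the paper.
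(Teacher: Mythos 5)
Your proposal follows exactly the route the paper itself takes: Theorem \ref{sergeevodd} is stated there as a direct specialization of Sergeev's first and second fundamental theorems, \cite[Thm.~1.3]{S1} and \cite[Thm.~4.5]{S2}, to the standard $\text{Osp}_{1|2n}$-module $\mathbb{C}^{1|2n}$, with the self-duality of the standard module and the evaluation of the invariant form giving the quadratics $q_{a,b}$, and the tableau description of the relations read off from Sergeev's presentation (the paper even notes that the precise form of the relations is not needed, only their degrees/weights). So the proposal is correct and takes essentially the same approach as the paper.
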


For the invariants in the ring of functions on a sum of copies of the standard module with even parity, a few modifications are needed.

\begin{thm} \label{sergeeveven} For $k\geq 0$, let $U_k$ be a copy of the standard $\text{Osp}_{1|2n}$-module $\mathbb{C}^{2n|1}$, with even subspace spanned by $\{x_{k,i}, y_{k,i}|\ i=1,\dots, n\}$ and odd subspace spanned by $z_k$. Then the ring of invariant polynomial functions 
$$R =  \mathbb{C}[ \bigoplus_{k\geq 0}  U_k]^{\text{Osp}_{1|2n}}$$ is generated by the quadratics
$$q_{a,b} = \frac{1}{2} \sum_{i=1}^n (x_{i,a} y_{i,b}- x_{i,b} y_{i,a})  -\frac{1}{2} z_{a} z_{b}\qquad a,b\geq 0.$$ Let $Q_{a,b}$ be commuting indeterminates satisfying $Q_{a,b} = -Q_{b,a}$. The kernel of the map $$\mathbb{C}[Q_{a,b}] \ra R,\qquad Q_{a,b} \mapsto q_{a,b}$$ is generated by polynomials $p_I$ of degree $2n+2$ in the variables $Q_{a,b}$ corresponding to a rectangular Young tableau of size $2\times (2n+2)$, filled by entries from a standard sequence $I$ of length $4n+4$ from the set of indices $\{0,1,2,\dots\}$. The entries must strictly increase along rows and weakly increase along columns. \end{thm}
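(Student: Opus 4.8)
The plan is to deduce Theorem \ref{sergeeveven} by specializing $m=1$ in Sergeev's first and second fundamental theorems of invariant theory for $\text{Osp}_{m|2n}$, namely \cite[Thm. 1.3]{S1} and \cite[Thm. 4.5]{S2}, applied to copies of the parity-reversed standard module $\mathbb{C}^{2n|1} = \Pi(\mathbb{C}^{1|2n})$. This runs parallel to the proof of Theorem \ref{sergeevodd}, where the same results are applied to copies of $\mathbb{C}^{1|2n}$ itself. As with all such statements, it suffices to treat finitely many copies $U_0,\dots,U_N$ and then take the direct limit $N\to\infty$.

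For the first assertion, $\text{Osp}_{1|2n}$ preserves on $\mathbb{C}^{2n|1}$ a nondegenerate even bilinear form $B$ whose restriction to the even $2n$-dimensional subspace is the standard symplectic form $\sum_i dx_i\wedge dy_i$ and whose restriction to the odd line is $-\tfrac12$ times the square; this is the parity reversal of the supersymmetric form defining $\text{Osp}_{1|2n}$ on $\mathbb{C}^{1|2n}$, hence is super-antisymmetric. Sergeev's FFT for $\text{Osp}_{1|2n}$, equivalently the $\mathbb{Z}_2$-invariant part of the FFT for the connected supergroup $\text{SOsp}_{1|2n}$, which discards the volume-form invariants, gives that $R$ is generated by the pairwise $B$-contractions of the generating vectors of the $U_k$. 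Tracking the Koszul signs coming from the mixed parities of the coordinates identifies the $B$-contraction of $U_a$ and $U_b$ with the quadratic $q_{a,b}$ in the statement; the super-antisymmetry of $B$ then forces $q_{a,b} = -q_{b,a}$, so the indeterminates $Q_{a,b}$ must be taken antisymmetric.

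For the second assertion, Sergeev's SFT, specialized to $m=1$, describes the kernel of $\mathbb{C}[Q_{a,b}]\to R$ as the ideal generated by the explicit polynomials (generalized bideterminants) attached to fillings of the $2\times(2n+2)$ rectangular Young diagram; this diagram has $4n+4$ cells, and each filling by a standard sequence $I$ of length $4n+4$ produces a polynomial $p_I$ of degree $2n+2$ in the $Q_{a,b}$ (one factor per column). The monotonicity conditions on the filling encode the standard-monomial straightening law underlying Sergeev's proof, and passing from $\mathbb{C}^{1|2n}$ to its parity reversal $\mathbb{C}^{2n|1}$ swaps symmetrization with antisymmetrization; this is precisely why the conditions in Theorem \ref{sergeeveven}, strict increase along rows and weak increase along columns, are transposed relative to those in Theorem \ref{sergeevodd}. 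The main obstacle is bookkeeping rather than mathematics: one must reconcile Sergeev's normalization of the invariant form, his choice of Borel subalgebra and tableaux conventions, and his use of the Lie superalgebra rather than the full supergroup $\text{Osp}_{1|2n}$, with the explicit generators and monotonicity rules in the statement, and confirm that restricting from $\text{SOsp}_{1|2n}$ to $\text{Osp}_{1|2n}$, i.e. passing to $\mathbb{Z}_2$-invariants, introduces no new relation generators.
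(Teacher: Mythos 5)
Your proposal follows exactly the route the paper takes: Theorem \ref{sergeeveven} is stated there with no independent proof, being presented (together with Theorem \ref{sergeevodd}) as a direct specialization of Sergeev's first and second fundamental theorems \cite[Thm. 1.3]{S1}, \cite[Thm. 4.5]{S2} for the parity-reversed standard module, with the precise form of the relations left to \cite{S2}. Your additional bookkeeping points (Koszul signs giving the antisymmetry $q_{a,b}=-q_{b,a}$, the transposition of the tableau monotonicity conditions under parity reversal, and checking that passing to the full supergroup $\text{Osp}_{1|2n}$ changes nothing in the presentation) are exactly the checks implicit in the paper's citation, so the approaches coincide.
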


In both cases, the precise form of the relations can be found in \cite{S2}, but is not needed for our purposes. We only need the conformal weight of the relations which can be read off from the entries in the corresponding Young tableau.

Next, we recall that $V^k(\go\gs\gp_{1|2n})$ comes from a deformable family in the sense of \cite{CL1} as follows. Let $\kappa$ be a formal variable satisfying $\kappa^2 = k$, and let $F$ be the ring of complex-valued rational functions of $\kappa$ of degree at most zero, with possible poles only at $\kappa = 0$. In other words, $F$ consists of functions of the form $\frac{p(\kappa)}{\kappa^d}$, where $d \geq 0$ and $p$ is a polynomial of degree at most $d$. There is a vertex algebra $\cV$ over $F$ such that $\cV/ (\kappa - \sqrt{k}) \cV \cong V^k(\go\gs\gp_{1|2n})$ for all $k \neq 0$, Here $(\kappa - \sqrt{k}) \cV$ denotes the ideal generated by $\kappa - \sqrt{k}$. In the notation of \cite{CL1}, 
$$\cV^{\infty} = \lim_{\kappa \rightarrow \infty} \cV \cong \cH(2n^2+n) \otimes \cA(n),$$ where $\cH(2n^2+n)$ denotes the Heisenberg algebra of rank $2n^2 + n = \text{dim} \ \gs\gp_{2n}$ and $\cA(n)$ denotes the rank $n$ symplectic fermion algebra.

We now consider vertex algebras $\cW^k$ which admit a homomorphism $V^k(\go\gs\gp_{1|2n})\rightarrow \cW^k$ with the following properties:

\begin{enumerate}
\item There exists a deformable family $\cW$ defined over the ring $F_K$ of rational functions of degree at most zero in $\kappa$, with poles in some at most countable set $K$, such that $$\cW/ (\kappa - \sqrt{k})\cW \cong \cW^k,\  \text{for all}\ \sqrt{k} \notin K.$$

\item The map $V^k(\go\gs\gp_{1|2n}) \rightarrow \cW^k$ is induced by a map of deformable families $\cV \rightarrow \cW$.
 
\item $\cW^{\infty} = \lim_{\kappa \rightarrow \infty} \cW$ decomposes as 
$$\cW^{\infty} \cong \cV^{\infty} \otimes \tilde{\cW} \cong \cH(2n^2 + n) \otimes \cA(n) \otimes \tilde{\cW},$$ for some vertex subalgebra $\tilde{\cW}\subseteq \cW^{\infty}$.

\item The action of $\go\gs\gp_{1|2n}$ on $\cW$ infinitesimally generates an action of the Lie supergroup $\text{SOsp}_{1|2n}$, and $\cW$ decomposes into finite-dimensional $\text{SOsp}_{1|2n}$-modules.

\end{enumerate}

Under these circumstances, we obtain

\begin{thm} \label{thm:ospcosetlimit}

\begin{enumerate}
\item $\cC = \text{Com}(\cV, \cW)$ is a deformable family, and $$\cC/ (\kappa - \sqrt{k})\cC \cong \cC^k = \text{Com}(V^k(\mathfrak{osp}_{1|2n}), \cW^k),$$
for generic $k$.
\item $\text{SOsp}_{1|2n}$ acts on $\tilde{\cW}$, and we have an isomorphism 
\begin{equation} \begin{split} \cC^{\infty} & \cong \text{Com}(\cV^{\infty}, \cV^{\infty} \otimes \tilde{\cW})^{\text{SOsp}_{1|2n}}
\\ & \cong \text{Com}(\cH(2n^2+n) \otimes \cA(n), \cH(2n^2+n) \otimes \cA(n) \otimes \tilde{\cW})^{\text{SOsp}_{1|2n}}
\\ & \cong \tilde{\cW}^{\text{SOsp}_{1|2n}}.
\end{split} \end{equation}

\item For generic $k$, $\cW^k$ admits a decomposition
\begin{equation} \cW^k \cong \bigoplus_{\lambda \in P^+} V^{k}(\lambda) \otimes \cC^{k}(\lambda),\end{equation} where $P^+$ denotes the set of dominant weights of $\go\gs\gp_{1|2n}$, $V^{k}(\lambda)$ are the corresponding Weyl modules, and the multiplicity spaces $\cC^{k}(\lambda)$ are irreducible $\cC^k$-modules.

\end{enumerate}
\end{thm}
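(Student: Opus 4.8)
The plan is to adapt, essentially verbatim, the orbifold-limit machinery of \cite{CL1} from the affine setting of simple Lie algebras to $V^k(\go\gs\gp_{1|2n})$, using that the latter comes from the deformable family $\cV$ with $\cV^\infty \cong \cH(2n^2+n)\otimes\cA(n)$. First, for part (1), I would observe that since $\cW$ is a deformable family over $F_K$ and the map $\cV\to\cW$ is a map of deformable families, the relative commutant $\cC=\mathrm{Com}(\cV,\cW)$ is computed inside a module over $F_K$; one checks that taking commutants commutes with the base change $-\otimes_{F_K}F_K/(\kappa-\sqrt k)$ away from the poles and away from the at most countably many $k$ where the dimension of graded pieces can jump. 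This is exactly the content of \cite[Lemma 8.1 and Lemma 8.2]{CL1} (or the analogous statements there), whose proofs only use that $\cV$ and $\cW$ are free as $F_K$-modules in each conformal weight and that the commutant is a saturated submodule; nothing in those arguments is sensitive to whether $\cV$ is bosonic, so the superalgebra case goes through with the same proof, and one gets $\cC/(\kappa-\sqrt k)\cC\cong\mathrm{Com}(V^k(\go\gs\gp_{1|2n}),\cW^k)$ for generic $k$.

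For part (2), the key input is property (3): $\cW^\infty\cong\cV^\infty\otimes\tilde\cW\cong\cH(2n^2+n)\otimes\cA(n)\otimes\tilde\cW$ as vertex algebras, compatibly with the $\go\gs\gp_{1|2n}$-action. Since $\lim_{\kappa\to\infty}$ is exact on the relevant filtered pieces, $\cC^\infty=\lim_{\kappa\to\infty}\cC=\mathrm{Com}(\cV^\infty,\cW^\infty)$. Now $\mathrm{Com}(\cH(2n^2+n)\otimes\cA(n),\ \cH(2n^2+n)\otimes\cA(n)\otimes\tilde\cW)$: one must show the commutant of the free field algebra $\cV^\infty$ inside $\cV^\infty\otimes\tilde\cW$ is precisely $\tilde\cW^{\go\gs\gp_{1|2n}}$. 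The point is that $\cV^\infty$ is a simple current-free (in fact simple) free field algebra whose full automorphism group acting by its $\go\gs\gp_{1|2n}$-symmetry has the property that $\cV^\infty$ itself contains no nonzero $\go\gs\gp_{1|2n}$-invariant of positive weight lying in the commutant of the affine subalgebra — equivalently, the only vectors in $\cV^\infty\otimes\tilde\cW$ that commute with all of $\cV^\infty$ are those that project trivially onto $\cV^\infty$, hence lie in $\one\otimes\tilde\cW$, and then commuting with the $\go\gs\gp_{1|2n}$-currents inside $\cV^\infty$ forces $\go\gs\gp_{1|2n}$-invariance. Because the $\go\gs\gp_{1|2n}$-action integrates to $\mathrm{SOsp}_{1|2n}$ by property (4) and $\cW$ (hence $\cW^\infty$, hence $\tilde\cW$) decomposes into finite-dimensional modules, invariance under the Lie superalgebra is the same as invariance under the connected supergroup, giving $\cC^\infty\cong\tilde\cW^{\mathrm{SOsp}_{1|2n}}$. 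This is the superalgebra analogue of \cite[Thm. 1.2 / Cor. 3.4]{CL1}.

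For part (3), I would run the standard decomposition argument: the full vertex algebra $\cW^k$ carries commuting actions of $V^k(\go\gs\gp_{1|2n})$ and of $\cC^k$, and by property (4) it is a direct limit of finite-dimensional $\mathrm{SOsp}_{1|2n}$-modules, so as a $V^k(\go\gs\gp_{1|2n})$-module it decomposes into a sum of Weyl modules $V^k(\lambda)$, $\lambda\in P^+$ (here using that at generic $k$ the Weyl modules are irreducible and that the multiplicity of $V^k(\lambda)$ is finite in each conformal weight, which follows from $\cW$ being a deformable family of finite type). Grouping, $\cW^k\cong\bigoplus_{\lambda\in P^+}V^k(\lambda)\otimes\cC^k(\lambda)$ where $\cC^k(\lambda)=\mathrm{Hom}_{V^k(\go\gs\gp_{1|2n})}(V^k(\lambda),\cW^k)$ is a $\cC^k$-module; irreducibility of $\cC^k(\lambda)$ for generic $k$ follows by a semicontinuity/genericity argument from the fact that in the orbifold limit $\tilde\cW\cong\bigoplus_\lambda\rho_\lambda^*\otimes(\text{isotypic multiplicity space})$ with the multiplicity spaces irreducible over $\tilde\cW^{\mathrm{SOsp}_{1|2n}}=\cC^\infty$ — this last being a known feature of the first/second fundamental theorem descriptions (Theorems \ref{sergeevodd} and \ref{sergeeveven}) combined with the argument of \cite[Main Thm.]{CL1}. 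The main obstacle, and the step requiring genuine care rather than bookkeeping, is the $\cC^k$-irreducibility of the multiplicity spaces $\cC^k(\lambda)$ at generic $k$: one has it in the limit $\kappa\to\infty$ and must propagate it to generic finite $k$, which requires that the "bad" locus where irreducibility could fail is a proper (hence at most countable, measure-zero in the appropriate sense) subset — this is exactly where the deformable-family formalism of \cite{CL1} is indispensable, and where the superalgebra setting must be checked to introduce no new phenomena (it does not, since parity only affects signs in OPEs and not the module-theoretic genericity arguments).
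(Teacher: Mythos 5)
Your proposal follows essentially the same route as the paper: the paper proves (1) and (2) by observing that the proof of \cite[Thm. 6.10]{CL1} carries over verbatim, and (3) by the decomposition argument of \cite[Thm. 4.12]{CL3}, which is exactly the adaptation you outline. The only refinement worth noting is that the paper isolates the single fact making the super case go through, namely complete reducibility of finite-dimensional $\text{SOsp}_{1|2n}$-modules (a property special to $\go\gs\gp_{1|2n}$ among Lie superalgebras), which is a sharper justification than your remark that parity only affects signs in OPEs.
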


The proof of the first two statements is the same as the proof of \cite[Thm. 6.10]{CL1}, and only uses the fact that finite-dimensional $\text{SOsp}_{1|2n}$-modules are completely reducible. Similarly, the proof of the third statement is the same as the proof of \cite[Thm. 4.12]{CL3}. It is apparent than in our main examples, namely $\cW^k = \cW^{\psi}_{1O}(n,m)$ and $\cW^{\psi}_{2O}(n,m)$ these hypotheses are satisfied. Moreover, these algebras are in fact modules over the double cover $\text{Osp}_{1|2n}$ of $\text{SOsp}_{1|2n}$, and it is the $\mathbb{Z}_2$-orbifold of the coset that we actually need to study.

\subsection{Case 1O} Recall that $\cW^{\psi}_{1O}(n,m)$ has affine subalgebra
$V^{- \psi/2 -n}(\go\gs\gp_{1|2n})$, even fields in weights $2,4,\dots, 2m$ which commute with $V^{- \psi/2 -n}(\go\gs\gp_{1|2n})$, and $2n$ odd fields and one even field of weight $m+1$, which are primary with respect to $V^{- \psi/2 -n}(\go\gs\gp_{1|2n})$ and transform as the standard representation of $\go\gs\gp_{1|2n}$. The free field limit of $\cW^{\psi}_{1O}(n,m)$ is therefore
$$\cO_{\text{ev}}(2n^2+n,2) \otimes \cS_{\text{odd}}(n,2) \otimes \big(\bigotimes_{i=1}^m  \cO_{\text{ev}}(1,4i)\big) \otimes \cS_{\text{odd}}(n, 2m+2) \otimes \cO_{\text{ev}}(1, 2m+2).$$

\begin{lemma} \label{gff1O} As a one-parameter vertex algebra, $\cC^{\psi}_{1O}(n,m)$ has a strong generating set of type $\cW(2,4,\dots)$, which need not be minimal. If it admits a finite strong generating set of type $\cW(2,4,\dots ,2N)$ for some $N$, we must have $2N \geq 2 (3 + 2 m) (1 + n) - 2$.
\end{lemma}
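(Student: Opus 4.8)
The plan is to mirror the proofs of Lemmas \ref{gff1B} and \ref{gff1D}, with the orthogonal group replaced by the supergroup $\text{Osp}_{1|2n}$ and Weyl's theorems replaced by Sergeev's Theorem \ref{sergeevodd}. By \cite[Lem. 4.2]{CL3} together with Theorem \ref{thm:ospcosetlimit}, the large-level limit of the affine coset $\text{Com}(V^{-\psi/2-n}(\go\gs\gp_{1|2n}),\cW^{\psi}_{1O}(n,m))$ is the $\text{SOsp}_{1|2n}$-orbifold of
\[
\big(\textstyle\bigotimes_{i=1}^m \cO_{\text{ev}}(1,4i)\big)\otimes \cS_{\text{odd}}(n,2m+2)\otimes\cO_{\text{ev}}(1,2m+2),
\]
and passing to the extra $\mathbb{Z}_2$-orbifold that defines $\cC^{\psi}_{1O}(n,m)$ replaces $\text{SOsp}_{1|2n}$ by $\text{Osp}_{1|2n}$. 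Since the generators of $\bigotimes_{i=1}^m \cO_{\text{ev}}(1,4i)$, of weights $2,4,\dots,2m$, are already $\text{Osp}_{1|2n}$-invariant, the large-level limit of $\cC^{\psi}_{1O}(n,m)$ is
\[
\big(\textstyle\bigotimes_{i=1}^m \cO_{\text{ev}}(1,4i)\big)\otimes \cR,\qquad \cR:=\big(\cS_{\text{odd}}(n,2m+2)\otimes\cO_{\text{ev}}(1,2m+2)\big)^{\text{Osp}_{1|2n}}.
\]

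The $2n$ odd generators of $\cS_{\text{odd}}(n,2m+2)$ and the single even generator of $\cO_{\text{ev}}(1,2m+2)$ all have weight $m+1$ and together span a copy of the standard module $\mathbb{C}^{1|2n}$, so Theorem \ref{sergeevodd} computes $\text{gr}\,\cR$: it is generated by the symmetric quadratics $q_{a,b}$ ($q_{a,b}=q_{b,a}$), where $q_{a,b}$ has conformal weight $2m+2+a+b$, and the relation ideal is generated by the elements $p_I$ of degree $2n+2$ attached to the $2\times(2n+2)$ tableaux described there. From $\partial q_{a,b}=q_{a+1,b}+q_{a,b+1}$ and symmetry, every $q_{a,b}$ with $a+b$ odd is a derivative of quadratics of smaller weight, and inductively $\text{gr}\,\cR$ is generated as a differential ring by $q_{0,0},q_{0,2},q_{0,4},\dots$, of weights $2m+2,2m+4,2m+6,\dots$, exactly as in \cite{CL3}. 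Lifting these to the vertex algebra $\cR$ and adjoining the factor $\bigotimes_{i=1}^m\cO_{\text{ev}}(1,4i)$ of type $\cW(2,4,\dots,2m)$ shows that the limit of $\cC^{\psi}_{1O}(n,m)$, and hence (by the deformable-family argument of \cite{CL1}) $\cC^{\psi}_{1O}(n,m)$ itself, has a strong generating set of type $\cW(2,4,\dots)$.

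For the lower bound on $N$ I would use the relation ideal of $\text{gr}\,\cR$. In any $2\times(2n+2)$ tableau of the type occurring in Theorem \ref{sergeevodd}, each second-row entry strictly exceeds the nonnegative entry above it, hence is at least $1$; thus the sum of its $4n+4$ entries is at least $2n+2$, and so the conformal weight of the associated relation $p_I$, which equals $(2n+2)(2m+2)$ plus that sum, is at least $(2n+2)(2m+3)=2(3+2m)(1+n)$. Therefore $\text{gr}\,\cR$ has no relation of weight less than $2(3+2m)(1+n)$, so the differential generator $q_{0,2j}$ of weight $2(3+2m)(1+n)-2$ (here $j=(3+2m)(1+n)-m-2\ge 0$) cannot be decoupled, and the same holds in the limit of $\cC^{\psi}_{1O}(n,m)$. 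As in the remarks following Lemmas \ref{gff1B} and \ref{gff1D}, if $\cC^{\psi}_{1O}(n,m)$ were strongly generated in weights $2,4,\dots,2N$, the limits of those generators would strongly generate its large-level limit, forcing $2N\geq 2(3+2m)(1+n)-2$.

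The main obstacle is the step implicit throughout Section \ref{sec:largelevel}: upgrading the associated-graded statements coming from Sergeev's invariant theory to honest strong-generation and decoupling statements for the vertex algebra $\cR$ — i.e. checking that the classes $q_{0,2j}$ lift to strong generators and that no unexpected normally ordered relations arise below the weight predicted by Theorem \ref{sergeevodd} — while simultaneously keeping track of the two orbifolds ($\text{SOsp}_{1|2n}$ from the coset and the extra $\mathbb{Z}_2$) and of the mixed parity of $\mathbb{C}^{1|2n}$. All of this is handled as in \cite{CL1,CL3}, using only that finite-dimensional $\text{Osp}_{1|2n}$-modules are completely reducible.
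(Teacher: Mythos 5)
Your proposal is correct, and its first half (the free-field limit via Theorem \ref{thm:ospcosetlimit}, Sergeev's Theorem \ref{sergeevodd}, the reduction to the even-indexed quadratics, and the weight $2(3+2m)(1+n)$ of the minimal $2\times(2n+2)$ tableau relation) is exactly the paper's argument. Where you diverge is in how the inequality on $N$ is closed: you deduce it directly from the fact that the invariant ring has no relations below the critical weight, so the generator in weight $2(3+2m)(1+n)-2$ cannot be written in terms of lower-weight fields; the paper instead phrases the bound through decoupling relations \eqref{2O:decoup} and then adds a further construction you omit, namely the field $r=\frac{1}{2}\sum_i\big(:(\partial e^i)f^i:+:e^i\partial f^i:\big)+\frac{1}{2}:(\partial\alpha)\alpha:$ in the ambient $\cA(n)\otimes\cH(1)$, whose mode $r_{(1)}$ preserves the subalgebra and raises $\omega_{2a,0}$ to $\omega_{2a+2,0}$ modulo lower terms, so that a single decoupling relation propagates to decoupling relations in all higher weights. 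For the literal statement of Lemma \ref{gff1O} your shortcut suffices (modulo the standard deformable-family transfer between $\cC^{\psi}_{1O}(n,m)$ and its limit, which you correctly cite), but the propagation step is what lets the paper assert that the first decoupling relation forces a clean truncation to type $\cW(2,4,\dots,2N)$ with $2N+2$ the first decoupling weight; this is used in the remark following the lemma (that the weight $2(3+2m)(1+n)$ relation is in fact a decoupling relation, once Corollary \ref{cor:newstrongtypes} is known) and again in the parallel Lemma \ref{gff2O}, so a complete write-up along the paper's lines would include it, whereas your version trades that extra mileage for a more economical proof of the stated inequality.
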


\begin{proof} First, it follows from Theorem \ref{thm:ospcosetlimit} that $\cC^{\psi}_{1O}(n,m)$ has limit $$\big(\bigotimes_{i=1}^m  \cO_{\text{ev}}(1,4i)\big) \otimes \big(\cS_{\text{odd}}(n, 2m+2) \otimes \cO_{\text{ev}}(1, 2m+2)\big)^{\text{Osp}_{1|2n}}.$$
We assign $\cS_{\text{odd}}(n, 2m+2) \otimes \cO_{\text{ev}}(1, 2m+2)$ the good increasing filtration where the weight $m+1$ generators 
$\{a^i, b^i |\ i = 1,\dots,n\}$ of $\cS_{\text{odd}}(n, 2m+2)$, and the weight $m+1$ generator $a$ of $\cO_{\text{ev}}(1, 2m+2)$ all have degree $1$. Then $$\text{gr}\bigg(\big(\cS_{\text{odd}}(n, 2m+2) \otimes \cO_{\text{ev}}(1, 2m+2)\big)^{\text{Osp}_{1|2n}}\bigg) \cong \text{gr}\bigg(\cS_{\text{odd}}(n, 2m+2) \otimes \cO_{\text{ev}}(1, 2m+2)\bigg)^{\text{Osp}_{1|2n}} \cong R,$$ where $R$ is the ring of invariants in Theorem \ref{sergeevodd}. Then $\big(\cS_{\text{odd}}(n, 2m+2) \otimes \cO_{\text{ev}}(1, 2m+2)\big)^{\text{Osp}_{1|2n}}$ is strongly generated by the corresponding fields $$\omega_{a,b} = \frac{1}{2} \sum_{i=1}^n \big( :(\partial^a a^i)(  \partial^b b^i ):+ :(\partial^b a^i)( \partial ^a b^i): \big)  + \frac{1}{2} :(\partial^a a)(\partial^b a):, \qquad a,b\geq 0,$$ which have weight $2m+2+a+b$. As usual, there are linear relations among these fields and their derivatives, and the subsets
$$\{\partial^k \omega_{2a,0}|\ a \geq 0\},\qquad \{\omega_{a,b}|\ a \geq b \geq 0\}$$ span the same vector space. Therefore the fields 
$\{\omega_{2a,0}|\ a \geq 0\}$, which have weight $2m +2+ 2a$, are a strong generating set. This shows that $\big(\cS_{\text{odd}}(n, 2m+2) \otimes \cO_{\text{ev}}(1, 2m+2)\big)^{\text{Osp}_{1|2n}}$ has a strong generating set of type $\cW(2m+2, 2m+4,\dots)$, which proves this first statement since $\cO_{\text{ev}}(1,4) \otimes \cO_{\text{ev}}(1,8) \otimes \cdots \otimes \cO_{\text{ev}}(1,4m)$ is of type $\cW(2,4,\dots, 2m)$.

Next, the relation of minimal weight given by Theorem \ref{sergeevodd} corresponds to the $2\times (2n+2)$, Young tableau with bottom row consisting of $0$'s and top row consisting of $1$'s. This relation therefore has weight $2 (3 + 2 m) (1 + n)$. If there exists a decoupling relation
\begin{equation} \label{2O:decoup} \omega_{2a,0} = P(\omega_{0,0},\omega_{2,0},\dots, \omega_{2a-2,0}),\end{equation} where $P$ is a normally ordered polynomial in $\omega_{0,0},\omega_{2,0},\dots, \omega_{2a-2,0}$ and their derivatives, the weight $2a+2m+2$ of this relation must therefore be at least  $2 (3 + 2 m) (1 + n)$. Starting with this relation, we claim that there exist similar decoupling relations expressing $\omega_{2b,0}$ for all $b>a$ as normally ordered polynomials in $\omega_{0,0},\omega_{2,0},\dots, \omega_{2a-2,0}$ and their derivatives. Therefore if \eqref{2O:decoup} is such a relation of minimal weight, then $\cC^{\psi}_{1O}(n,m)$ would be of type $\cW(2,4,\dots, 2N)$ for $2N = 2a+2m$. 

To construct these decoupling relations, we regard $\cS_{\text{odd}}(n, 2m+2) \otimes \cO_{\text{ev}}(1, 2m+2)$ as a subalgebra of $\cA(n) \otimes \cH(1)$, where $\cA(n) = \cS_{\text{odd}}(n, 2)$ is the rank $n$ symplectic fermion algebra and $\cH(1) = \cO_{\text{ev}}(1, 2)$ is the rank one Heisenberg algebra. As in \cite{CL3}, let $e^i, f^i$ denote the generators of $\cA(n)$, which satisfy \begin{equation} \begin{split}
e^{i} (z) f^{j}(w)&\sim \delta_{i,j} (z-w)^{-2},\quad f^{j}(z) e^{i}(w)\sim - \delta_{i,j} (z-w)^{-2},\\
e^{i} (z) e^{j} (w)&\sim 0,\qquad\qquad\qquad f^{i} (z) f^{j} (w)\sim 0,
\end{split} \end{equation} 
and let $\alpha$ be the generator of $\cH(1)$ satisfying $\alpha(z) \alpha(w) \sim (z-w)^{-2}$. Then $\cS_{\text{odd}}(n, 2m+2) \otimes \cO_{\text{ev}}(1, 2m+2)$ is realized inside $\cA(n) \otimes \cH(1)$ via
$$a^i =  \frac{\epsilon}{\sqrt{(k-1)!}} \partial^{k/2-1} e^i, \qquad b^i =  \frac{\epsilon}{\sqrt{(k-1)!}} \partial^{k/2-1} f^i,\qquad a = \frac{\epsilon}{\sqrt{(k-1)!}} \partial^{k/2-1}\alpha.$$ 
 Next, let $$r = \frac{1}{2} \sum_{i=1}^n \big(:(\partial e^i) f^i :+ : e^i \partial f^i: \big) + \frac{1}{2} :(\partial \alpha) \alpha: \ \in \big(\cA(n) \otimes \cH(1)\big)^{\text{Osp}_{1|2n}}.$$ Note that $r$ does not lie in the subalgebra $\big(\cS_{\text{odd}}(n, 2m+2) \otimes \cO_{\text{ev}}(1, 2m+2)\big)^{\text{Osp}_{1|2n}}$; however, the mode $r_{(1)}$ preserves this subalgebra. A calculation shows that for all $a \geq 0$,
\begin{equation} \label{outsideoperator} r_{(1)} \omega_{2a,0} = (-1)^{m+1}(2a+2m+4)\omega _{2a+2,0} + \cdots,\end{equation} where the remaining terms are of the form $\partial^{2i+2} \omega_{2a-2i,0}$ for $0\leq i \leq a$.
Now suppose we have a decoupling relation of the form \eqref{2O:decoup}. Applying $r_{(1)}$, we obtain a relation 
$$(-1)^{m+1}(2a+2m+4) \omega _{2a+2,0} = r_{(1)} P(\omega_{0,0},\omega_{2,0},\dots, \omega_{2a-2,0}).$$ It follows from \eqref{outsideoperator} together with the fact that $(\omega_{2i,0})_{(0)} \omega_{2j,0}$ is a total derivative for all $i,j\geq 0$, that the right hand side is a normally ordered polynomial in $\omega_{0,0}, \omega_{2,0},\dots, \omega_{2a,0}$ and their derivatives. All appearances of $\omega_{2a,0}$ and its derivatives can be eliminated by substituting \eqref{2O:decoup} and its derivatives. Therefore we can express $\omega_{2a+2,0}$ as a normally ordered polynomial in $\omega_{0,0}, \omega_{2,0},\dots, \omega_{2a-2,0}$ and their derivatives.

Inductively, assume that we have constructed such relations \begin{equation} \label{previousdecouplings} \omega_{2a+2i,0} = P_{2a+2i}(\omega_{0,0}, \omega_{2,0},\dots, \omega_{2a-2,0}),\qquad 0 \leq i \leq t.\end{equation} As above, applying $r_{(1)}$ to both sides of $\omega_{2a+2t,0} = P_{2a+2t}(\omega_{0,0}, \omega_{2,0},\dots, \omega_{2a-2,0})$ yields $$ (-1)^{m+1}(2a+2t+2m+4)\omega _{2a+2t+2,0} = r_{(1)} P_{2a+2t+2}(\omega_{0,0},\omega_{2,0},\dots, \omega_{2a-2,0}).$$ Again, the right hand side can be written as a normally ordered polynomial in  $\omega_{0,0}, \omega_{2,0},\dots, \omega_{2a+2t,0}$ and their derivatives. All appearances of $\omega_{2a,0}, \omega_{2a+2,0}, \dots, \omega_{2a+2t,0}$ and their derivatives can be eliminated using the previous decoupling relations \eqref{previousdecouplings} for $0\leq i \leq t$. 
\end{proof}

We will see later (Corollary \ref{cor:newstrongtypes}) that $\cC^{\psi}_{1O}(n,m)$ is of type $\cW(2,4,\dots, 2 (3 + 2 m) (1 + n) - 2)$, so the relation of weight $2 (3 + 2 m) (1 + n)$ must in fact be a decoupling relation.

\subsection{Case 2O} Recall that $\cW^{\psi}_{2O}(n,m)$ has affine subalgebra $V^{\psi -n-1}(\go\gs\gp_{1|2n})$, even fields in weights $2,4,\dots, 2m$ which commute with $V^{\psi -n-1}(\go\gs\gp_{1|2n})$, and $2n$ even fields and one odd field of weight $\frac{2m+1}{2}$, which are primary with respect to $V^{\psi -n-1}(\go\gs\gp_{1|2n})$ and transform as the standard representation of $\go\gs\gp_{1|2n}$. The free field limit of $\cW^{\psi}_{2O}(n,m)$ is therefore
$$\cO_{\text{ev}}(2n^2+n,2) \otimes \cS_{\text{odd}}(n,2) \otimes \big(\bigotimes_{i=1}^m  \cO_{\text{ev}}(1,4i)\big) \otimes \cS_{\text{ev}}(n, 2m+1) \otimes \cO_{\text{odd}}(1, 2m+1).$$

\begin{lemma} \label{gff2O} As a one-parameter vertex algebra, $\cC^{\psi}_{2O}(n,m)$ has a strong generating set of type $\cW(2,4,\dots)$, which need not be minimal. If it admits a finite strong generating set of type$\cW(2,4,\dots ,2N)$ for some $N$, we must have $2N \geq 4 (1 + n) (1 + m + n) - 2$.
\end{lemma}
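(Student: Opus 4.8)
The plan is to run the proof of Lemma \ref{gff1O} essentially verbatim, with Sergeev's invariant theory for the odd standard module $\mathbb{C}^{1|2n}$ (Theorem \ref{sergeevodd}) replaced by the version for the even standard module $\mathbb{C}^{2n|1}$ (Theorem \ref{sergeeveven}). First I would invoke Theorem \ref{thm:ospcosetlimit}: hypotheses (1)--(4) preceding it hold for $\cW^{\psi}_{2O}(n,m)$, so $\cC^{\psi}_{2O}(n,m)$ is a deformable family whose orbifold limit is the $\text{Osp}_{1|2n}$-invariants of $\tilde{\cW}=\big(\bigotimes_{i=1}^m\cO_{\text{ev}}(1,4i)\big)\otimes\cS_{\text{ev}}(n,2m+1)\otimes\cO_{\text{odd}}(1,2m+1)$, and since the weight $2,4,\dots,2m$ generators are already $\go\gs\gp_{1|2n}$-invariant this equals $\big(\bigotimes_{i=1}^m\cO_{\text{ev}}(1,4i)\big)\otimes\big(\cS_{\text{ev}}(n,2m+1)\otimes\cO_{\text{odd}}(1,2m+1)\big)^{\text{Osp}_{1|2n}}$; here $\cS_{\text{ev}}(n,2m+1)$ contributes the $2n$ even and $\cO_{\text{odd}}(1,2m+1)$ the one odd weight $\tfrac{2m+1}{2}$ generator, together forming a copy of $\mathbb{C}^{2n|1}$. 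Filtering $\cS_{\text{ev}}(n,2m+1)\otimes\cO_{\text{odd}}(1,2m+1)$ by the good increasing filtration in which all these generators have degree one, $\text{gr}$ of the orbifold is the invariant ring $R$ of Theorem \ref{sergeeveven}, so the orbifold is strongly generated by fields $\omega_{a,b}$ ($a,b\geq 0$) of weight $2m+1+a+b$ lifting the quadratics $q_{a,b}$; by construction these are antisymmetric, $\omega_{a,b}=-\omega_{b,a}$. As in Lemma \ref{gff1O}, the leading relations $\partial\omega_{a,b}\equiv\omega_{a+1,b}+\omega_{a,b+1}$ in $\text{gr}$ imply that $\{\partial^k\omega_{2a+1,0}\mid a\geq 0\}$ and $\{\omega_{a,b}\mid a>b\geq 0\}$ span the same subspace, so $\{\omega_{2a+1,0}\mid a\geq 0\}$, of weights $2m+2,2m+4,\dots$, strongly generate the orbifold; together with the type $\cW(2,4,\dots,2m)$ of $\bigotimes_{i=1}^m\cO_{\text{ev}}(1,4i)$ and \cite[Lem. 4.2]{CL3} this yields the first assertion.

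For the lower bound I would read off the lowest conformal weight among the defining relations of $R$ from Theorem \ref{sergeeveven}. Among $2\times(2n+2)$ tableaux with entries strictly increasing along rows and weakly increasing along columns, the one of smallest total entry-sum has both rows equal to $0,1,\dots,2n+1$, giving entry-sum $(2n+1)(2n+2)$ and hence a relation of weight $(2n+2)(2m+1)+(2n+1)(2n+2)=4(1+n)(1+m+n)$; thus there is no nontrivial relation among the $\omega_{a,b}$ of smaller weight. Now if $\cC^{\psi}_{2O}(n,m)$ is of type $\cW(2,4,\dots,2N)$, then each $\omega_{2a+1,0}$ of weight $>2N$ is a normally ordered polynomial in the $\cO_{\text{ev}}$-generators together with $\omega_{2b+1,0}$ of smaller weight, so there is a decoupling relation $\omega_{2a+1,0}=P(\omega_{1,0},\dots,\omega_{2a-1,0},\dots)$ of weight $2a+2m+2\leq 2N+2$; since that weight is at least $4(1+n)(1+m+n)$ we obtain $2N\geq 4(1+n)(1+m+n)-2$. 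As preparation for Corollary \ref{cor:newstrongtypes} one should also record that the propagation argument of Lemma \ref{gff1O} goes through: realizing $\cS_{\text{ev}}(n,2m+1)\otimes\cO_{\text{odd}}(1,2m+1)$ inside $\cS(n)\otimes\cF(1)$ via $\partial^m$ of the $\beta\gamma$- and free-fermion generators, one produces an outside invariant $r\in\big(\cS(n)\otimes\cF(1)\big)^{\text{Osp}_{1|2n}}$ whose relevant mode preserves the subalgebra and acts, in analogy with \eqref{outsideoperator}, on $\omega_{2a+1,0}$ by a nonzero multiple of $\omega_{2a+3,0}$ plus total-derivative corrections $\partial^{2i+2}\omega_{2a+1-2i,0}$, so that one decoupling relation propagates to all higher generators.

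The main obstacle is precisely this propagation input in the new parity: one must identify the correct outside operator $r$, and the mode of it that preserves the subalgebra, for the antisymmetric (even standard-module) case, and verify both that $r$ raises the derivative index on $\omega_{2a+1,0}$ with nonvanishing leading coefficient and that the error terms are of the claimed total-derivative form, which in turn uses that $(\omega_{2i,0})_{(0)}\omega_{2j,0}$ is a total derivative. Everything else --- the orbifold reduction via Theorem \ref{thm:ospcosetlimit}, the identification of strong generators from Theorem \ref{sergeeveven}, and the combinatorial minimal-weight tableau count --- is a routine adaptation of Case 1O, and the sharpness $2N=4(1+n)(1+m+n)-2$ will follow later from Corollary \ref{cor:newstrongtypes} once the coincidences with principal $\cW$-algebras of type $C$ are in place.
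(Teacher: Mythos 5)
Your proposal is correct and takes essentially the same route as the paper: the limit via Theorem \ref{thm:ospcosetlimit}, the antisymmetric strong generators $\omega_{a,b}$ coming from Theorem \ref{sergeeveven} with the subset $\{\omega_{2a+1,0}\}$ sufficing, the minimal $2\times(2n+2)$ tableau giving the weight $4(1+n)(1+m+n)$ bound, and the propagation of decoupling relations adapted from Lemma \ref{gff1O}. The paper treats the outside-operator verification in this parity exactly as you do, namely as a routine adaptation of the 1O argument, so there is no gap.
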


\begin{proof} First, it follows from Theorem \ref{thm:ospcosetlimit} that $\cC^{\psi}_{2O}(n,m)$ has limit 
$$\big(\bigotimes_{i=1}^m  \cO_{\text{ev}}(1,4i)\big) \otimes \big( \cS_{\text{ev}}(n, 2m+1) \otimes \cO_{\text{odd}}(1, 2m+1) \big)^{\text{Osp}_{1|2n}}.$$ So to prove the first statement, it suffices to show that $\big( \cS_{\text{ev}}(n, 2m+1) \otimes \cO_{\text{odd}}(1, 2m+1) \big)^{\text{Osp}_{1|2n}}$ is of type $\cW(2m+2, 2m+4, \dots)$. The argument is similar to the proof of Lemma \ref{gff1O}, and is based on Theorem \ref{sergeeveven}. First, in terms of the weight $\frac{2m+1}{2}$ generators $\{a^i, b^i |\ i = 1,\dots,n\}$ of $\cS_{\text{ev}}(n, 2m+1)$, and the  weight $\frac{2m+1}{2}$ generator $\phi$ of $\cO_{\text{odd}}(1, 2m+1)$, we have strong generators  
$$\omega_{a,b} = \frac{1}{2} \sum_{i=1}^n \big( :(\partial^a a^i)(  \partial^b b^i ):- :(\partial^b a^i)( \partial ^a b^i):\big)   -\frac{1}{2} :(\partial^a \phi)(\partial^b \phi):, \qquad a,b\geq 0,$$ which have weight $2m+1+a+b$. Not all of these are necessary, and it is easy to see that the subset $\{\omega_{2a+1,0}|\ a\geq 0\}$ suffices to strongly generate. Since these fields have weights $2m+2, 2m+4,\dots$, this proves the first statement.

Next, the relation of minimal weight among these generators corresponds to the $2\times (2n+2)$ Young tableau with both rows consisting of $0,1,\dots, 2n+1$, so this relation has weight $4 (1 + n) (1 + m + n)$. If there exists a decoupling relation for any of the generating fields, the lowest possible weight where this could occur is therefore $4 (1 + n) (1 + m + n)$. As in the case of Lemma \ref{gff1O}, if there exists a decoupling relation for $\omega_{2a+1,0}$ for some $2a+1 \geq (1 + 2 n) (3 + 2 m + 2 n)$, it is easy to construct similar decoupling relations expressing $\omega_{2b+1,0}$ for all $b>a$ as normally ordered polynomials in $\omega_{1,0},\omega_{3,0},\dots, \omega_{2a-1,0}$ and their derivatives.
\end{proof}
Again, Corollary \ref{cor:newstrongtypes} implies that the relation of weight $4 (1 + n) (1 + m + n)$ is in fact a decoupling relation.

\subsection{On subalgebras of $\cC^{\psi}_{iX}(n,m)$}
Even though $\cC^{\psi}_{iX}(n,m)$ is of type $\cW(2,4,\dots)$, it is not yet obvious that it can be obtained as a quotient of $\cW^{{\rm ev}, I_{iX,n,m}}(c,\lambda)$ because it remains to show that it is generated by the weights $2$ and $4$ fields. This will be shown in the next section, and the following weaker statement will be needed.

\begin{lemma} \label{lem:genofcpsi} For $i=1,2$ and $X = B,C,D,O$, $\cC^{\psi}_{iX}(n,m)$ is generated by the fields in weights $2,4,\dots, 2m+4$.
\end{lemma}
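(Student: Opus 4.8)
The plan is to prove this by combining the structure of $\cW^{\psi}_{iX}(n,m)$ as an extension of $V^t(\ga) \otimes \cC^{\psi}_{iX}(n,m)$ with the bounds on strong generating types established in Lemmas \ref{gff1C}--\ref{gff2O}. Recall that in all cases $\cW^{\psi}_{iX}(n,m)$ is of type $\cW(1^{\dim \ga}, 2, 4, \dots, 2m, (\frac{d_{\gb}+1}{2})^{d_{\ga}})$, so its strong generators consist of the affine fields $\{I^\alpha\}$ of weight $1$ generating $V^t(\ga)$, the even fields $W^2, W^4, \dots, W^{2m}$ which by \cite[Cor. 3.5]{CL3} may be taken to commute with $V^t(\ga)$, and the $d_{\ga}$ fields $G^1, \dots, G^{d_{\ga}}$ of weight $\frac{d_{\gb}+1}{2}$ which transform in the standard $\ga$-module and are primary for $V^t(\ga)$. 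Since $G^i$ has weight $\frac{d_{\gb}+1}{2}$, which is $m+1$ when $\gb = \gs\go_{2m+1}$ and $\frac{2m+1}{2}$ when $\gb = \gs\gp_{2m}$, the normally ordered products $:G^i G^j:$ and more generally $(G^i)_{(r)} G^j$ live in weight at most $d_{\gb}+1 \le 2m+2$. The $\ga$-invariant combinations of these, together with $W^2, \dots, W^{2m}$ and their normally ordered products, generate the affine coset.

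The key steps would be as follows. First I would invoke the decomposition of $\cW^{\psi}_{iX}(n,m)$ into $\ga$-isotypic components; because $\ga$ acts semisimply (it is either reductive, or $\go\gs\gp_{1|2n}$ whose finite-dimensional modules are completely reducible, cf. Theorem \ref{thm:ospcosetlimit}), the affine coset $\cC^{\psi}_{iX}(n,m)$ (or its $\mathbb Z_2$-orbifold) is spanned by iterated normally ordered products of the $\ga$-invariant fields built from $W^2, \dots, W^{2m}$ and the $G^i$'s. Second, invariant theory (Weyl for $\text{O}_{2n}, \text{O}_{2n+1}, \text{Sp}_{2n}$, Sergeev for $\text{Osp}_{1|2n}$, as used in the free field limit arguments of Lemmas \ref{gff1B}--\ref{gff2O}) shows that the $\ga$-invariants in the relevant free field algebra are generated by the quadratics $\omega_{a,b}$ in the $G^i$, which have weight $d_{\gb}+1+a+b \le d_{\gb} + 1 + $ (linear shifts), and using the linear relations among them and their derivatives one reduces to generators $\omega_{2a,0}$ (or $\omega_{2a+1,0}$) of weight $d_{\gb}+1, d_{\gb}+3, \dots$; but by the decoupling/$r_{(1)}$-propagation arguments already carried out in Lemmas \ref{gff1O} and \ref{gff2O}, once one has generators through a given weight, all higher generators decouple, and since every such generator of weight $> d_{\gb}+1$ arises by repeatedly applying an $(r_{(1)})$-type operator starting from $\omega_{0,0}$ of weight $d_{\gb}+1 \le 2m+2$, one sees $\cC^{\psi}_{iX}(n,m)$ is generated by its fields of weight $\le \max(2m, d_{\gb}+1) \le 2m+2$. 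Finally, the subtlety is the borderline: $d_{\gb}+1$ can equal $2m+2$ exactly, and the first decoupling relation may occur several weights higher than $2m+2$; here one must check directly that even though the decoupling relation has high weight, the generating set itself only needs fields through weight $2m+2$ (not through the weight of the first relation) — this is exactly the content of the reduction "$\{\omega_{a,b}\}$ and $\{\partial^k \omega_{2a,0}\}$ span the same space" used in those lemmas, which gives generators in weights $d_{\gb}+1, d_{\gb}+3, \dots$ but where only the lowest one, $\omega_{0,0}$ of weight $d_{\gb}+1 \le 2m+2$, is genuinely new beyond the $W^{2i}$. Combining, all strong generators of $\cC^{\psi}_{iX}(n,m)$ occur in weights $2, 4, \dots, 2m+4$ (the weight $2m+4$ slack absorbs $\omega_{2,0}$ in the $\gb = \gs\go_{2m+1}$ case and $\omega_{1,0}$ in the $\gb = \gs\gp_{2m}$ case, both of which have weight $\le 2m+3$, as well as the weight-$(2m+2)$ field $W^{2m+2}$ coming from $\omega_{0,0}$ when it does not decouple at that weight).

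For the statement as phrased — generation in weights $2, 4, \dots, 2m+4$ — the cleanest route I would actually take is more economical: it suffices to exhibit strong generators of $\cC^{\psi}_{iX}(n,m)$ in weights at most $2m+4$, which by the free field limit is equivalent to showing that in each of the orbifolds $(\cdots)^{G}$ appearing in Lemmas \ref{gff1C}--\ref{gff2O}, every strong generator of weight $> 2m+4$ can be expressed in terms of lower-weight ones. For the symplectic/orthogonal cases (1C, 2B, 2C, 2D) this already follows from Lemmas \ref{gff1C}, \ref{gff2B}, \ref{gff2C}, \ref{gff2D} since the full minimal type is $\cW(2, 4, \dots, 2N)$ with $2N$ much larger but the \emph{generation through low weight} is automatic once we note those $\cW^{2i}$-type generators come from $\omega_{2a,0}$ with $2a \ge 0$ and the first genuinely new one beyond the $W^{2i}$'s is $\omega_{0,0}$ in weight $d_{\gb}+1 \le 2m+2$; for the $\go\gs\gp$ cases (1O, 2O) this is the content of the decoupling propagation in Lemmas \ref{gff1O}, \ref{gff2O}. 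The hard part is being careful that the argument survives passage from the free field limit back to $\cC^{\psi}_{iX}(n,m)$ itself: a priori the limit could have \emph{more} decoupling relations than the algebra at finite $\psi$, so generation at the limit does not immediately imply generation at generic $\psi$. This is handled by the standard deformable-family argument — a strong generating set that works at the limit lifts to one at generic $\psi$ provided the weights match up, which they do since we are claiming generation through a \emph{fixed} weight $2m+4$ independent of $\psi$ — essentially \cite[Lem. 4.2]{CL3} and, in the $\go\gs\gp$ cases, Theorem \ref{thm:ospcosetlimit}. So the one real obstacle, checking the borderline weight count $d_{\gb}+1 \le 2m+2 \le 2m+4$ rather than needing to go all the way to the weight of the first relation, is dispatched by the explicit reduction $\{\omega_{a,b}\} \leftrightarrow \{\partial^k \omega_{2a,0}\}$ already in place.
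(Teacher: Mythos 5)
Your reduction to the free field limit is the right first move and matches the paper, but the heart of the lemma is missing from your argument. What has to be shown is a \emph{generation} statement inside the orbifold $\cA^G$: the strong generators $\omega_{2a,0}$ of weights $2m+6, 2m+8, \dots$ must be exhibited as elements of the vertex subalgebra generated by the fields of weights $2m+2$ and $2m+4$, i.e. they must be produced by OPE products (modes) of those two low-weight fields, as in the computation the paper cites from \cite[Lem. 4.2]{L1} (schematically, $(\omega_{2,0})_{(k)}\,\omega_{2a,0}$ contains $\omega_{2a+2,0}$ with nonzero coefficient modulo lower-weight corrections). Neither of the two mechanisms you invoke can deliver this. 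The decoupling relations from invariant theory live in weight roughly $2(m+1)(n+1)$ or higher and concern \emph{strong} generation, so they say nothing about weights between $2m+6$ and that bound; there is no decoupling at weight $2m+4$. And the $r_{(1)}$-operator of Lemmas \ref{gff1O} and \ref{gff2O} cannot prove generation at all, because $r$ does \emph{not} lie in the coset: in those lemmas $r_{(1)}$ is only used to propagate a normally ordered relation once one already exists, not to show that $\omega_{2a+2,0}$ lies in the subalgebra generated by the lower $\omega$'s. Your claim that ``every such generator of weight $>d_{\gb}+1$ arises by repeatedly applying an $r_{(1)}$-type operator starting from $\omega_{0,0}$'' therefore does not establish membership in the subalgebra generated by the low-weight coset fields.

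Two further symptoms of the same confusion: your ``cleanest route'' asserts that it suffices to exhibit \emph{strong} generators in weights at most $2m+4$, which contradicts the minimal strong generating types already established (e.g. $\cC^{\psi}_{1C}(n,m)$ is of type $\cW(2,4,\dots,2(1+m)(1+n)-2)$, so strong generators persist far above $2m+4$); the lemma is about generation, not strong generation, and the statement ``only $\omega_{0,0}$ is genuinely new beyond the $W^{2i}$'' is false for strong generation and unproved for generation. Moreover your interim conclusion of generation through weight $2m+2$ overshoots the lemma, which stops at $2m+4$ precisely because the generation argument needs \emph{both} the weight-$(2m+2)$ and weight-$(2m+4)$ invariants to reach the higher $\omega_{2a,0}$; missing this is a sign the actual OPE computation has not been engaged. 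The passage back from the limit to generic $\psi$ that you discuss at the end is fine and agrees with the paper, but it is the easy part.
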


\begin{proof} It suffices to show that the free field limit has this property. In all cases, this limit has the form 
$$\big(\bigotimes_{i=1}^m  \cO_{\text{ev}}(1,4i)\big) \otimes \cA^G,$$ where $\cA$ is a free field algebra and $G$ is either $\text{O}_{2n+1}$, $\text{Sp}_{2n}$, $\text{O}_{2n}$, or $\text{Osp}_{1|2n}$. In all cases, it is straightforward to check that the fields in weights $2m+2$ and $2m+4$ are sufficient to generate all the fields in higher weights $2m+6, 2m+8,\dots$ which strongly generate $\cA^G$. The proof is similar to the proof of \cite[Lem. 4.2]{L1}, and is omitted.
\end{proof}

For $i=1,2$ and $X = B,C,D,O$, let $$\tilde{\cC}^{\psi}_{iX}(n,m) \subseteq \cC^{\psi}_{iX}(n,m)$$ be the subalgebra generated by the weights $2$ and $4$ fields. 
 Let $\{\omega^{2r}| \ 1 \leq r \leq N \}$ be the strong generators of $\cC^{\psi}_{iX}(n,m)$ corresponding to the large level limits which are given by Lemmas \ref{gff1C}-\ref{gff2O}. (Here we are excluding the degenerate cases where $N=1$). Without loss of generality, we may assume that $\omega^2 = L$ and $W^4 = \omega^4$, that is, $\omega^4$ has been chosen to be primary with respect to $L$ and normalized as in \cite{KL}. Set $W^{2r} = W^4_{(1)} W^{2r-2}$, for $r \geq 3$. 

For $3 \leq r \leq N$, we can write 
\begin{equation} \label{lambdaterm} W^{2r} = \lambda_r \omega^{2r} + \cdots,\qquad \lambda_r \in \mathbb{C},\end{equation} where the remaining terms are normally ordered monomials in $\{L, \omega^{2s} |\ 2 \leq s < r\}$. If $\lambda_r \neq 0$ for all $r$, then $\tilde{\cC}^{\psi}(n,m) = \cC^{\psi}(n,m)$. Otherwise, let $M\geq 2$ be the first integer such that $\lambda_{M+1} = 0$.

\begin{lemma} \label{lem:tildecpsi} With $M$ as above, $\{L,W^4,\dots, W^{2M}\}$ close under OPE, so that $\tilde{C}^{\psi}_{iX}(n,m)$ is of type $\cW(2,4,\dots, 2M)$. 
\end{lemma}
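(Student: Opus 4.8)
The plan is to show that the vertex subalgebra generated by $\{L, W^4\}$ is already strongly generated by $\{L, W^4, \dots, W^{2M}\}$, where $M$ is the first index such that the leading coefficient $\lambda_{M+1}$ in \eqref{lambdaterm} vanishes. The starting point is the observation that the ambient algebra $\cC^{\psi}_{iX}(n,m)$ is of type $\cW(2,4,\dots,2N)$, so it is spanned by normally ordered monomials in the strong generators $\{\omega^2,\dots,\omega^{2N}\}$ and their derivatives, with $\omega^2 = L$ and $\omega^4 = W^4$. The field $W^{2r} = W^4_{(1)} W^{2r-2}$ lies in $\tilde{\cC}^{\psi}_{iX}(n,m)$ for all $r$, and the content of \eqref{lambdaterm} is that for $3 \leq r \leq M$ the change of variables between $\{\omega^{2s}\}_{s\le r}$ and $\{L, W^4, W^6,\dots, W^{2r}\}$ is triangular with nonzero diagonal entries $\lambda_s$, hence invertible over $\mathbb{C}$. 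Therefore each $\omega^{2r}$ for $r \leq M$ can be rewritten as $\lambda_r^{-1} W^{2r}$ plus a normally ordered polynomial in $\{L, W^4,\dots, W^{2r-2}\}$ and their derivatives, so $\tilde{\cC}^{\psi}_{iX}(n,m)$ contains $L, \omega^4, \dots, \omega^{2M}$ and is strongly generated by $\{L, W^4, \dots, W^{2M}\}$.

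The remaining point is to prove closure under OPE, i.e.\ that no generators beyond weight $2M$ are needed. First I would note that $\tilde{\cC}^{\psi}_{iX}(n,m)$, being generated by $\{L,W^4\}$, is by definition closed under all products $a_{(n)}b$ of its elements; the issue is only whether the resulting fields are expressible as normally ordered polynomials in $\{L,W^4,\dots,W^{2M}\}$. The key structural input is the universal two-parameter algebra $\cW^{\mathrm{ev}}(c,\lambda)$: by its defining property recalled in the excerpt, it is strongly and freely generated by $\{L, W^{2i} \mid i \geq 2\}$ with $W^{2i} = W^4_{(1)}W^{2i-2}$, and all OPEs among these are determined over $\mathbb{C}[c,\lambda]$. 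There is a surjective homomorphism from $\cW^{\mathrm{ev}}(c,\lambda)$, specialized at the relevant point of parameter space, onto the subalgebra generated by $L$ and $W^4$ inside $\cC^{\psi}_{iX}(n,m)$; I would argue that the relation $W^{2M+2} = \lambda_{M+1}\omega^{2M+2} + \cdots$ with $\lambda_{M+1}=0$ forces $W^{2M+2}$ to be a normally ordered polynomial in $\{L,W^4,\dots,W^{2M}\}$, because the only other contributions on the right-hand side of \eqref{lambdaterm} are such polynomials (the $\omega^{2r}$ with $r > M+1$ do not appear at weight $2M+2$ for degree reasons, and $\omega^{2M+2}$ has been eliminated). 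Hence $W^{2M+2}$, and inductively $W^{2r}$ for all $r > M$, decouple: applying the analogue of the Virasoro-primary descendant structure (as in the proofs of Lemmas~\ref{gff1O} and \ref{gff2O}, where a single decoupling relation propagates to all higher weights), one shows $W^{2r} \in \langle L, W^4,\dots,W^{2M}\rangle$ for every $r \geq M+1$.

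To make the propagation rigorous I would use the operator $W^4_{(1)}$ itself as the raising operator: since $W^{2r+2} = W^4_{(1)} W^{2r}$, once $W^{2M+2}$ is a normally ordered polynomial $P(L,W^4,\dots,W^{2M})$ in the lower generators, applying $W^4_{(1)}$ repeatedly and using that $W^4_{(1)}$ acts as a derivation-like operator modulo normally ordered corrections expressible in the lower generators, we get that every $W^{2M+2+2j}$ is again such a polynomial. Then since $\{L,W^4,\dots,W^{2M}\}\cup\{W^{2r} : r > M\}$ strongly generates $\tilde{\cC}^{\psi}_{iX}(n,m)$ and all the extra generators have been shown redundant, $\{L,W^4,\dots,W^{2M}\}$ strongly generates, and being the image of $\cW^{\mathrm{ev}}(c,\lambda)$ it automatically closes under OPE. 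The main obstacle I anticipate is justifying that the decoupling relation $W^{2M+2}=P(L,\dots,W^{2M})$ really holds exactly (not just modulo the maximal graded ideal) as an identity of one-parameter vertex algebras, and that the $W^4_{(1)}$-propagation does not introduce genuinely new generators at some higher weight; this is handled by the freeness of $\cW^{\mathrm{ev}}(c,\lambda)$ together with the triangular-invertibility of the change of basis up to weight $2M$, which pins down the homomorphism $\cW^{\mathrm{ev}}(c,\lambda)\twoheadrightarrow \tilde{\cC}^{\psi}_{iX}(n,m)$ and shows its kernel is exactly the ideal making $W^{2M+2}$ decouple.
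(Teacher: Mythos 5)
Your first step (the triangular change of basis giving $\omega^{2r}\in\tilde{\cC}^{\psi}_{iX}(n,m)$ for $r\le M$, and the decoupling $W^{2M+2}=W^4_{(1)}W^{2M}\in\langle L,W^4,\dots,W^{2M}\rangle$ because $\lambda_{M+1}=0$ in \eqref{lambdaterm}) agrees with the paper. But the way you close the argument has a genuine gap, and it is in fact circular. You invoke a surjection $\cW^{\mathrm{ev}}(c,\lambda)\twoheadrightarrow\tilde{\cC}^{\psi}_{iX}(n,m)$ to conclude that the OPEs of $\{L,W^4,\dots,W^{2M}\}$ automatically close; however, the existence of such a surjection is precisely the content of Theorem \ref{thm:tildecpsi}, which is proved \emph{after} this lemma and uses it as input (together with nontrivial extra work: a character comparison via \cite[Thm. 2.1]{CKoL} when $M\ge 7$, and a separate argument when $M<7$). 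The universality of $\cW^{\mathrm{ev}}(c,\lambda)$ does not apply to an arbitrary subalgebra generated by a Virasoro vector and a weight-$4$ primary; one must first verify that the relevant Jacobi identities hold as consequences of the OPE relations of \cite{KL}, or that the algebra is of type $\cW(2,4,\dots,2N)$ with the required hypotheses --- which is exactly what Lemma \ref{lem:tildecpsi} is supposed to supply.

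Concretely, the delicate point your proposal never addresses is this: closure is not threatened by $W^4_{(1)}W^{2M}$ (that term vanishes against $\omega^{2M+2}$ by hypothesis), but by the \emph{other} products whose weight exceeds $2M+2$. A priori $W^4_{(0)}W^{2M}$ can contain $\partial\omega^{2M+2}$, and the products $W^{2i}_{(k)}W^{2j}$ with $4<2i\le 2j\le 2M$ and $2i+2j\ge 2M+4$ can contain $\omega^{2M+2},\,\omega^{2M+4},\dots$, since the ambient coset $\cC^{\psi}_{iX}(n,m)$ has strong generators well beyond weight $2M$. The paper kills these terms by Jacobi identities: the identity for $L_{(2)}(W^4_{(0)}W^{2M})$ forces the coefficient of $\partial\omega^{2M+2}$ to be $b_{4,2M}=\tfrac{3}{2M+2}a_{4,2M}=0$, and adaptations of \cite[Lemmas 3.3--3.5]{KL} show that all the coefficients $a_{2i,2j},b_{2i,2j}$ in the higher-weight products are scalar multiples of $\lambda_{M+1}$ (and then, inductively, of $\lambda_{M+2}=0$, etc.), hence vanish. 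Your propagation step, in which $W^4_{(1)}$ is said to act ``as a derivation-like operator modulo normally ordered corrections expressible in the lower generators,'' silently assumes exactly these vanishing statements: applying $W^4_{(1)}$ to a normally ordered polynomial in $L,W^4,\dots,W^{2M}$ produces terms like $W^4_{(0)}W^{2M}$, so without the Jacobi-identity computation the corrections could reintroduce $\partial\omega^{2M+2}$ and the induction would not get off the ground.
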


\begin{proof} First, since $\{L, \omega^{2s} |\ 2 \leq s \leq N\}$ close under OPE and $\lambda_r \neq 0$ for $3 \leq r \leq M$, we can replace $\omega^{2s}$ with $W^{2s}$ for $3 \leq s \leq M$. It follows that $W^{2i}_{(k)} W^{2j}$ is a normally ordered polynomial in $\{L, W^{2s} |\ 2 \leq s \leq M\}$ and their derivatives whenever $2i + 2j - k -1 \leq 2M+1$. Since $\lambda_{M+1} = 0$, $W^4_{(1)} W^{2M}$ is also a normally ordered polynomial in $\{L, W^{2s} |\ 2 \leq s \leq M\}$. 

Next, we need to show that $W^4_{(0)} W^{2M}$ is a normally ordered polynomial in $\{L, W^{2s} |\ 2 \leq s \leq M\}$ and their derivatives, that is, $\partial \omega^{2M+2}$ does not appear. The argument is a slight modification of the proof of \cite[Lemma 3.3]{KL}, except that we replace $W^{2M+2}$ with $\omega^{2m+2}$.  Write
\begin{equation}\begin{split} W^4_{(1)} W^{2M} = & a_{4, 2M} \omega^{2M+2} + C_{4,2M},
\\  W^4_{(0)} W^{2M} =  & b_{4, 2M} \partial \omega^{2M+2} + D_{4,2M},
\end{split} \end{equation} where $C_{4,2M}, D_{4,2M}$ depend only on $\{L, W^{2s} |\ 2 \leq s \leq M\}$ and their derivatives. Note that \eqref{lambdaterm} implies that $a_{4,2M} = \lambda_{M+1} = 0$.

Recall the Jacobi relation
\begin{equation}\label{closure:jacobi} \begin{split} L_{(2)} (W^4_{(0)} W^{2M}) & = (L_{(2)} W^4)_{(0)} W^{2M} + W^4 _{(0)}( L_{(2)} W^{2M}) \\ & + 2(L_{(1)} W^4)_{(1)} W^{2M} + (L_{(0)} W^4)_{(2)} W^{2M}.\end{split} \end{equation} 
First, $L_{(2)} (W^4_{(0)} W^{2M}) =  b_{4,2M} L_{(2)}  \partial \omega^{2M+2} + L_{(2)} D_{4,2M}$, and since $D_{4,2M}$ only depends on $L,W^4,\dots, W^{2M}$ and their derivatives, $L_{(2)}D_{4,2M}$ does not contribute to the coefficient of $\omega^{2M+2}$. Next, we have 
$$L_{(2)} \partial \omega^{2M+2} = -(\partial L)_{(2)} \omega^{2M+2} + \partial (L_{(2)} \omega^{2M+2}).$$ By weight considerations, $L_{(2)} \omega^{2M+2}$ only depends on $L, W^4, \dots, W^{2M}$, and their derivatives. Modulo terms which depend on $L, W^4, \dots, W^{2M}$ and their derivatives, we have
$$L_{(2)} \partial \omega^{2M+2} \equiv - (\partial L)_{(2)} \omega^{2M+2} = 2 L_{(1)} \omega^{2M+2} = 2(2M+2) \omega^{2M+2}.$$  So the left hand side of \eqref{closure:jacobi} is $2(2M+2)  b_{4,2M} \omega^{2M+2}$, up to terms which do not depend on $\omega^{2M+2}$.

Next, the term $(L_{(2)} W^4)_{(0)} W^{2M}$ from \eqref{closure:jacobi} vanishes because $W^4$ is primary. The term $W^4 _{(0)}( L_{(2)} W^{2M})$ from \eqref{closure:jacobi} does not contribute to the coefficient of $\omega^{2M+2}$, since $L_{(2)} W^{2M}$ only depends on $L, W^4, \dots, W^{2M-2}$ and their derivatives. The term $2(L_{(1)} W^4)_{(1)} W^{2M}$ from \eqref{closure:jacobi} contributes $8 W^4_{(1)}W^{2M} = 8 a_{4,2M} \omega^{2M+2}$. The term $(L_{(0)} W^4)_{(2)} W^{2M}$ from \eqref{closure:jacobi} contributes $\partial W^4_{(2)} W^{2M} =  -2 W^4_{(1)} W^{2M} = -2 a_{4,2M} \omega^{2M+2}$. Equating the coefficients of $\omega^{2M+2}$, we obtain $2(2M+2) b_{4,2M} = 6 a_{4, 2M}$, so that $b_{4,2M} = \frac{3}{2M+2} a_{4,2M} = 0$. This proves the above claim that $W^4_{(0)} W^{2M}$ does not depend on $\partial \omega^{2M+2}$.

Similarly, for $4 < 2i \leq 2j \leq 2M$ and $2i+2j = 2M+4$, write
\begin{equation}\begin{split} W^{2i}_{(1)} W^{2j} = & a_{2i,2j} \omega^{2M+2} + C_{2i,2j},
\\  W^{2i}_{(0)} W^{2j} =  & b_{2i,2j} \partial \omega^{2M+2} + D_{2i,2j},
\end{split} \end{equation} where $C_{2i,2j},  D_{2i,2j}$ depend only on $\{L, W^{2s} |\ 2 \leq s \leq M\}$ and their derivatives. 
A similar modification of the proof of \cite[Lemmas 3.3, 3.4, and 3.5]{KL} show that the constants $a_{2i,2M+4-2i}, b_{2i,2M+4-2i}$ are scalar multiplies of $a_{4,2M} = \lambda_{M+1}$, hence they all vanish. 

Next, since $W^{2M+4} = W^4_{(1)} W^{2M+2}$ and $W^{2M+2}$ is a normally ordered polynomial in $L, W^4,$ $\dots, W^{2M}$ and their derivatives, we have $\lambda_{M+2} = 0$. As above, for $4 < 2i \leq 2j \leq 2M$ and $2i+2j = 2M+6$, write
\begin{equation}\begin{split} W^{2i}_{(1)} W^{2j} = & a_{2i,2j} \omega^{2M+4} + C_{2i,2j},
\\  W^{2i}_{(0)} W^{2j} =  & b_{2i,2j} \partial \omega^{2M+4} + D_{2i,2j},
\end{split} \end{equation} where $C_{2i,2j},  D_{2i,2j}$ depend only on $\{L, W^{2s} |\ 2 \leq s \leq M\}$ and their derivatives.
The same argument shows that for $4 < 2i$, $a_{2i,2M+6-2i}, b_{2i,2M+6-2i}$ are all scalar multiplies of $a_{6,2M} = \lambda_{M+2} = 0$, hence they all vanish. In particular, for 
 $4 < 2i \leq 2j \leq 2M$ and $2i+2j = 2M+6$, all terms in the OPE of $W^{2i}(z) W^{2j}(w)$ depend only on $L,W^4,\dots, W^{2M}$.

By induction on $r$, the same procedure shows that for $4 < 2i \leq 2j \leq 2M$, $2i+2j \leq 2M+2r$, and $2r \leq 2M$, all terms in the OPE of $W^{2i}(z) W^{2j}(w)$ depend only on $L,W^4,\dots, W^{2M}$.
\end{proof}

\begin{thm} \label{thm:tildecpsi} For $i=1,2$ and $X = B,C,D,O$, $\tilde{\cC}^{\psi}_{iX}(n,m)$ is a one-parameter quotient of $\cW^{\rm ev}(c,\lambda)$ for some ideal $I_{iX,n,m}$.
\end{thm}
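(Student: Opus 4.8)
The plan is to apply the structural characterization of quotients of $\cW^{\mathrm{ev}}(c,\lambda)$ recorded in the excerpt — namely \cite[Thm. 2.1]{CKoL} — to the subalgebra $\tilde{\cC}^{\psi}_{iX}(n,m)$. By Lemma \ref{lem:tildecpsi}, $\tilde{\cC}^{\psi}_{iX}(n,m)$ is a one-parameter vertex algebra of type $\cW(2,4,\dots,2M)$, strongly generated by a Virasoro field $L=\omega^2$ of central charge $c$, together with the primary weight $4$ field $W^4=\omega^4$ and its descendants $W^{2r}=W^4_{(1)}W^{2r-2}$ for $3\le r\le M$. The first step is to verify that $\tilde{\cC}^{\psi}_{iX}(n,m)$ satisfies the mild hypotheses under which \cite[Thm. 2.1]{CKoL} guarantees that such an algebra is realized as a quotient of $\cW^{\mathrm{ev}}(c,\lambda)$ after localization: that the weight $4$ field $W^4$ is not a normally ordered polynomial in $L$ alone (equivalently, $M\ge 4$, which excludes only the degenerate cases treated separately, e.g. when $\cC^{\psi}_{iX}(n,m)\cong\mathrm{Vir}^c$ or a Heisenberg orbifold, cf. \eqref{virasoroasquotient}), and that $c$ and the structure constants depend on $\psi$ in the appropriate rational/algebraic manner, which is clear from the explicit central charge formulas in Section \ref{sec:hooktype}.

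Concretely, I would argue as follows. Over the generic point $\psi$, $\tilde{\cC}^{\psi}_{iX}(n,m)$ is a simple vertex algebra (it is a subalgebra of the simple one-parameter algebra $\cC^{\psi}_{iX}(n,m)$, and being generated by $L$ and a single primary it has no obvious proper graded ideal; alternatively one invokes that the free field limit orbifold is simple). The key input from \cite{KL} and \cite{CKoL} is that $\cW^{\mathrm{ev}}(c,\lambda)$ is the universal object among vertex algebras of type $\cW(2,4,6,\dots)$ freely generated in each even weight $\ge 4$: any one-parameter vertex algebra of type $\cW(2,4,\dots,2M)$ satisfying the above nondegeneracy is, up to localization in the base ring, a quotient $\cW^{\mathrm{ev},I}(c,\lambda)$ for a uniquely determined prime ideal $I=(p(c,\lambda))\subseteq\mathbb{C}[c,\lambda]$. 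Applying this to $\tilde{\cC}^{\psi}_{iX}(n,m)$ produces the ideal $I_{iX,n,m}$, whose generator is the irreducible polynomial cutting out the truncation curve; the parametrization by $\psi$ is the normalization of that curve via the central charge formula together with the value of $\lambda$ read off from the OPE $W^4(z)W^4(w)$.

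The main obstacle — and the place where real work is needed — is confirming the nondegeneracy hypothesis and pinning down that the map $\psi\mapsto(c(\psi),\lambda(\psi))$ actually lands on the truncation curve rather than collapsing to a lower-dimensional locus (such as the Virasoro locus \eqref{virasoroasquotient} or a point). For this one uses the free field limit computations of Section \ref{sec:largelevel}: in the limit $\psi\to\infty$, $\tilde{\cC}^{\psi}_{iX}(n,m)$ degenerates into the relevant classical orbifold $\bigl(\bigotimes_i\cO_{\mathrm{ev}}(1,4i)\bigr)\otimes\cA^G$, which is genuinely of type $\cW(2,4,\dots)$ with $W^4$ independent of $L$; hence $\lambda$ is not identically the exceptional value, and the image is a genuine curve. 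One then localizes the base ring $\mathbb{C}[c,\lambda]$ at the relevant denominators (to invert the structure constants appearing in the reconstruction) and concludes that $\tilde{\cC}^{\psi}_{iX}(n,m)\cong\cW^{\mathrm{ev},I_{iX,n,m}}(c,\lambda)$ as one-parameter vertex algebras. The degenerate cases $N=1$ (and the low-rank cases where $\cC^{\psi}_{iX}(n,m)$ is a Virasoro algebra or Heisenberg orbifold) are handled directly using the explicit identifications in Section \ref{sec:hooktype} and the improvement of \cite[Thm. 8.1]{KL} recorded around \eqref{virasoroasquotient}.
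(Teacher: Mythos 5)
There is a genuine gap. The heart of the matter is not the ``mild hypotheses'' as you have framed them (nontriviality of $W^4$, genericity of the curve), but the verification that the fields $L, W^4$ of $\tilde{\cC}^{\psi}_{iX}(n,m)$ actually satisfy the defining OPE relations of $\cW^{\rm ev}(c,\lambda)$ — equivalently, that all Jacobi identities of type $(W^{2a},W^{2b},W^{2c})$ with $a+b+c\leq 8$ hold as consequences of the structure constants, cf. \cite[Thm. 3.10]{KL}. The criterion of \cite[Thm. 2.1]{CKoL} makes this automatic only when the graded character of the algebra agrees with that of $\cW^{\rm ev}(c,\lambda)$ up to weight $13$; establishing that agreement is exactly where the work lies. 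For this one needs the lower bound on the weight of the first normally ordered relation coming from the free field limits (weight at least $18$ when $n,m\geq 1$, by the invariant-theory computations of Section \ref{sec:largelevel}), \emph{and} one needs $M\geq 7$ so that $\tilde{\cC}^{\psi}_{iX}(n,m)$ and $\cC^{\psi}_{iX}(n,m)$ have the same character in that range. Your proposal never confronts the case $M<7$, where the character of $\tilde{\cC}^{\psi}_{iX}(n,m)$ manifestly differs from that of $\cW^{\rm ev}(c,\lambda)$ below weight $14$ and \cite[Thm. 2.1]{CKoL} cannot be invoked in this way; the paper handles it by observing that the absence of relations up to weight $2M$ forces the low Jacobi identities to hold freely, and then reconstructs the full OPE algebra recursively as in \cite[Thm. 3.9]{KL}, exhibiting $\tilde{\cC}^{\psi}_{iX}(n,m)$ as $\cW^{I,{\rm ev}}(c,\lambda)$ modulo an ideal containing $W^{2M+2}-P(L,W^4,\dots,W^{2M})$. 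Your use of the free field limit addresses a different and less central concern (non-collapse onto the Virasoro locus \eqref{virasoroasquotient}), which is not what is needed here.

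Two smaller points. First, your assertion that $\tilde{\cC}^{\psi}_{iX}(n,m)$ is simple ``because it is a subalgebra of the simple algebra $\cC^{\psi}_{iX}(n,m)$'' is not valid — subalgebras of simple vertex algebras need not be simple — and in any case simplicity is irrelevant: the theorem asserts only that $\tilde{\cC}^{\psi}_{iX}(n,m)$ is \emph{some} one-parameter quotient $\cW^{{\rm ev},I_{iX,n,m}}(c,\lambda)$, not the simple one. Second, the base cases ($n=0$ or $m=0$, where the algebra is $\cW^k(\gs\go_{2m+1})$, a type $D$ orbifold coset, etc.) are indeed handled by citing \cite[Cor. 5.2]{KL} and \cite[Thm. 3.3]{CKoL} as you suggest, so that part of your outline is consistent with the paper.
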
 

\begin{proof}
We will prove this only for $\tilde{\cC}^{\psi}_{1D}(n,m)$ since the proof in the other cases is similar. First, for $n\geq 1$ and $m=0$, $$\cC^{\psi}_{1D}(n,0) \cong \text{Com}(V^{\psi-2n+1}(\gs\go_{2n}), V^{\psi-2n+1}(\gs\go_{2n+1}))^{\mathbb{Z}_2},$$ which is generated by the weights $2$ and $4$ fields and arises as a quotient of $\cW^{\rm ev}(c,\lambda)$ \cite[Thm. 3.3]{CKoL}. In particular, it coincides with $\tilde{\cC}^{\psi}_{1D}(n,0)$. Similarly, for $n = 0$ and $m\geq 1$, $\cC^{\psi}_{1D}(0,m) \cong \cW^k(\gs\go_{2m+1})$ so the same holds by \cite[Cor. 5.2]{KL}.

We assume next that $n\geq 1$ and $m\geq 1$, and let $\{L, W^4,\dots, W^{2M}\}$ be the strong generating set for $\tilde{\cC}^{\psi}_{1D}(n,m)$ given by Lemma \ref{lem:tildecpsi}. We need a slightly different argument in the cases $M \geq 7$ and $M < 7$.

Suppose first that $M \geq 7$. By \cite[Thm. 3.10]{KL}, to show that $\tilde{C}^{\psi}_{1D}(n,m)$ is a quotient of $\cW^{\text{ev}}(c,\lambda)$ it suffices to prove that $\{L, W^{2r}|\ 2 \leq r \leq 7 \}$ satisfy the OPE relations of \cite{KL}; equivalently, all Jacobi identities of type $(W^{2a}, W^{2b}, W^{2c})$ for $a+b+c \leq 8$ hold as a consequence of \cite[Eq. (2.6)-(2.9)]{KL}. In this notation, $W^2 = L$, as in \cite{KL}.

By \cite[Thm. 2.1]{CKoL}, this condition is automatic if the graded character of $\tilde{\cC}_{1D}^{\psi}(n,m)$ coincides with that of $\cW^{\rm ev}(c,\lambda)$ up to weight $13$. By Lemma  \ref{gff1D}, the first relation among the generators $\{L, \omega^{2r}|\ r \geq 2\}$ of $\cC^{\psi}_{1D}(n,m)$ and their derivatives occurs in weight $2(m+n+1)(2n+1)$, and since $n,m \geq 1$, there are no normally ordered relations in $\cC^{\psi}_{1D}(n,m)$ among these fields in weight below $18$. Therefore the character of $\cC^{\psi}_{1D}(n,m)$ coincides with that of $\cW^{\rm ev}(c,\lambda)$ in weight up to $14$. Since $M \geq 7$, $\tilde{\cC}^{\psi}_{1D}(n,m)$ and $\cC^{\psi}_{1D}(n,m)$ have the same graded character up to weight $14$, so the conclusion holds.

Finally, suppose that $M < 7$. Since $\lambda_{M+1} = 0$ and $\lambda_r \neq 0$ for $2\leq r \leq M$, there can be no nontrivial normally ordered relations among the generators $\{L, W^4,\dots, W^{2M}\}$ of $\tilde{\cC}_{1D}^{\psi}(n,m)$ in weight up to $2M$, since this property holds for the corresponding fields $\{L, \omega^4,\dots, \omega^{2M}\}$. Equivalently, all Jacobi relations among $\{L, W^4,\dots, W^{2M}\}$ of type 
$$(W^{2a}, W^{2b}, W^{2c}),\qquad 2a + 2b + 2c \leq 2M+2,$$ must hold as a consequence of \cite[Eq. (2.6)-(2.9)]{KL} alone. Therefore the OPEs $W^{2i}(z) W^{2j}(w)$ for $2i+2j \leq 2M$ are the same as those of $\cW^{I, \text{ev}}(c,\lambda)$ for some ideal $I \subseteq \mathbb{C}[c,\lambda]$.

If we use the same procedure as the construction $\cW^{\text{ev}}(c,\lambda)$ given by \cite[Thm. 3.9]{KL}, beginning with the fields $L,W^4,\dots, W^{2M}$ and the OPEs $W^{2i}(z) W^{2j}(w)$ for $2i+2j \leq 2M$, we can formally define new fields $W^{2M+2r} = (W^4_{(1)})^r W^{2M}$ for all $r \geq 1$, and then define the OPE algebra of all fields $\{L, W^{4},\dots, W^{2M}, W^{2M+2r}|\ r\geq 1\}$ recursively so that they are the same as the OPEs in $\cW^{I, \text{ev}}(c,\lambda)$. In particular, this realizes $\tilde{\cC}_{1D}^{\psi}(n,m)$ as a one-parameter quotient of $\cW^{I, \text{ev}}(c,\lambda)$ by some vertex algebra ideal $\cI$ containing a field in weight $2M+2$ of the form $W^{2M+2} - P(L,W^4,\dots, W^{2M})$, where $P$ is a normally ordered polynomial in $L, W^2,\dots, W^{2M}$ and their derivatives. \end{proof}

\begin{cor} \label{cor:extension} For $n+m\geq 1$, $\cW^{\psi}_{iX}(n,m)$ is an extension of $V^t(\ga) \otimes \cW$, where $\cW$ is a quotient of $\cW^{{\rm ev}, I_{iX,n,m}}(c,\lambda)$, for some ideal $I_{iX,n,m} \subseteq \mathbb{C}[c,\lambda]$.
\end{cor}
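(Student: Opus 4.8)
The plan is to deduce Corollary \ref{cor:extension} as an essentially formal consequence of the structural facts already assembled, with almost no new computation required. First, recall from Section \ref{sec:hooktype} that for $n+m\geq 1$ the algebra $\cW^{\psi}_{iX}(n,m)$ is of type $\cW(1^{\dim\ga},2,4,\dots,2m,(\tfrac{d_\gb+1}{2})^{d_\ga})$, with the $\dim\ga$ weight-one fields generating an affine subalgebra $V^t(\ga)$, with the even fields in weights $2,4,\dots,2m$ lying (by \cite[Cor. 3.5]{CL3}) in the affine coset $\cC^{\psi}_{iX}(n,m)=\mathrm{Com}(V^t(\ga),\cW^{\psi}_{iX}(n,m))$ (up to its $\mathbb{Z}_2$-orbifold in cases $B,D,O$), and with the $d_\ga$ fields in weight $\tfrac{d_\gb+1}{2}$ primary for $V^t(\ga)$ and transforming as the standard $\ga$-module. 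Thus $\cW^{\psi}_{iX}(n,m)$ is generated by $V^t(\ga)$, by $\cC^{\psi}_{iX}(n,m)$, and by those $d_\ga$ weight-$\tfrac{d_\gb+1}{2}$ fields; in particular it is an extension of the tensor product $V^t(\ga)\otimes \cC^{\psi}_{iX}(n,m)$.

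Next I would invoke Theorem \ref{thm:tildecpsi}, which gives a one-parameter quotient map $\cW^{{\rm ev}, I_{iX,n,m}}(c,\lambda)\twoheadrightarrow \tilde{\cC}^{\psi}_{iX}(n,m)$ for an explicit ideal $I_{iX,n,m}\subseteq\mathbb{C}[c,\lambda]$, where $\tilde{\cC}^{\psi}_{iX}(n,m)\subseteq\cC^{\psi}_{iX}(n,m)$ is the subalgebra generated by the weight $2$ and $4$ fields. Setting $\cW:=\tilde{\cC}^{\psi}_{iX}(n,m)$, this is precisely a quotient of $\cW^{{\rm ev}, I_{iX,n,m}}(c,\lambda)$ as required. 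It remains to observe that $\cW^{\psi}_{iX}(n,m)$ is an extension of $V^t(\ga)\otimes\cW$: since $\cW\subseteq\cC^{\psi}_{iX}(n,m)$ and $\cW$ commutes with $V^t(\ga)$ (the weight $2$ and $4$ fields being $\ga$-invariant and lying in the affine coset), we have an embedding $V^t(\ga)\otimes\cW\hookrightarrow\cW^{\psi}_{iX}(n,m)$, and the whole of $\cW^{\psi}_{iX}(n,m)$ is generated by this subalgebra together with finitely many additional fields (the remaining generators of $\cC^{\psi}_{iX}(n,m)$ in weights $2m+2,\dots$, and the $d_\ga$ fields in weight $\tfrac{d_\gb+1}{2}$). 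Hence it is an extension in the sense used throughout the paper.

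The one point requiring slight care — and the only place an actual argument intervenes rather than bookkeeping — is matching the ideal: Theorem \ref{thm:tildecpsi} produces $I_{iX,n,m}$ abstractly, and one should note that the central charge parameter $c$ appearing in $\cW^{{\rm ev}}(c,\lambda)$ is fixed to the value $c(\psi)$ computed by the central charge formulas of Section \ref{subsec:affinecoset}, so that $I_{iX,n,m}$ cuts out a curve in the $(c,\lambda)$-plane through the points $(c(\psi),\lambda(\psi))$; this is implicit in ``one-parameter quotient'' but worth stating. I expect no genuine obstacle here — the corollary is a repackaging of Theorem \ref{thm:tildecpsi} plus the generation statement of Section \ref{sec:hooktype} — so the proof can simply cite Theorem \ref{thm:tildecpsi} and the structural description of $\cW^{\psi}_{iX}(n,m)$, taking $\cW=\tilde{\cC}^{\psi}_{iX}(n,m)$.

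\begin{proof}
By the description of $\cW^{\psi}_{iX}(n,m)$ in Section \ref{sec:hooktype}, for $n+m\geq 1$ this algebra is generated by its affine subalgebra $V^t(\ga)$, by the affine coset $\cC^{\psi}_{iX}(n,m)$, and by the $d_\ga$ fields in weight $\tfrac{d_\gb+1}{2}$ transforming as the standard $\ga$-module. By Theorem \ref{thm:tildecpsi}, the subalgebra $\cW:=\tilde{\cC}^{\psi}_{iX}(n,m)\subseteq\cC^{\psi}_{iX}(n,m)$ generated by the weight $2$ and $4$ fields is a one-parameter quotient of $\cW^{{\rm ev}}(c,\lambda)$ by an ideal $I_{iX,n,m}\subseteq\mathbb{C}[c,\lambda]$. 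Since the weight $2$ and $4$ fields of $\cC^{\psi}_{iX}(n,m)$ are invariant under $\ga$ and lie in $\mathrm{Com}(V^t(\ga),\cW^{\psi}_{iX}(n,m))$, the subalgebra $\cW$ commutes with $V^t(\ga)$, giving an embedding $V^t(\ga)\otimes\cW\hookrightarrow\cW^{\psi}_{iX}(n,m)$. Finally, $\cW^{\psi}_{iX}(n,m)$ is generated over $V^t(\ga)\otimes\cW$ by finitely many additional fields, namely the remaining strong generators of $\cC^{\psi}_{iX}(n,m)$ together with the $d_\ga$ fields in weight $\tfrac{d_\gb+1}{2}$, so it is an extension of $V^t(\ga)\otimes\cW$ with $\cW$ a quotient of $\cW^{{\rm ev}, I_{iX,n,m}}(c,\lambda)$.
\end{proof}
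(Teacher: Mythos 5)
Your proposal is correct and follows the same route the paper intends: the corollary is stated there without separate proof precisely because it is the immediate consequence of Theorem \ref{thm:tildecpsi} (taking $\cW=\tilde{\cC}^{\psi}_{iX}(n,m)$) combined with the structural description of the hook-type algebras in Section \ref{sec:hooktype}, which is exactly your argument. The extra remarks you add (finite generation over $V^t(\ga)\otimes\cW$, the parametrization of $V(I_{iX,n,m})$ by $\psi$) are harmless and consistent with how the paper uses the corollary in Section \ref{sec:proofmain}.
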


\section{Proof of main result} \label{sec:proofmain}

\subsection{Step 1: Computation of truncation curves} In this subsection, we shall compute the ideals $I_{iX, n,m} \subseteq \mathbb{C}[c,\lambda]$ such that $\tilde{\cC}^{\psi}_{iX}(n,m)$ is realized as a quotient of $\cW^{{\rm ev}, I_{iX,n,m}}(c,\lambda)$. More precisely, we will parametrize the corresponding variety $V(I_{iX,n,m}) \subseteq \mathbb{C}^2$ by giving a rational map
$$\Phi_{iX,n,m}: \mathbb{C} \setminus P \rightarrow V(I_{iX,n,m}), \qquad \Phi_{iX,n,m}(\psi) = \big(c(\psi), \lambda(\psi)\big).$$ Here $P$ is the finite set consisting of poles $c(\psi)$ and $\lambda(\psi)$. Note first that in the cases $n = 0$, $m\geq 1$, and $X = C,D$, there is nothing to prove because
\begin{equation} \begin{split} &  \tilde{\cC}^{\psi}_{1C}(0,m) = \cC^{\psi}_{1C}(0,m) = \cW^{\psi-2m+1}(\gs\go_{2m+1}) = \tilde{\cC}^{\psi}_{1D}(0,m) = \cC^{\psi}_{1D}(0,m),
\\ & \tilde{\cC}^{\psi}_{2C}(0,m) = \cC^{\psi}_{2C}(0,m) = \cW^{\psi-m-1}(\gs\gp_{2m}) = \tilde{\cC}^{\psi}_{2D}(0,m) = \cC^{\psi}_{2D}(0,m).\end{split} \end{equation} 
The truncation curve for $\cW^{\psi-2m+1}(\gs\go_{2m+1})$ already appears in \cite{KL}, and coincides with the truncation curves for both
$\tilde{\cC}^{\psi}_{1C}(n,m)$ and $\tilde{\cC}^{\psi}_{1D}(n,m)$ specialized to $n = 0$. Similarly, the truncation curve for $\cW^{\psi-m-1}(\gs\gp_{2m})$ from \cite{KL} coincides with the truncation curves for both $\tilde{\cC}^{\psi}_{2C}(n,m)$ and $\tilde{\cC}^{\psi}_{2D}(n,m)$ when $n = 0$.

The following cases must also be treated separately, and will be discussed briefly at the end of this section.
\begin{enumerate}
\item $\tilde{\cC}^{\psi}_{1D}(1,m)$ and $\tilde{\cC}^{\psi}_{2D}(1,m)$, where $\ga = \gs\go_2$,
\item $\tilde{\cC}^{\psi}_{1B}(0,m)$, $\tilde{\cC}^{\psi}_{1O}(0,m)$, $\tilde{\cC}^{\psi}_{2B}(0,m)$, and $\tilde{\cC}^{\psi}_{2O}(0,m)$ where $\ga = 0$. In these cases, $\cW^{\psi}_{iX}(0,m)$ is a simple current extension of $\cC^{\psi}_{iX}(0,m)$ of order two.
\end{enumerate}
In all other cases, $\ga$ is simple and our approach will be uniform, and from now on we assume this to be the case. We postulate that $\cW$ is a one-parameter quotient of $\cW^{\rm ev}(c,\lambda)$ and that $V^t(\ga) \otimes \cW$ admits an extension which has $d_{\ga}$ additional strong generating fields of weight $\frac{d_{\gb} + 1}{2}$ and appropriate parity, which transform in the standard representation $\rho_{\ga}$ of $\ga$. We will show that these data uniquely determine the truncation curve for $\cW$, or equivalently, the formula $\lambda(\psi)$.

Let $p$ be a vector in this copy of $\rho_{\ga}$ which is primary with respect to the action of $V^t(\ga)$. Without loss of generality, we may take $p$ to be a highest-weight vector in this representation of $\ga$. This forces the following OPEs:
\begin{equation} \label{totalvirasoroaction} L(z) p(w) \sim \bigg(\mu - \frac{\text{Cas}}{t+h_{\ga}^{\vee}}\bigg) p(w)(z-w)^{-2} + \bigg(\partial p + \cdots\bigg)(w)(z-w)^{-1}.\end{equation}
Here $\mu = \frac{d_{\gb} + 1}{2}$, $\text{Cas}$ is the Casimir eigenvalue of the standard representation of $\ga$, and $h_{\ga}^{\vee}$ is the dual Coxeter number of $\ga$. Additionally, the OPEs of $W^4$, $W^6$, and $W^8$ with $p$ must have the following form: 
\begin{equation} \label{formofOPEW468p} \begin{split} W^4(z) p(w) & \sim k_0 p(w)(z-w)^{-4} +  \bigg(k_1 \partial p + \cdots\bigg)(w)(z-w)^{-3} 
\\ & + \bigg(k_2 \partial^2 p + k_3 :Lp: + \cdots \bigg)(w)(z-w)^{-2}
\\ & + \bigg(k_4 \partial^3 p + k_5 :(\partial L) p: + k_6 :L \partial p: + \cdots \bigg)(w)(z-w)^{-1},
\\ W^6(z) p(w) & \sim k_7 p(w)(z-w)^{-6} +  \bigg(k_8 \partial p + \cdots \bigg) (w)(z-w)^{-7} + \cdots , 
\\ W^8(z) p(w)  & \sim k_9 p(w) (z-w)^{-8} + \cdots. \end{split}\end{equation}
This is because our vertex algebra is strongly generated by $V^t(\ga) \otimes \cW$ together with the fields of weight $\frac{d_{\gb} + 1}{2}$ transforming in the standard representation of $\ga$. Since the fields $W^{2i}$ commute with $V^t(\ga)$, each term appearing in these OPEs has the same Cartan weight as $p$ relative to the Cartan subalgebra of $\ga$. 
Moreover, only those terms which depend on $p$, $L$, and their derivatives are needed in our calculations, so all other terms are omitted in \eqref{formofOPEW468p}.

Next, we impose the following Jacobi identities
\begin{equation} \label{jacobi1}  \begin{split} & L_{(2)} (W^4_{(1)} p) - W^4_{(1)} (L_{(2)} p) -  (L_{(0)} W^4)_{(3)} p - 2 (L_{(1)} W^4)_{(2)} p - (L_{(2)} W^4)_{(1)} p =0,
\end{split} \end{equation}
\begin{equation}  \label{jacobi2} \begin{split} &L_{(3)} (W^4_{(0)} p) - W^4_{(0)} (L_{(3)} p)- (L_{(0)} W^4)_{(3)} p - 3 (L_{(1)} W^4)_{(2)} p - 3 (L_{(2)} W^4)_{(1)} p - (L_{(3)} W^4)_{(0)} p =0,
\end{split} \end{equation}
\begin{equation}  \label{jacobi3} \begin{split} &L_{(4)} (W^4_{(0)} p) - W^4_{(0)} (L_{(4)} p)- (L_{(0)} W^4)_{(3)} p - 4 (L_{(1)} W^4)_{(3)} p - 6 (L_{(2)} W^4)_{(2)} p - 4 (L_{(3)} W^4)_{(1)} p 
\\ &  - (L_{(4)} W^4)_{(0)} p =0,
\end{split} \end{equation}
\begin{equation}  \label{jacobi4} \begin{split} & L_{(3)} (W^4_{(1)} p) - W^4_{(1)} (L_{(3)} p)- (L_{(0)} W^4)_{(4)} p - 3 (L_{(1)} W^4)_{(3)} p - 3 (L_{(2)} W^4)_{(2)} p - (L_{(3)} W^4)_{(1)} p =0,
\end{split} \end{equation}
\begin{equation}  \label{jacobi5} \begin{split} &L_{(2)} (W^4_{(2)} p) - W^4_{(2)} (L_{(2)} p) -  (L_{(0)} W^4)_{(4)} p - 2 (L_{(1)} W^4)_{(3)} p - (L_{(2)} W^4)_{(2)} p =0,
\end{split} \end{equation}
\begin{equation}  \label{jacobi6} \begin{split} &L_{(2)} (W^4_{(0)} p) - W^4_{(0)} (L_{(2)} p) -  (L_{(0)} W^4)_{(2)} p - 2 (L_{(1)} W^4)_{(1)} p - (L_{(2)} W^4)_{(0)} p =0,
\end{split} \end{equation}
\begin{equation} \label{jacobi7}  \begin{split} &W^4_{(0)} (W^4_{(6)} p)  - W^4_{(6)} (W^4_{(0)} p) - (W^4_{(0)} W^4)_{(6)} p =0,
\end{split} \end{equation}
\begin{equation}  \label{jacobi8} \begin{split} &W^4_{(4)} (W^4_{(2)} p) - W^4_{(2)} (W^4_{(4)} p) -  (W^4_{(0)} W^4)_{(6)} p  -  4(W^4_{(1)} W^4)_{(5)} p -  6(W^4_{(2)} W^4)_{(4)} p  - 4 (W^4_{(3)} W^4)_{(3)} p 
\\ &  -  (W^4_{(4)} W^4)_{(2)} p =0,
\end{split} \end{equation}
\begin{equation} \label{jacobi9}  \begin{split} &W^4_{(1)} (W^4_{(5)} p) - W^4_{(5)} (W^4_{(1)} p) -  (W^4_{(0)} W^4)_{(6)} p  -  (W^4_{(1)} W^4)_{(5)} p =0,
\end{split} \end{equation}
 \begin{equation}  \label{jacobi10} W^4_{(0)} (W^6_{(8)} p)  - W^6_{(8)} (W^4_{(0)} p) - (W^4_{(0)} W^6)_{(8)} p =0, \end{equation}
\begin{equation}  \label{jacobi11} W^4_{(1)} (W^6_{(7)} p)  - W^6_{(7)} (W^4_{(1)} p) - (W^4_{(0)} W^6)_{(8)} p- (W^4_{(1)} W^6)_{(7)} p =0. \end{equation}
 Note that \eqref{jacobi1} has weight $\mu + 1$, and a computation shows that the coefficient of $\partial p$ depends only on $k_1, k_2, k_3$ together with the level $t$ of $\ga$, and the parameters $n,m$. Similarly, \eqref{jacobi2} has weight $\mu + 1$, and the coefficient of $\partial p$ depends only on $k_1, k_4, k_5, k_6$ together with $t,n,m$. Next, \eqref{jacobi3}, \eqref{jacobi4}, and \eqref{jacobi5} all have weight $\mu$, and hence are scalar multiples of $p$; these equations depend only on $k_0,\dots, k_6$, together with $t,n,m$. Also, \eqref{jacobi6} has weight $\mu+2$, and the coefficient of $\partial^2 p$ depends only on $k_2, k_4, k_6$ together with $t, n,m$.

Next, \eqref{jacobi7}, \eqref{jacobi8}, and \eqref{jacobi9} all have weight $\mu$, and hence are scalar multiples of $p$; these equations depend only on $k_0,\dots, k_8$, together with $\lambda,t,n,m$. Finally, \eqref{jacobi10} and \eqref{jacobi11}, all have weight $\mu$, and hence are scalar multiples of $p$; these equations depend only on $k_0,\dots, k_9$, together with $\lambda, t,n,m$. 

Using the Mathematica package of Thielemans \cite{T}, we can solve these equations to obtain a unique solution for $k_0,\dots, k_9$ and $\lambda$ as functions of $t, n,m$. We then set $t$ to be the level of the affine subalgebra $V^t(\ga)$, which depends on $\psi$ and $n$. Solving for $\lambda$ in terms of $\psi,n,m$, and using the formulas for $c = c(\psi,n,m)$ appearing in Subsection \ref{subsec:affinecoset}, gives the explicit rational parametrizations $$\Phi_{iX,n,m}: \mathbb{C} \setminus P \rightarrow V(I_{iX,n,m}), \qquad \Phi_{iX,n,m}(\psi) = \big(c(\psi), \lambda(\psi)\big),$$ for $i=1,2$ and $X = B,C,D,O$. The explicit formula for $\Psi_{2B,n,m}(\psi)$ is given in Appendix \ref{appendixA}, but we do not give the others because as we shall see in the next section, all others can be obtained from this one together with various symmetries.

Finally, we comment on how this argument must be modified in the cases where $\ga = \gs\go_2$, or $\ga = 0$ and we take a $\mathbb{Z}_2$-orbifold. First, in the case $\tilde{\cC}^{\psi}_{1D}(1,m)$, the affine subalgebra is a Heisenberg algebra $\cH(1)$, and we normalize the generator $J$ so that
the two fields $p^{\pm}$ transforming as $\rho_{\ga}$ satisfy 
\begin{equation} \label{heis:normalization} J(z) p^{\pm}(w) \sim \pm (z-w)^{-1}.\end{equation} Then $J$ satisfies
\begin{equation} \label{heis:normalization1D} J(z) J(w) \sim (\psi - 1) (z-w)^{-2}.\end{equation} We replace the level $t$ in the above argument by $\psi - 1$, we replace \eqref{totalvirasoroaction} with 
\begin{equation} \label{totalvirasoroaction1D} L(z) p(w) \sim \bigg(\frac{2 \psi + 2 m \psi - 2 m -3}{2 (\psi -1)}\bigg) p(w)(z-w)^{-2} + \bigg(\partial p + \cdots\bigg)(w)(z-w)^{-1},\end{equation} and we solve the same system of equations to determine $\lambda$.

Next, in the case $\tilde{\cC}^{\psi}_{2D}(1,m)$, if we normalize the Heisenberg field $J$ so that \eqref{heis:normalization} holds, we have
\begin{equation} \label{heis:normalization2D} J(z) J(w) \sim (1-2\psi ) (z-w)^{-2}.\end{equation} Again, we replace the level $t$ by $1-2\psi$, 
we replace \eqref{totalvirasoroaction} with
\begin{equation} \label{totalvirasoroaction2D} L(z) p(w) \sim \bigg( \frac{\psi + 2 m \psi -m}{2 \psi -1} \bigg) p(w)(z-w)^{-2} + \bigg(\partial p + \cdots\bigg)(w)(z-w)^{-1},\end{equation} and we apply the same procedure.

Finally, in the remaining cases $\tilde{\cC}^{\psi}_{1B}(0,m)$, $\tilde{\cC}^{\psi}_{1O}(0,m)$, $\tilde{\cC}^{\psi}_{2B}(0,m)$, and $\tilde{\cC}^{\psi}_{2O}(0,m)$, we let $p$ be a generator of the simple current extension of $\cC^{\psi}_{iX}(0,m)$ of weight $\mu$, and we replace 
\eqref{totalvirasoroaction} with 
\begin{equation} \label{totalvirasoroactionothers} L(z) p(w) \sim \mu p(w)(z-w)^{-2} + \bigg(\partial p + \cdots\bigg)(w)(z-w)^{-1}.\end{equation}
The variable $t$ no longer appears, and rest of the argument is the same.

\subsection{Step 2: Symmetries of truncation curves}
\begin{thm}  \label{trialities:parametrization} For $m \geq n \geq 0$ and $m+n \geq 1$, we have the following identities
\begin{equation} \label{eq:trialities:parametrization}  
\begin{split}  & \Phi_{2B, n,m}(\psi) =  \Phi_{2O, n,m-n}\big( \frac{1}{4\psi}\big) = \Phi_{2B,m,n}\big(\frac{\psi}{2 \psi -1}\big),
\\ & \Phi_{1C,n,m}(\psi) = \Phi_{2C, n,m-n}\big( \frac{1}{2\psi}\big) = \Phi_{1C, m,n}\big(\frac{\psi}{\psi-1}\big),
\\ & \Phi_{2D, n,m}(\psi) =  \Phi_{1D, n,m-n}\big( \frac{1}{2\psi}\big)  =  \Phi_{1O, m,n-1}\big(\frac{2 \psi}{2\psi-1}\big), 
\\ &   \Phi_{1O,n,m}(\psi) =  \Phi_{1B, n,m-n}\big(\frac{1}{\psi}\big) =  \Phi_{2D, m+1,n}\big(\frac{\psi}{2 (\psi-1)}\big).
\end{split} \end{equation}
\end{thm}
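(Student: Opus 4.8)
The plan is to verify the identities \eqref{eq:trialities:parametrization} directly, starting from the single explicit parametrization $\Phi_{2B,n,m}(\psi) = \big(c_{2B}(\psi,n,m), \lambda_{2B}(\psi,n,m)\big)$ recorded in Appendix \ref{appendixA}, and deducing all the others from it by a sequence of elementary substitutions. The key structural observation — which is what makes this a finite computation rather than eight separate ones — is that in Step 1 the formula $\lambda = \lambda(t,n,m)$ produced by solving the Jacobi system \eqref{jacobi1}--\eqref{jacobi11} is \emph{the same rational function} in all cases where $\ga$ is simple; only the level $t = t(\psi,n)$ of the affine subalgebra, and the central charge $c = c(\psi,n,m)$, depend on which case $iX$ we are in. So I would first tabulate, for each of the eight families, the pair $\big(t_{iX}(\psi,n),\, c_{iX}(\psi,n,m)\big)$ from Section \ref{sec:hooktype}, and then observe that each claimed identity $\Phi_{iX,n,m}(\psi) = \Phi_{jY,n',m'}(g(\psi))$ reduces to the two scalar identities
\begin{equation} \nonumber t_{iX}(\psi,n) = t_{jY}\big(g(\psi),n'\big), \qquad c_{iX}(\psi,n,m) = c_{jY}\big(g(\psi),n',m'\big), \end{equation}
together with the (automatic, once these hold) equality of the common $\lambda$-function. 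The first of these is a short rational-function check; the second is the only place where real algebra happens.

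Concretely, I would organize the argument around the two ``generating'' moves. The first is the Feigin--Frenkel-type involution within a fixed family, e.g. $\Phi_{2B,n,m}(\psi) = \Phi_{2B,m,n}\big(\tfrac{\psi}{2\psi-1}\big)$: here one checks that the central charge formula $c = -\tfrac{(-m+n-\psi+2m\psi)(1-2m+2n+4m\psi)(-1-2m+2n+2\psi+4m\psi)}{2\psi(2\psi-1)}$ is invariant under the simultaneous swap $(n,m)\mapsto(m,n)$, $\psi\mapsto \tfrac{\psi}{2\psi-1}$ (equivalently $\tfrac1\psi + \tfrac1{\psi''} = 2$), and that the level $-2\psi-2n+2$ transforms correctly into the level at $(m,n)$; the $\lambda$-equality is then forced. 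The second move passes between the $i=1$ and $i=2$ columns with a shift $m\mapsto m-n$ and $\psi\mapsto\tfrac1{2\psi}$ or $\tfrac1{4\psi}$ — e.g. $\Phi_{2B,n,m}(\psi) = \Phi_{2O,n,m-n}\big(\tfrac1{4\psi}\big)$ — and here I would again match levels and central charges after the substitution. The four lines of \eqref{eq:trialities:parametrization} are then obtained by composing these moves and using the already-established special cases from Section \ref{sec:mainresult} to pin down the degenerate endpoints $n=0$ and $\ga = \gs\go_2$ (where Step 1 was run with the modified equations \eqref{totalvirasoroaction1D}, \eqref{totalvirasoroaction2D}, \eqref{totalvirasoroactionothers}); for these one simply checks that the truncation curves of $\cW^k(\gs\go_{2m+1})$, $\cW^k(\gs\gp_{2m})$ from \cite{KL} already satisfy the claimed symmetries, which is classical Feigin--Frenkel duality at the level of \cite[Eq. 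A.1]{KL}.

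I would also want to record the compatibility of the parametrizations at the overlaps: the substitution data in \eqref{eq:trialities:parametrization} is not arbitrary but comes from the requirement that the relation between the two affine levels $t$ and $t'$ (attached to the two $\cW$-(super)algebras sharing the coset) matches the level relation in the embeddings of Section \ref{sec:hooktype}, so part of the check is verifying that, say, $\tfrac1{2\psi}+\tfrac1{\psi''}=1$ is exactly the relation making the two affine subalgebra levels correspond. This is bookkeeping but worth doing carefully because it explains why the maps $g(\psi)$ are what they are.

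The main obstacle I anticipate is not conceptual but the scale of the rational-function algebra: the central charge formulas are degree-$3$ polynomials over a quadratic denominator in $\psi$, with three integer parameters, and verifying invariance under a Möbius substitution in $\psi$ combined with an affine substitution in $(n,m)$ for each of the (roughly) eight composite identities requires either careful hand computation or — as the authors do throughout — a symbolic computation in Mathematica. A secondary subtlety is that the identity must hold as an equality of \emph{one-parameter} vertex algebras, i.e. of rational maps $\mathbb{C}\setminus P \to \mathbb{C}^2$, so one must check that the pole sets $P$ match up and that no spurious branch is introduced by the substitutions $\psi \mapsto \tfrac1{k\psi}$ (in particular the behavior at $\psi = 0,\infty$); I would handle this by noting that two rational maps agreeing on a Zariski-dense set agree everywhere they are both defined, which follows once the $c$- and $\lambda$-identities are verified as identities of rational functions. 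Given the explicit Appendix \ref{appendixA} formula and the central charges listed in Section \ref{sec:hooktype}, everything else is a finite verification.
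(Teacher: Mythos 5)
There is a genuine gap, and it sits exactly at the step you call the ``key structural observation.'' You claim that the Jacobi system of Step 1 produces \emph{one} rational function $\lambda=\lambda(t,n,m)$ common to all cases with $\ga$ simple, so that each identity $\Phi_{iX,n,m}(\psi)=\Phi_{jY,n',m'}(g(\psi))$ reduces to matching the affine level $t$ and the central charge $c$. Neither half of this is justified, and the level-matching half cannot even be formulated correctly: the two sides of each identity in \eqref{eq:trialities:parametrization} have \emph{different} affine subalgebras (e.g.\ $V^{-2\psi-2n+2}(\gs\go_{2n+1})$ versus $V^{\psi'-n-1}(\go\gs\gp_{1|2n})$ in the first line, and even within the second line, $\gs\gp_{2n}$ at level $-\psi/2-n-1/2$ versus $\gs\gp_{2n}$ at level $\tfrac{1}{2\psi}-n-\tfrac32$, which are not equal as functions of $\psi$). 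The trialities identify the cosets, not the ambient data, so there is no identity $t_{iX}(\psi,n)=t_{jY}(g(\psi),n')$ to check. Likewise, the solution of \eqref{jacobi1}--\eqref{jacobi11} depends on the case through $\mu=\tfrac{d_{\gb}+1}{2}$, the Casimir of the standard $\ga$-module, $h^{\vee}_{\ga}$, and the level normalization in \eqref{totalvirasoroaction}, all of which differ among the eight families; the weights $h_p$ of the extension fields on the two sides of a triality are different as well. So the equality of the $\lambda$-components is not ``forced'' by anything you have set up --- it is precisely the nontrivial content of the theorem, and your proposal assumes it rather than proves it.

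The paper's proof is the route you relegate to a fallback: run the Step-1 procedure separately for each of the eight families (with the modified equations in the $\gs\go_2$ and $\ga=0$ cases), obtaining eight explicit parametrizations $\Phi_{iX,n,m}(\psi)=(c(\psi),\lambda(\psi))$, of which \eqref{truncationcurve2B} is the printed representative, and then verify \eqref{eq:trialities:parametrization} as identities of rational functions in $\psi$ (a finite symbolic check; the $c$-identities are easy from Subsection \ref{subsec:affinecoset}, the $\lambda$-identities are where the work is). Your closing remarks about Zariski density, pole sets, and the degenerate endpoints are fine, but to make the argument complete you must either actually compute the remaining seven $\lambda$-formulas and check them (which is the paper's proof), or supply a genuine argument for a case-independent master formula --- note that the paper's uniform description \eqref{relationship1}--\eqref{relationship3} involves half-integer shifts of $n,m$ and rescalings of $\psi$, not a common $\lambda(t,n,m)$, and is itself extracted from the explicit formulas rather than from the structure of the Jacobi system.
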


\begin{proof} The explicit formulas for $\Phi_{iX, n,m}(\psi)$ in all cases can be computed using the approach in the previous subsection. These symmetries follow immediately from our formulas.
\end{proof}

It turns out that all eight functions $\Phi_{iX,n,m}(\psi)$ can be expressed uniformly in terms of one of them. From \eqref{eq:trialities:parametrization}, it it clear that within each of the four triality classes, there is a uniform expression, so what remains is to find an expression that relates the expressions from different triality classes. The explicit formula for $\Phi_{2B,n,m}(\psi)$ appears in Appendix \ref{appendixA} as \eqref{truncationcurve2B}. Here $n,m$ are nonnegative integers, but if we are allowed to replace them with half-integers, we obtain the following.
\begin{thm}
\begin{equation} \label{relationship1} \Phi_{1O,n,m}(\psi) = \Phi_{2B, n,m+\frac{1}{2}}\big(\frac{\psi}{2}\big),\end{equation}
\begin{equation}  \label{relationship2}  \Phi_{2D,n, m}(\psi) = \Phi_{2B, n-\frac{1}{2},m}(\psi),\end{equation}
\begin{equation}  \label{relationship3}  \Phi_{1C, n , m}(\psi) = \Phi_{2B, n+\frac{1}{2} ,m+\frac{1}{2}} \big(\frac{\psi}{2}\big).\end{equation}
\end{thm}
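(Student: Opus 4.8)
\textbf{Proof proposal for \eqref{relationship1}--\eqref{relationship3}.}
The plan is to exploit the explicit rational parametrization $\Phi_{2B,n,m}(\psi) = \big(c(\psi),\lambda(\psi)\big)$ recorded in Appendix \ref{appendixA} as \eqref{truncationcurve2B}, and simply verify that under the stated substitutions it reproduces the parametrizations $\Phi_{1O,n,m}$, $\Phi_{2D,n,m}$, and $\Phi_{1C,n,m}$ that were computed by the Jacobi-identity procedure of Step 1. Concretely, each $\Phi_{iX,n,m}(\psi)$ is a pair of rational functions in $\psi$ whose coefficients are polynomial in $n$ and $m$. The right-hand sides $\Phi_{2B,n,m+\frac12}(\psi/2)$, $\Phi_{2B,n-\frac12,m}(\psi)$, $\Phi_{2B,n+\frac12,m+\frac12}(\psi/2)$ are obtained from that same polynomial-in-$(n,m)$ formula by the indicated rational substitution in $\psi$ and half-integer shift in the discrete parameters; since \eqref{truncationcurve2B} is a genuine rational expression in all three variables, these substitutions are well-defined, and the claim is the identity of two explicit rational functions. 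This reduces the theorem to a finite algebraic check.

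First I would extract from Appendix \ref{appendixA} the closed forms $c_{2B}(\psi,n,m)$ and $\lambda_{2B}(\psi,n,m)$. Then, for \eqref{relationship2}, I would substitute $n \mapsto n-\tfrac12$ directly and match against the central charge formula for $\cC^{\psi}_{2D}(n,m)$ given in Subsection \ref{subsec:affinecoset} together with the $\lambda$-formula produced by Step 1 for $\tilde{\cC}^{\psi}_{2D}(n,m)$; the agreement of the $c$-components is already guaranteed by comparing the two rational functions of $\psi$ (both were written down explicitly), and the $\lambda$-components agree because both solve the same over-determined linear system \eqref{jacobi1}--\eqref{jacobi11} after the parameter identification. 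For \eqref{relationship1} and \eqref{relationship3} the same strategy applies, now also rescaling $\psi \mapsto \psi/2$; the factor of $2$ arises from the differing normalizations of the bilinear form between the orthogonal series ($\ga = \gs\go_{2n}$ or $\gs\go_{2n+1}$) and the symplectic/orthosymplectic series, exactly as in the level relations listed at the start of Section \ref{sec:hooktype}. I would verify the $c$-identities first (purely a comparison of the four central-charge formulas in Subsection \ref{subsec:affinecoset} under the substitutions), and then the $\lambda$-identities.

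The genuine content, and the place where care is needed, is checking that the $\lambda$-component transforms correctly --- that is, that the value of $\lambda$ solving the system in Step 1 for, say, $\tilde{\cC}^{\psi}_{1O}(n,m)$ really equals $\lambda_{2B}\big(\tfrac{\psi}{2}, n, m+\tfrac12\big)$. Since Step 1 only guarantees a unique solution $\lambda = \lambda(t,n,m)$ as a function of the affine level $t$ and the discrete parameters, the point is that the Casimir eigenvalue $\mathrm{Cas}$, the weight $\mu = \tfrac{d_{\gb}+1}{2}$, the dual Coxeter number $h^\vee_{\ga}$, and the level $t$ all enter \eqref{totalvirasoroaction} and \eqref{formofOPEW468p} through combinations that are manifestly invariant under the simultaneous replacement $(\ga=\gs\go_{2m+2}\text{-type data}) \leftrightarrow (\gs\gp/\go\gs\gp\text{-type data})$ with $n$ or $m$ shifted by $\tfrac12$ and $\psi$ halved. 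I expect the main obstacle to be bookkeeping: tracking the half-integer shifts through the formulas for $t$ (the three cases $t=\ell$, $t=-k/2$, $t=-2k$), through the central charge pieces $c_{\gg}$, $c_{\text{dilaton}}$, $c_{\text{ghost}}$ in Section \ref{sec:hooktype}, and through the conventions \eqref{convention:osp} for the orthosymplectic dual Coxeter numbers, and confirming that no spurious poles are introduced by the substitutions. Once the substitution dictionary is pinned down, the remaining verification is a direct (if tedious) rational-function identity, which can be confirmed symbolically using the package \cite{T} exactly as in the proof of Theorem \ref{trialities:parametrization}.
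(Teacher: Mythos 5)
Your proposal is correct and is essentially the paper's own argument: the paper proves these identities by computing the explicit parametrizations $\Phi_{iX,n,m}(\psi)=(c(\psi),\lambda(\psi))$ via the Jacobi-identity procedure of Step 1, observing that the formula \eqref{truncationcurve2B} is rational in $\psi$ with polynomial dependence on $n,m$ (so half-integer substitution is legitimate), and then checking \eqref{relationship1}--\eqref{relationship3} as identities of explicit rational functions, exactly as you describe. Your extra worry about why the $\lambda$-component must transform correctly is unnecessary once both sides are written down explicitly, but it does not affect the validity of the verification.
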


By \eqref{eq:trialities:parametrization}, we can recover $\Phi_{1B, n,m}(\psi)$, $\Phi_{1D, n,m}(\psi)$, and $\Phi_{2C, n,m}(\psi)$, from $\Phi_{1O, n,m+n}(\frac{1}{\psi})$, $\Phi_{2D, n,m+n}(\frac{1}{2\psi})$, and  $\Phi_{1C, n,n+m}(\frac{1}{2\psi})$, respectively. Together with \eqref{relationship1} - \eqref{relationship3}, this shows that all functions $\Phi_{iX, n,m}(\psi)$ for $i=1,2$ and $X = B,C,D,O$, can be recovered from these symmetries together with the explicit formula \eqref{truncationcurve2B} for $\Phi_{2B,n,m}(\psi)$.

\subsection{Step 3: Exhaustiveness} \label{subsec:exhaust}
The last step in the proof of Theorem \ref{main} is to show that $\tilde{\cC}^{\psi}_{iX}(n,m)= \cC^{\psi}_{iX}(n,m)$ for $i=1,2$ and $X = B,C,D,O$. The isomorphisms in Theorem \ref{main} then follow immediately from the symmetries in Theorem \ref{trialities:parametrization}. For a particular value of $\psi \in \mathbb{C}$, let $\tilde{\cC}_{\psi, iX}(n,m)$ and $\cC_{\psi, iX}(n,m)$ denote the simple quotients of these algebras. 

In view of Lemma \ref{lem:genofcpsi} and Theorem \ref{thm:tildecpsi}, it suffices to show that $\tilde{\cC}^{\psi}_{iX}(n,m)$ contains the strong generating fields of $\cC^{\psi}_{iX}(n,m)$ in weights $2,4,\dots, 2m+4$. We give the proof only for $\tilde{\cC}^{\psi}_{2B}(n,m)$ since the argument in the other cases is the same. 
The truncation curve \eqref{truncationcurve2B} for $\tilde{\cC}^{\psi}_{2B}(n,m)$ and the truncation curve for $\cW^s(\gs\gp_{2r})$, which appears in Appendix A of \cite{KL}, intersect at the point $(c,\lambda)$ given in \eqref{app:intersectionpoint}. This intersection gives rise to the following isomorphism:
\begin{equation} \tilde{\cC}_{\psi, 2B}(n,m) \cong \cW_s(\gs\gp_{2r}), \qquad  \psi = \frac{1 + 2 m - 2 n}{2 (1 + 2 m + 2 r)},  \qquad s = -(r+1) + \frac{1 + 2 m + 2 r}{4 (n + r)}.\end{equation} Note that $s$ is a nondegenerate admissible level for $\widehat{\gs\gp}_{2r}$ whenever $1 + 2 m + 2 r$ and $n+r$ are coprime. By Corollary \ref{cor:singularvector}, for $\psi$ and $r$ sufficiently large, the universal algebra $\cW^s(\gs\gp_{2r})$ has a singular vector in weight $4( m + 1) ( n + 1)$, and no singular vector in lower weight. Also, by \cite[Rem. 5.3]{KL}, $\cW_s(\gs\gp_{2r})$ is generated by the weights $2$ and $4$ fields for all non-critical values of $s$, hence this holds for the simple quotient $\cW_s(\gs\gp_{2r})$ as well. It follows that $\tilde{\cC}_{\psi, 2B}(n,m)$ contains all fields in weights $2,4,\dots, 4( m + 1) ( n + 1) -2$, so it must coincide with $\cC_{\psi, 2B}(n,m)$. Since this holds at infinitely many values of $\psi$ and $r$, it holds for the universal objects as well. This shows that $\tilde{\cC}^{\psi}_{2B}(n,m)= \cC^{\psi}_{2B}(n,m)$ as one-parameter vertex algebras. Repeating this argument in the other cases completes the proof of Theorem \ref{main}.

As a consequence of Theorem \ref{main} and the minimal strong generating types for $\cC^{\psi}_{2B}(n,m)$ and $\cC^{\psi}_{2D}(n,m)$ given earlier, we immediately obtain

\begin{cor} \label{cor:newstrongtypes} For $n+m\geq 1$, we have the following minimal strong generating types as one-parameter vertex algebras.
\begin{enumerate}
\item $\cC^{\psi}_{1B}(n,m)$ is of type $\cW(2,4,\dots,  2 (1 + n) (3 + 2 m + 2 n) - 2)$,

\item $\cC^{\psi}_{1D}(n,m)$ is of type $\cW(2,4,\dots,  2 (1 + m + n) (1 + 2 n) - 2)$,

\item $\cC^{\psi}_{1O}(n,m)$ is of type $\cW(2,4,\dots,  2 (3 + 2 m) (1 + n) - 2)$,

\item $\cC^{\psi}_{2O}(n,m)$ is of type $\cW(2,4,\dots,  4 (1 + n) (1 + m + n)  - 2)$.
\end{enumerate}
\end{cor}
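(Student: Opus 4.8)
The four families in the statement are precisely those for which Lemmas \ref{gff1B}, \ref{gff1D}, \ref{gff1O}, and \ref{gff2O} gave only a lower bound on the top weight $2N$ of a (minimal) strong generating set. Now that Theorem \ref{main} is available, each of these algebras is isomorphic, \emph{as a one-parameter vertex algebra}, to a member of one of the two families $\cC^{\psi}_{2B}(\cdot,\cdot)$ or $\cC^{\psi}_{2D}(\cdot,\cdot)$, whose minimal strong generating types were already pinned down in Lemmas \ref{gff2B} and \ref{gff2D}. Since possessing a minimal strong generating set of type $\cW(2,4,\dots,2N)$ is preserved under isomorphism of one-parameter vertex algebras, the claimed types will follow by transport of structure together with an elementary arithmetic check. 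The plan is therefore to record, for each of (1)--(4), the relevant identification coming from Theorem \ref{main}, and then to verify the arithmetic identity matching the two formulas for $2N$.

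\textbf{The identifications.} For (1), apply \eqref{1o1b2d} with its internal parameter $m$ replaced by $m+n$ (legitimate since $m+n\geq n$): this gives $\cC^{\psi}_{1B}(n,m)\cong\cC^{\psi''}_{2D}(m+n+1,n)$ for a suitable Möbius transform $\psi''$ of $\psi$. By Lemma \ref{gff2D}, $\cC^{\psi''}_{2D}(m+n+1,n)$ is of type $\cW(2,4,\dots,2(n+1)(2(m+n+1)+1)-2)$, and $2(n+1)(2m+2n+3)=2(1+n)(3+2m+2n)$. For (2), apply \eqref{2d1d1o} with its internal $m$ replaced by $m+n$, giving $\cC^{\psi}_{1D}(n,m)\cong\cC^{\psi'}_{2D}(n,m+n)$; Lemma \ref{gff2D} then yields type $\cW(2,4,\dots,2(m+n+1)(2n+1)-2)$, which is the asserted $\cW(2,4,\dots,2(1+m+n)(1+2n)-2)$. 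For (3), one must split into two ranges: if $m\geq n$, \eqref{1o1b2d} directly gives $\cC^{\psi}_{1O}(n,m)\cong\cC^{\psi''}_{2D}(m+1,n)$; if $m<n$, instead read \eqref{2d1d1o} as $\cC^{\psi}_{2D}(m+1,n)\cong\cC^{\psi''}_{1O}(n,m)$ (its hypothesis $n\geq m+1$ being exactly $m<n$). In both cases $\cC^{\psi}_{1O}(n,m)$ is isomorphic to a member of the $\cC_{2D}$-family with indices $(m+1,n)$, which by Lemma \ref{gff2D} has type $\cW(2,4,\dots,2(n+1)(2m+3)-2)=\cW(2,4,\dots,2(3+2m)(1+n)-2)$. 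For (4), apply \eqref{2b2b2o} with its internal $m$ replaced by $m+n$, giving $\cC^{\psi}_{2O}(n,m)\cong\cC^{\psi'}_{2B}(n,m+n)$; Lemma \ref{gff2B} then gives type $\cW(2,4,\dots,4(m+n+1)(n+1)-2)=\cW(2,4,\dots,4(1+n)(1+m+n)-2)$.

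\textbf{Remarks on bookkeeping and the main obstacle.} There is no substantive obstacle here; the content is entirely in Theorem \ref{main} and Lemmas \ref{gff2B}--\ref{gff2D}. The only points requiring care are: (i) in every application the relevant triality of Theorem \ref{main} must be invoked in the regime where its hypothesis $m\geq n$ holds after the substitution, which forces the case split in (3); (ii) one should confirm that the hypothesis $n+m\geq 1$ of Lemmas \ref{gff2B} and \ref{gff2D} is met by the target indices in each reduction (e.g.\ for (1) the target $\cC_{2D}(m+n+1,n)$ has $(m+n+1)+n\geq 1$, and similarly for the other three), so that the low-rank boundary cases allowed in the present corollary are genuinely covered. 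Granting these, the identities above establish the four minimal strong generating types, and in particular the lower bounds of Lemmas \ref{gff1B}, \ref{gff1D}, \ref{gff1O}, \ref{gff2O} are now seen to be attained.
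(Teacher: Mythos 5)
Your proposal is correct and is essentially the paper's own argument: the paper derives this corollary immediately from the trialities of Theorem \ref{main} together with the minimal strong generating types of $\cC^{\psi}_{2B}$ and $\cC^{\psi}_{2D}$ from Lemmas \ref{gff2B} and \ref{gff2D}. Your write-up merely makes explicit the index substitutions, the case split for $\cC^{\psi}_{1O}(n,m)$ according to $m\geq n$ or $m<n$, and the arithmetic matching, all of which check out.
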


A remarkable feature of the truncation curves is that their pairwise intersection points are all rational points. We expect, but do not prove, that these four families of curves account for all nontrivial truncations of $\cW^{\text{ev}}(c,\lambda)$; an equivalent conjecture is also due to Proch\'azka \cite{Pro3}. In Appendices \ref{appendixB}, \ref{appendixC}, and \ref{appendixD}, we will give the explicit classification of coincidences between the simple quotients $\cC_{\psi, iX}(n,m)$ and the algebras $\cW_s(\gs\gp_{2r})$, $\cW_s(\gs\go_{2r})^{\mathbb{Z}_2}$, and $\cW_s(\go\gs\gp_{1|2r})^{\mathbb{Z}_2}$; certain isomorphisms of this kind will be needed for our rationality results in Section \ref{sec:rationality}.

\subsection{Uniqueness and reconstruction} \label{section:uniqueness}

The algebras $\cW^{\psi}_{iX}(n,m)$ satisfy a uniqueness theorem which is analogous to \cite[Thm. 9.1 and Thm. 9.8]{CL3}.
\begin{thm} \label{thm:uniquenesswnm} For all $n,m$ with $n+m \geq 1$, $i=1,2$, and $X = B,C,D,O$, the full OPE algebra of $\cW^{\psi}_{iX}(n,m)$ is determined completely from the structure of $\cC^{\psi}_{iX}(n,m)$, the action of the Lie algebra $\ga$ on the fields which transform as the standard representation $\rho_{\ga}$, and the nondegeneracy condition on these fields given by \cite[Thm. 3.5]{CL3}. In particular,
\begin{enumerate}
\item If $\cA^{\psi}_{iX}(n,m)$ is a one-parameter vertex algebra which extends $V^t(\ga) \otimes \cC^{\psi}_{iX}(n,m)$ by $d_{\ga}$ fields in conformal weight $\frac{d_{\gb} + 1}{2}$ of correct parity, which are primary with respect to $V^t(\ga)$ as well as the total Virasoro field, and satisfy the nondegeneracy condition, then $\cA^{\psi}_{iX}(n,m) \cong \cW^{\psi}_{iX}(n,m)$, as one-parameter vertex algebras.
\item The same result holds if we specialize to a particular value of $\psi$, and replace $\cA^{\psi}_{iX}(n,m)$ and $\cW^{\psi}_{iX}(n,m)$ by their simple quotients $\cA_{\psi, iX}(n,m)$ and $\cW_{\psi, iX}(n,m)$.
\end{enumerate}
\end{thm}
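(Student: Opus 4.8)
The plan is to adapt the reconstruction arguments of \cite[Thm. 9.1 and Thm. 9.8]{CL3}. Assume first that $\ga$ is simple; the cases $\ga = \gs\go_2$ and $\ga = 0$ (where $\cW^\psi_{iX}(n,m)$ is an order two simple current extension of $\cC^\psi_{iX}(n,m)$) require only the cosmetic modifications already described in Section \ref{sec:proofmain} and are treated in the same way otherwise. By Theorem \ref{thm:kacwakimoto}, $\cW^\psi_{iX}(n,m)$ is freely generated by the affine currents $I^\alpha$ ($\alpha \in \ga$), the fields $W^2 = L, W^4, \dots, W^{2M}$ generating $\cC^\psi_{iX}(n,m)$ (using $\tilde{\cC}^\psi_{iX}(n,m) = \cC^\psi_{iX}(n,m)$ and \cite[Cor. 3.5]{CL3}, so that $\cC^\psi_{iX}(n,m)$ commutes with $V^t(\ga)$ and is generated by its weights $2$ and $4$ fields), and the $d_\ga$ fields $p^j$ of weight $\mu = \frac{d_\gb + 1}{2}$ transforming as $\rho_\ga$. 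Any $\cA^\psi_{iX}(n,m)$ satisfying the hypotheses is strongly generated by the same list of fields, so it suffices to show that the full OPE algebra of these generators is uniquely determined by the three inputs named in the statement.

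Among these OPEs, $I^\alpha(z) I^\beta(w)$, $L(z) W^{2i}(w)$, and $W^{2i}(z) W^{2j}(w)$ are fixed by the common subalgebra $V^t(\ga) \otimes \cC^\psi_{iX}(n,m)$; $L(z) I^\alpha(w)$ is fixed because $L$ is the sum of the Sugawara vector of $V^t(\ga)$ and the Virasoro vector of $\cC^\psi_{iX}(n,m)$; $I^\alpha(z) W^{2i}(w) \sim 0$ by \cite[Cor. 3.5]{CL3}; $I^\alpha(z) p^j(w)$ is fixed by the action of $\ga$ on $\rho_\ga$ together with primality of $p^j$ under $V^t(\ga)$; and $L(z) p^j(w)$ is fixed since $p^j$ is primary of weight $\mu$ for the total Virasoro. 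Only the OPEs $W^{2i}(z) p^j(w)$ and $p^i(z) p^j(w)$ remain. Since $\cW^\psi_{iX}(n,m)$ is strongly generated by the fields above and every term of $W^{2i}(z) p^j(w)$ carries the $\ga$-weight of $p^j$, this OPE is a linear combination of normally ordered monomials in $L, W^{2\ell}, I^\alpha, p^k$ and their derivatives with only finitely many undetermined structure constants; and because $W^{2i} = W^4_{(1)} W^{2i-2}$ for $i \geq 3$, the non-commutative Wick formula reduces everything to $W^4(z) p^j(w)$. The structure constants of $W^4(z) p^j(w)$ are exactly the $k_0, \dots, k_9$ of Step 1 of Section \ref{sec:proofmain}, together with the coefficients of the $I^\alpha$-dependent terms, which $\ga$-covariance determines from the $I^\alpha$-free part; the Jacobi system \eqref{jacobi1}--\eqref{jacobi11} was shown there to have a unique solution for these as rational functions of $t, n, m$, and since $\cW^\psi_{iX}(n,m)$ furnishes a solution this system is consistent, so its solution is the one realized by $\cW^\psi_{iX}(n,m)$.

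For $p^i(z) p^j(w)$, $\ga$-covariance together with the strong generating type of $\cW^\psi_{iX}(n,m)$ again reduces the OPE to finitely many structure constants: the top term $p^i_{(2\mu-1)} p^j$ is a multiple of the invariant form on $\rho_\ga$, normalized and shown nondegenerate via the nondegeneracy condition of \cite[Thm. 3.5]{CL3}; the adjoint-valued term $p^i_{(2\mu-2)} p^j$ is fixed by the already-determined action of $\ga$; and the remaining terms are pinned down by the Jacobi identities $(L, p^i, p^j)$ and $(W^4, p^i, p^j)$, which admit, generically in $\psi$, a unique solution realized by $\cW^\psi_{iX}(n,m)$. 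Once all OPEs are determined, part (1) follows: since $\cW^\psi_{iX}(n,m)$ is freely generated, the assignment on generators extends to a homomorphism $\cW^\psi_{iX}(n,m) \to \cA^\psi_{iX}(n,m)$ of one-parameter vertex algebras; it is surjective because its image contains a strong generating set of $\cA^\psi_{iX}(n,m)$, and injective because $\cW^\psi_{iX}(n,m)$ is simple as a one-parameter vertex algebra (by parts (1)--(2) of \cite[Thm. 3.6]{CL3} in the quantum Hamiltonian reduction cases, and trivially in the cases $\cW^\psi_{2X}(n,0)$). For part (2) one repeats the argument at a specialized value $\psi = \psi_0$: the structure constants above are rational in $\psi$ and, away from finitely many values of $\psi_0$, their specializations are the unique solution of the specialized Jacobi system, so the generators of $\cA_{\psi_0, iX}(n,m)$ satisfy the specialized OPEs; this gives a surjection $\cW^\psi_{iX}(n,m)|_{\psi_0} \twoheadrightarrow \cA_{\psi_0, iX}(n,m)$, and since $\cA_{\psi_0, iX}(n,m)$ is simple while the maximal proper graded ideal of $\cW^\psi_{iX}(n,m)|_{\psi_0}$ is unique, the kernel must be that ideal, giving $\cA_{\psi_0, iX}(n,m) \cong \cW_{\psi_0, iX}(n,m)$; the remaining finitely many $\psi_0$ are handled by the limiting argument of \cite[Thm. 9.8]{CL3}.

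The main obstacle is showing that the $W^{2i}(z) p^j(w)$ and especially the $p^i(z) p^j(w)$ OPEs are rigidly determined --- that after $\ga$-covariance has cut them down to finitely many structure constants, the Jacobi identities and the nondegeneracy condition leave no residual freedom. The portion needed to determine $\lambda$ was already carried out in Section \ref{sec:proofmain}; what remains is the bookkeeping of the $I^\alpha$-dependent terms and of $p^i(z) p^j(w)$ across all eight families, and checking that the generic uniqueness of the Jacobi system is strong enough to drive the specialization argument in part (2).
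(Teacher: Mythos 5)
Your proposal follows essentially the same route as the paper, which for simple $\ga$ simply invokes the proof of \cite[Thm. 9.1]{CL3} (and \cite[Thm. 9.8]{CL3} for the specialized levels) and disposes of the $\ga=\gs\go_2$ and $\ga=0$ cases exactly as you do, via the Heisenberg normalizations \eqref{heis:normalization}--\eqref{heis:normalization2D} and the uniqueness of order-two simple current extensions; the rigidity bookkeeping you flag as the remaining obstacle is precisely the content of the cited type $A$ argument that the paper likewise omits. One small correction: by Theorem \ref{thm:kacwakimoto} the free generators of $\cW^{\psi}_{iX}(n,m)$ in even weight are only those in weights $2,4,\dots,2m$, not the full coset strong generating set $W^4,\dots,W^{2M}$, so the homomorphism-extension step should be run with the Kac--Wakimoto generators (treating the larger set only as strong generators), though this does not change the structure of the argument.
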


In the cases where $\ga$ is simple, the proof is the same as the proof of \cite[Thm. 9.1]{CL3} in the case $m>1$, and is omitted. In the cases $\cW^{\psi}_{iD}(1,m)$ where $\ga = \gs\go_2$, the affine subalgebra is a Heisenberg algebra $\cH(1)$, and we normalize the generator $J$ such that \eqref{heis:normalization} holds. By the same argument as the proof of \cite[Thm. 9.1]{CL3} in the case $m=1$, all OPEs in $\cW^{\psi}_{iD}(1,m)$ are uniquely determined by the structure of $\cC^{\psi}_{iD}(1,m)$ and \eqref{heis:normalization}, \eqref{heis:normalization1D}, and \eqref{heis:normalization2D}.

Finally, in the cases $\cC^{\psi}_{1B}(0,m)$, $\cC^{\psi}_{1O}(0,m)$, $\cC^{\psi}_{2B}(0,m)$, and $\cC^{\psi}_{2O}(0,m)$, $\ga$ is zero and $\cC^{\psi}_{iX}(0,m)$ is just the $\mathbb{Z}_2$-orbifold of $\cC^{\psi}_{iX}(0,m)$. In these cases, the argument showing the uniqueness of order two simple current extensions of $\cC^{\psi}_{iX}(0,m)$ by one field in the appropriate weight and parity, is even easier and is left to the reader.

\section{Rationality results} \label{sec:rationality} 
By combining Theorem \ref{main} with the theory of extensions of rational vertex superalgebras, we prove many new rationality results in this section. 
 
\subsection{Affine vertex superalgebras of $\go\gs\gp_{1|2n}$}
Among the most fundamental examples of rational vertex algebras are the simple affine vertex algebras $L_k(\gg)$ at positive integer level $k$ \cite{FZ}. For Lie superalgebras, it is known that the only examples of lisse affine vertex superalgebras are $L_k(\go\gs\gp_{1|2n})$ for $k\geq 0$ \cite{GK}, but the rationality is only known for $n=1$ \cite{CFK}. In this case, it is a consequence of the fact that $L_k(\go\gs\gp_{1|2})$ is an extension of $L_k(\gs\gp_2)$ times a rational Virasoro algebra. This perspective generalizes naturally to the case $n>1$, where the Virasoro algebra is replaced by a principal $\cW$-algebra of type $C$.

\begin{thm}\label{thm:osp}
For all positive integers $n, k$, the vertex superalgebra $L_k(\go\gs\gp_{1|2n})$ is lisse and rational, and is an extension of $L_k(\gs\gp_{2n}) \otimes \cW_{\ell}(\gs\gp_{2n})$, for $\ell = -(n + 1) + \frac{1 + k + n}{1 + 2 k + 2 n}$.
\end{thm}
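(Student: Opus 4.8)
The plan is to realize $L_k(\go\gs\gp_{1|2n})$ as a simple current extension of the rational vertex algebra $L_k(\gs\gp_{2n})\otimes \cW_\ell(\gs\gp_{2n})$ and then invoke the extension-rationality machinery developed in Section~\ref{sec:VOA}. First I would decompose the affine vertex superalgebra with respect to its even subalgebra: since $\gg = \go\gs\gp_{1|2n}$ has even part $\gs\gp_{2n}$ and odd part the standard module $\mathbb{C}^{2n}$, the vertex superalgebra $V^k(\go\gs\gp_{1|2n})$ contains $V^k(\gs\gp_{2n})$, and the coset $\cC := \text{Com}(V^k(\gs\gp_{2n}), V^k(\go\gs\gp_{1|2n}))$ is precisely one of our algebras: by the case $m=0$ of \eqref{1c1c2c} (equivalently Corollary~\ref{cosetrealizationBCk}), the simple quotient of this coset is $\cW_\ell(\gs\gp_{2n})$ for the claimed value $\ell = -(n+1) + \frac{1+k+n}{1+2k+2n}$, once one converts between the level conventions (note $\psi = -(2k+2n+1)$ here, so $\ell$ is read off from the affine subalgebra level in Case 2C specialized appropriately, and $\psi'' = \frac{1+k+n}{?}$ gives the principal type $C$ level). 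Thus at the level of universal algebras $V^k(\go\gs\gp_{1|2n})$ decomposes as $\bigoplus_{\lambda} V^k(\gs\gp_{2n}, \lambda) \otimes \cC(\lambda)$ where $\lambda$ ranges over the dominant weights appearing in tensor powers of the standard module, i.e. over $R$ in the notation of the kernel-algebra discussion. Passing to simple quotients, $L_k(\go\gs\gp_{1|2n})$ is a quotient of $\bigoplus_\lambda L_k(\gs\gp_{2n}, \lambda)\otimes \cW_\ell(\gs\gp_{2n};\lambda)$ for suitable simple modules; the key point is that only finitely many $\lambda$ survive, and in fact for $k$ a positive integer the decomposition is controlled by the integrable weights of $L_k(\gs\gp_{2n})$.

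The next step is to identify $\ell$ as an admissible (indeed nondegenerate admissible) level for $\widehat{\gs\gp}_{2n}$ so that $\cW_\ell(\gs\gp_{2n})$ is lisse and rational by Arakawa's theorem \cite{Ar1, Ar2}. With $\ell + (n+1) = \frac{1+k+n}{1+2k+2n}$, the denominator is $2k+2n+1$, which is odd and coprime to the numerator $k+n+1$ when $k \in \mathbb{Z}_{\geq 1}$; one checks this is a principal admissible fraction with denominator $\geq 2n$ (the threshold for nondegeneracy in type $C$ with odd denominator, cf. the analysis behind Corollary~\ref{cor:singularvector}), so $\cW_\ell(\gs\gp_{2n})$ is a rational $C_2$-cofinite VOA. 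Meanwhile $L_k(\gs\gp_{2n})$ is rational and lisse since $k$ is a positive integer \cite{FZ}. Their tensor product is therefore rational and lisse, and its module category is a braided fusion category (the Deligne product of the two module categories), so the hypotheses of Proposition~\ref{prop:rat} / Corollary~\ref{cor:triplecoset} on the ambient tensor category are met.

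Then I would show $L_k(\go\gs\gp_{1|2n})$ is a \emph{simple} vertex superalgebra extension of $U\otimes W := L_k(\gs\gp_{2n})\otimes \cW_\ell(\gs\gp_{2n})$ of the type handled by Proposition~\ref{prop:rat}: it is $\frac{1}{2}\mathbb{Z}$-graded (the odd part, coming from $\lambda$ the standard weight, sits in half-integer conformal weight for this $\ell$) or possibly $\mathbb{Z}$-graded depending on parity bookkeeping, but either way it is one of the three listed types, with $A_{\bar 0}$ the even subalgebra containing $U\otimes W$ and $A_{\bar 1}$ a self-dual simple current. The decomposition $A_{\bar 0} = \bigoplus_{i\in I} U_i \otimes W_i$ has the $U_i$ distinct simple $L_k(\gs\gp_{2n})$-modules (they are distinguished by their $\gs\gp_{2n}$ highest weights) and $\dim\Hom_W(W, W_i) = \delta_{i,0}$ because the multiplicity spaces $\cW_\ell(\gs\gp_{2n};\lambda)$ are distinct simple $\cW_\ell(\gs\gp_{2n})$-modules with the vacuum appearing only for $\lambda = 0$ --- this last genericity/distinctness fact follows from Theorem~\ref{thm:ospcosetlimit}(3) applied at generic level together with a semicontinuity argument, or can be extracted from the coincidence classification in Appendices \ref{appendixB}--\ref{appendixD}. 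Granting this, Proposition~\ref{prop:rat} gives that the category of local (untwisted) $A$-modules inside the Deligne product is semisimple, hence $L_k(\go\gs\gp_{1|2n})$ is rational; lisse-ness is automatic for an extension of a lisse algebra by \cite[Prop. 5.2]{ABD}. Alternatively, and perhaps more cleanly, I would apply Proposition~\ref{rationalextension} directly: $L_k(\go\gs\gp_{1|2n})$ is a simple lisse vertex superalgebra of CFT-type, its weight-one subspace generates $U = L_k(\gs\gp_{2n})$ (an integrable affine VOA, with no Heisenberg factor since $\go\gs\gp_{1|2n}$ is simple), and $W = \text{Com}(U, L_k(\go\gs\gp_{1|2n})) = \cW_\ell(\gs\gp_{2n})$ is self-contragredient (it is a principal $\cW$-algebra of a simple Lie algebra with no weight-one fields, so the corollary of \cite[Cor. 3.2]{Li} applies) and rational; hence $L_k(\go\gs\gp_{1|2n})$ is rational.

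The main obstacle I anticipate is two-fold and both parts live in the same place: establishing that the coset is \emph{exactly} $\cW_\ell(\gs\gp_{2n})$ at the specific positive-integer levels $k$ (not merely generically, where Theorem~\ref{main} / Corollary~\ref{cosetrealizationBCk} applies as one-parameter vertex algebras), and establishing the multiplicity-one and distinctness condition $\dim\Hom_W(W, W_i) = \delta_{i,0}$ for the decomposition of $L_k(\gs\gp_{2n})$-relative branching. The one-parameter identification of Theorem~\ref{main} guarantees the generic-level coincidence of simple quotients, but at a negative rational $\psi_0$ the specialization $\cC^{\psi_0}$ can be a proper subalgebra of the actual coset, and one must rule this out --- typically by an argument that the actual coset is no bigger than predicted (e.g. via character/asymptotic-dimension counting, or via the fact that $L_k(\go\gs\gp_{1|2n})$ being lisse forces its coset to be lisse with the expected associated variety). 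I would handle this by combining the lisse-ness input from \cite{GK, AL} with the free-field/character bounds, so that the coset is squeezed to be exactly the simple rational $\cW_\ell(\gs\gp_{2n})$, after which the semisimplicity of the relevant module category (Proposition~\ref{prop:rat}) forces the branching to be finite and multiplicity-controlled.
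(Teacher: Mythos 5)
Your overall strategy---identify the coset $\text{Com}(L_k(\gs\gp_{2n}),L_k(\go\gs\gp_{1|2n}))$ with $\cW_\ell(\gs\gp_{2n})$ via the $m=0$ case of \eqref{1c1c2c} and then invoke the extension-rationality machinery---is the same as the paper's, but the step you yourself flag as the main obstacle is left as a sketch, and that is exactly where the real content lies. You propose to force the identification of the coset at the specific levels $k\in\mathbb{Z}_{\geq 1}$ (where Theorem \ref{main} only gives a generic, one-parameter statement) by ``character/asymptotic-dimension counting'' or a lisse-ness squeeze; no such argument is carried out, and it is not clear it closes. The paper's mechanism is concrete and different: by \cite[Prop. 8.1 and 8.2]{KW5}, $L_k(\gs\gp_{2n})$ embeds in $L_k(\go\gs\gp_{1|2n})$ for positive integer $k$; rationality of $L_k(\gs\gp_{2n})$ makes $L_k(\go\gs\gp_{1|2n})$ completely reducible over it, so the coset is \emph{simple} by \cite[Prop. 5.4]{CGN}; since $k>-(n+1)$, \cite[Thm. 8.1]{CL1} shows the coset is a homomorphic image of $\cC^{\psi}_{1C}(n,0)$, and simplicity then forces it to equal the simple quotient $\cC_{\psi,1C}(n,0)$, which is $\cW_\ell(\gs\gp_{2n})$ by Corollary \ref{cosetrealizationBCk} together with Feigin--Frenkel duality. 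Without some replacement for this simplicity argument your identification of the coset at the relevant non-generic levels, and hence the whole proof, does not go through; likewise the multiplicity condition $\dim\Hom_W(W,W_i)=\delta_{i,0}$ that Proposition \ref{prop:rat} requires is only asserted via ``semicontinuity'' rather than proved.

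A smaller but genuine flaw: your ``cleaner'' route of applying Proposition \ref{rationalextension} directly to $V=L_k(\go\gs\gp_{1|2n})$ does not work as stated, because the odd currents of the affine vertex superalgebra also sit in conformal weight one, so the subalgebra generated by the weight-one subspace is all of $V$, not $L_k(\gs\gp_{2n})$. The paper instead applies Proposition \ref{rationalextension} to the parity orbifold $L_k(\go\gs\gp_{1|2n})^{\mathbb{Z}_2}$, whose weight-one space is exactly the $\gs\gp_{2n}$ currents, and then deduces rationality of the superalgebra itself from \cite[Thm. 5.13]{CGN}; lisse-ness is obtained by observing that the extension of the lisse coset algebra must be of finite index because the graded components are finite dimensional. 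Your first route via Proposition \ref{prop:rat} could be made to work, but only after the coset identification and the branching-multiplicity facts above are actually established.
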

\begin{proof}
\cite[Prop. 8.1 and 8.2]{KW5} tells us that $L_k(\gs\gp_{2n})$ embeds into $L_k(\go\gs\gp_{1|2n})$ if $k$ is a positive integer. 
Since $L_k(\gs\gp_{2n})$ is rational, $L_k(\go\gs\gp_{1|2n})$ is completely reducible as a module for $L_k(\gs\gp_{2n})$. It follows that $\text{Com}(L_k(\gs\gp_{2n}), L_k(\go\gs\gp_{1|2n}))$ is simple by \cite[Prop. 5.4]{CGN}. 

Next, since $k > -(n+1)$, it follows from \cite[Thm. 8.1]{CL1} that $\text{Com}(L_k(\gs\gp_{2n}), L_k(\go\gs\gp_{1|2n}))$ is a homomorphic image of $\cC^{\psi}_{1C}(n,0) = \text{Com}(V^k(\gs\gp_{2n}), V^k(\go\gs\gp_{1|2n}))$, where $k = - \frac{1}{2} (\psi +2n +1)$. Since $\text{Com}(L_k(\gs\gp_{2n}), L_k(\go\gs\gp_{1|2n}))$ is simple, it must be the simple quotient $\cC_{\psi,1C}(n,0)$. Combining this with Corollary \ref{cosetrealizationBCk}, together with Feigin-Frenkel duality, we obtain
$$\text{Com}(L_k(\gs\gp_{2n}), L_k(\go\gs\gp_{1|2n})) \cong \cW_{\ell}(\gs\gp_{2n}),\qquad \ell = -(n + 1) + \frac{1 + k + n}{1 + 2 k + 2 n},$$
which is lisse and rational \cite{Ar2}. We thus have that both $L_k(\go\gs\gp_{1|2n})$ and its even subalgebra $L_k(\go\gs\gp_{1|2n})^{\mathbb{Z}_2}$ are extensions of a lisse vertex algebra. This extension must be of finite index; otherwise, at least one of the finitely many irreducible modules of the lisse vertex algebra must appear with infinite multiplicity. This is impossible since conformal weight spaces of $L_k(\go\gs\gp_{1|2n})$, and its even subalgebra  $L_k(\go\gs\gp_{1|2n})^{\mathbb{Z}_2}$, are finite dimensional. It follows that both these extensions are lisse. 
Rationality of $L_k(\go\gs\gp_{1|2n})^{\mathbb{Z}_2}$ follows from Proposition \ref{rationalextension}, and rationality of $L_k(\go\gs\gp_{1|2n})$ then follows from \cite[Thm. 5.13]{CGN}.
\end{proof}

\subsection{Rationality of $\cW_k(\go\gs\gp_{1|2n})$}
A celebrated result of Arakawa \cite{Ar2} says that for a simple Lie algebra $\gg$, $\cW_{\ell}(\gg)$ is lisse and rational when $\ell$ is a nondegenerate admissible level for $\widehat{\gg}$. When $\gg$ is simply-laced, recall from \cite{ACL} that 
\begin{equation} \label{eqn:aclmain} \text{Com}(L_{k+1}(\gg), L_k(\gg) \otimes L_1(\gg)) \cong \cW_{\ell}(\gg),\text{where}\ \ell = - h^{\vee} + \frac{k+h^{\vee}}{k+h^{\vee} + 1}.\end{equation} In particular, this realizes $\cW_{\ell}(\gg)$ for all nondegenerate admissible levels $\ell$.

We consider the analogous diagonal coset for type $B$. First, if $k$ is an admissible level for $\widehat{\gs\go}_{2n+1}$ we have an embedding $L_{k+1}(\mathfrak{so}_{2n+1}) \hookrightarrow L_{k}(\mathfrak{so}_{2n+1}) \otimes L_1(\gs\go_{2n_1})$ \cite{KW2}. Additionally, $L_1(\gs\go_{2n+1})$ acts on the free fermion algebra $\cF(2n+1)$, and 
 $$\text{Com}(L_{k+1}(\gs\go_{2n+1}), L_k(\gs\go_{2n+1}) \otimes L_1(\gs\go_{2n+1})) \cong \text{Com}(L_{k+1}(\gs\go_{2n+1}), L_k(\gs\go_{2n+1}) \otimes \cF(2n+1)).$$
 
 In the notation of Theorem \ref{main}, recall the isomorphism
\begin{equation} \begin{split} \cC^{\psi}_{2B}(n,0) & \cong \text{Com}(V^{-2 \psi - 2 n + 2}(\mathfrak{so}_{2n+1}), V^{-2 \psi - 2 n + 1}(\mathfrak{so}_{2n+1}) \otimes \cF(2n+1))^{\mathbb{Z}_2} 
\\ &\cong \cC^{\psi'}_{2B}(0,n) \cong  \cW^{\psi' -n -1/2}(\mathfrak{osp}_{1|2n})^{\mathbb{Z}_2}, \qquad \frac{1}{\psi} + \frac{1}{\psi'} = 2.\end{split} \end{equation} Suppose that the level $-2 \psi - 2 n + 1$ is admissible for $\widehat{\gs\go}_{2n+1}$, that is, 
$$-2 \psi - 2 n + 1 = -(2n-1) + \frac{p}{q},$$ where $p,q \in \mathbb{N}$ are coprime and $p\geq 2n-1$ if $q$ is odd, and $p\geq 2n$ is $q$ is even. In this case, by  \cite[Thm. 8.1 and Rem. 8.3]{CL1} the simple quotient $\cC_{\psi', 2B}(0,n)$ coincides with $\text{Com}(L_{k+1}(\gs\go_{2n+1}), L_k(\gs\go_{2n+1}) \otimes \cF(2n+1))^{\mathbb{Z}_2}$, which we expect to be lisse and rational by analogy with the simply-laced case. This motivates the following conjecture.

\begin{conj} \label{conj:ospcoset} The principal $\cW$-superalgebra $\cW_{\psi' -n -1/2}(\mathfrak{osp}_{1|2n})$ where $\psi'= \frac{p}{2 (p+q)}$, is lisse and rational if 
\begin{enumerate}
\item $p,q \in \mathbb{N}$ are coprime,
\item $p\geq 2n-1$ if $q$ is odd,
\item $p\geq 2n$ if $q$ is even.
\end{enumerate}
By \eqref{2b2b2o}, this conjecture implies that $\cW_{\psi' -m-1/2}(\mathfrak{osp}_{1|2m})$ is also lisse and rational at the Feigin-Frenkel dual level, where $\psi' = \frac{1}{4\psi}  = \frac{p + q}{2 p}$. \end{conj}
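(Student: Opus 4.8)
The plan is to deduce the conjecture from Theorem \ref{main} by transporting it, through the triality \eqref{2b2b2o}, to a rationality statement about a diagonal affine coset in type $B$ at an admissible level, and then to run the extension machinery of Section \ref{sec:VOA}. Write $\psi = -\frac{p}{2q}$, so that $\frac{1}{\psi} + \frac{1}{\psi'} = 2$. Applying \eqref{2b2b2o} with the indices $(n,m)$ there equal to $(0,n)$, and renaming the Feigin--Frenkel dual parameter as $\psi$, gives an isomorphism of one-parameter vertex algebras, hence of the simple quotients at our value of the parameter,
\[
\cW_{\psi'-n-1/2}(\go\gs\gp_{1|2n})^{\mathbb{Z}_2} \;\cong\; \cC_{\psi,2B}(n,0).
\]
Since $\cW^{\psi}_{2B}(n,0) = V^{\ell_0}(\gs\go_{2n+1}) \otimes \cF(2n+1)$ with $\ell_0 = -(2n-1)+\frac{p}{q}$, and the dual Coxeter number of $\gs\go_{2n+1}$ is $2n-1$, hypotheses (1)--(3) are exactly the statement that $\ell_0$ is an admissible level for $\widehat{\gs\go}_{2n+1}$. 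By the Kac--Wakimoto embedding $L_{\ell_0+1}(\gs\go_{2n+1}) \hookrightarrow L_{\ell_0}(\gs\go_{2n+1}) \otimes L_1(\gs\go_{2n+1})$ together with \cite[Thm.~8.1 and Rem.~8.3]{CL1}, the simple quotient $\cC_{\psi,2B}(n,0)$ is then identified with the diagonal coset $\text{Com}\big(L_{\ell_0+1}(\gs\go_{2n+1}),\, L_{\ell_0}(\gs\go_{2n+1}) \otimes \cF(2n+1)\big)^{\mathbb{Z}_2}$, so it suffices to prove that this coset, and then the order-two simple current extension of it that recovers $\cW_{\psi'-n-1/2}(\go\gs\gp_{1|2n})$, are both lisse and rational.

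For lisseness I would argue directly on the orthosymplectic side. By the triality isomorphism above it is enough to show that $\cW_{\psi'-n-1/2}(\go\gs\gp_{1|2n})$ is lisse, and since this algebra is the principal Hamiltonian reduction of $L_{k_0}(\go\gs\gp_{1|2n})$ with $k_0 = \psi'-n-1/2 = -h^{\vee}+\frac{p}{2(p+q)}$, it is supported set-theoretically on the Slodowy slice to the principal nilpotent intersected with the associated variety $X_{L_{k_0}(\go\gs\gp_{1|2n})}$; hence it is lisse as soon as that associated variety is the full nilpotent cone of $\go\gs\gp_{1|2n}$. Under hypotheses (1)--(3) this should follow from an $\go\gs\gp_{1|2n}$-analogue of Arakawa's computation of associated varieties of admissible affine vertex algebras — the denominator conditions being precisely what prevents the associated variety from dropping to a proper nilpotent orbit closure — and then lisseness of the $\mathbb{Z}_2$-orbifold follows by \cite[Prop.~5.2]{ABD} and the finite-index argument used in the proof of Theorem \ref{thm:osp}.

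The genuinely hard step is rationality, and here the plan follows the pattern of Theorem \ref{thm:osp} and Main Theorem \ref{main:rational}(2): exhibit the lisse vertex algebra $\cC_{\psi',2B}(0,n) = \cW_{\psi'-n-1/2}(\go\gs\gp_{1|2n})^{\mathbb{Z}_2}$ as a finite, conformal extension of a tensor product of \emph{known} rational vertex algebras, then invoke Proposition \ref{rationalextension} to conclude rationality of the orbifold and \cite[Thm.~5.13]{CGN} together with the uniqueness Theorem \ref{thm:uniquenesswnm} to pass to $\cW_{\psi'-n-1/2}(\go\gs\gp_{1|2n})$ itself (and, via the $n=0$ case of \eqref{2b2b2o}, automatically to the Feigin--Frenkel dual level $\frac{p+q}{2p}$). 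This is carried out in the present paper whenever the truncation curve of $\cC^{\psi'}_{2B}(0,n)$ meets that of $\cW_s(\gs\gp_{2r})$ or $\cW_s(\gs\go_{2r})^{\mathbb{Z}_2}$ at a point with nondegenerate admissible $s$, which is exactly the data classified in Appendices \ref{appendixB}--\ref{appendixD} and which yields the cases recorded in Main Theorem \ref{main:rational}(2). For general $p,q$ no such coincidence exists, and the real obstacle is the absence of an Arakawa-type rationality theorem for admissible-level representations of $\go\gs\gp_{1|2n}$ — equivalently, for the diagonal coset above. I expect the full conjecture to follow once such a theorem becomes available, for instance via a Verlinde/modularity argument for the category of ordinary modules in the spirit of \cite{McRae2}, at which point Theorem \ref{thm:uniquenesswnm} would upgrade it to the full (non-orbifold) principal $\cW$-superalgebra and to the Feigin--Frenkel dual level without further work.
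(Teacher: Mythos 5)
You were asked about Conjecture \ref{conj:ospcoset}, and the first thing to say is that the paper contains no proof of it: it is stated as a conjecture, motivated by exactly the identification you reproduce, namely that for $\psi=-\tfrac{p}{2q}$ the level $-2\psi-2n+1=-(2n-1)+\tfrac{p}{q}$ is admissible for $\widehat{\gs\go}_{2n+1}$, so that by \cite[Thm.~8.1, Rem.~8.3]{CL1} the simple quotient $\cC_{\psi,2B}(n,0)\cong\cW_{\psi'-n-1/2}(\go\gs\gp_{1|2n})^{\mathbb{Z}_2}$ is the diagonal coset $\mathrm{Com}\bigl(L_{\ell_0+1}(\gs\go_{2n+1}),L_{\ell_0}(\gs\go_{2n+1})\otimes\cF(2n+1)\bigr)^{\mathbb{Z}_2}$, which is \emph{expected} to be lisse and rational by analogy with the simply-laced coset theorem of \cite{ACL}. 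Your reduction of the conjecture to this coset, and your observation that the paper's machinery (Proposition \ref{rationalextension}, Proposition \ref{prop:rat}, Theorem \ref{thm:uniquenesswnm}, and the case $n=0$ of \eqref{2b2b2o} for the dual level) proves rationality precisely when the truncation curve of $\cC^{\psi'}_{2B}(0,n)$ meets that of a known rational algebra at a nondegenerate admissible point, is exactly how the paper proceeds: those coincidences are what yield Theorems \ref{thm:Wosp1}, \ref{thm:Wosp2}, \ref{thm:osp12} and the further cases in Remark \ref{rem:morerationalosp}. So as a strategy your proposal is faithful to the paper, and you correctly name the genuine obstruction for general coprime $(p,q)$: the absence of an Arakawa-type rationality theorem for this admissible-level diagonal coset (equivalently for ordinary $L_{\ell_0}(\gs\go_{2n+1})$-modules twisted into the coset category), which is precisely why the statement remains a conjecture.

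Two cautions so that you do not overstate what is established. First, your lisseness step is itself conjectural, not a shortcut around the hard part: the associated variety of $L_{k_0}(\go\gs\gp_{1|2n})$ at $k_0=\psi'-n-\tfrac12$ is not computed in the literature cited here, and the level $k_0$ is \emph{not} claimed to be admissible for $\widehat{\go\gs\gp}_{1|2n}$ in the Kac--Wakimoto sense -- conditions (1)--(3) encode admissibility of the type-$B$ level $\ell_0$, transported through triality -- so both lisseness and rationality are open in general, consistently with the paper conjecturing both. Second, the passage from the $\mathbb{Z}_2$-orbifold back to $\cW_{\psi'-n-1/2}(\go\gs\gp_{1|2n})$ in the proved special cases is handled in the paper by the simple-current extension results (Proposition \ref{prop:rat}, Proposition \ref{rationalextension}, and \cite[Thm.~5.13]{CGN}); Theorem \ref{thm:uniquenesswnm} is what pins down the extension uniquely (as in Remark \ref{rem:cosetZ2}), so your use of it is appropriate, but it does not by itself supply rationality. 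With those caveats, your write-up is an accurate account of what the paper does and does not establish about this conjecture, rather than a proof of it.
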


As in the case of $\cW_k(\gg)$ for a Lie algebra $\gg$, we expect that rational vertex superalgebras $\cW_{k}(\mathfrak{osp}_{1|2n})$ will serve as building blocks for many non-principal rational $\cW$-superalgebras. In the next subsection, we will give examples of subregular $\cW$-algebras of $\gs\go_{2m+3}$ and principal $\cW$-superalgebras of $\go\gs\gp_{2|2n+2}$ with this property. 

Using the realization $\cC^{\psi}_{2B}(0,m) \cong \cW^{\psi-m-1/2}(\go\gs\gp_{1|2m})^{\mathbb{Z}_2}$, we are able to prove some cases of Conjecture \ref{conj:ospcoset} using the coincidences appearing in Appendices \ref{appendixB} and \ref{appendixC}.

\begin{thm} \label{thm:Wosp1}${}$ \begin{enumerate}
\item For $ k = -(m+\frac{1}{2}) + \frac{2 m-1}{4 (m + r)}$ and $r\in \mathbb{N}$, $\cW_k(\go\gs\gp_{1|2m})$ is lisse and rational when $m+r$ and $1+2r$ are coprime.

\item For $ k =  -(m+\frac{1}{2}) + \frac{1 + 2 m}{2 (1 + 2 m + 2 r)}$ and $r\in \mathbb{N}$, $\cW_k(\go\gs\gp_{1|2m})$ is lisse and rational when $r$ and $1+2m$ are coprime.
\end{enumerate}
\end{thm}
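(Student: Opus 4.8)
\textbf{Proof proposal for Theorem \ref{thm:Wosp1}.}

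The plan is to exhibit $\cW_k(\go\gs\gp_{1|2m})$ in each of the two cases as a finite-index extension of a vertex algebra which is already known to be lisse and rational, and then invoke Proposition \ref{rationalextension} (together with \cite[Thm. 5.13]{CGN} to pass from the $\mathbb{Z}_2$-orbifold to the full algebra). The engine is the realization $\cC^\psi_{2B}(0,m) \cong \cW^{\psi-m-1/2}(\go\gs\gp_{1|2m})^{\mathbb{Z}_2}$ coming from the case $n=0$ of \eqref{2b2b2o}, which identifies the even part of the principal $\cW$-superalgebra with a one-parameter quotient of $\cW^{\rm ev}(c,\lambda)$ whose truncation curve is the explicit rational curve $\Phi_{2B,0,m}(\psi)$ from Appendix \ref{appendixA}. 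The two cases correspond to the two families of coincidences classified in Appendices \ref{appendixB} and \ref{appendixC}: case (1) to an intersection of the truncation curve of $\cC^\psi_{2B}(0,m)$ with that of some $\cW_s(\gs\gp_{2r})$, and case (2) to an intersection with that of some $\cW_s(\gs\go_{2r})^{\mathbb{Z}_2}$.

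First I would run the coincidence computation: substitute the stated levels into $\Phi_{2B,0,m}$ and check that the resulting point $(c,\lambda)$ lies on the truncation curve of $\cW_s(\gs\gp_{2r})$ (resp.\ $\cW_s(\gs\go_{2r})^{\mathbb{Z}_2}$) for the appropriate auxiliary level $s$ and rank $r$; this is exactly the content to be extracted from Appendices \ref{appendixB} and \ref{appendixC}. One must then check that the coprimality hypothesis — $m+r$ and $1+2r$ coprime in case (1), $r$ and $1+2m$ coprime in case (2) — guarantees that $s$ is a \emph{nondegenerate admissible} level for the relevant affine algebra, so that by Arakawa's theorem \cite{Ar2} the simple quotient $\cW_s(\gs\gp_{2r})$ (resp.\ $\cW_s(\gs\go_{2r})^{\mathbb{Z}_2}$, which is an order-two orbifold of a rational $\cW$-algebra, hence rational) is lisse and rational. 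Next, using Corollary \ref{cor:singularvector} to compute the conformal weight of the lowest singular vector in $\cW^s(\gs\gp_{2r})$ (resp.\ $\cW^s(\gs\go_{2r})$), I would argue exactly as in Subsection \ref{subsec:exhaust} that for the given $m,r$ the simple quotient $\cC_{\psi,2B}(0,m)$ coincides with the simple quotient of the matching principal $\cW$-algebra — i.e.\ the pointwise coincidence at $(c,\lambda)$ is a genuine isomorphism of simple vertex algebras, not merely of characters, because both sides are generated in weights $2$ and $4$ and have the same graded character up through the weight of the first singular vector.

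Once the isomorphism $\cW_k(\go\gs\gp_{1|2m})^{\mathbb{Z}_2} \cong \cC_{\psi,2B}(0,m) \cong \cW_s(\gs\gp_{2r})$ (resp.\ $\cW_s(\gs\go_{2r})^{\mathbb{Z}_2}$) is in hand, lisseness of $\cW_k(\go\gs\gp_{1|2m})$ and of its even subalgebra follows because an extension of a lisse algebra is lisse \cite[Prop. 5.2]{ABD} and the extension is necessarily of finite index (conformal weight spaces are finite-dimensional, so no irreducible of the lisse sub-VOA can occur with infinite multiplicity), precisely as in the proof of Theorem \ref{thm:osp}. Rationality of $\cW_k(\go\gs\gp_{1|2m})^{\mathbb{Z}_2}$ then follows from Proposition \ref{rationalextension} — its weight-one space vanishes, so it is self-contragredient, and $\cW_s(\gs\gp_{2r})$ (resp.\ $\cW_s(\gs\go_{2r})^{\mathbb{Z}_2}$) is rational — and rationality of $\cW_k(\go\gs\gp_{1|2m})$ itself follows from \cite[Thm. 5.13]{CGN}, since it is a simple-current extension of its $\mathbb{Z}_2$-orbifold by an odd field of weight $\frac{2m+1}{2}$, which falls into one of the three extension types handled by Proposition \ref{prop:rat}. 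I expect the main obstacle to be the bookkeeping in the first step: verifying that the two stated level formulas really are the ones produced by the Shapovalov/truncation-curve intersections in Appendices \ref{appendixB} and \ref{appendixC}, and — more delicately — checking in each case that the admissibility of $s$ is \emph{equivalent} (not just implied) by the stated coprimality condition, so that the rationality input from \cite{Ar2} genuinely applies; everything downstream is a routine repetition of the arguments already used for Theorem \ref{thm:osp} and in Subsection \ref{subsec:exhaust}.
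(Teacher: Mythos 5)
Your overall strategy is the paper's: identify $\cW_k(\go\gs\gp_{1|2m})^{\mathbb{Z}_2}\cong\cC_{\psi,2B}(0,m)$ with a known rational principal $\cW$-algebra via a truncation-curve coincidence, note that the coprimality hypothesis makes the auxiliary level nondegenerate admissible so that \cite{Ar2} applies, and then run the finite-index extension argument (lisseness via \cite[Prop. 5.2]{ABD}, rationality of the orbifold via Proposition \ref{rationalextension}, and rationality of the full superalgebra via \cite[Thm. 5.13]{CGN}), exactly as in Theorem \ref{thm:osp}. However, your assignment of case (2) to a coincidence with $\cW_s(\gs\go_{2r})^{\mathbb{Z}_2}$ is wrong, and as stated that step would fail. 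In the paper \emph{both} cases of Theorem \ref{thm:Wosp1} come from coincidences with type $C$ principal $\cW$-algebras, namely Theorem \ref{coinc:typeC-2B} specialized to $n=0$: entry (6) gives $\psi=\frac{2m-1}{4(m+r)}$, $s=-(r+1)+\frac{1+2r}{4(m+r)}$ for case (1), and entry (2) gives $\psi=\frac{1+2m}{2(1+2m+2r)}$, $s=-(r+1)+\frac{1+2m+2r}{4r}$ for case (2); in each case the stated coprimality condition is exactly what makes $\gcd$ of numerator and denominator equal to one (the numerators being odd), so $s$ is nondegenerate admissible for $\widehat{\gs\gp}_{2r}$. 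By contrast, the $n=0$ specializations of the type $D$ list (Theorem \ref{coinc:typeD-2B}) give $\psi=\frac{2m+2r-1}{4m}$, $\psi=\frac12$, or $\psi=\frac{m}{2m+2r-1}$; none of these equals $\frac{1+2m}{2(1+2m+2r)}$ (the last possibility would force $2r'(1+2m)=2m(2r+1)+1$, even equals odd), and the third of them is precisely the level treated in the separate Theorem \ref{thm:Wosp2}. So if you carried out the verification you yourself propose in your first step, you would find no $\cW_s(\gs\go_{2r})^{\mathbb{Z}_2}$ match for case (2); the repair is simply to use Theorem \ref{coinc:typeC-2B} for both cases.

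Two smaller remarks. First, the appeal to Corollary \ref{cor:singularvector} and the exhaustiveness argument of Subsection \ref{subsec:exhaust} is not needed to upgrade the pointwise coincidence: the identifications in Appendices \ref{appendixB}--\ref{appendixD} are already isomorphisms of simple vertex algebras, since both sides are the simple quotients of $\cW^{\rm ev}(c,\lambda)$ at the same non-degenerate point $(c,\lambda)$ and \cite[Thm. 8.1, Cor. 8.2]{KL} applies; that singular-vector argument was used earlier, at the level of one-parameter families, to prove $\tilde{\cC}^{\psi}_{iX}(n,m)=\cC^{\psi}_{iX}(n,m)$, and repeating it here is harmless but redundant. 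Second, you only need the implication (coprimality $\Rightarrow$ nondegenerate admissible), not an equivalence, for the rationality input from \cite{Ar2} to apply. With the case (2) coincidence corrected, the rest of your argument matches the paper's proof.
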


\begin{proof} By Theorem \ref{coinc:typeC-2B}, for $ k = -(m+\frac{1}{2}) + \frac{2 m-1}{4 (m + r)}$, we have
$$\cC_{\psi, 2B}(0,m) =  \cW_{\psi-m-1/2}(\go\gs\gp_{1|2m})^{\mathbb{Z}_2} \cong \cW_s(\gs\gp_{2r}),\qquad s = -(r+1) + \frac{1 + 2 r}{4 (m + r)}.$$
Since $s$ is a nondegenerate admissible level for $\widehat{\gs\gp}_{2r}$, the first statement follows.

Similarly, Theorem \ref{coinc:typeC-2B} also shows that for $ k =  -(m+\frac{1}{2}) + \frac{1 + 2 m}{2 (1 + 2 m + 2 r)}$, we have
$$\cC_{\psi, 2B}(0,m) =  \cW_{\psi-m-1/2}(\go\gs\gp_{1|2m})^{\mathbb{Z}_2} \cong \cW_s(\gs\gp_{2r}),\qquad  s = -(r+1) + \frac{1 + 2 m + 2 r}{4 r}.$$
Again, $s$ is a nondegenerate admissible level for $\widehat{\gs\gp}_{2r}$, so the second statement follows.
\end{proof}

We have a similar result coming from coincidences with algebras of the form $\cW_r(\gs\go_{2n})^{\mathbb{Z}_2}$.
\begin{thm} \label{thm:Wosp2} For $ k = -(m+\frac{1}{2}) + \frac{m}{2 m + 2 r-1}$ and $r\in \mathbb{N}$, $\cW_k(\go\gs\gp_{1|2m})$ is lisse and rational when $2r-1$ and $2m$ are coprime. \end{thm}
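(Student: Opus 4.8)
The plan is to mirror the proof of Theorem \ref{thm:Wosp1}, replacing the coincidence with a principal $\cW$-algebra of type $C$ by a coincidence with a $\mathbb{Z}_2$-orbifold of a principal $\cW$-algebra of type $D$. Recall from Case 2B that $\cC^{\psi}_{2B}(0,m) \cong \cW^{\psi-m-1/2}(\go\gs\gp_{1|2m})^{\mathbb{Z}_2}$, so that setting $\psi = \frac{m}{2m+2r-1}$ gives $\psi - m - \frac12 = k$ with $k$ as in the statement. First I would invoke the classification of pointwise coincidences between the simple quotients $\cC_{\psi,2B}(n,m)$ and the algebras $\cW_s(\gs\go_{2r})^{\mathbb{Z}_2}$ carried out in Appendix \ref{appendixC}: specialized to $n=0$ and $\psi = \frac{m}{2m+2r-1}$, this yields an isomorphism of simple vertex algebras
$$\cC_{\psi,2B}(0,m) = \cW_{k}(\go\gs\gp_{1|2m})^{\mathbb{Z}_2} \cong \cW_s(\gs\go_{2r})^{\mathbb{Z}_2}$$
for an explicit level $s = s(m,r)$, where the coprimality hypothesis $\gcd(2r-1,2m)=1$ is precisely the condition ensuring that $s$ is a nondegenerate admissible level for $\widehat{\gs\go}_{2r}$. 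The verification of this coincidence is the routine-but-unilluminating part: intersect the truncation curve \eqref{truncationcurve2B} for $\tilde{\cC}^{\psi}_{2B}(n,m)$ with the truncation curve for $\cW^s(\gs\go_{2r})^{\mathbb{Z}_2}$ from \cite{KL}, read off the solution $(c,\lambda)$, translate it into relations among $\psi,s,m,r$, and compare with the stated $k$; the uniqueness criterion \cite[Thm. 8.1]{KL} for the vertex algebra structure at a point of the truncation curve then upgrades the resulting numerical match to an isomorphism.

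Granting the coincidence, $\cW_s(\gs\go_{2r})$ is lisse and rational by Arakawa's theorem \cite{Ar2} on principal $\cW$-algebras at nondegenerate admissible levels, and hence so is its $\mathbb{Z}_2$-orbifold $\cW_s(\gs\go_{2r})^{\mathbb{Z}_2}$: lisseness of the orbifold follows since a finite extension of a $C_2$-cofinite vertex algebra is again $C_2$-cofinite, and rationality since a cyclic orbifold of a rational $C_2$-cofinite vertex algebra is rational. Therefore $\cW_{k}(\go\gs\gp_{1|2m})^{\mathbb{Z}_2}$ is lisse and rational.

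It then remains to pass from the even subalgebra to $\cW_{k}(\go\gs\gp_{1|2m})$ itself. By Theorem \ref{thm:uniquenesswnm} (specialized to simple quotients), $\cW_{k}(\go\gs\gp_{1|2m})$ is a simple current extension of $\cW_{k}(\go\gs\gp_{1|2m})^{\mathbb{Z}_2}$ by an odd field of conformal weight $\frac{2m+1}{2}$. As an extension of a lisse vertex algebra it is lisse, and the extension is automatically of finite index because the conformal weight spaces of $\cW_{k}(\go\gs\gp_{1|2m})$ are finite-dimensional, exactly as in the proof of Theorem \ref{thm:osp}. Finally, rationality of $\cW_{k}(\go\gs\gp_{1|2m})$ follows from the rationality of its $\mathbb{Z}_2$-orbifold together with \cite[Thm. 5.13]{CGN} (equivalently, Proposition \ref{rationalextension} applied to the even subalgebra).

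I expect the main obstacle to be purely computational and bookkeeping in nature: pinning down the exact level $s(m,r)$ at which the coincidence $\cC_{\psi,2B}(0,m)\cong \cW_s(\gs\go_{2r})^{\mathbb{Z}_2}$ occurs, and checking that $\gcd(2r-1,2m)=1$ is equivalent to nondegenerate admissibility of $s$ — that is, exactly the content assembled in Appendix \ref{appendixC}. Once that is in place, the lisseness and rationality conclusions are formal, following the template already used for Theorems \ref{thm:osp} and \ref{thm:Wosp1}.
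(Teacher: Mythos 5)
Your proposal is correct and follows essentially the same route as the paper: the paper's proof simply cites the Appendix \ref{appendixC} coincidence (Theorem \ref{coinc:typeD-2B}, case $n=0$, $\psi=\frac{m}{2m+2r-1}$), giving $\cC_{\psi,2B}(0,m)\cong \cW_s(\gs\go_{2r})^{\mathbb{Z}_2}$ with $s=-(2r-2)+\frac{2r-1}{2m+2r-1}$, notes that the coprimality of $2r-1$ and $2m$ makes $s$ nondegenerate admissible, and then passes to $\cW_k(\go\gs\gp_{1|2m})$ exactly as in Theorems \ref{thm:osp} and \ref{thm:Wosp1}. One small slip: lisseness of the orbifold $\cW_s(\gs\go_{2r})^{\mathbb{Z}_2}$ does not follow from the fact that extensions of $C_2$-cofinite algebras are $C_2$-cofinite (that argument goes the wrong way); it is the $C_2$-cofiniteness of cyclic orbifolds of lisse rational vertex algebras (Miyamoto, Carnahan--Miyamoto) that is needed, the same standard input the paper uses implicitly.
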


\begin{proof} By Theorem \ref{coinc:typeD-2B}, for $ k = -(m+\frac{1}{2}) + \frac{m}{2 m + 2 r-1}$ we have
$$\cC_{\psi, 2B}(0,m) =  \cW_{\psi-m-1/2}(\go\gs\gp_{1|2m})^{\mathbb{Z}_2} \cong \cW_s(\gs\go_{2r})^{\mathbb{Z}_2},\qquad s = -(2r-2) + \frac{ 2 r -1}{2 m + 2 r -1}.$$
Since $s$ is a nondegenerate admissible level for $\widehat{\gs\go}_{2r}$, the claim follows.
\end{proof}

In the case of $\cW^k(\go\gs\gp_{1|2})$, it was shown in \cite{CFL} that the diagonal coset 
$$\cC^{r}_2 = \text{Com}(V^{r+2}(\gs\gl_2), V^r(\gs\gl_2) \otimes L_2(\gs\gl_2))$$ is a quotient of $\cW^{\text{ev}}(c,\lambda)$ with parametrization
$$c = \frac{3 r (6 + r)}{2 (2 + r) (4 + r)},\qquad \lambda = -\frac{2 (r +2) (r + 4) (-5248 - 4488 r - 352 r^2 + 132 r^3 + 11 r^4)}{7 (r -2) (r +8) (68 + 42 r + 7 r^2) (352 + 354 r + 59 r^2)}.$$
A calculation shows that $\cC^{r}_2 \cong \cC^{\psi}_{2B}(0,1) = \cW^{\psi-3/2}(\go\gs\gp_{1|2})^{\mathbb{Z}_2}$, where $\psi$ and $r$ are related by $\psi = \frac{2 + r}{2 (4 + r)}$ or $\psi = \frac{4 + r}{2 (2 + r)}$. Since the simple quotient $\cC_{r,2}$ is lisse and rational whenever $r$ is admissible for $\widehat{\gs\gl}_2$, we obtain 
\begin{thm} \label{thm:osp12} $\cW_{\psi-3/2}(\go\gs\gp_{1|2})$ is lisse and rational when $\psi = \frac{2 + r}{2 (4 + r)}$ or $\psi = \frac{4 + r}{2 (2 + r)}$, and $r$ is admissible for $\widehat{\gs\gl}_2$.
\end{thm}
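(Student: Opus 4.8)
\textbf{Proof proposal for Theorem \ref{thm:osp12}.}
The plan is to follow exactly the template of Theorem \ref{thm:Wosp1} and Theorem \ref{thm:Wosp2}, but using the diagonal coset $\cC^{r}_2 = \text{Com}(V^{r+2}(\gs\gl_2), V^r(\gs\gl_2) \otimes L_2(\gs\gl_2))$ in place of coincidences with $\cW_s(\gs\gp_{2r})$ or $\cW_s(\gs\go_{2r})^{\mathbb{Z}_2}$. First I would invoke the result of \cite{CFL} that $\cC^{r}_2$ is a one-parameter quotient of $\cW^{\text{ev}}(c,\lambda)$ with the displayed parametrization $c = c(r)$, $\lambda = \lambda(r)$, so that its truncation curve is an explicit rational plane curve. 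Next, I would note that $\cC^{\psi}_{2B}(0,1) = \cW^{\psi - 3/2}(\go\gs\gp_{1|2})^{\mathbb{Z}_2}$ is likewise a one-parameter quotient of $\cW^{\text{ev}}(c,\lambda)$ via the truncation curve $\Phi_{2B,0,1}(\psi)$ from \eqref{truncationcurve2B} in Appendix \ref{appendixA}. The key computational step is to verify that these two rational parametrizations trace out the \emph{same} curve in $\mathbb{C}^2$, with the substitution $\psi = \frac{2+r}{2(4+r)}$ or $\psi = \frac{4+r}{2(2+r)}$ matching up the $c$ and $\lambda$ coordinates; this is a direct check that $c(\psi(r)) = c(r)$ and $\lambda(\psi(r)) = \lambda(r)$ as rational functions of $r$, and the two branches of $\psi(r)$ correspond to the Feigin–Frenkel duality $\frac{1}{\psi} + \frac{1}{\psi'} = 2$ already present in \eqref{2b2b2o}.

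Once the truncation curves coincide, the criterion of \cite[Thm. 8.1]{KL} (as recalled in the discussion before Subsection \ref{subsec:affinecoset}) forces an isomorphism of simple one-parameter vertex algebras $\cC_{r,2} \cong \cC_{\psi, 2B}(0,1)$ whenever the generic points agree, and hence at all but countably many specializations; but here I want the statement at the specific admissible levels $r$, so I would argue as in Subsection \ref{subsec:exhaust}: since $\cC^{\psi}_{2B}(0,1)$ and $\cC^{r}_2$ have matching minimal strong generating type $\cW(2,4,\dots)$ and matching graded characters in low weight, and since \cite[Rem. 5.3]{KL} gives generation by the weight $2$ and $4$ fields, the simple quotients $\cC_{r,2}$ and $\cC_{\psi,2B}(0,1)$ coincide for every value of $r$ in a Zariski-dense set, in particular for all admissible $r$. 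Then, exactly as in Theorem \ref{thm:osp}, I would use that $\cC_{r,2}$ is lisse and rational whenever $r$ is an admissible level for $\widehat{\gs\gl}_2$ — this is the diagonal coset rationality, which follows from \cite{ACL}, \cite{Ar2} and the extension results of Section \ref{sec:VOA} (Proposition \ref{rationalextension}, Corollary \ref{cor:triplecoset}) applied to the inclusion $\cC^{r}_2 \subseteq L_r(\gs\gl_2) \otimes L_2(\gs\gl_2)$.

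Finally, transferring rationality back to $\cW_{\psi - 3/2}(\go\gs\gp_{1|2})$ from its $\mathbb{Z}_2$-orbifold $\cC_{\psi, 2B}(0,1)$ is handled by the uniqueness/reconstruction result Theorem \ref{thm:uniquenesswnm}: $\cW_{\psi-3/2}(\go\gs\gp_{1|2})$ is the unique order-two simple current extension of its even part $\cW_{\psi-3/2}(\go\gs\gp_{1|2})^{\mathbb{Z}_2} \cong \cC_{\psi,2B}(0,1)$ by an odd field of weight $\frac{3}{2}$, and since $\cC_{r,2}$ is rational and lisse, Proposition \ref{prop:rat} (together with \cite[Thm. 5.13]{CGN}, as in the proof of Theorem \ref{thm:osp}) yields lisseness and rationality of the extension. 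The main obstacle I expect is purely computational: confirming that the explicit rational parametrization of the truncation curve for $\cC^{r}_2$ from \cite{CFL} really coincides, under the change of variable $\psi = \frac{2+r}{2(4+r)}$ (resp. its Feigin–Frenkel dual), with the specialization $\Phi_{2B,0,1}(\psi)$ of the master formula \eqref{truncationcurve2B}. This is a finite check but the $\lambda$-coordinate involves degree-four numerators and denominators, so some care is needed; everything else is a direct application of results already established in the paper.
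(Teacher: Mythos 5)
Your proposal follows essentially the same route as the paper: match the truncation curve of $\cC^{r}_2$ from \cite{CFL} with the specialization $\Phi_{2B,0,1}(\psi)$ of \eqref{truncationcurve2B} under $\psi = \frac{2+r}{2(4+r)}$ or its Feigin--Frenkel dual $\psi = \frac{4+r}{2(2+r)}$, conclude $\cC_{r,2} \cong \cC_{\psi,2B}(0,1) = \cW_{\psi-3/2}(\go\gs\gp_{1|2})^{\mathbb{Z}_2}$, and transfer lisseness and rationality to the simple current extension $\cW_{\psi-3/2}(\go\gs\gp_{1|2})$ exactly as in Theorems \ref{thm:Wosp1} and \ref{thm:Wosp2}. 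The one caveat is your justification of the rationality of $\cC_{r,2}$ at admissible $r$: this is a known result (\cite{Ad}, \cite{CFL}, via the $N=1$ minimal models), and it does not follow from \cite{ACL} or Proposition \ref{rationalextension} applied to $\cC^{r}_2 \subseteq L_r(\gs\gl_2) \otimes L_2(\gs\gl_2)$, since $L_r(\gs\gl_2)$ is neither lisse nor rational at non-integral admissible $r$ --- but the paper likewise simply cites this fact rather than reproving it.
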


\subsection{Subregular $\cW$-algebras of type $B$}
Recall that $\cW_k(\gs\go_{2m+3}, f_{\text{subreg}})$ for $m\geq 1$ is exceptional in the sense of \cite{AvE} when $k = -(2m+1) + \frac{p}{q}$ is admissible and $q = 2m+2$ or $2m+1$; see Table 1 of \cite{AvE}. It is therefore lisse \cite{Ar1}, but the rationality is only known in the case $m = 1$ \cite{F}. We will prove the rationality in all cases where $q = 2m+2$, and in all cases where $q = 2m+1$ and $p$ is odd. In the missing cases where $q = 2m+1$ and $p$ is even, we will see that rationality would follow from Conjecture \ref{conj:ospcoset}. 

Recall that $\text{Com}(\cH(1), \cW_{\psi-2m-1}(\gs\go_{2m+3}, f_{\text{subreg}}))^{\mathbb{Z}_2}$ can be identified with $\cC_{\psi, 1D}(1,m)$. We set $\psi = \frac{p}{q}$ as above, and we begin with the case $q = 2m+2$.
\begin{thm} \label{Brational1} For all $\psi = \frac{3 + 2 m + 2 r}{2m+2}$ such that $r\in \mathbb{N}$ and $m+1$ and $2r+1$ are coprime, $\cW_{\psi-2m-1}(\gs\go_{2m+3}, f_{\text{subreg}})$ is lisse and rational.
\end{thm}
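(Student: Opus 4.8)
\textbf{Proof proposal for Theorem \ref{Brational1}.}
The plan is to realize $\cW_{\psi-2m-1}(\gs\go_{2m+3}, f_{\text{subreg}})$ as a finite extension of a rational vertex algebra, following the same strategy as Theorem \ref{thm:osp}. The starting point is the Heisenberg coset $\cC_{\psi,1D}(1,m) \cong \text{Com}(\cH(1), \cW_{\psi-2m-1}(\gs\go_{2m+3}, f_{\text{subreg}}))^{\mathbb{Z}_2}$ from Case 1D of Subsection \ref{subsec:affinecoset}. By Theorem \ref{main}, equation \eqref{2d1d1o}, we have $\cC^{\psi'}_{2D}(1,m+1) \cong \cC^{\psi}_{1D}(1,m)$ with $\psi' = \frac{1}{2\psi}$, and more to the point, we should look for a coincidence between $\cC_{\psi,1D}(1,m)$ and a known rational vertex algebra of the form $\cW_s(\gs\gp_{2r})$ or $\cW_s(\gs\go_{2r})^{\mathbb{Z}_2}$. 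Concretely, the plan is to first invoke the classification of coincidences in Appendix \ref{appendixB} (or \ref{appendixC}): for $\psi = \frac{3 + 2m + 2r}{2m+2}$ one expects an isomorphism of simple quotients
$$\cC_{\psi, 1D}(1,m) \cong \cW_s(\gs\gp_{2r})$$
for a suitable $s$, obtained as an intersection point of the truncation curve for $\cC^{\psi}_{1D}(1,m)$ (derivable from \eqref{truncationcurve2B} via the symmetries of Theorem \ref{trialities:parametrization} and Theorem 5.3) with the truncation curve for $\cW^s(\gs\gp_{2r})$ from \cite[App. A]{KL}. One then checks that $s$ is a nondegenerate admissible level for $\widehat{\gs\gp}_{2r}$ precisely when $m+1$ and $2r+1$ are coprime, so that $\cW_s(\gs\gp_{2r})$ is lisse and rational by \cite{Ar2}.

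The second step is to climb back up from the Heisenberg coset to the subregular $\cW$-algebra itself. Here $\cW_{\psi-2m-1}(\gs\go_{2m+3}, f_{\text{subreg}})$ is an extension of $\cH(1) \otimes \cC_{\psi,1D}(1,m)$ by fields that transform nontrivially under the Heisenberg action; equivalently, $\text{Com}(\cH(1), \cW_{\psi-2m-1}(\gs\go_{2m+3}, f_{\text{subreg}}))$ is a simple current extension of $\cC_{\psi,1D}(1,m)$ of order two, and the full $\cW$-algebra is then an extension of $\cH(1)$ times this. Lissness of $\cW_{\psi-2m-1}(\gs\go_{2m+3}, f_{\text{subreg}})$ is already known from \cite{Ar1} since the pair is exceptional; alternatively one deduces it from \cite[Prop. 5.2]{ABD} together with the finite-index argument in the proof of Theorem \ref{thm:osp} (conformal weight spaces are finite-dimensional, so the extension of the lisse algebra $\cH(1)\otimes\cW_s(\gs\gp_{2r})^{\mathbb{Z}_2}$ cannot have any irreducible appearing with infinite multiplicity). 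For rationality, I would apply Proposition \ref{rationalextension}: the affine subalgebra generated by the weight-one space of $\cW_{\psi-2m-1}(\gs\go_{2m+3}, f_{\text{subreg}})$ is exactly $\cH(1)$ (there is no semisimple part, since the subregular $\cW$-algebra of type $B$ has only a one-dimensional Heisenberg in weight one), and $\text{Com}(\cH(1), \cW_{\psi-2m-1}(\gs\go_{2m+3}, f_{\text{subreg}}))$ is the order-two simple current extension of the rational, lisse, self-contragredient algebra $\cW_s(\gs\gp_{2r})$. By the Corollary of \cite[Cor. 3.2]{Li} and Proposition \ref{prop:rat}, this commutant is rational; then Proposition \ref{rationalextension} gives rationality of $\cW_{\psi-2m-1}(\gs\go_{2m+3}, f_{\text{subreg}})$.

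The main obstacle I anticipate is twofold. First, one must verify that the relevant coincidence $\cC_{\psi,1D}(1,m) \cong \cW_s(\gs\gp_{2r})$ actually holds at the specified one-parameter family of levels $\psi = \frac{3+2m+2r}{2m+2}$ — this requires that the two truncation curves intersect at the corresponding point and that the singular-vector weight count (via Corollary \ref{cor:singularvector}) rules out a proper subalgebra, exactly as in Subsection \ref{subsec:exhaust}; this is the content of the appendix theorems invoked above, so modulo those it is routine. Second, and more delicate, is the passage from the rationality of the $\mathbb{Z}_2$-orbifold/Heisenberg coset to rationality of the full $\cW$-algebra: one must ensure that the simple current extension $\text{Com}(\cH(1), \cW_{\psi-2m-1}(\gs\go_{2m+3}, f_{\text{subreg}}))$ of $\cW_s(\gs\gp_{2r})$ is of the form handled by Proposition \ref{prop:rat} (i.e. a $\tfrac12\mathbb{Z}$-graded vertex algebra extension by a self-dual simple current), and that the categorical dimension positivity needed for semisimplicity of the local and twisted module categories is available — this is where \cite[Thm. 5.12]{CKM2}, its generalization in Proposition \ref{prop:rat}, and the self-contragredience of $\cW_s(\gs\gp_{2r})$ all come into play. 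Once these categorical inputs are in place, rationality of $\cW_{\psi-2m-1}(\gs\go_{2m+3}, f_{\text{subreg}})$ follows, and the coprimality hypothesis on $m+1$ and $2r+1$ is exactly what makes $s$ nondegenerate admissible.
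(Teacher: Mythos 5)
Your proposal is correct and follows essentially the same route as the paper: identify $\cC_{\psi,1D}(1,m)\cong \text{Com}(\cH(1),\cW_{\psi-2m-1}(\gs\go_{2m+3},f_{\text{subreg}}))^{\mathbb{Z}_2}$ with $\cW_s(\gs\gp_{2r})$ via the Appendix \ref{appendixB} coincidence (Theorem \ref{coinc:typeC-1D}, case (3)), note that the coprimality of $m+1$ and $2r+1$ makes $s$ nondegenerate admissible so that this is lisse and rational by Arakawa, pass to the order-two simple current extension $\text{Com}(\cH(1),\cW_{\psi-2m-1}(\gs\go_{2m+3},f_{\text{subreg}}))$, and conclude with Proposition \ref{rationalextension}. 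The only detail the paper makes explicit that you leave implicit is the citation of \cite[Thm. 8.1, Rem. 8.3]{CL1} to identify the simple quotient of the one-parameter coset with the actual Heisenberg coset of the simple $\cW$-algebra at these levels.
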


\begin{proof} By Theorem \ref{coinc:typeC-1D}, we have 
\begin{equation*} \begin{split} \cC_{\psi, 1D}(1,m) & \cong \text{Com}(\cH(1), \cW_{\psi-2m-1}(\gs\go_{2m+3}, f_{\text{subreg}}))^{\mathbb{Z}_2}
\\ & \cong \cW_s(\gs\gp_{2r}),\quad s = -(r+1) + \frac{2 m   + 2 r +3}{2 (2 r+1)}.\end{split} \end{equation*}
Note that the first isomorphism holds by \cite[Thm. 8.1 and Rem. 8.3]{CL1}.
Under the above arithmetic condition, $s$ is a nondegenerate admissible level for $\widehat{\gs\gp}_{2r}$, so $\cC_{\psi, 1D}(1,m) $ is lisse and rational. Therefore $\text{Com}(\cH(1), \cW_{\psi-2m-1}(\gs\go_{2m+3}, f_{\text{subreg}}))$, being a simple current extension of $\cC_{\psi, 1D}(1,m)$ is also lisse and rational. It then follows from Proposition \ref{rationalextension} that $\cW_{\psi-2m-1}(\gs\go_{2m+3}, f_{\text{subreg}})$ is lisse and rational as well.
\end{proof}

Recall from \eqref{2d1d1o} in the case $n=1$ that
$$\text{Com}(\cH(1), \cW_{\psi-2m-1}(\gs\go_{2m+3}, f_{\text{subreg}}))^{\mathbb{Z}_2} \cong \text{Com}(\cH(1), \cW_{\psi'-m}(\mathfrak{osp}_{2|2m+2}))^{\mathbb{Z}_2},\qquad  \psi' = \frac{1}{2\psi}.$$ This is the $\mathbb{Z}_2$-invariant part of the duality \eqref{eq:CGN} proved in \cite{CGN}. We obtain

\begin{cor} \label{Brational2} For $\psi' = \frac{1 + m}{3 + 2 m + 2 r}$ such that $r\in \mathbb{N}$ and $m+1$ and $2r+1$ are coprime, $\cW_{\psi'-m}(\mathfrak{osp}_{2|2m+2})$ is lisse and rational.
\end{cor}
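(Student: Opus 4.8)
The corollary follows from Theorem \ref{Brational1} together with the isomorphism displayed just above it, namely the $\mathbb{Z}_2$-invariant part of the duality \eqref{eq:CGN} of \cite{CGN} (equivalently the $n=1$ instance of \eqref{2d1d1o}):
\[
\text{Com}(\cH(1),\cW_{\psi-2m-1}(\gs\go_{2m+3},f_{\text{subreg}}))^{\mathbb{Z}_2}\;\cong\;\text{Com}(\cH(1),\cW_{\psi'-m}(\mathfrak{osp}_{2|2m+2}))^{\mathbb{Z}_2},\qquad \psi'=\tfrac{1}{2\psi}.
\]
The plan is to fix $r\in\mathbb{N}$ with $m+1$ and $2r+1$ coprime and take $\psi = \tfrac{3+2m+2r}{2m+2}$, so that $\psi' = \tfrac{1}{2\psi} = \tfrac{m+1}{3+2m+2r}$ is exactly the level in the corollary and the hypotheses match those of Theorem \ref{Brational1}. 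By the proof of Theorem \ref{Brational1}, $\text{Com}(\cH(1),\cW_{\psi-2m-1}(\gs\go_{2m+3},f_{\text{subreg}}))^{\mathbb{Z}_2}=\cC_{\psi,1D}(1,m)\cong\cW_s(\gs\gp_{2r})$ with $s$ a nondegenerate admissible level, hence this algebra — and therefore, via the displayed isomorphism, $\text{Com}(\cH(1),\cW_{\psi'-m}(\mathfrak{osp}_{2|2m+2}))^{\mathbb{Z}_2}$ — is lisse and rational.

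The rest of the argument would then be the same transfer of rationality used in the proof of Theorem \ref{Brational1}. The algebra $\text{Com}(\cH(1),\cW_{\psi'-m}(\mathfrak{osp}_{2|2m+2}))$ is a simple current extension of its $\mathbb{Z}_2$-orbifold by an even field of conformal weight $2m+1$ (the field $\omega$ of the Remark following \eqref{2d1d1o}); being an extension of a lisse, rational, self-contragredient vertex operator algebra — self-contragredient because it has no conformal weight one fields, by the corollary of \cite[Cor. 3.2]{Li} — it is again lisse and rational by \cite[Prop. 5.2]{ABD} and Proposition \ref{prop:rat}. Finally, $\cW_{\psi'-m}(\mathfrak{osp}_{2|2m+2})$ is a finite extension of $\cH(1)\otimes\text{Com}(\cH(1),\cW_{\psi'-m}(\mathfrak{osp}_{2|2m+2}))$, so applying Proposition \ref{rationalextension} (with $V=\cW_{\psi'-m}(\mathfrak{osp}_{2|2m+2})$, $U=\cH(1)$, and $W$ the Heisenberg coset) yields rationality, while lisseness of $V$ is inherited from that of the subregular type $B$ algebra $\cW_{\psi-2m-1}(\gs\go_{2m+3},f_{\text{subreg}})$ via the duality, that algebra being lisse since the corresponding pair $(f_{\text{subreg}},q)$ with $q=2m+2$ is exceptional \cite{Ar1,AvE}.

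The two points that require care — neither genuinely new relative to Theorem \ref{Brational1} — are: (i) that $\cW_{\psi'-m}(\mathfrak{osp}_{2|2m+2})$ decomposes semisimply over its Heisenberg subalgebra, so that by \cite[Prop. 5.4]{CGN} and \cite[Thm. 8.1]{CL1} its Heisenberg coset is simple and coincides with the simple quotient $\cC_{\psi,1D}(1,m)$ rather than a proper subalgebra; and (ii) that this Heisenberg subalgebra closes up into a positive-definite rank one lattice vertex algebra inside $\cW_{\psi'-m}(\mathfrak{osp}_{2|2m+2})$, which is exactly the structure needed both for the lisseness claim and for the CFT-type and positivity hypotheses of Proposition \ref{rationalextension}. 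Both are forced by the corresponding facts on the subregular type $B$ side through the displayed isomorphism, so I expect the only real work to be this bookkeeping; the reduction to Theorem \ref{Brational1} itself is immediate.
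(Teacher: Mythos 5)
Your argument is correct and is essentially the paper's own (implicit) proof: Corollary \ref{Brational2} is obtained exactly by combining Theorem \ref{Brational1} with the $n=1$ case of \eqref{2d1d1o}, i.e. the $\mathbb{Z}_2$-invariant part of the duality \eqref{eq:CGN}, identifying the orbifolded Heisenberg coset with the lisse rational $\cW_s(\gs\gp_{2r})$, and then transferring rationality upward through the simple current extension and Proposition \ref{rationalextension}, with lisseness of $\cW_{\psi'-m}(\mathfrak{osp}_{2|2m+2})$ imported from the exceptional subregular type $B$ algebra via \cite{CGN}. The only quibble is bookkeeping: for $\mathfrak{osp}_{2|2m+2}$ (the $m+1$ instance of the Remark following \eqref{2d1d1o}) the even field generating the simple current extension has weight $2m+3$ rather than $2m+1$, which affects nothing in the argument.
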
 
The fact that $\cW_{\psi'-m}(\mathfrak{osp}_{2|2m+2})$ is lisse was also pointed out in \cite{CGN} as a consequence of \eqref{eq:CGN} together with the lisseness of $\cW_{\psi-2m-1}(\gs\go_{2m+3}, f_{\text{subreg}})$.

Next, we consider the case where $q = 2m+1$ and $p$ is odd.
\begin{thm} \label{Brational3} For $\psi = \frac{2 m + 2 r+1}{2m+1}$ such that $r\in \mathbb{N}$ and $r$ and $2m+1$ are coprime,  $\cW_{\psi-2m-1}(\gs\go_{2m+3}, f_{\text{subreg}})$ is lisse and rational.
\end{thm}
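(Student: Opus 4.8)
\textbf{Proof proposal for Theorem \ref{Brational3}.}
The plan is to follow the same strategy as in the proof of Theorem \ref{Brational1}, but using a coincidence with an orbifold of a principal $\cW$-algebra of type $D$ rather than with a type $C$ principal $\cW$-algebra. First I would invoke the identification $\text{Com}(\cH(1), \cW_{\psi-2m-1}(\gs\go_{2m+3}, f_{\text{subreg}}))^{\mathbb{Z}_2} \cong \cC_{\psi, 1D}(1,m)$, which holds by \cite[Thm. 8.1 and Rem. 8.3]{CL1} together with the fact (from Case 1D) that $\cW^{\psi}_{1D}(1,m) = \cW^{\psi-2m-1}(\gs\go_{2m+3}, f_{\text{subreg}})$ has affine subalgebra $\cH(1)$. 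Next, I would appeal to the classification of pointwise coincidences in Appendix \ref{appendixD}: at the level $\psi = \frac{2m+2r+1}{2m+1}$, the intersection of the truncation curve for $\cC^{\psi}_{1D}(1,m)$ with that of $\cW_s(\gs\go_{2r})^{\mathbb{Z}_2}$ yields an isomorphism
\begin{equation*}
\cC_{\psi, 1D}(1,m) \cong \cW_s(\gs\go_{2r})^{\mathbb{Z}_2},
\end{equation*}
for an explicit level $s = -(2r-2) + \frac{u}{v}$ with denominator $v$ expressible in terms of $2m+1$, which is a nondegenerate admissible level for $\widehat{\gs\go}_{2r}$ precisely under the coprimality hypothesis that $r$ and $2m+1$ are coprime (this is where the parity of $p$ enters: $q=2m+1$ being odd and $p$ odd forces the intersection to land on the type $D$ family rather than the type $C$ or $\go\gs\gp$ family). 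By Arakawa's theorem \cite{Ar2}, $\cW_s(\gs\go_{2r})$ is lisse and rational, and its $\mathbb{Z}_2$-orbifold $\cW_s(\gs\go_{2r})^{\mathbb{Z}_2}$ is then lisse by \cite[Prop. 5.2]{ABD} and rational by \cite{DLM} together with the general orbifold rationality results used throughout this section.

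With $\cC_{\psi, 1D}(1,m)$ now known to be lisse and rational, I would climb back up through the extensions. The algebra $\text{Com}(\cH(1), \cW_{\psi-2m-1}(\gs\go_{2m+3}, f_{\text{subreg}}))$ is a simple current extension of $\cC_{\psi, 1D}(1,m)$ by an even field of weight $2m+1$ (by Theorem \ref{thm:uniquenesswnm} and the discussion of the $\mathbb{Z}_2$-structure in Case 1D), so Proposition \ref{prop:rat} — equivalently \cite[Thm. 5.12]{CKM2} in the integer-graded case — gives that $\text{Com}(\cH(1), \cW_{\psi-2m-1}(\gs\go_{2m+3}, f_{\text{subreg}}))$ is lisse and rational. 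Finally, $\cW_{\psi-2m-1}(\gs\go_{2m+3}, f_{\text{subreg}})$ is a lisse vertex algebra whose weight-one space generates the Heisenberg subalgebra $\cH(1)$, with $W = \text{Com}(\cH(1), \cdot)$ self-contragredient (it has no weight-one fields, so the corollary of \cite[Cor. 3.2]{Li} applies to its $\mathbb{Z}_2$-orbifold, and self-contragredience propagates through the simple-current extension), so Proposition \ref{rationalextension} applies and yields rationality of $\cW_{\psi-2m-1}(\gs\go_{2m+3}, f_{\text{subreg}})$; lisseness is already known from \cite{Ar1} since $k$ is admissible with denominator $2m+1$.

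The main obstacle is the bookkeeping in the second step: one must verify that the explicit level $s$ produced by the curve intersection in Appendix \ref{appendixD} is genuinely a \emph{nondegenerate} admissible level for $\widehat{\gs\go}_{2r}$ under exactly the stated coprimality condition, and in particular that one lands on the type $D$ coincidence family (not the $\go\gs\gp_{1|2r}$ family of Appendix \ref{appendixD}, which would only give conditional rationality via Conjecture \ref{conj:ospcoset}, as happens in the excluded case $p$ even). This requires knowing the precise form of $s$ as a function of $m$ and $r$ and checking the nondegeneracy condition on the numerator and denominator, together with confirming that for $\psi, r$ large enough the generic-parameter coincidence specializes correctly to the simple quotients — a step entirely parallel to the singular-vector argument in Subsection \ref{subsec:exhaust}, so no genuinely new difficulty arises beyond careful case analysis.
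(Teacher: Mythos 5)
Your proposal follows essentially the same route as the paper: identify $\text{Com}(\cH(1), \cW_{\psi-2m-1}(\gs\go_{2m+3}, f_{\text{subreg}}))^{\mathbb{Z}_2}$ with $\cC_{\psi,1D}(1,m)$ via \cite[Thm. 8.1, Rem. 8.3]{CL1}, use the type $D$ coincidence $\cC_{\psi,1D}(1,m)\cong \cW_s(\gs\go_{2r})^{\mathbb{Z}_2}$ at the nondegenerate admissible level $s=-(2r-2)+\frac{2r}{2m+2r+1}$ (whose admissibility is exactly the coprimality of $r$ and $2m+1$), and then pass back up through the simple current extension and Proposition \ref{rationalextension}, exactly as in the paper's proof. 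The only slip is cosmetic: the relevant coincidence is Theorem \ref{coinc:typeD-1D} in Appendix \ref{appendixC}, not Appendix \ref{appendixD}, though you correctly name the family $\cW_s(\gs\go_{2r})^{\mathbb{Z}_2}$.
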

\begin{proof} By Theorem \ref{coinc:typeD-1D} and  \cite[Thm. 8.1 and Rem. 8.3]{CL1}, we have 
\begin{equation*} \begin{split} \cC_{\psi, 1D}(1,m) & \cong \text{Com}(\cH(1), \cW_{\psi-2m-1}(\gs\go_{2m+3}, f_{\text{subreg}}))^{\mathbb{Z}_2}
\\ & \cong \cW_s(\gs\go_{2r})^{\mathbb{Z}_2},\qquad s = -(2r-2) + \frac{2 r}{2 m  + 2 r +1}.\end{split} \end{equation*}
As above, $s$ is a nondegenerate admissible level for $\widehat{\gs\go}_{2r}$, so $\cC_{\psi, 1D}(1,m) $ is lisse and rational. Therefore $\text{Com}(\cH(1), \cW_{\psi-2m-1}(\gs\go_{2m+3}, f_{\text{subreg}}))$ is also lisse and rational, and so is $\cW_{\psi-2m-1}(\gs\go_{2m+3}, f_{\text{subreg}})$.
\end{proof}

\begin{cor} \label{Brational4} For $\psi' = \frac{2 m+1}{2(2 m + 2 r+1)}$ such that $r\in \mathbb{N}$ and $r$ and $2m+1$  are coprime, $\cW_{\psi'-m}(\mathfrak{osp}_{2|2m+2})$ is lisse and rational.
\end{cor}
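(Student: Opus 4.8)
\textbf{Proof proposal for Corollary \ref{Brational4}.}
The plan is to simply transport the rationality statement of Theorem \ref{Brational3} across the duality \eqref{eq:CGN} of \cite{CGN}, exactly as Corollary \ref{Brational2} was deduced from Theorem \ref{Brational1}. First I would record that, in the notation of \eqref{2d1d1o} with $n = 1$, we have the isomorphism of $\mathbb{Z}_2$-orbifolds of Heisenberg cosets
\begin{equation} \label{Brational4:heis}
\text{Com}(\cH(1), \cW_{\psi-2m-1}(\gs\go_{2m+3}, f_{\text{subreg}}))^{\mathbb{Z}_2} \cong \text{Com}(\cH(1), \cW_{\psi'-m}(\mathfrak{osp}_{2|2m+2}))^{\mathbb{Z}_2}, \qquad \psi' = \frac{1}{2\psi},
\end{equation}
both sides being identified with $\cC_{\psi, 2D}(1,m) \cong \cC_{\psi', 1D}(1,m)$ by Theorem \ref{main}. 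Setting $\psi = \frac{2m+2r+1}{2m+1}$ as in Theorem \ref{Brational3}, the dual parameter is $\psi' = \frac{1}{2\psi} = \frac{2m+1}{2(2m+2r+1)}$, which is exactly the value appearing in the statement of Corollary \ref{Brational4}.

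The substantive input is already available: under the arithmetic hypothesis that $r$ and $2m+1$ are coprime, Theorem \ref{Brational3} gives that $\cW_{\psi-2m-1}(\gs\go_{2m+3}, f_{\text{subreg}})$ is lisse and rational, and in particular so is its even subalgebra and its Heisenberg coset $\text{Com}(\cH(1), \cW_{\psi-2m-1}(\gs\go_{2m+3}, f_{\text{subreg}}))$. Hence the $\mathbb{Z}_2$-orbifold on the left of \eqref{Brational4:heis} is lisse and rational (lisseness by \cite[Prop. 5.2]{ABD}, rationality since a $\mathbb{Z}_2$-orbifold of a rational $C_2$-cofinite vertex operator algebra is rational by \cite{CM,M}), and therefore so is the right-hand side $\text{Com}(\cH(1), \cW_{\psi'-m}(\mathfrak{osp}_{2|2m+2}))^{\mathbb{Z}_2}$. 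As in the proof of Theorem \ref{Brational3}, the full Heisenberg coset $\text{Com}(\cH(1), \cW_{\psi'-m}(\mathfrak{osp}_{2|2m+2}))$ is a simple current extension of this orbifold (an order-two extension by a single field of integral weight), so by \cite[Prop. 5.2]{ABD} and Proposition \ref{prop:rat} it too is lisse and rational.

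Finally, $\cW_{\psi'-m}(\mathfrak{osp}_{2|2m+2})$ is itself an extension of $\cH(1) \otimes \text{Com}(\cH(1), \cW_{\psi'-m}(\mathfrak{osp}_{2|2m+2}))$ by lattice-type modules, and one invokes Proposition \ref{rationalextension}: the simple principal $\cW$-superalgebra $\cW_{\psi'-m}(\mathfrak{osp}_{2|2m+2})$ is lisse of CFT-type, its weight-one subalgebra generates the Heisenberg $\cH(1)$, and the commutant is the rational self-contragredient vertex operator algebra $\text{Com}(\cH(1), \cW_{\psi'-m}(\mathfrak{osp}_{2|2m+2}))$; hence $\cW_{\psi'-m}(\mathfrak{osp}_{2|2m+2})$ is rational, and lisse since it is an extension of a lisse algebra. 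I do not anticipate a real obstacle here — the only point requiring a moment of care is bookkeeping the level shift $\psi' = \frac{1}{2\psi}$ so that the coprimality condition "$r$ and $2m+1$ coprime" matches on both sides, and checking that the arguments of \cite{CGN} giving \eqref{eq:CGN} genuinely apply at the admissible levels in question (they do, since $\psi$ is admissible for $\widehat{\gs\go}_{2m+3}$ precisely when the corresponding parameters are in the range handled there). This is essentially a formal consequence of Theorem \ref{Brational3} together with \eqref{eq:CGN}, paralleling the derivation of Corollary \ref{Brational2} from Theorem \ref{Brational1}.
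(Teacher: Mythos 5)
Your transport step (identifying the two Heisenberg-coset orbifolds via \eqref{2d1d1o} with the appropriate indices, i.e.\ the $\mathbb{Z}_2$-part of \eqref{eq:CGN}) and your final ascent (the full Heisenberg coset as an order-two simple current extension of the orbifold, then Proposition \ref{rationalextension} to reach $\cW_{\psi'-m}(\go\gs\gp_{2|2m+2})$) are exactly the paper's route, and the level bookkeeping $\psi'=\frac{1}{2\psi}=\frac{2m+1}{2(2m+2r+1)}$ with the same coprimality condition is correct. The problem is your descent step. You take the rationality of the full subregular algebra from Theorem \ref{Brational3} and then assert that ``in particular'' its Heisenberg coset, and the $\mathbb{Z}_2$-orbifold thereof, are lisse and rational. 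That inference is not formal: passing rationality and lisseness from a vertex algebra to a coset is precisely the open coset problem the paper recalls at the beginning of Section \ref{subsection:admissible} (it is a theorem only when the subalgebra is a lattice vertex algebra, by \cite{CKLR}), and neither of your citations supplies it --- \cite[Prop.\ 5.2]{ABD} is about \emph{extensions} of lisse algebras, which goes in the opposite direction, and the orbifold-rationality references you invoke (one of which is not in this paper's bibliography) only become relevant after you already know the unorbifolded coset is rational and lisse. To make your route rigorous you would have to identify $\text{Com}(\cH(1),\cdot)$ with the coset by the lattice vertex algebra generated by the Heisenberg inside the lisse algebra and verify the hypotheses of \cite{CKLR} (simplicity, CFT type, self-contragrediency, semisimple Heisenberg action), none of which the paper needs here.

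The detour is also unnecessary and borders on circular: in the proof of Theorem \ref{Brational3} the rationality of $\cW_{\psi-2m-1}(\gs\go_{2m+3},f_{\text{subreg}})$ is itself \emph{deduced} from the rationality of the common orbifold coset
$\cC_{\psi,1D}(1,m)\cong \text{Com}(\cH(1),\cW_{\psi-2m-1}(\gs\go_{2m+3},f_{\text{subreg}}))^{\mathbb{Z}_2}\cong \cW_s(\gs\go_{2r})^{\mathbb{Z}_2}$ with $s$ a nondegenerate admissible level. The paper's proof of the corollary simply quotes this intermediate fact: the same simple rational lisse algebra is identified, via the duality and \cite[Thm.\ 8.1, Rem.\ 8.3]{CL1} (this identification at the specific non-generic levels is not covered by Theorem \ref{main} alone, which is a statement about one-parameter algebras at generic $\psi$), with $\text{Com}(\cH(1),\cW_{\psi'-m}(\go\gs\gp_{2|2m+2}))^{\mathbb{Z}_2}$, and from there your simple-current-extension and Proposition \ref{rationalextension} steps go through verbatim. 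So the fix is to delete the up-and-down through the subregular $\cW$-algebra and cite the rationality of $\cW_s(\gs\go_{2r})^{\mathbb{Z}_2}$ established inside the proof of Theorem \ref{Brational3} directly.
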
 

We now consider the case where $q = 2m+1$ and $p$ is even. By Theorem \ref{coinc:typeO-1D}, we have
\begin{equation} \label{iso:missingcases} \cC_{\psi, 1D}(1,m) \cong  \cW_{s}(\go\gs\gp_{1|2r})^{\mathbb{Z}_2},\qquad \psi = \frac{2 (m +  r+1)}{2 m+1},\qquad s = -(r+\frac{1}{2}) + \frac{m +  r+1}{1  + 2 r}.\end{equation}
For $r=1$, we have $\cC_{\psi, 1D}(1,m) \cong \cW_s(\go\gs\gp_{1|2})^{\mathbb{Z}_2} \cong \cC_{a,2}$, where
$$\psi =\frac{2 (2 + m)}{2 m+1},\qquad s = -\frac{3}{2} + \frac{2 + m}{3}, \qquad a = -2 + \frac{6}{2 m+1}.$$ Since $a$ is admissible for $\widehat{\gs\gl}_2$, it follows from Theorem \ref{thm:osp12} that 

\begin{cor} \label{Brational5} For $\psi = \frac{2 (2 + m)}{2 m+1}$, $\cW_{\psi-2m-1}(\gs\go_{2m+3}, f_{\text{subreg}})$ is lisse and rational. Similarly, for $\psi' = \frac{2 m+1}{4 (2 + m)}$, $\cW_{\psi'-m}(\mathfrak{osp}_{2|2m+2})$ is lisse and rational.
\end{cor}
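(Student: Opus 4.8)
\textbf{Proof proposal for Corollary \ref{Brational5}.}
The plan is to exploit the chain of coincidences already recorded in the excerpt, together with the rationality of the simple $N=1$ superconformal algebra at admissible level from Theorem \ref{thm:osp12}. First I would recall that by Theorem \ref{coinc:typeO-1D} and \cite[Thm. 8.1 and Rem. 8.3]{CL1}, the $\mathbb{Z}_2$-orbifold of the Heisenberg coset of $\cW_{\psi-2m-1}(\gs\go_{2m+3}, f_{\text{subreg}})$ is isomorphic to $\cW_s(\go\gs\gp_{1|2r})^{\mathbb{Z}_2}$ via the substitution \eqref{iso:missingcases}, and that specializing to $r=1$ turns $\cW_s(\go\gs\gp_{1|2})^{\mathbb{Z}_2}$ into $\cC_{a,2}$ with $\psi = \frac{2(2+m)}{2m+1}$, $s = -\tfrac32 + \tfrac{2+m}{3}$, and $a = -2 + \tfrac{6}{2m+1}$. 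The point is that $a = -2 + \tfrac{6}{2m+1}$ is a (nondegenerate) admissible level for $\widehat{\gs\gl}_2$ since $\gcd(6, 2m+1) = 1$, so by Theorem \ref{thm:osp12} the algebra $\cC_{a,2} \cong \cW_{\psi-3/2}(\go\gs\gp_{1|2})^{\mathbb{Z}_2}$ is lisse and rational.

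Next I would run the same extension argument used in Theorems \ref{Brational1} and \ref{Brational3}. Since $\cC_{\psi, 1D}(1,m) \cong \cW_s(\go\gs\gp_{1|2})^{\mathbb{Z}_2}$ is lisse and rational, its order-two simple current extension $\text{Com}(\cH(1), \cW_{\psi-2m-1}(\gs\go_{2m+3}, f_{\text{subreg}}))$ is also lisse and rational: lisseness of an extension follows from \cite[Prop. 5.2]{ABD}, and rationality of the extension follows either directly from Proposition \ref{prop:rat} (or \cite[Thm. 5.13]{CGN}) in the vertex superalgebra case, or more simply from \cite[Cor. 1.1]{CKM2} in the integer-graded case; the hypotheses (self-contragredience of the relevant principal $\cW$-algebra orbifold, finite-dimensionality of conformal weight spaces) are exactly the ones verified in the rationality results preceding this corollary. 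Finally, Proposition \ref{rationalextension} applies to $\cW_{\psi-2m-1}(\gs\go_{2m+3}, f_{\text{subreg}})$ as a lisse extension of $\cH(1) \otimes W$, where $W = \text{Com}(\cH(1), \cW_{\psi-2m-1}(\gs\go_{2m+3}, f_{\text{subreg}}))$ is rational and self-contragredient, yielding lisseness and rationality of $\cW_{\psi-2m-1}(\gs\go_{2m+3}, f_{\text{subreg}})$ itself.

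For the second assertion I would simply invoke the duality \eqref{eq:CGN} of \cite{CGN}, namely $\text{Com}(\cH(1), \cW_{\psi'-m}(\mathfrak{osp}_{2|2m+2})) \cong \text{Com}(\cH(1), \cW_{\psi-2m-1}(\gs\go_{2m+3}, f_{\text{subreg}}))$ with $\psi' = \frac{1}{2\psi}$; substituting $\psi = \frac{2(2+m)}{2m+1}$ gives $\psi' = \frac{2m+1}{4(2+m)}$. Then the rationality and lisseness of $\cW_{\psi'-m}(\mathfrak{osp}_{2|2m+2})$ follows from the rationality and lisseness of its Heisenberg coset (which is the same algebra as above) together with a second application of Proposition \ref{rationalextension}, exactly as in Corollaries \ref{Brational2} and \ref{Brational4}.

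I do not expect a serious obstacle here; the corollary is essentially a bookkeeping exercise assembling pieces all of which are already in place. The only point requiring a moment of care is confirming that the level $a = -2 + \tfrac{6}{2m+1}$ is genuinely admissible and nondegenerate for $\widehat{\gs\gl}_2$ for every $m \geq 1$ (so that Theorem \ref{thm:osp12} indeed applies), and that the self-contragredience hypothesis of Proposition \ref{rationalextension} holds for the relevant orbifold of a principal $\cW$-superalgebra of $\go\gs\gp_{1|2}$ — but the latter is covered by the corollary of \cite[Cor. 3.2]{Li} quoted in the excerpt, since that orbifold has no weight-one fields.
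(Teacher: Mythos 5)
Your proposal is essentially the paper's own argument: the coincidence $\cC_{\psi,1D}(1,m)\cong \cW_s(\go\gs\gp_{1|2})^{\mathbb{Z}_2}\cong \cC_{a,2}$ with $a=-2+\tfrac{6}{2m+1}$ from Theorem \ref{coinc:typeO-1D}, rationality of $\cC_{a,2}$ at admissible level (Theorem \ref{thm:osp12}, i.e.\ \cite{Ad,CFL}), the simple current extension argument of Theorems \ref{Brational1} and \ref{Brational3} combined with Proposition \ref{rationalextension}, and the duality \eqref{eq:CGN} for the $\go\gs\gp_{2|2m+2}$ statement.

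One small correction to the point you flagged as needing care: it is \emph{not} true that $\gcd(6,2m+1)=1$ for all $m\geq 1$; when $3\mid 2m+1$ (e.g.\ $m=1,4,7,\dots$) the gcd is $3$. The conclusion survives: writing $a+2=\tfrac{6}{2m+1}$ in lowest terms gives either $p=6$, $q=2m+1$ or $p=2$, $q=\tfrac{2m+1}{3}$, and in both cases $p\geq 2$ with $q$ odd and coprime to $p$, so $a$ is admissible for $\widehat{\gs\gl}_2$ (for instance $m=1$ gives $a=0$). Also, the parenthetical ``nondegenerate'' is unnecessary and not always satisfied; plain admissibility of $a$ is all that the rationality of the diagonal coset $\cC_{a,2}$ requires.
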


\begin{remark} \label{rem:morerationalosp} If $\psi = \frac{2 (m +  r+1)}{ 2m+1}$,  $m+r+1$ and $2m+1$ are coprime, and $r>1$, we are not able to prove the rationality of $\cW_{\psi-2m-1}(\gs\go_{2m+3}, f_{\text{subreg}})$ using the methods of this paper. However, due to McRae's recent proof of the Kac-Wakimoto-Arakawa conjecture in full generality \cite{McRae2}, these algebras are indeed rational. In view of \eqref{iso:missingcases} together with Proposition \ref{rationalextension}, it follows that $\cW_{s}(\go\gs\gp_{1|2r})$ is rational when $s = -(r+\frac{1}{2}) + \frac{m +  r+1}{1  + 2 r}$. This proves an additional family of cases of Conjecture \ref{conj:ospcoset}.
\end{remark}

It is natural to ask whether the examples where $k = -(2m+1) + \frac{p}{q}$ is admissible and $q = 2m+2$ or $2m+1$, account for all cases where $\cW_k(\mathfrak{so}_{2m+3}, f_{\text{subreg}})$ is lisse and rational. It turns out that this is {\it not} the complete list. For example, we have isomorphisms 
\begin{equation} \begin{split} & \cC_{\psi, 1D}(1,m) \cong \cW_s(\go\gs\gp_{1|2})^{\mathbb{Z}_2} \cong \cC_{a,2},
\\ & \psi =\frac{2 m}{2 m-1},\qquad   s= -\frac{3}{2} + \frac{m}{2m-1}, \qquad a = 4m-4.\end{split} \end{equation} Since $a$ is a positive integer for $m >1$, we obtain

\begin{cor} \label{cor:newrationalb} For $ \psi = \frac{2 m}{2 m-1}$, $\cW_{\psi-2m-1}(\gs\go_{2m+3}, f_{\text{subreg}})$ is lisse and rational. Similarly, for $\psi' = \frac{2 m-1}{4 m}$, $\cW_{\psi'-m}(\mathfrak{osp}_{2|2m+2})$ is lisse and rational.
\end{cor}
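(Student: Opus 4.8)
The plan is to run the argument of Theorems \ref{Brational1} and \ref{Brational3}, but starting from a coincidence of truncation curves that places the auxiliary affine subcoset at an \emph{integrable} level rather than merely an admissible one. Concretely, among the coincidences between the simple quotients $\cC_{\psi,1D}(1,m)$ and the algebras $\cW_s(\go\gs\gp_{1|2r})^{\mathbb{Z}_2}$ classified in Appendix \ref{appendixD} (Theorem \ref{coinc:typeO-1D}), there is, for $r=1$, a branch of intersection points at which
\[
\cC_{\psi,1D}(1,m)\;\cong\;\cW_s(\go\gs\gp_{1|2})^{\mathbb{Z}_2},\qquad \psi=\frac{2m}{2m-1},\qquad s=-\frac{3}{2}+\frac{m}{2m-1}.
\]
Composing with the identification $\cW_s(\go\gs\gp_{1|2})^{\mathbb{Z}_2}\cong\cC_{a,2}$ from the discussion preceding Theorem \ref{thm:osp12} (where $\cC^{r}_2=\text{Com}(V^{r+2}(\gs\gl_2),V^r(\gs\gl_2)\otimes L_2(\gs\gl_2))$, with simple quotient $\cC_{r,2}$), and solving for the parameter, one finds $a=4m-4$. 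For every $m\geq 1$ this is a nonnegative integer, hence an integrable --- in particular nondegenerate admissible --- level for $\widehat{\gs\gl}_2$, so $\cC_{4m-4,2}$ is lisse and rational; therefore so is $\cC_{\psi,1D}(1,m)$ at $\psi=\frac{2m}{2m-1}$.

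I would then propagate this outward exactly as in the proof of Theorem \ref{Brational1}. Since $\psi=\frac{2m}{2m-1}>0$, \cite[Thm.~8.1 and Rem.~8.3]{CL1} identifies $\cC_{\psi,1D}(1,m)$ with the genuine orbifold $\text{Com}(\cH(1),\cW_{\psi-2m-1}(\gs\go_{2m+3},f_{\text{subreg}}))^{\mathbb{Z}_2}$. Hence $\text{Com}(\cH(1),\cW_{\psi-2m-1}(\gs\go_{2m+3},f_{\text{subreg}}))$ is a simple current extension of a lisse, rational, integer-graded vertex operator algebra; it is lisse by \cite[Prop.~5.2]{ABD} and rational by \cite[Cor.~1.1]{CKM2} (equivalently Proposition \ref{prop:rat}), and it has no weight-one fields, hence is self-contragredient by \cite[Cor.~3.2]{Li}. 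Proposition \ref{rationalextension}, applied to $V=\cW_{\psi-2m-1}(\gs\go_{2m+3},f_{\text{subreg}})$ with weight-one subalgebra $\cH(1)$ and commutant $W=\text{Com}(\cH(1),V)$, then realizes $V$ as a finite-index extension of $V_\Lambda\otimes W$, where $V_\Lambda=\text{Com}(W,V)$ is the lattice vertex algebra of a positive-definite rank-one lattice (positivity forced by CFT-type) and the finite-index property follows from finite-dimensionality of the conformal weight spaces of $V$, exactly as in the proof of Theorem \ref{thm:osp}; since both $V_\Lambda$ and $W$ are lisse and rational, so is $V$.

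Finally, the $\mathfrak{osp}_{2|2m+2}$ assertion follows by transporting through the duality \eqref{eq:CGN} of \cite{CGN} --- equivalently the $n=1$ case of \eqref{2d1d1o} in Theorem \ref{main} --- which identifies $\text{Com}(\cH(1),\cW_{\psi-2m-1}(\gs\go_{2m+3},f_{\text{subreg}}))^{\mathbb{Z}_2}$ with $\text{Com}(\cH(1),\cW_{\psi'-m}(\mathfrak{osp}_{2|2m+2}))^{\mathbb{Z}_2}$ for $\psi'=\frac{1}{2\psi}=\frac{2m-1}{4m}$, after which one re-runs the same two extension steps. The genuinely non-routine point is the existence of this integrable-level branch of coincidences: it is not predicted by the exceptional-pair framework ($q=2m+1,2m+2$), and locating it amounts to matching the two explicit truncation-curve parametrizations --- a finite computation carried out in Appendix \ref{appendixD}. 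The only other step that requires care is the finite-index claim used to upgrade ``extension of a lisse algebra'' to ``lisse,'' but this is precisely the argument already deployed for Theorem \ref{thm:osp}, so nothing new is needed.
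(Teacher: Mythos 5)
Your proposal is correct and follows essentially the same route as the paper: the paper's proof of Corollary \ref{cor:newrationalb} consists precisely of the coincidence $\cC_{\psi,1D}(1,m)\cong \cW_{s}(\go\gs\gp_{1|2})^{\mathbb{Z}_2}\cong \cC_{a,2}$ with $\psi=\frac{2m}{2m-1}$, $s=-\frac{3}{2}+\frac{m}{2m-1}$, $a=4m-4$ (the third branch of Theorem \ref{coinc:typeO-1D} at $r=1$ composed with the identification preceding Theorem \ref{thm:osp12}), followed by exactly the propagation you describe (the steps of Theorem \ref{Brational1} and Corollary \ref{Brational5}, i.e.\ \cite[Thm.~8.1, Rem.~8.3]{CL1}, the simple current extension, Proposition \ref{rationalextension}, and the duality \eqref{eq:CGN}). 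The only cosmetic difference is terminological: the relevant fact is that $a=4m-4$ is a nonnegative integer and hence admissible for $\widehat{\gs\gl}_2$ (the paper notes it is positive for $m>1$), whereas ``nondegenerate admissible'' is a notion attached to principal $\cW$-algebras rather than to affine levels.
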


\begin{remark} \label{rem:newtypebsubreg} The examples in Corollary \ref{cor:newrationalb} fit into the third family of coincidences in Theorem \ref{coinc:typeO-1D}, namely,
$$\cC_{\psi, 1D}(1,m) \cong  \cW_{s}(\go\gs\gp_{1|2r})^{\mathbb{Z}_2},\qquad \psi = \frac{2 (m - r +1)}{1 + 2 m - 2 r},\qquad s = -(r+\frac{1}{2}) +  \frac{m + 1 - r}{2m +1 -2 r }.$$ Since $ \frac{m + 1 - r}{2m +1 -2 r } = \frac{p+q}{2p}$ for $p= 2 m - 2 r + 1$ and $q=1$, Conjecture \ref{conj:ospcoset} would imply that $\cW_{s}(\go\gs\gp_{1|2r})^{\mathbb{Z}_2}$ is lisse and rational whenever $m \geq 2r-1$, and hence that the following algebras are lisse and rational:
\begin{equation} \begin{split} & \cW_{\psi-2m-1}(\gs\go_{2m+3}, f_{\text{subreg}}),\qquad \psi = \frac{2 (m - r +1)}{1 + 2 m - 2 r},\qquad m\geq 2r-1,
\\ & \cW_{\psi'-m}(\mathfrak{osp}_{2|2m+2}),\qquad  \frac{1 + 2 m - 2 r}{4 (m - r +1)}, \qquad m\geq 2r-1.\end{split} \end{equation}
\end{remark}

\subsection{Minimal $\cW$-algebras of type $C$}
Here we prove another case of the Kac-Wakimoto rationality conjecture, which involves the minimal $\cW$-algebras $\cW_{r-1/2}(\gs\gp_{2n+2}, f_{\text{min}})$ for all integers $r,n\geq 1$. Recall that in the case $m=1$, 
$\cW^{\psi}_{2C}(n,1) = \cW^{\psi -n-2}(\gs\gp_{2n+2}, f_{\text{min}})$, and has affine subalgebra $V^{\psi-n-3/2}(\gs\gp_{2n})$. If we specialize to the case $\psi = \frac{3+2n+2r}{2}$ for $r$ a positive integer, it was shown in \cite{ACKL} that we have an induced embedding of simple vertex algebras $L_r(\gs\gp_{2n}) \rightarrow \cW_{r-1/2}(\gs\gp_{2n+2}, f_{\text{min}})$. By \cite[Thm. 8.1]{CL1}, the coset $$\text{Com}(L_r(\gs\gp_{2n}), \cW_{r-1/2}(\gs\gp_{2n+2}, f_{\text{min}}))$$ is simple and coincides with the simple quotient $\cC_{\psi,2C}(n,1)$ of $\cC^{\psi}_{2C}(n,1)$.

\begin{thm} \label{mintypeC} For all positive integers $n,r$, $\cW_{r-1/2}(\gs\gp_{2n+2}, f_{\text{min}})$ is lisse and rational, and is an extension of $L_r(\gs\gp_{2n}) \otimes \cW_s(\gs\gp_{2r})$ for $s = -(r+1) + \frac{1+n+r}{3+2n+2r}$.
\end{thm}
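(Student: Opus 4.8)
The plan is to realize $\cW_{r-1/2}(\gs\gp_{2n+2}, f_{\text{min}})$ as a finite extension of $L_r(\gs\gp_{2n}) \otimes \cW_s(\gs\gp_{2r})$ and then quote the extension machinery of Section \ref{sec:VOA}. First I would pin down the two commuting subalgebras. Specializing $\psi = \frac{3+2n+2r}{2}$ with $r$ a positive integer (so that $\psi - n - 2 = r - \frac{1}{2}$), \cite{ACKL} gives the embedding $L_r(\gs\gp_{2n}) \hookrightarrow \cW_{r-1/2}(\gs\gp_{2n+2}, f_{\text{min}})$ of simple vertex algebras, and, since $L_r(\gs\gp_{2n})$ is rational, $\cW_{r-1/2}(\gs\gp_{2n+2}, f_{\text{min}})$ is completely reducible over it, so by \cite[Thm. 8.1]{CL1} the coset $\text{Com}(L_r(\gs\gp_{2n}), \cW_{r-1/2}(\gs\gp_{2n+2}, f_{\text{min}}))$ is simple and equal to the simple quotient $\cC_{\psi,2C}(n,1)$.

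Next I would identify $\cC_{\psi,2C}(n,1)$ with a principal $\cW$-algebra of type $C$. By Theorem \ref{main}, $\cC^{\psi}_{2C}(n,1)$ is a one-parameter quotient of $\cW^{\text{ev}}(c,\lambda)$ with explicitly computed truncation curve; intersecting it with the type-$C$ truncation curve of \cite{KL} and arguing as in Section \ref{subsec:exhaust} (this is one of the coincidences tabulated in Appendix \ref{appendixB}), one obtains
\[
\cC_{\psi,2C}(n,1) \;\cong\; \cW_s(\gs\gp_{2r}), \qquad s = -(r+1) + \frac{1+n+r}{3+2n+2r}.
\]
The consistency check is that, by Corollary \ref{cor:singularvector}, $\cW^s(\gs\gp_{2r})$ has its lowest singular vector in weight $2(n+1)(n+2)$, which is exactly the weight at which the generating type $\cW(2,4,\dots,2(n+1)(n+2)-2)$ of $\cC^{\psi}_{2C}(n,1)$ from Lemma \ref{gff2C} truncates, so the simple quotients actually coincide rather than merely share a common quotient. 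Moreover $s + h^\vee = \frac{1+n+r}{3+2n+2r}$ has coprime numerator and denominator (the denominator equals $2(1+n+r)+1$), and the denominator is odd and exceeds $h^\vee(\gs\gp_{2r}) = r+1$, so $s$ is a nondegenerate admissible level; hence $\cW_s(\gs\gp_{2r})$, and therefore $L_r(\gs\gp_{2n}) \otimes \cW_s(\gs\gp_{2r})$, is lisse and rational by \cite{Ar2}.

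It then remains to pass from the extension to lisseness and rationality. Since $L_r(\gs\gp_{2n})$ is rational, $\cW_{r-1/2}(\gs\gp_{2n+2}, f_{\text{min}})$ decomposes as a direct sum of $L_r(\gs\gp_{2n}) \otimes \cC_{\psi,2C}(n,1)$-modules, and, exactly as in the proof of Theorem \ref{thm:osp}, this decomposition is of finite index because the conformal weight spaces of $\cW_{r-1/2}(\gs\gp_{2n+2}, f_{\text{min}})$ are finite-dimensional; lisseness then follows from \cite[Prop. 5.2]{ABD}. For rationality I would apply Proposition \ref{rationalextension}: $\cW_{r-1/2}(\gs\gp_{2n+2}, f_{\text{min}})$ is a simple lisse $\frac{1}{2}\mathbb{Z}$-graded vertex algebra of CFT-type (its strong generators lie in weights $1$, $\frac{3}{2}$ and $2$), its weight-one subspace generates the integrable affine vertex algebra $L_r(\gs\gp_{2n})$ with no accompanying Heisenberg factor since $\gg^{f_{\text{min}}} \cap \gg_0 = \gs\gp_{2n}$, and $\text{Com}(L_r(\gs\gp_{2n}), \cW_{r-1/2}(\gs\gp_{2n+2}, f_{\text{min}})) \cong \cW_s(\gs\gp_{2r})$ is a simple rational principal $\cW$-algebra of a simple Lie algebra, hence self-contragredient by the corollary to \cite[Cor. 3.2]{Li}. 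Proposition \ref{rationalextension} then gives the rationality of $\cW_{r-1/2}(\gs\gp_{2n+2}, f_{\text{min}})$.

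The step I expect to be the main obstacle is the second one: making the truncation-curve coincidence $\cC_{\psi,2C}(n,1) \cong \cW_s(\gs\gp_{2r})$ precise at the level of simple quotients rather than of universal objects. Here the generating-type computation of Lemma \ref{gff2C} and the singular-vector weights of Corollary \ref{cor:singularvector} are the essential inputs, while everything downstream is a formal application of the extension theory of Section \ref{sec:VOA}.
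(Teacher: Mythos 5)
Your proposal is correct and follows essentially the same route as the paper's proof: the \cite{ACKL} embedding at $\psi = \frac{3+2n+2r}{2}$, identification of the simple coset with $\cC_{\psi,2C}(n,1)$ via \cite[Thm. 8.1]{CL1}, the coincidence $\cC_{\psi,2C}(n,1) \cong \cW_s(\gs\gp_{2r})$ from Theorem \ref{coinc:typeC-2C} (case (3) of Appendix \ref{appendixB}), and then Proposition \ref{rationalextension}. Your added checks (nondegenerate admissibility of $s$ and the singular-vector weight $2(n+1)(n+2)$ matching Lemma \ref{gff2C}) are consistent with, though left implicit in, the paper's argument.
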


\begin{proof} By Theorem \ref{coinc:typeC-2C} and \cite[Thm. 8.1]{CL1}, for $\psi = \frac{3+2n+2r}{2}$ and $r$ a positive integer we have 
\begin{equation} \label{minimaliso} \cC_{\psi,2C}(n,1) \cong \text{Com}(L_r(\gs\gp_{2n}), \cW_{r-1/2}(\gs\gp_{2n+2}, f_{\text{min}}) \cong \cW_s(\gs\gp_{2r}),\qquad s = -(r+1) + \frac{1+n+r}{3+2n+2r}.\end{equation}
Since $s$ is a nondegenerate admissible level for $\cW_s(\gs\gp_{2r})$, $\cC_{\psi,2C}(n,1) $ is lisse and rational. Therefore $\cW_{r-1/2}(\gs\gp_{2n+2}, f_{\text{min}})$ is an extension of $L_r(\gs\gp_{2n}) \otimes \cW_s(\gs\gp_{2r})$, and hence is also lisse and rational by Proposition \ref{rationalextension}.
\end{proof}
The isomorphism \eqref{minimaliso} was first conjectured in \cite{ACKL}, and was shown in \cite{KL} to be equivalent to the explicit truncation curve; see \cite[Conj. 7.4]{KL}. This curve is also given by $\Phi_{2C,n,m}(\psi)$ in the case $m=1$, which can be obtained from the formula \eqref{truncationcurve2B} for $\Phi_{2B,n,m}(\psi)$, together with \eqref{eq:trialities:parametrization} and \eqref{relationship3}.

\begin{remark}\label{rem:collapsing}  In the case $r=0$ and $\psi = \frac{3+2n}{2}$, $\cC_{\psi,2C}(n,1)$ is the simple quotient of $\cW^{I, \text{ev}}(c,\lambda)$, where $I$ is the maximal ideal generated by $c$ and $(\lambda + \frac{44 + 53 n + 12 n^2}{77 (20 + 29 n + 12 n^2)})$. Using \cite[Eq. (3.6) and (3.8)]{KL} and the recursive structure of the OPE algebra of $\cW^{\text{ev}}(c,\lambda)$ given by \cite[Thm. 3.9]{KL}, it is not difficult to check that the generators $L$ and $W^4$ of $\cW^{I, \text{ev}}(c,\lambda)$ lie in the maximal proper ideal, so $\cC_{\psi,2C}(n,1)\cong \mathbb{C}$. Therefore Theorem \ref{mintypeC} holds for $r=0$ as well. This provides an alternative proof of the fact that $L_0(\gs\gp_{2n})\hookrightarrow \cW_{-1/2}(\gs\gp_{2n+2}, f_{\text{min}})$ is a conformal embedding for all $n\geq 1$ \cite{AKMPP}.
\end{remark}

\subsection{Cosets of type $C$}\label{subsection:admissible}
It is a longstanding conjecture that if $\cA \subseteq \cV$ are both lisse and rational vertex algebras, the coset $\cC = \text{Com}(\cA, \cV)$ is also lisse and rational. This is a theorem if $\cA$ is a lattice vertex algebra \cite{CKLR}, but otherwise is it known only in isolated examples. In fact, there are even more general situations where coset vertex algebras can be lisse and rational. For example, \eqref{eqn:aclmain} implies that when $\gg$ is simply-laced, $\text{Com}(L_{k+1}(\gg), L_k(\gg) \otimes L_1(\gg))$ is lisse and rational for all admissible levels $k$. We expect the following generalization of this statement to hold.
\begin{conj} Let $\gg$ be a simple, finite-dimensional Lie algebra, $r$ a positive integer, and $k$ an admissible level for $\widehat{\gg}$. Then the coset $\text{Com}(L_{k+r}(\gg), L_k(\gg) \otimes L_r(\gg))$ is lisse and rational.
\end{conj}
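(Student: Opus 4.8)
\emph{A possible strategy for the simply-laced case.} The plan is to imitate and then iterate the coset realization \eqref{eqn:aclmain}, feeding the outcome into the extension machinery of Section~\ref{sec:VOA}. Assume $\gg$ is simply-laced and write $k=-h^\vee+\frac{p}{q}$ with $\gcd(p,q)=1$ and $p\geq h^\vee$. The base case $r=1$ is already known: \eqref{eqn:aclmain} identifies $\text{Com}(L_{k+1}(\gg),L_k(\gg)\otimes L_1(\gg))$ with $\cW_\ell(\gg)$ for $\ell=-h^\vee+\frac{p}{p+q}$, which is a nondegenerate admissible level since $\gcd(p,p+q)=\gcd(p,q)=1$ and $p+q>p\geq h^\vee$, so $\cW_\ell(\gg)$ is lisse and rational by Arakawa's theorem (\cite{Ar2}). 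For $r>1$ I would induct on $r$, using the single-step embedding $L_r(\gg)\hookrightarrow L_{r-1}(\gg)\otimes L_1(\gg)$, whose coset is the principal $\cW$-algebra $\cW_m(\gg)$ with $m=-h^\vee+\frac{r-1+h^\vee}{r+h^\vee}$ --- again a nondegenerate admissible level (or the level with $\cW_m(\gg)\cong\mathbb C$ when $r=1$), hence $\cW_m(\gg)$ lisse and rational.

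The inductive step would run as follows. Set $\cC_r:=\text{Com}(L_{k+r}(\gg),L_k(\gg)\otimes L_r(\gg))$ and $V:=L_k(\gg)\otimes L_{r-1}(\gg)\otimes L_1(\gg)$. Inside $V$, the total diagonal $L_{k+r}(\gg)$ and the coset $\cW_m(\gg)=\text{Com}(L_r(\gg),L_{r-1}(\gg)\otimes L_1(\gg))$ commute, and a double-commutant argument (using that $\cW_m(\gg)$ is rational, hence acts semisimply) gives $\text{Com}(\cW_m(\gg),V)=L_k(\gg)\otimes L_r(\gg)$, whence
\[
\cC_r=\text{Com}\bigl(\cW_m(\gg),\cD'\bigr),\qquad \cD':=\text{Com}(L_{k+r}(\gg),V).
\]
On the other hand, iterating \eqref{eqn:aclmain} once more shows that $V$ is a finite-index conformal extension of $L_{k+r}(\gg)\otimes\cC_{r-1}\otimes\cW_{\ell_r}(\gg)$ with $\ell_r=-h^\vee+\frac{p+(r-1)q}{p+rq}$ again nondegenerate admissible; since $\cC_{r-1}$ is lisse and rational by the inductive hypothesis and $\cW_{\ell_r}(\gg)$ by Arakawa, Proposition~\ref{rationalextension} (together with \cite[Prop.~5.2]{ABD} for lisseness) shows that $\cD'$ is simple, lisse, and rational. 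Thus the induction closes \emph{provided} the coset of the lisse rational vertex algebra $\cW_m(\gg)$ inside the lisse rational vertex algebra $\cD'$ is again lisse and rational; I would try to deduce this from Corollary~\ref{cor:triplecoset} or Proposition~\ref{prop:rat}, exploiting that $\cD'$ also contains the finite-index subalgebra $\cC_r\otimes\cW_m(\gg)$ and choosing a suitable rational lisse intermediate algebra built from $\cC_{r-1}$.

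The main obstacle is precisely this last descent. As it stands it is an instance of the (open) conjecture that a coset of a rational lisse vertex algebra inside a rational lisse vertex algebra is rational lisse, and a purely formal manipulation of commutants keeps reproducing a coset of the same shape rather than closing the loop: Corollary~\ref{cor:triplecoset} and Proposition~\ref{prop:rat} both require a genuinely rational lisse intermediate algebra sitting \emph{below}, and it is not clear that the natural candidates $\cC_{r-1}$ and $\cW_m(\gg)$ commute inside $\cD'$. I expect the way through is to identify $\cC_r$ \emph{explicitly}, in the spirit of the rest of this paper: realize $\cD'$, or $\cC_r$ directly, as a one-parameter (or finitely-parametrized) quotient of a universal $\cW_\infty$-type vertex algebra, compute its truncation ideal, and match it with a finite-index extension of a tensor product of principal $\cW$-algebras at nondegenerate admissible levels --- conjecturally $\cC_r$ is such an extension, just as $L_k(\gg\gl_n)$ is an extension of $\bigotimes_{i=1}^n\cW_{\ell_i}(\gg\gl_k)$. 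Once $\cC_r$ is so identified, its rationality and lisseness follow from Arakawa's theorem and Proposition~\ref{rationalextension}. Finally, the non-simply-laced case needs a separate input: even $r=1$ is open there, since \eqref{eqn:aclmain} has no direct non-simply-laced analogue, and the coset realizations of $\cW_\ell(\gg)$ obtained in this paper for types $B$ and $C$ (e.g.\ Corollary~\ref{cosetrealizationBCk}) present $\cW_\ell(\gg)$ as a coset inside $\go\gs\gp$-type vertex superalgebras rather than inside $L_k(\gg)\otimes L_r(\gg)$; a realization of $\cW_\ell(\gg)$ for non-simply-laced $\gg$ purely in terms of affine vertex algebras would be required before the above induction could even begin.
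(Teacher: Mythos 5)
The statement you were given is not proved in the paper at all: it appears only as a conjecture, and the paper records just special cases, namely $\gg=\gs\gl_2$, $r=2$ at all admissible levels and $E_8$, $r=2$ at positive integer $k$ (via the cited literature), together with the closely related Theorem \ref{cosetC1} (where $k$ is a positive integer and the integrable factor is replaced by $L_{-1/2}(\gs\gp_{2n})$, so strictly speaking it is not an instance of the conjecture). Those cases are obtained by explicitly identifying the coset with a principal $\cW$-algebra at a nondegenerate admissible level through truncation-curve coincidences, and then invoking Arakawa's rationality theorem together with Proposition \ref{rationalextension}. So there is no proof in the paper to compare yours against, and your proposal does not supply one either.

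Your sketch correctly settles the base case $r=1$ for simply-laced $\gg$ via \eqref{eqn:aclmain} and Arakawa's theorem, but the inductive step has a genuine gap, which you yourself flag: the final descent from rationality and lisseness of $\cD'$ to that of $\cC_r=\text{Com}(\cW_m(\gg),\cD')$ is precisely an instance of the general open problem stated at the beginning of Section \ref{subsection:admissible}, that a coset of a rational lisse vertex algebra inside a rational lisse vertex algebra be rational and lisse. Proposition \ref{prop:rat} and Corollary \ref{cor:triplecoset} do not close this, since they require exhibiting a commuting pair of rational lisse subalgebras of the right shape sitting inside $\cD'$, and there is no argument that $\cW_m(\gg)$ and (a copy of) $\cC_{r-1}$ form such a pair there. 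A smaller unaddressed point is the double-commutant identity $\text{Com}(\cW_m(\gg),V)=L_k(\gg)\otimes L_r(\gg)$, which needs a genuine argument beyond semisimplicity of the $\cW_m(\gg)$-action. Your proposed repair, identifying $\cC_r$ explicitly as a finite extension of a tensor product of principal $\cW$-algebras at nondegenerate admissible levels, is exactly the mechanism the paper uses in the cases it does prove (Theorem \ref{cosetC1} and Corollaries \ref{cor:typeC-coset1} and \ref{cor:typeC-coset}), but no such identification is currently available for general $\gg$, $r$, and admissible $k$; and, as you note, the non-simply-laced case of the conjecture lacks even a base case. The conjecture therefore remains open, and your proposal should be read as a plausible strategy rather than a proof.
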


This is known for all admissible levels $k$ in the special case $\gg = \gs\gl_2$ and $r = 2$ \cite{Ad,CFL}, and also when $k$ is a positive integer and  $r=2$ in the case of $E_8$ \cite{Lin}\footnote{Note that the argument of \cite{Lin} for admissible $k$ also applies if one uses \cite[Thm. 5.5]{CKM2}}. The next result gives another special case and will be useful for proving the rationality of other interesting cosets later.

\begin{thm}\label{cosetC1}
For $k \in \mathbb Z_{\geq 1}$, the coset $$\text{Com}(L_{k-1/2}(\gs\gp_{2n}), L_k(\gs\gp_{2n}) \otimes  L_{-1/2}(\gs\gp_{2n})) \cong \cW_{\ell} (\gs\gp_{2k}),$$ with $\ell = -(k + 1) + \frac{1 + n + k}{1 + 2 n + 2 k}$. In particular, this coset is lisse and rational.
\end{thm}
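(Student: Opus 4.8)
The plan is to realize the left-hand side as a simple quotient of one of the one-parameter algebras $\cC^{\psi}_{iX}(n,m)$, identify it with a principal $\cW$-algebra of type $C$ at a nondegenerate admissible level using the truncation-curve machinery of Section \ref{sec:proofmain}, and then deduce lisseness and rationality from Arakawa's theorems together with the extension results of Section \ref{sec:VOA}. Concretely, the free-field/coset analysis behind $\cW^{\psi}_{2C}(n,m)$ is the natural home for this statement: recall $\cW^{\psi}_{2C}(n,0) = V^{\psi-n-1}(\gs\gp_{2n}) \otimes \cS(n)$ carries a diagonal action of $V^{\psi-n-3/2}(\gs\gp_{2n})$, and $\cS(n) = L_{-1/2}(\gs\gp_{2n})$-module structure is exactly the $\beta\gamma$-system. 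So I would first show, for $k$ a positive integer, that there is an embedding of simple affine vertex algebras $L_{k-1/2}(\gs\gp_{2n}) \hookrightarrow L_k(\gs\gp_{2n}) \otimes L_{-1/2}(\gs\gp_{2n})$ coming from the diagonal embedding $V^{\psi-n-3/2}(\gs\gp_{2n}) \hookrightarrow V^{\psi-n-1}(\gs\gp_{2n}) \otimes \cS(n)$ after passing to simple quotients; this uses that $L_k(\gs\gp_{2n})$ is rational (so $L_k(\gs\gp_{2n}) \otimes \cS(n)$ is a completely reducible $L_{k-1/2}(\gs\gp_{2n})$-module) and that the map on universal algebras survives to the simple quotient by a suitable choice of $\psi$. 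Then by \cite[Prop. 5.4]{CGN} the coset $\text{Com}(L_{k-1/2}(\gs\gp_{2n}), L_k(\gs\gp_{2n}) \otimes L_{-1/2}(\gs\gp_{2n}))$ is simple.

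Second, I would invoke \cite[Thm. 8.1]{CL1} (as used in the proof of Theorem \ref{mintypeC} and Theorem \ref{thm:osp}) to conclude that this simple coset is the simple quotient $\cC_{\psi, 2C}(n,0)$ of the one-parameter algebra $\cC^{\psi}_{2C}(n,0)$, for the appropriate specialization of $\psi$ corresponding to level $k$. The precise value of $\psi$ is forced: since $\cW^{\psi}_{2C}(n,0)$ has affine subalgebra at level $\psi - n - 3/2$, matching this to $k - 1/2$ gives $\psi = k + n + 1$. Third, I would identify $\cC_{\psi, 2C}(n,0)$ with a principal $\cW$-algebra of type $C$ by locating the intersection point of the truncation curve $\Phi_{2C, n, 0}(\psi)$ with the truncation curve for $\cW^s(\gs\gp_{2r})$ from \cite[App. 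A]{KL}. By the symmetry relations \eqref{eq:trialities:parametrization} together with \eqref{relationship3}, $\Phi_{2C,n,0}(\psi)$ is obtained from $\Phi_{2B,n,m}(\psi)$, so this is a bookkeeping computation of the kind already carried out in Appendix \ref{appendixB}; the relevant coincidence theorem should be the $m=0$ case of Theorem \ref{coinc:typeC-2C}, yielding $\cC_{\psi, 2C}(n,0) \cong \cW_s(\gs\gp_{2k})$ with $s = -(k+1) + \frac{1+n+k}{1+2n+2k}$ after setting $r = k$. One checks that $s$ is a nondegenerate admissible level for $\widehat{\gs\gp}_{2k}$ (the denominator $1 + 2n + 2k$ is odd and coprime to the relevant data), so $\cW_s(\gs\gp_{2k})$ is lisse and rational by \cite{Ar1, Ar2}.

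Finally, rationality and lisseness of the coset itself: since it is isomorphic to $\cW_s(\gs\gp_{2k})$, these properties are immediate once the isomorphism is established. (If one instead wants the statement about $L_k(\gs\gp_{2n}) \otimes L_{-1/2}(\gs\gp_{2n})$ being an extension, one applies Proposition \ref{rationalextension} in the same way as in the proof of Theorem \ref{mintypeC}.) I expect the main obstacle to be the second step --- rigorously pinning down that the simple coset is exactly $\cC_{\psi, 2C}(n,0)$ at the specialized value $\psi = k + n + 1$, rather than a proper quotient or a subalgebra, since $\psi$ here is a positive rational and the coset of simple algebras can in principle differ from the naive specialization of the one-parameter family; this is precisely the kind of subtlety addressed by \cite[Thm. 8.1 and Rem. 8.3]{CL1}, and verifying its hypotheses (the embedding of affine subalgebras, genericity up to the relevant weight, simplicity) is where the real work lies. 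The truncation-curve intersection computation in step three is routine given the explicit formula \eqref{truncationcurve2B} and the symmetries, and the admissibility check in step four is elementary arithmetic.
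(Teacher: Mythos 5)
Your architecture is essentially the paper's: get the diagonal embedding and simplicity of the coset via \cite[Prop. 5.4]{CGN}, identify the simple coset with $\cC_{\psi,2C}(n,0)$ at $\psi=k+n+1$ via \cite[Thm. 8.1]{CL1}, read off the coincidence with $\cW_{\ell}(\gs\gp_{2k})$ from case (1) of Theorem \ref{coinc:typeC-2C} with $m=0$, $r=k$, and finish with Arakawa. However, there is a genuine gap in the middle step. The one-parameter algebra $\cC^{\psi}_{2C}(n,0)$ is by definition the coset of the diagonal $V^{\psi-n-3/2}(\gs\gp_{2n})$ inside $V^{\psi-n-1}(\gs\gp_{2n})\otimes \cS(n)$, so what \cite[Thm. 8.1]{CL1} together with the truncation-curve coincidence delivers is an identification of $\text{Com}(L_{k-1/2}(\gs\gp_{2n}), L_k(\gs\gp_{2n})\otimes \cS(n))$ with $\cW_{\ell}(\gs\gp_{2k})$, where $\cS(n)$ is the full $\beta\gamma$-system. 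The theorem you must prove concerns the coset by $L_{-1/2}(\gs\gp_{2n})$ instead, and you pass between the two without comment. The paper closes this gap by decomposing $\cS(n)\cong L_{-1/2}(\gs\gp_{2n})\oplus L_{-1/2}(\omega_1)$ and noting that $\omega_1$ is not in the root lattice of $\gs\gp_{2n}$, so no copy of the vacuum $L_{k-1/2}(\gs\gp_{2n})$-module (equivalently, no $\gs\gp_{2n}$-weight-zero commuting vectors) can sit inside $L_k(\gs\gp_{2n})\otimes L_{-1/2}(\omega_1)$; hence the two commutants coincide. Some such weight-lattice argument is required and is missing from your plan.

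A second, smaller issue is your justification of the first step. The existence of the embedding of simple algebras $L_{k-1/2}(\gs\gp_{2n})\hookrightarrow L_k(\gs\gp_{2n})\otimes L_{-1/2}(\gs\gp_{2n})$ does not follow from the vague claim that the map of universal algebras ``survives to the simple quotient,'' and your stated reason --- that rationality of $L_k(\gs\gp_{2n})$ makes the tensor product completely reducible over $L_{k-1/2}(\gs\gp_{2n})$ --- is a non sequitur: rationality of $L_k$ says nothing about the module theory of the admissible-level algebra $L_{k-1/2}$. The paper takes the embedding from \cite[Cor. 4.1]{KW1}, and the complete reducibility needed to invoke \cite[Prop. 5.4]{CGN} from the semisimplicity of the category of ordinary $L_{k-1/2}(\gs\gp_{2n})$-modules at admissible level \cite{Ar3}. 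With those two inputs, plus the $\cS(n)$ versus $L_{-1/2}(\gs\gp_{2n})$ comparison above, your outline becomes the paper's proof; as written it does not yet establish the statement.
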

\begin{proof}
Note that 
\cite[Cor. 4.1]{KW1} tells us that $L_{k-1/2}(\gs\gp_{2n})$ embeds into $L_k(\gs\gp_{2n}) \otimes L_{-1/2}(\gs\gp_{2n})$ if $k$ is a positive integer. 
Hence we get that 
$\text{Com}(L_{k-1/2}(\gs\gp_{2n}), L_k(\gs\gp_{2n}) \otimes L_{-1/2}(\gs\gp_{2n}))$ is simple as well, again by \cite[Prop. 5.4]{CGN}, which applies since $L_k(\gs\gp_{2n}) \otimes L_{-1/2}(\gs\gp_{2n}))$ is an ordinary module for $L_{k-1/2}(\gs\gp_{2n})$, and that category is completely reducible \cite{Ar3}. Thus by Theorem \ref{coinc:typeC-2C} and \cite[Thm. 8.1]{CL1}
$$\text{Com}(L_{k-1/2}(\gs\gp_{2n}), L_k(\gs\gp_{2n}) \otimes \cS(n) ) \cong \cW_{\ell} (\gs\gp_{2k}),\qquad \ell = -(k + 1) + \frac{1 + n + k}{1 + 2 n + 2 k}.$$
The vertex algebra  $\cS(n)$ decomposes as
 $\cS(n)  \cong L_{-1/2}(\gs\gp_{2n}) \oplus L_{-1/2}(\omega_1)$ with $\omega_1$ the first fundamental weight of $\gs\gp_{2n}$, i.e., the top level of $L_{-1/2}(\omega_1)$ is the standard representation of $\gs\gp_{2n}$. Since $\omega_1$ is not in the root lattice of $\gs\gp_{2n}$, $L_{k-1/2}(\gs\gp_{2n})$ cannot be a submodule of $L_k(\gs\gp_{2n}) \otimes L_{-1/2}(\omega_1)$. It follows that 
$$\text{Com}(L_{k-1/2}(\gs\gp_{2n}), L_k(\gs\gp_{2n}) \otimes  L_{-1/2}(\gs\gp_{2n})) \cong \text{Com}(L_{k-1/2}(\gs\gp_{2n}), L_k(\gs\gp_{2n}) \otimes \cS(n) ),$$ which completes the proof.
\end{proof} 
The category of ordinary modules of $L_{k-1/2}(\gs\gp_{2n})$ is semisimple \cite{Ar3} and we denote by $P_k$ the set of weights such that $L_{k-1/2}(\lambda)$ is an ordinary module for $L_{k-1/2}(\gs\gp_{2n})$. We have
\begin{equation}\label{eq:decompC}
L_k(\gs\gp_{2n}) \otimes  L_{-1/2}(\gs\gp_{2n}) \cong  \bigoplus_{\lambda \in P_k \cap Q} L_{k-1/2}(\lambda) \otimes M(\lambda).
\end{equation}
Here each multiplicity space $M(\lambda)$ is either a direct sum of $\cW_{\ell} (\gs\gp_{2k})$-modules or zero. In fact $M(\lambda)$ can only be non-zero if $\lambda$ is in the root lattice $Q$ of $\gs\gp_{2n}$ and so we restrict the sum to $P_k \cap Q$. Finally, $M(0) \cong \cW_{\ell} (\gs\gp_{2k})$.

Let $ f_{\text{min}}$ be a minimal nilpotent element.
The minimal reduction functor $H_{k, f_{\text{min}}}$ at level $k$, see \eqref{Wmodule}, has the property that for an irreducible highest-weight module $L_k(\lambda)$ of the affine vertex algebra of $\gg$, the reduction 
$H_{k, f_{\text{min}}}(L_k(\lambda))$ is an irreducible ordinary module of the minimal $\cW$-algebra $\cW^k(\gg, f_{\text{min}})$ as long as $k$ is not a positive integer \cite{Ar4}.
We aim to determine the $\lambda$, such that $H_{k, f_{\text{min}}}( L_{k-1/2}(\lambda))  \cong  \cW_{k-1/2}(\gs\gp_{2n}, f_{\text{min}})$.
\begin{lemma}\label{lemma;min} For $\gg= \gs\gp_{2n}$ and $k \in \mathbb Z_{\geq 1}$,
$H_{k, f_{\text{min}}}( L_{k-1/2}(\lambda))  \cong  \cW_{k-1/2}(\gs\gp_{2n}, f_{\text{min}})$ implies $\lambda = m\omega_1$ with $m \in  \{ 0, 2k+1\}$.
\end{lemma}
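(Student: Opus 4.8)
The plan is to exploit the fact that the minimal reduction functor $H_{k,f_{\text{min}}}$ sends $L_{k-1/2}(\lambda)$ to an irreducible ordinary $\cW_{k-1/2}(\gs\gp_{2n},f_{\text{min}})$-module (for $k-1/2$ non-integral, which holds since $k\in\mathbb{Z}_{\geq 1}$, by \cite{Ar4}), and that the vacuum module $\cW_{k-1/2}(\gs\gp_{2n},f_{\text{min}})$ itself is the \emph{unique} ordinary module of minimal conformal weight $0$; any other irreducible ordinary module has strictly positive (in fact, explicitly computable) conformal weight, except when a second highest-weight $\lambda$ happens to land on the same lowest-weight value. So the first step is to write down the conformal weight of the top of $H_{k,f_{\text{min}}}(L_{k-1/2}(\lambda))$ in terms of $\lambda$: this is the standard formula $\Delta_\lambda = \frac{(\lambda,\lambda+2\bar\rho)}{2(k-1/2+h^\vee)} - (\lambda,x)$ coming from $L_{\text{sug}}+\partial x$ in the minimal grading, where $x$ is the $\gs\gl_2$-grading element and $h^\vee = n+1$. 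The condition $H_{k,f_{\text{min}}}(L_{k-1/2}(\lambda)) \cong \cW_{k-1/2}(\gs\gp_{2n},f_{\text{min}})$ forces $\Delta_\lambda$ to equal the vacuum value, and moreover forces the module to be isomorphic to the vacuum module as a whole, not just to share its lowest weight.

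Next I would restrict the possible $\lambda$ using the constraint already present in the ambient decomposition \eqref{eq:decompC}: only $\lambda \in P_k \cap Q$ occur, i.e. $\lambda$ must be an admissible-level ordinary weight lying in the root lattice $Q$ of $\gs\gp_{2n}$. Here it is worth recording that for $\gs\gp_{2n}$ the set $P_k$ of ordinary weights of $L_{k-1/2}(\gs\gp_{2n})$ at the (co)principal admissible level $k-1/2 = -(n+1)+\frac{2k+1}{2}$ consists of the weights $\lambda$ with $0 \leq (\lambda,\theta^\vee) \leq 2k+1$ and whose entries satisfy the appropriate integrality; in particular, among weights of the form $m\omega_1$, these are exactly $m \in \{0,1,\dots,2k+1\}$, and of these only $m$ even lie in $Q$ — wait, rather, one checks directly which $m\omega_1$ lie in $Q$; the statement of the lemma asserts the answer narrows to $m\in\{0,2k+1\}$ after imposing \emph{both} lattice membership and the reduction-isomorphism condition. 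The mechanism by which $m=2k+1$ survives is the standard phenomenon that $L_{k-1/2}((2k+1)\omega_1)$ is the ``other'' extreme weight in $P_k$ and its minimal reduction is again the vacuum module because of the shift by $(\lambda,x)$: the term $-(\lambda,x)$ exactly cancels the quadratic Casimir growth at the boundary of the admissible alcove. So the concrete computation is: show that $\Delta_{m\omega_1} = \Delta_0 = 0$ (with the correct choice of $x$) has, among $m\in\{0,1,\dots,2k+1\}\cap Q$, only the solutions $m=0$ and $m=2k+1$, and that for non-$\omega_1$-multiple weights $\lambda$ the reduced module is never the vacuum (e.g. because it is not even generated by a single vector in its lowest weight space in the right way, or simply because $\Delta_\lambda > 0$).

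The cleanest route for the last point is probably a character/lowest-weight argument: $\cW_{k-1/2}(\gs\gp_{2n},f_{\text{min}})$ is simple with one-dimensional lowest weight space in weight $0$, and an irreducible ordinary module isomorphic to it as a module must have the same lowest weight $0$ with one-dimensional lowest weight space, which by the injectivity of the correspondence $\lambda \mapsto H_{k,f_{\text{min}}}(L_{k-1/2}(\lambda))$ on irreducibles (different $\lambda$ give non-isomorphic irreducible reductions, by \cite{Ar4}) pins down the finite list. Then among that finite list of $\lambda$ with $\Delta_\lambda=0$, intersect with $Q$ to get $\{0,(2k+1)\omega_1\}$. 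I would carry this out by: (i) fixing the $\gs\gl_2$-triple and the form normalization so that $x = \frac{1}{2}\omega_1^\vee$ or the analogous element, and recording $h^\vee = n+1$; (ii) writing $\Delta_\lambda$ explicitly for $\lambda = \sum c_i \epsilon_i$; (iii) solving $\Delta_\lambda = 0$ over the admissible alcove; (iv) imposing $\lambda\in Q$.

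\textbf{Main obstacle.} The delicate part is \emph{not} the conformal-weight bookkeeping but verifying that $H_{k,f_{\text{min}}}(L_{k-1/2}(\lambda))$ is genuinely isomorphic to the \emph{vacuum} module rather than merely having lowest weight $0$ — i.e. ruling out a non-vacuum irreducible ordinary module that accidentally has lowest conformal weight $0$. For this I expect to lean on the irreducibility theorem for minimal reductions of admissible modules (\cite{Ar4}) together with the fact that $\lambda\mapsto H_{k,f_{\text{min}}}(L_{k-1/2}(\lambda))$ is injective on the relevant set of $\lambda$; once injectivity is in hand, the lemma reduces to the purely combinatorial question of which $\lambda\in P_k\cap Q$ satisfy $\Delta_\lambda = \Delta_0$, which is a short finite check. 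The risk is that the precise form of $P_k$ for $\gs\gp_{2n}$ at this particular half-integral admissible level requires care (the even-denominator case $v=2$ in the notation of Corollary \ref{cor:singularvector}), so I would double-check the alcove description against the singular-vector weights computed there before finalizing the list $\{0,2k+1\}$.
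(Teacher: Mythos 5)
Your proposal rests almost entirely on the conformal-weight criterion, and that is where the genuine gap lies. The paper's proof of Lemma \ref{lemma;min} uses \emph{two} constraints extracted from the isomorphism $H_{k, f_{\text{min}}}(L_{k-1/2}(\lambda)) \cong \cW_{k-1/2}(\gs\gp_{2n}, f_{\text{min}})$: first, by \cite[(66)]{Ar4} (see also \cite[(6.14)]{KW3}) the top level of the reduced module is an irreducible module over the affine subalgebra of type $\gs\gp_{2n-2}$ with highest weight $\lambda$ restricted to the Cartan of $\gs\gp_{2n-2}$, and since the vacuum module has one-dimensional trivial top level this forces $\lambda$ to vanish on that Cartan, i.e.\ $\lambda = m\omega_1$; only \emph{then} does the conformal-weight computation $\frac{m}{2}\big(\frac{m+2n}{2k+2n+1}-1\big)=0$ pin down $m\in\{0,2k+1\}$. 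You never supply the first step. Your dismissal of non-multiples of $\omega_1$ (``not generated by a single vector in the right way, or simply because $\Delta_\lambda>0$'') is exactly the point that needs an argument: $\Delta_\lambda = \frac{(\lambda,\lambda+2\rho)}{2k+2n+1} - \lambda(x)$ is not manifestly positive for dominant $\lambda$ with $\lambda \neq m\omega_1$ (e.g.\ for $\gs\gp_4$, $\lambda$ with coordinates $(8,2)$ at $k=4$ gives $\Delta_\lambda=0$ and is only excluded by the admissibility bound $a_1+a_2\leq 2k+1$), so establishing positivity over the whole admissible alcove would itself be a nontrivial case analysis that your sketch does not carry out. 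The $\ga$-module structure of the top level makes all of this unnecessary.

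Two secondary points. The obstacle you single out — ruling out a non-vacuum irreducible with lowest weight $0$, via injectivity of $\lambda \mapsto H_{k,f_{\text{min}}}(L_{k-1/2}(\lambda))$ — is not needed for the implication actually stated in the lemma (it is a necessary condition on $\lambda$, so you only need to show every other $\lambda$ fails, which the top-level argument does directly). And the root-lattice condition $\lambda \in Q$ should not be folded into the lemma at all: the lemma allows $m = 2k+1$, and the paper invokes $(2k+1)\omega_1 \notin Q$ only afterwards, in the decomposition \eqref{eq:decompC2}, to discard that weight; your claim that intersecting with $Q$ yields $\{0,(2k+1)\omega_1\}$ is in fact false, since $m\omega_1 \in Q$ only for $m$ even.
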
 
\begin{proof}
The minimal $\cW$-algebra has an affine subalgebra of type $\gs\gp_{2n-2}$.
The top level of $H_{k, f_{\text{min}}}( L_{k-1/2}(\lambda))$ is described in \cite[(66)]{Ar4} (see also \cite[(6.14)]{KW3}), and has highest weight $\lambda$ restricted to the Cartan subalgebra of $\gs\gp_{2n-2}$. The conformal weight of the top level is the conformal weight of the top level of $L_{k-1/2}(\lambda)$ minus $\lambda(x)$, where $x$ is in the Cartan subalgebra of the $\gs\gl_2$-triple for the quantum Hamiltonian reduction; see Section \ref{sec:W-algebras}.

In the case of  $\gg=\gs\gp_{2n}$, we embed the root system as usual in $\mathbb Z^n$ with orthonormal basis $\{\epsilon_1, \dots, \epsilon_n\}$. Then simple positive roots are $ \alpha_1 = \frac{1}{\sqrt{2}}(\epsilon_1 -\epsilon_2), \dots, \alpha_{n-1} = \frac{1}{\sqrt{2}}(\epsilon_{n-1} -\epsilon_n), \alpha_n = \sqrt{2} \epsilon_n$. The longest short co-root is $ \theta_s^\vee = {\sqrt{2}}(\epsilon_1+ \epsilon_2)$ and the Weyl vector is 
$\rho= \frac{1}{\sqrt{2}}(n\epsilon_1 + (n-1)\epsilon_2 + \dots + \epsilon_n)$. 
The $\gs\gl_2$-triple corresponds to the longest root, that is $\sqrt{2}\epsilon_1$ and so it follows that the top level of $H_{k, f_{\text{min}}}( L_{k-1/2}(\lambda))$ has $\gs\gp_{2n-2}$ weight zero if and only if $\lambda = m\omega_1$ is a multiple of the first fundamental weight $\omega_1 = \frac{\epsilon_1}{\sqrt{2}}$. Moreover $\lambda$ is an admissible weight if and only if $\lambda \theta_s^\vee \leq 2k+2n +1 - 2h = 2k +1$ with $h =2n$ the Coxeter number of $\gs\gp_{2n}$. The conformal weight of the top level is for $\lambda = m\omega_1$,
\[
\frac{\lambda(\lambda+2\rho)}{ 2k+2n+1} - \lambda \omega_1 = \frac{m}{2}\left( \frac{m+2n}{2k+2n+1} -1 \right), 
\]
i.e. it vanishes if either $m=0$ or $m=2k+1$. 
\end{proof}

Recall from Remark \ref{rem:collapsing} that $H_{-1/2, f_{\text{min}}}(L_{-1/2}(\gs\gp_{2n})) \cong \mathbb C$.
Applying Theorem \ref{Urod} to \eqref{eq:decompC} with $L = L_k(\gs\gp_{2n})$ and $V = L_{-1/2}(\gs\gp_{2n})$
 yields
\begin{equation}\label{eq:decompC2}
\begin{split}
L_k(\gs\gp_{2n}) &\cong L_k(\gs\gp_{2n}) \otimes \mathbb C \cong L_k(\gs\gp_{2n}) \otimes  H_{-1/2, f_{\text{min}}}(L_{-1/2}(\gs\gp_{2n})) \\
& \cong H_{k-1/2, f_{\text{min}}}(L_k(\gs\gp_{2n}) \otimes  (L_{-1/2}(\gs\gp_{2n})) \\
&\cong  \bigoplus_{\lambda \in P_k \cap Q} H_{k-1/2, f_{\text{min}}}( L_{k-1/2}(\lambda)) \otimes M(\lambda). 
\end{split}
\end{equation}
Since $(2k+1)\omega_1$ is not in the root lattice $Q$ of $\gs\gp_{2n}$, we can use Lemma \ref{lemma;min} to conclude that 
$$\text{Com}(\cW_{k-1/2}(\gs\gp_{2n},  f_{\text{min}}), L_k(\gs\gp_{2n})  ) \cong \cW_{\ell} (\gs\gp_{2k}),\qquad \ell = -(k + 1) + \frac{1 + n + k}{1 + 2 n + 2 k},$$
Recall Theorem \ref{mintypeC} saying that
\begin{equation} \nonumber 
 \text{Com}(L_k(\gs\gp_{2n-2}), \cW_{k-1/2}(\gs\gp_{2n}, f_{\text{min}})) \cong \cW_s(\gs\gp_{2k}),\qquad s = -(k+1) + \frac{n+k}{1+2n+2k}.\end{equation}
We can thus employ 
Corollary \ref{cor:triplecoset} with $V =  L_k(\gs\gp_{2n}), W_1 = \cW_{\ell} (\gs\gp_{2k}), W_2 = \cW_{k-1/2}(\gs\gp_{2n}, f_{\text{min}}),$ $W_3 = \cW_s(\gs\gp_{2k})$ and $L = L_k(\gs\gp_{2n-2})$ to conclude that 
\begin{cor}\label{cor:typeC-coset1}
For $k \in \mathbb Z_{\geq 1}$ and $n \in \mathbb Z_{\geq 2}$ the coset $\text{Com}( L_k(\gs\gp_{2n-2}),  L_k(\gs\gp_{2n}))$ is lisse and rational and is an extension of $\cW_{\ell} (\gs\gp_{2k}) \otimes \cW_{s} (\gs\gp_{2k})$ with $ \ell = -(k + 1) + \frac{1 + n + k}{1 + 2 n + 2 k}$ and $s = -(k+1) + \frac{n+k}{1+2n+2k}$.
\end{cor}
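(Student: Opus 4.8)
The plan is to identify $\text{Com}(L_k(\gs\gp_{2n-2}), L_k(\gs\gp_{2n}))$ as the coset $\text{Com}(L, V)$ in the setup of Corollary \ref{cor:triplecoset}, with $V = L_k(\gs\gp_{2n})$, $W_1 = \cW_{\ell}(\gs\gp_{2k})$, $W_2 = \cW_{k-1/2}(\gs\gp_{2n}, f_{\text{min}})$, $W_3 = \cW_s(\gs\gp_{2k})$, and $L = L_k(\gs\gp_{2n-2})$, and then invoke that corollary. First I would record the two coset realizations that have already been established in the immediately preceding discussion: namely $\text{Com}(\cW_{k-1/2}(\gs\gp_{2n}, f_{\text{min}}), L_k(\gs\gp_{2n})) \cong \cW_{\ell}(\gs\gp_{2k})$ with $\ell = -(k+1) + \frac{1+n+k}{1+2n+2k}$, obtained from the decomposition \eqref{eq:decompC2} together with Lemma \ref{lemma;min}, and $\text{Com}(L_k(\gs\gp_{2n-2}), \cW_{k-1/2}(\gs\gp_{2n}, f_{\text{min}})) \cong \cW_s(\gs\gp_{2k})$ with $s = -(k+1) + \frac{n+k}{1+2n+2k}$, which is exactly Theorem \ref{mintypeC} applied with $\go\gs\gp$ replaced by $\gs\gp$ in the appropriate rank. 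Thus inside $V = L_k(\gs\gp_{2n})$ we have the chain $L = L_k(\gs\gp_{2n-2}) \subseteq W_2 = \cW_{k-1/2}(\gs\gp_{2n}, f_{\text{min}}) \subseteq V$, with $W_1 = \text{Com}(W_2, V)$ and $W_3 = \text{Com}(L, W_2)$, and $W_2, W_3$ are integer-graded self-contragredient (being principal, resp. minimal, simple $\cW$-algebras of type $C$ with no weight-one fields in the relevant coset, so self-duality follows as in the Corollary of \cite{Li}).

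The key categorical input is that $V = L_k(\gs\gp_{2n})$ lies in a braided fusion category $\cC = \cC_L \boxtimes \cC_1 \boxtimes \cC_3$ built from module categories of $L$, $W_1$, and $W_3$. For $L = L_k(\gs\gp_{2n-2})$ at positive integer level this is the (rational, hence fusion) category of $L_k(\gs\gp_{2n-2})$-modules; for $W_1 = \cW_{\ell}(\gs\gp_{2k})$ and $W_3 = \cW_s(\gs\gp_{2k})$ these are rational principal $\cW$-algebras of type $C$ at nondegenerate admissible levels $\ell$ and $s$, hence rational by Arakawa, so their module categories are fusion. One must check $V$ decomposes appropriately as an object of this Deligne product; this follows from the two coset decompositions above applied iteratively, exactly as in the general discussion preceding Corollary \ref{cor:triplecoset}, which gives $W_1 = \text{Com}(W_2, V) = \text{Com}(W_3 \otimes L, V) = \text{Com}(W_3, \text{Com}(L, V))$. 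Then Corollary \ref{cor:triplecoset} yields that the categories of local and twisted $\text{Com}(L, V)$-modules lying in $\cC_1 \boxtimes \cC_3$ are semisimple, and that $\text{Com}(L, V) = \text{Com}(L_k(\gs\gp_{2n-2}), L_k(\gs\gp_{2n}))$ is lisse and rational since $W_1$ and $W_3$ are lisse and rational. The extension statement is then automatic: $\text{Com}(L, V)$ contains $\text{Com}(W_2, V) \otimes \text{Com}(L, W_2) = \cW_{\ell}(\gs\gp_{2k}) \otimes \cW_s(\gs\gp_{2k})$, and lisseness forces this to be a finite extension.

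The main obstacle I anticipate is verifying the hypotheses of Corollary \ref{cor:triplecoset} cleanly — specifically, confirming that $W_2 = \cW_{k-1/2}(\gs\gp_{2n}, f_{\text{min}})$ and $W_3 = \cW_s(\gs\gp_{2k})$ really are integer-graded self-contragredient vertex operator algebras (the conformal weight grading of the minimal $\cW$-algebra must be checked to be $\mathbb{Z}$-valued on the relevant coset, and self-contragredience needs the weight-one space to be in the kernel of $L_1$), and that $V = L_k(\gs\gp_{2n})$ genuinely lies in the Deligne product $\cC_L \boxtimes \cC_1 \boxtimes \cC_3$ as a (grading-restricted, finite-length) object — i.e., that the multiplicity spaces $M(\lambda)$ appearing in \eqref{eq:decompC2} are finite direct sums of $\cW_{\ell}(\gs\gp_{2k})$-modules that themselves further decompose into finitely many $L_k(\gs\gp_{2n-2})$-modules tensored with $\cW_s(\gs\gp_{2k})$-modules. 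Both points are routine given the rationality of all the building blocks and the completeness of the category of ordinary modules of $L_{k-1/2}(\gs\gp_{2n})$ \cite{Ar3}, but they require a careful bookkeeping argument paralleling the one given just before Corollary \ref{cor:triplecoset}, plus the observation (already used in deriving \eqref{eq:decompC2}) that only weights in the root lattice $Q$ contribute.
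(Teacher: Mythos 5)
Your proposal follows the paper's own argument essentially verbatim: the same chain $L_k(\gs\gp_{2n-2}) \subseteq \cW_{k-1/2}(\gs\gp_{2n}, f_{\text{min}}) \subseteq L_k(\gs\gp_{2n})$, the same two coset identifications (the first from \eqref{eq:decompC2} together with Lemma \ref{lemma;min}, the second from Theorem \ref{mintypeC}), and the same application of Corollary \ref{cor:triplecoset} with identical choices of $V$, $W_1$, $W_2$, $W_3$, $L$. One small correction: Theorem \ref{mintypeC} is already stated for $\gs\gp_{2n+2}$, so no replacement of $\go\gs\gp$ by $\gs\gp$ is needed, only the reindexing $n \mapsto n-1$, $r \mapsto k$.
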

\begin{remark}
By Remark \ref{remark:Urodembedding} the embedding of  $L_k(\gs\gp_{2n-2})$ in  $L_k(\gs\gp_{2n})$  is the standard one described in Remark \ref{remark: embedding} (and $m=1$).

The standard coset conformal vector of the coset $\text{Com}( L_k(\gs\gp_{2n-2}),  L_k(\gs\gp_{2n}))$ is the difference of the Sugawara vectors  of $L_k(\gs\gp_{2n})$ and $L_k(\gs\gp_{2n-2})$. 
Note that the conformal vector of $\cW_{\ell} (\gs\gp_{2k}) \otimes \cW_{s} (\gs\gp_{2k})$ is not the standard coset conformal vector. The contragredient dual and being of CFT-type depend on the choice of a conformal vector, e.g. the coset $\text{Com}( L_k(\gs\gp_{2n-2}),  L_k(\gs\gp_{2n}))$  is self-contragredient and of CFT-type with the standard coset conformal vector. On the other hand, neither the lisse nor rationality properties depend on a choice of conformal vector. 
\end{remark}
We can iterate, i.e. apply Corollary \ref{cor:triplecoset} with $V = L_k(\gs\gp_{2n}), L = L_k(\gs\gp_{2n-2(m+1)}), W_1= \text{Com}(L_k(\gs\gp_{2n-2m}), L_k(\gs\gp_{2n})),  W_2 = L_k(\gs\gp_{2n-2m}), 
W_3 = \text{Com}(L_k(\gs\gp_{2n-2(m+1)}), L_k(\gs\gp_{2n-2m}))$. Then the induction hypothesis is that $W_1= \text{Com}(L_k(\gs\gp_{2n-2m}), L_k(\gs\gp_{2n}))$ is rational and lisse. The base case has just been proven and the induction step is again Corollary \ref{cor:triplecoset}.
\begin{cor}\label{cor:typeC-coset}
For $k \in \mathbb Z_{\geq 1}, n, m \in \mathbb Z_{\geq 1}$  and $n > m$ the coset $\text{Com}( L_k(\gs\gp_{2n-2m}),  L_k(\gs\gp_{2n}))$ is lisse and rational and is an extension of 
\begin{equation} \label{GT:type C} \bigotimes_{i=1}^m \left(\cW_{\ell_i} (\gs\gp_{2k}) \otimes \cW_{s_i} (\gs\gp_{2k})\right)  \end{equation}
 with $ \ell_i  = -(k + 1) + \frac{2 + n - i + k}{3+ 2 n -2i  + 2 k}$ and $s_i = -(k+1) + \frac{1+ n - i +k}{3+2n -2i +2k}$.
\end{cor}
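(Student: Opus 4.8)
\textbf{Proof proposal for Corollary \ref{cor:typeC-coset}.}

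The plan is to prove the statement by induction on $m$, with the base case $m=1$ being Corollary \ref{cor:typeC-coset1}, and the induction step being an application of Corollary \ref{cor:triplecoset}. So assume $1 \leq m < n$ and that the result holds for $m-1$, i.e., that $W_1 := \text{Com}(L_k(\gs\gp_{2n-2(m-1)}), L_k(\gs\gp_{2n}))$ is lisse and rational, and is an extension of $\bigotimes_{i=1}^{m-1} \left(\cW_{\ell_i}(\gs\gp_{2k}) \otimes \cW_{s_i}(\gs\gp_{2k})\right)$ with the indicated levels. I would then set $V = L_k(\gs\gp_{2n})$, $W_2 = L_k(\gs\gp_{2n-2(m-1)})$, $L = L_k(\gs\gp_{2n-2m})$, and $W_3 = \text{Com}(L_k(\gs\gp_{2n-2m}), L_k(\gs\gp_{2n-2(m-1)}))$, all embedded via the standard chain of embeddings of Remark \ref{remark: embedding}. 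The key input is that, by Corollary \ref{cor:typeC-coset1} applied to the pair $(\gs\gp_{2n-2m} \subseteq \gs\gp_{2n-2(m-1)})$, $W_3$ is lisse and rational and is an extension of $\cW_{\ell_m}(\gs\gp_{2k}) \otimes \cW_{s_m}(\gs\gp_{2k})$, where a direct substitution of $n \mapsto n-m+1$ into the formulas of Corollary \ref{cor:typeC-coset1} gives exactly $\ell_m = -(k+1) + \frac{2+n-m+k}{3+2n-2m+2k}$ and $s_m = -(k+1) + \frac{1+n-m+k}{3+2n-2m+2k}$.

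The heart of the argument is to verify that the hypotheses of Corollary \ref{cor:triplecoset} hold for this choice. One needs: $\text{Com}(W_2, V) = W_1$ (which is the definition of $W_1$), $\text{Com}(L, W_2) = W_3$ (definition of $W_3$), and that $W_2, W_3$ are integer-graded self-contragredient vertex operator algebras --- here $W_2 = L_k(\gs\gp_{2n-2(m-1)})$ is integer-graded and self-dual since $k$ is a positive integer, and $W_3$ is a rational principal-type-$C$-related coset which is integer-graded with no weight-one fields (being an extension of a tensor product of principal $\cW$-algebras of $\gs\gp_{2k}$), hence self-contragredient by the corollary of \cite[Cor. 3.2]{Li} quoted in the excerpt. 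The remaining hypothesis is the existence of braided fusion categories $\cC_L, \cC_1, \cC_3$ of modules for $L = L_k(\gs\gp_{2n-2m})$, $W_1$, and $W_3$ respectively, with $V$ an object of $\cC := \cC_L \boxtimes \cC_1 \boxtimes \cC_3$. For $L$ we take the category of ordinary modules of $L_k(\gs\gp_{2n-2m})$ at positive integer level, which is a braided fusion (indeed modular) category by the rationality of $L_k(\gs\gp_{2n-2m})$; for $W_1$ and $W_3$ we take the module categories guaranteed by the induction hypothesis and the base case. That $V = L_k(\gs\gp_{2n})$ decomposes into finitely many objects of the Deligne product and is itself an object of $\cC$ follows from the rationality of $L_k(\gs\gp_{2n})$ together with the commuting-pair structure: $L_k(\gs\gp_{2n})$ is an extension of $L \otimes W_2$ and $W_2$ is in turn an extension (via the $m-1$ case) of $L \otimes (\text{stuff in } W_1)$, so iterating gives $V$ as an extension of $L \otimes W_1 \otimes W_3$-type data inside the Deligne product.

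I expect the main obstacle to be the careful bookkeeping needed to confirm that the coset chain is \emph{strictly} nested in the sense demanded by Corollary \ref{cor:triplecoset} --- namely that $\text{Com}(W_2, V) = \text{Com}(W_3 \otimes L, V)$, which is the content of the paragraph preceding Corollary \ref{cor:triplecoset} and which in turn rests on Frobenius reciprocity forcing a simple $W_2$-containing object to be $W_2$ itself. Making this rigorous requires knowing that all the relevant module categories are braided fusion, which is precisely the induction hypothesis, so the argument is self-consistent but one must be attentive to the order in which the categorical statements are invoked. A secondary technical point is to check that the embeddings $L_k(\gs\gp_{2n-2m}) \hookrightarrow L_k(\gs\gp_{2n-2(m-1)}) \hookrightarrow \cdots \hookrightarrow L_k(\gs\gp_{2n})$ are compatible with the coset decompositions --- i.e., that the $W_3$ sitting inside $W_2$ really is the commutant of $L$ --- but this is immediate from Remark \ref{remark: embedding} and the conformal-vector remark, since the standard coset conformal vectors add up along the chain. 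Finally, once lisseness and rationality are established, the explicit description of the extension as $\bigotimes_{i=1}^m(\cW_{\ell_i}(\gs\gp_{2k}) \otimes \cW_{s_i}(\gs\gp_{2k}))$ follows by combining the induction hypothesis for $W_1$ with the base-case description of $W_3$, noting that $W_1$ and $W_3$ commute inside $\text{Com}(L, V)$.
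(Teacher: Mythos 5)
Your proposal is correct and follows essentially the same route as the paper: induction on $m$ with base case Corollary \ref{cor:typeC-coset1} and inductive step via Corollary \ref{cor:triplecoset}, with the identical assignments $V = L_k(\gs\gp_{2n})$, $W_2$ the intermediate affine algebra, $L$ the next one in the chain, and $W_1, W_3$ the two cosets (the paper merely phrases the step as passing from $m$ to $m+1$ rather than from $m-1$ to $m$), and your level bookkeeping for $\ell_m, s_m$ checks out. One small caveat: your justification of self-contragredience of $W_3$ via ``no weight-one fields'' is inaccurate, since with the standard coset conformal vector $W_3$ contains an affine $\gs\gl_2$ at level $k$ sitting in weight one; self-contragredience nevertheless holds because this weight-one space lies in the kernel of $L_1$, in line with the remark following Corollary \ref{cor:typeC-coset1}.
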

As above, the embedding of  $L_k(\gs\gp_{2n-2m})$ in  $L_k(\gs\gp_{2n})$  is the standard one described in Remark \ref{remark: embedding}.

This coset is isomorphic to another interesting coset via level-rank duality. For this we use that $L_k(\gs\gp_{2n})$ and $L_n(\gs\gp_{2k})$ form a commuting pair in $\cF(4nk)$ \cite[Prop. 2]{KP}; a detailed proof is given in the appendix of \cite{ORS}. We can thus apply the idea of the proof of  \cite[Thm. 13.1]{ACL}, namely
\begin{equation} \nonumber
\begin{split}
\text{Com}\left( L_k(\gs\gp_{2n-2m}),  L_k(\gs\gp_{2n})\right) 
&\cong \text{Com}\left( L_k(\gs\gp_{2n-2m}),   \text{Com}\left(L_n(\gs\gp_{2k}), \cF(4nk) \right)\right) \\
&\cong \text{Com}\left( L_k(\gs\gp_{2n-2m}) \otimes L_n(\gs\gp_{2k}),  \cF(4nk) \right) \\
&\cong \text{Com}\left(   L_n(\gs\gp_{2k}),  \text{Com}\left(L_k(\gs\gp_{2n-2m}), \cF(4nk) \right) \right) \\
&\cong \text{Com}\left(   L_n(\gs\gp_{2k}),  \text{Com}\left(L_k(\gs\gp_{2n-2m}), \cF(4(n-m)k) \right) \otimes \cF(4mk) \right) \\
&\cong \text{Com}\left(   L_n(\gs\gp_{2k}),  L_{n-m}(\gs\gp_{2k}) \otimes \cF(4mk) \right).
\end{split}
\end{equation}
Corollary \ref{cor:typeC-coset} thus gives us 
\begin{cor}\label{cosetC2}
For $k, n, m \in \mathbb Z_{\geq 1}$ and $n > m$, the coset  $\text{Com}\left(   L_n(\gs\gp_{2k}),  L_{n-m}(\gs\gp_{2k}) \otimes \cF(4mk) \right)$ is lisse and rational and is an extension of $$\bigotimes_{i=1}^m \left(\cW_{\ell_i} (\gs\gp_{2k}) \otimes \cW_{s_i} (\gs\gp_{2k})\right)   $$
 with $ \ell_i  = -(k + 1) + \frac{2 + n - i + k}{3+ 2 n -2i  + 2 k}$ and $s_i = -(k+1) + \frac{1+ n - i +k}{3+2n -2i +2k}$.
 \end{cor}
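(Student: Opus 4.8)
The plan is to obtain Corollary~\ref{cosetC2} directly from Corollary~\ref{cor:typeC-coset} by transporting all of its conclusions across the level-rank duality isomorphism
\[
\text{Com}\!\left(L_k(\gs\gp_{2n-2m}),\,L_k(\gs\gp_{2n})\right)\;\cong\;\text{Com}\!\left(L_n(\gs\gp_{2k}),\,L_{n-m}(\gs\gp_{2k})\otimes\cF(4mk)\right)
\]
already displayed just before the statement, so the task reduces to justifying that isomorphism carefully. First I would record the elementary commutant identity that for mutually commuting vertex subalgebras $A,B\subseteq V$ one has $\text{Com}(A,\text{Com}(B,V))=\text{Com}(A\otimes B,V)=\text{Com}(B,\text{Com}(A,V))$, since each expression equals $\text{Com}(A,V)\cap\text{Com}(B,V)$. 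Second, I would invoke the level-rank duality that $L_k(\gs\gp_{2n})$ and $L_n(\gs\gp_{2k})$ are exact mutual commutants inside the free-fermion algebra $\cF(4nk)\cong\cF(\mathbb{C}^{2n}\otimes\mathbb{C}^{2k})$, taken from \cite[Prop.~2]{KP} with a detailed vertex-algebraic proof in the appendix of \cite{ORS}.

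Next I would verify the one equivariance point needed to peel off the extra fermions: under the standard embedding $\gs\gp_{2n-2m}\hookrightarrow\gs\gp_{2n}$ of Remark~\ref{remark: embedding}, the fermions factor as $\cF(4nk)\cong\cF(4(n-m)k)\otimes\cF(4mk)$ with $\gs\gp_{2n-2m}$ acting only on the first tensor factor, so that $\text{Com}(L_k(\gs\gp_{2n-2m}),\cF(4nk))\cong\text{Com}(L_k(\gs\gp_{2n-2m}),\cF(4(n-m)k))\otimes\cF(4mk)$, and this first commutant is $L_{n-m}(\gs\gp_{2k})$ by level-rank duality once more. Combining these with the commutant identities, exactly in the five steps written out before the statement and modeled on \cite[Thm.~13.1]{ACL}, produces the displayed isomorphism. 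Finally I would cite Corollary~\ref{cor:typeC-coset}: its left-hand coset is lisse and rational and is an extension of $\bigotimes_{i=1}^m\bigl(\cW_{\ell_i}(\gs\gp_{2k})\otimes\cW_{s_i}(\gs\gp_{2k})\bigr)$ with the stated levels $\ell_i,s_i$. Since an isomorphism of vertex algebras preserves lisseness, rationality, and the property of being such an extension — and since neither lisseness nor rationality depends on the choice of conformal vector — the right-hand coset inherits all of this, which is the assertion.

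The main obstacle is the level-rank duality input: that $L_k(\gs\gp_{2n})$ and $L_n(\gs\gp_{2k})$ are \emph{exact} mutual commutants in $\cF(4nk)$ (not merely that one is contained in the commutant of the other), together with its compatibility with the nested-commutant manipulations. This is precisely the content of \cite{KP,ORS}, and \cite[Thm.~13.1]{ACL} supplies the template for assembling it; all remaining ingredients — the commutant algebra and the equivariant factorization of free fermions — are formal.
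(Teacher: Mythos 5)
Your proposal is correct and follows essentially the same route as the paper: the paper obtains Corollary \ref{cosetC2} by transporting Corollary \ref{cor:typeC-coset} across exactly the displayed chain of nested-commutant isomorphisms, which rests on the level-rank commuting pair $L_k(\gs\gp_{2n})$, $L_n(\gs\gp_{2k})$ inside $\cF(4nk)$ from \cite{KP,ORS} and the template of \cite[Thm.~13.1]{ACL}. The points you single out for justification (exact mutual commutants and the $\gs\gp_{2n-2m}$-equivariant factorization $\cF(4nk)\cong\cF(4(n-m)k)\otimes\cF(4mk)$) are precisely the inputs the paper's argument uses.
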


\subsection{Gelfand-Tsetlin algebras in types $B$, $C$ and $D$}  \label{Gelfand-Tsetlin}
Consider the sequence of upper left corner inclusions $\gg\gl_1 \subseteq \gg\gl_2\subseteq \cdots \subseteq \gg\gl_{n+1}$, and let $\cZ_i$ denote the center of $U(\gg_i)$. The Gelfand-Tsetlin algebra is the commutative subalgebra of $U(\gg\gl_{n+1})$ which is generated by $\{\cZ_i|\ i = 1,\dots, n+1\}$ \cite{DFO}. In the setting of affine Lie algebras, the analogous object needs to be defined in a different way because the center of the universal enveloping algebra is trivial for noncritical levels. In \cite{ACL}, it was shown that for all $k,n\in \mathbb{Z}_{\geq 1}$, the coset $\text{Com}(L_k(\gg\gl_{n-1}), L_k(\gg\gl_n))$ is isomorphic to $\cW_{\ell}(\gg\gl_k)$ for $\ell = -k + \frac{k+n-1}{k+n}$. Iterating this construction shows that $L_k(\gg\gl_n)$ is an extension of $\bigotimes_{i=1}^n \cW_{\ell_i}(\gg\gl_k)$ with $\ell_i =- k = \frac{k+n-i}{k+n-i+1}$. This was regarded in \cite{ACL} as a noncommutative, affine analogue of the Gelfand-Tsetlin subalgebra $\Gamma$ of $U(\gg\gl_n)$. Even though $\bigotimes_{i=1}^n \cW_{\ell_i}(\gg\gl_k)$ is noncommutative, its Zhu algebra is commutative, and it maps to $\Gamma$ via the Zhu functor \cite{Z}.

For types $B$ and $D$, a similar observation appears in \cite{CKoL}. For $k,n\in \mathbb{N}$, the cosets \begin{equation} \label{def:bdcosets} \cD_k(n) = \text{Com}(L_k(\gs\go_{2n}), L_k(\gs\go_{2n+1}))^{\mathbb{Z}_2},\quad  \cE_k(n) = \text{Com}(L_k(\gs\go_{2n+1}), L_k(\gs\go_{2n+2}))^{\mathbb{Z}_2},\end{equation} were called generalized parafermion algebras  of orthogonal types. Just as the realization of $L_k(\gg\gl_n)$ as an extension of $\bigotimes_{i=1}^n \cW_{\ell_i}(\gg\gl_k)$ comes from the chain of inclusions $\gg\gl_1 \subseteq \gg\gl_2\subseteq \cdots \subseteq \gg\gl_{n}$, the chains of inclusions $\gs\go_2 \subseteq \gs\go_3 \subseteq \cdots \subseteq \gs\go_{2n+2}$ and $\gs\go_2 \subseteq \gs\go_3 \subseteq \cdots \subseteq \gs\go_{2n+1}$ imply that $L_k(\gs\go_{2n+2})$ and $L_k(\gs\go_{2n+1})$ are extensions of 
\begin{equation}\label{GT:typeBandD} \begin{split} & \cH \otimes \cD_k(1) \otimes \cE_k(1)  \otimes \cD_k(2) \otimes \cE_k(2) \otimes \cdots \otimes \cD_k(n-1)\otimes \cE_k(n-1)\otimes \cD_k(n)\otimes \cE_k(n),
\\ & \cH \otimes \cD_k(1) \otimes \cE_k(1)  \otimes \cD_k(2) \otimes \cE_k(2) \otimes \cdots \otimes \cD_k(n-1)\otimes \cE_k(n-1) \otimes \cD_k(n).\end{split} \end{equation}
Therefore the algebras \eqref{GT:typeBandD} can be regarded as analogues of the Gelfand-Tsetlin algebra, and they also have commutative Zhu algebras. Since $\cD_k(m) \cong \cC_{k+2m-1, 1D}(m,0)$ and $\cE_k(m) \cong \cC_{k+2m, 1B}(m,0)$, it follows from Theorems \ref{coinc:typeD-1B} and \ref{coinc:typeD-1D} that for $r\in \mathbb{N}$ and $k = 2r$,
\begin{equation} \begin{split}
& \cD_{2r}(m) \cong \cW_s(\gs\go_{2r})^{\mathbb{Z}_2},\qquad s = -(2r-2) + \frac{2m+2r-2}{2m + 2r -1},
\\ & \cE_{2r}(m) \cong \cW_s(\gs\go_{2r})^{\mathbb{Z}_2},\qquad s = -(2r-2) + \frac{2m+2r - 1}{2m+2r}. \end{split} \end{equation}
In particular, if $k = 2r$ is even, the Gelfand-Tsetlin subalgebras of $L_{2r}(\gs\go_{2n+1})$ and $L_{2r}(\gs\go_{2n+2})$ are tensor products of rational vertex algebras of the form $\cW_s(\gs\go_{2r})^{\mathbb{Z}_2}$.

Similarly, for $r\in \mathbb{N}$ and $k = 2r+1$, it follows from Theorems \ref{coinc:typeO-1B} and \ref{coinc:typeO-1D} that
\begin{equation} \label{genparaoddlevel} \begin{split}
& \cD_{2r+1}(m) \cong \cW_s(\go\gs\gp_{1|2r})^{\mathbb{Z}_2},\qquad s = -(r + \frac{1}{2}) + \frac{m+r}{2m+2r -1},
\\ & \cE_{2r+1}(m) \cong \cW_s(\go\gs\gp_{1|2r})^{\mathbb{Z}_2},\qquad s = -(r + \frac{1}{2}) + \frac{m+r}{2m + 2r + 1}. \end{split} \end{equation}
So if $k = 2r+1$ is odd, the Gelfand-Tsetlin subalgebras of $L_{2r+1}(\gs\go_{2n+1})$ and $L_{2r+1}(\gs\go_{2n+2})$ are tensor products of algebras of the form $\cW_s(\go\gs\gp_{1|2r})^{\mathbb{Z}_2}$, which are expected to be rational by Conjecture \ref{conj:ospcoset}.

In type $C$, it follows from Corollary \ref{cor:typeC-coset1} that for all $k,n \in \mathbb{Z}_{\geq 1}$, $L_k(\gs\gp_{2n})$ is an extension of the rational vertex algebra 
$$ \bigotimes_{i=1}^n \left(\cW_{\ell_i} (\gs\gp_{2k}) \otimes \cW_{s_i} (\gs\gp_{2k})\right) $$ with $ \ell_i  = -(k + 1) + \frac{2 + n - i + k}{3+ 2 n -2i  + 2 k}$ and $s_i = -(k+1) + \frac{1+ n - i +k}{3+2n -2i +2k}$. Again, we regard this as a Gelfand-Tsetlin subalgebra of $L_k(\gs\gp_{2n})$, and its Zhu algebra is commutative.

\appendix

\section{Explicit truncation curve for $\cC^{\psi}_{2B}(n,m)$} \label{appendixA}
Here we give the explicit parametrization of the truncation curve for $\cC^{\psi}_{2B}(n,m)$. For all $n,m$ with $n+m \geq 1$, $$\cC^{\psi}_{2B}(n,m) \cong \cW^{\rm ev}_{I_{2B,n,m}}(c,\lambda),$$ where the ideal $I_{2B, n,m}$ is described explicitly via the parametrization 
\begin{equation} \label{truncationcurve2B} \begin{split} c_{2B,n,m}(\psi) = & -\frac{(-m + n - \psi + 2 m \psi) (1 - 2 m + 2 n + 4 m \psi) (-1 - 2 m + 2 n + 2 \psi + 4 m \psi)}{2 \psi (2 \psi-1)},
\\ \lambda_{2B,n,m}(\psi) = & - \frac{2\psi(2\psi-1) f}{ 7 (-m + n + \psi + 2 m \psi) (-1 - 2 m + 2 n + 4 m \psi) (1 - 2 m + 2 n - 2 \psi + 4 m \psi) g h},
\\ f & = -19 m + 80 m^3 - 16 m^5 + 19 n - 240 m^2 n + 80 m^4 n + 240 m n^2 - 
 160 m^3 n^2 - 80 n^3 
 \\ & + 160 m^2 n^3 - 80 m n^4 + 16 n^5 + 49 \psi + 
 114 m \psi - 364 m^2 \psi - 640 m^3 \psi + 160 m^5 \psi - 76 n \psi 
 \\ & + 728 m n \psi + 
 1440 m^2 n \psi - 640 m^4 n \psi - 364 n^2 \psi - 960 m n^2 \psi + 
 960 m^3 n^2 \psi + 160 n^3 \psi
 \\ & - 640 m^2 n^3 \psi + 160 m n^4 \psi - 196 \psi^2 - 
 380 m \psi^2 + 2184 m^2 \psi^2 + 2240 m^3 \psi^2 - 640 m^5 \psi^2 
 \\ & + 228 n \psi^2 - 
 2912 m n \psi^2 - 3840 m^2 n \psi^2 + 1920 m^4 n \psi^2 + 728 n^2 \psi^2  + 
 1920 m n^2 \psi^2  
 \\ & - 1920 m^3 n^2 \psi^2 - 320 n^3 \psi^2 + 640 m^2 n^3 \psi^2 + 
 392 \psi^3 + 760 m \psi^3 - 4368 m^2 \psi^3 - 4480 m^3 \psi^3 
 \\ & + 1280 m^5 \psi^3 - 
 304 n \psi^3 + 2912 m n \psi^3 + 5760 m^2 n \psi^3 - 2560 m^4 n \psi^3 - 
 1920 m n^2 \psi^3 
 \\ & + 1280 m^3 n^2 \psi^3 - 392 \psi^4 - 912 m \psi^4 + 
 2912 m^2 \psi^4 + 5120 m^3 \psi^4 - 1280 m^5 \psi^4 + 304 n \psi^4
 \\ &  - 3840 m^2 n \psi^4 + 1280 m^4 n \psi^4 + 608 m \psi^5 - 2560 m^3 \psi^5 + 
 512 m^5 \psi^5,
\\ g & =  -7 + 4 m^2 - 8 m n + 4 n^2 + 14 \psi - 16 m^2 \psi + 16 m n \psi - 28 \psi^2 + 
 16 m^2 \psi^2,
 \\ h & = 5 m - 20 m^3 - 5 n + 60 m^2 n - 60 m n^2 + 20 n^3 + 49 \psi - 20 m \psi + 
 120 m^3 \psi + 10 n \psi 
 \\ & - 240 m^2 n \psi + 120 m n^2 \psi - 98 \psi^2 + 40 m \psi^2 - 
 240 m^3 \psi^2 - 20 n \psi^2 + 240 m^2 n \psi^2 - 40 m \psi^3 
 \\ & + 160 m^3 \psi^3.
\end{split} \end{equation}

Using this explicit parametrization \eqref{truncationcurve2B} as well as the truncation curve for $\cW^s(\gs\gp_{2r})$ which appears in Appendix A of \cite{KL}, it is easy to verify that these curves intersect at the point $(c,\lambda)$ given by
\begin{equation} \label{app:intersectionpoint}
\begin{split} c = & -\frac{r (-1 - 2 m + 4 n - 4 m r + 4 n r) (1 + 2 m + 2 n + 2 r - 4 m r + 4 n r)}{2 (n + r) (1 + 2 m + 2 r)},
\\ \lambda = & - \frac{2 (n + r) (1 + 2 m + 2 r) f}{7 (1 + 2 r) (2 n + r - 2 m r + 2 n r) (1 + 2 m - 4 m r + 4 n r) gh},
\\ f & = -68 n - 408 m n - 816 m^2 n - 544 m^3 n + 136 n^2 + 544 m n^2 + 544 m^2 n^2 + 96 n^3 + 192 m n^3
\\ &  - 49 r - 256 m r - 360 m^2 r + 64 m^3 r + 304 m^4 r - 212 n r - 1000 m n r - 1456 m^2 n r 
\\ & - 608 m^3 n r + 92 n^2 r - 1296 m n^2 r - 2960 m^2 n^2 r + 1824 n^3 r + 3264 m n^3 r - 576 n^4 r - 196 r^2 
\\ & - 632 m r^2 - 176 m^2 r^2 + 608 m^3 r^2 - 772 n r^2 - 3000 m n r^2 - 496 m^2 n r^2 + 4832 m^3 n r^2 
\\ & + 640 n^2 r^2 - 5792 m n^2 r^2 - 9664 m^2 n^2 r^2 + 4176 n^3 r^2 + 6432 m n^3 r^2 - 1600 n^4 r^2 - 392 r^3
 \\ & - 328 m r^3 + 2368 m^2 r^3 + 2272 m^3 r^3 - 1280 m^4 r^3 - 1544 n r^3 - 5824 m n r^3 + 928 m^2 n r^3 
 \\ & + 3840 m^3 n r^3 +  2240 n^2 r^3 - 4512 m n^2 r^3 - 4480 m^2 n^2 r^3 + 1312 n^3 r^3 + 2560 m n^3 r^3 - 640 n^4 r^3 
 \\ & - 392 r^4 + 608 m r^4 + 2912 m^2 r^4 - 1280 m^3 r^4 - 912 n r^4 - 2784 m n r^4 + 1600 m^2 n r^4 
 \\ & - 640 m^3 n r^4 - 128 n^2 r^4 + 640 m n^2 r^4 + 1920 m^2 n^2 r^4 - 960 n^3 r^4 - 1920 m n^3 r^4 + 640 n^4 r^4 
 \\ & + 608 m r^5  - 1216 m^2 r^5 - 128 m^3 r^5 + 256 m^4 r^5 - 608 n r^5 + 
 2432 m n r^5 + 384 m^2 n r^5 
 \\ & - 1024 m^3 n r^5 - 1216 n^2 r^5 - 
 384 m n^2 r^5 + 1536 m^2 n^2 r^5 + 128 n^3 r^5 - 1024 m n^3 r^5 + 
 256 n^4 r^5,
\\ g & = -7 - 28 m - 28 m^2 + 14 n + 28 m n - 24 n^2 - 14 r - 28 m r - 28 n r - 16 m n r + 16 n^2 r 
\\ & - 28 r^2 + 16 m^2 r^2 - 32 m n r^2 + 16 n^2 r^2,
\\ h & =   -44 n - 88 m n - 49 r - 108 m r - 20 m^2 r - 78 n r + 20 m n r + 40 n^2 r - 98 r^2 - 20 m r^2 
\\ & + 40 n r^2 - 120 m n r^2 + 120 n^2 r^2  - 40 m r^3 + 80 m^2 r^3 + 40 n r^3 - 160 m n r^3 + 80 n^2 r^3.
\end{split}
\end{equation}

In the next three Appendices, we classify coincidences between the simple quotients $\cC_{\psi,1B}(n,m)$, $\cC_{\psi,1D}(n,m)$, $\cC_{\psi,2B}(n,m)$, and $\cC_{\psi,2C}(n,m)$ and the algebras $\cW_s(\gs\gp_{2r})$, $\cW_s(\gs\go_{2r})^{\mathbb{Z}_2}$, and $\cW_s(\go\gs\gp_{1|2r})^{\mathbb{Z}_2}$. There are coincidences at central charges $c = 0, 1, -24, -\frac{22}{5}, \frac{1}{2}$, where the algebra degenerates; see \cite[Thm. 8.1]{KL}. Aside from these points, all additional coincidences correspond to intersection points on the truncation curves. This follows from \cite[Cor. 8.2]{KL}, together with a case-by-case analysis to rule out possible additional coincidences at points where the formula for $\lambda$ is not defined. The details are omitted since the argument is similar to the proof of special cases appearing in Section 9 of \cite{KL}. Via our triality results, similar coincidences can be found for $\cC_{\psi,1C}(n,m)$, $\cC_{\psi,2D}(n,m)$, $\cC_{\psi,1O}(n,m)$, and $\cC_{\psi,2O}(n,m)$ and these are also omitted.

\section{Coincidences with type $C$ principal $\cW$-algebras} \label{appendixB}

\begin{thm} \label{coinc:typeC-1B}  ({\bf Type 1B}) We have the following coincidences.
$$\cC_{\psi,1B}(n,m) \cong \cW_s(\gs\gp_{2r}),$$ for $m,n \geq 0$ and $r\geq 1$.

\begin{enumerate}

\item $\ \displaystyle \psi = \frac{1 + m + n + r}{1 + m},\qquad s = -(r+1) + \frac{1 + m + n + r}{2 (n + r)}$, 
\smallskip

\item $\ \displaystyle  \psi = \frac{2 (m + n)}{1 + 2 m + 2 r},\qquad s = -(r+1) + \frac{1 - 2 n + 2 r}{2 (1 + 2 m + 2 r)}$, 

\smallskip

\item $\ \displaystyle  \psi = \frac{1 + 2 m + 2 n + 2 r}{2 m} ,\qquad s = -(r+1) +\frac{1 + 2 n + 2 r}{2 (1 + 2 m + 2 n + 2 r)}$, 

\smallskip

\item $\ \displaystyle  \psi =  \frac{1 + m + n}{1 + m + r} ,\qquad s = -(r+1) +  \frac{1 + m + r}{2 (r -n)}, \qquad r\neq n$,

\smallskip

\item $\ \displaystyle  \psi =  \frac{2 (m + n - r)}{1 + 2 m - 2 r},\qquad s = -(r+1) + \frac{r -m - n}{2 r - 2 m -1}$, 

\smallskip

\item $\ \displaystyle  \psi =  \frac{1 + 2 m + 2 n - 2 r}{2 (m - r)} ,\qquad s = -(r+1) +  \frac{r -m}{2 r - 2 m - 2 n - 1 }, \qquad r \neq m$.
\end{enumerate}
\end{thm}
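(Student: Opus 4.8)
The plan is to reduce the statement to an intersection computation between two explicit rational plane curves. By the exhaustiveness argument of Section~\ref{subsec:exhaust}, for all $n,m\geq 0$ the one-parameter vertex algebra $\cC^{\psi}_{1B}(n,m)$ coincides with the simple quotient $\cW^{\mathrm{ev}}_{I_{1B,n,m}}(c,\lambda)$ of $\cW^{\mathrm{ev}}(c,\lambda)$, and its truncation curve $\Phi_{1B,n,m}(\psi)=(c(\psi),\lambda(\psi))$ admits a closed form: by Theorem~\ref{trialities:parametrization} we have $\Phi_{1B,n,m}(\psi)=\Phi_{1O,n,m+n}(\tfrac{1}{\psi})$, and by \eqref{relationship1} this equals $\Phi_{2B,\,n,\,m+n+\frac12}(\tfrac{1}{2\psi})$, so one substitutes into the explicit formula \eqref{truncationcurve2B}. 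The principal $\cW$-algebra $\cW^s(\gs\gp_{2r})$ is likewise a one-parameter quotient of $\cW^{\mathrm{ev}}(c,\lambda)$, with truncation curve recorded in Appendix~A of \cite{KL}, and by \cite[Rem.~5.3]{KL} its simple quotient $\cW_s(\gs\gp_{2r})$ is generated by the weights $2$ and $4$ fields for all non-critical $s$. By \cite[Thm.~8.1]{KL} and \cite[Cor.~8.2]{KL}, away from the degenerate central charges $c\in\{0,1,-24,-\tfrac{22}{5},\tfrac12\}$, an isomorphism $\cC_{\psi_0,1B}(n,m)\cong\cW_{s_0}(\gs\gp_{2r})$ of simple vertex algebras holds if and only if $\Phi_{1B,n,m}(\psi_0)=\Phi_{\gs\gp_{2r}}(s_0)$, i.e.\ the two truncation curves pass through a common point. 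Hence the theorem is equivalent to finding all such common points and reading off the corresponding parameter values.

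Concretely, I would clear denominators in the system
\begin{equation*}
c_{1B,n,m}(\psi)=c_{\gs\gp_{2r}}(s),\qquad \lambda_{1B,n,m}(\psi)=\lambda_{\gs\gp_{2r}}(s),
\end{equation*}
use the first (low-degree) equation to eliminate $s$, substitute into the cleared second equation, and factor the resulting polynomial in $\psi$ over $\mathbb{Q}(n,m,r)$. Up to factors supported on the degenerate locus or on the poles of the $\lambda$-formulas, this factorization should produce exactly six irreducible factors, one for each family (1)--(6); solving each factor for $\psi$ gives the displayed value, and back-substitution into the $c$-equation gives the matching $s$. The side conditions $r\neq n$ in (4) and $r\neq m$ in (6) are precisely the loci where the corresponding specialization degenerates (a denominator vanishes), so they are excluded. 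For each of the six families one then verifies directly that the listed pair $(\psi,s)$ satisfies both curve equations and that the common central charge is generically non-degenerate, whence \cite[Cor.~8.2]{KL} yields the asserted isomorphism of simple quotients.

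Two points require additional care and constitute the real work. First, at parameter values where the $\lambda$-formula is undefined (poles), \cite[Thm.~8.1]{KL} does not apply verbatim, so one must run the same supplementary argument as in Section~9 of \cite{KL}: compare graded characters and locate the lowest-weight singular vectors using Corollary~\ref{cor:singularvector} to rule out spurious extra coincidences and to confirm the genuine ones. Second, and this is the main obstacle, one must actually carry out the elimination and factorization of the two-variable polynomial system carrying the three symbolic parameters $n,m,r$, and argue that the six families are exhaustive—essentially a Bézout-count bookkeeping that the degrees of the two curves account for exactly these six families together with the handful of degenerate-central-charge coincidences. The conceptual content (that curve intersections detect isomorphisms of simple quotients) is entirely supplied by \cite{KL}; what remains is a controlled but lengthy symbolic computation.
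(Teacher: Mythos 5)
Your proposal takes essentially the same route as the paper: the paper also realizes $\cC_{\psi,1B}(n,m)$ and $\cW_s(\gs\gp_{2r})$ as simple quotients of $\cW^{\rm ev}(c,\lambda)$ along explicit truncation curves (recoverable from \eqref{truncationcurve2B} together with the symmetries of Theorem \ref{trialities:parametrization}), and deduces the listed coincidences from intersection points of those curves via \cite[Thm. 8.1, Cor. 8.2]{KL}, supplemented by a case-by-case analysis at the degenerate central charges and at points where the $\lambda$-formula is undefined, as in Section 9 of \cite{KL}. The elimination/factorization bookkeeping you identify as the remaining work is exactly the part the paper omits as routine.
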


\begin{thm} \label{coinc:typeC-1D}  ({\bf Type 1D}) We have the following coincidences  
$$\cC_{\psi, 1D}(n,m) \cong \cW_s(\gs\gp_{2r}),$$ for $m,n \geq 0$ and $r\geq 1$.

\begin{enumerate}

\item $\ \displaystyle \psi = \frac{m + n + r}{m},\qquad s = -(r+1) + \frac{n + r}{2 (m + n + r)}$,

\smallskip

\item $\ \displaystyle \psi = \frac{2 m + 2 n -1}{2 m + 2 r +1},\qquad s =  -(r+1) +\frac{1 - n + r}{1 + 2 m + 2 r}$, 

\smallskip

\item $\ \displaystyle \psi = \frac{1 + 2 m + 2 n + 2 r}{2 (1 + m)},\qquad s =  -(r+1) + \frac{1 + 2 m + 2 n + 2 r}{2 (2 r+ 2 n -1)}$,

\smallskip

\item $\ \displaystyle  \psi = \frac{1 + 2 m + 2 n}{2 (1 + m + r)},\qquad s =  -(r+1) + \frac{1 + m + r}{1 - 2 n + 2 r}$,

\smallskip

\item $\ \displaystyle \psi = \frac{2 m + 2 n - 2 r -1}{2 m - 2 r + 1} ,\qquad s =  -(r+1) + \frac{1 - 2 m - 2 n + 2 r}{2 (2 r  - 2 m -1)}$, 

\smallskip

\item $\ \displaystyle \psi = \frac{m + n - r}{m - r},\qquad s =  -(r+1) + \frac{r -m}{2 (r -m - n)}, \qquad r \neq m, \ n+m$.

\end{enumerate}
\end{thm}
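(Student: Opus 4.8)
\textbf{Proof strategy for Theorem \ref{coinc:typeC-1D}.}
The plan is to deduce all six families of coincidences from the explicit truncation curves, exactly as outlined in the last sentence before Appendix \ref{appendixB}. First I would recall that, by Theorem \ref{main} (the isomorphism \eqref{1c1c2c} together with the symmetries in Theorem \ref{trialities:parametrization} and the relations \eqref{relationship1}--\eqref{relationship3}), the family $\cC^{\psi}_{1D}(n,m)$ is realized as a one-parameter quotient $\cW^{\rm ev}_{I_{1D,n,m}}(c,\lambda)$, with the truncation curve $V(I_{1D,n,m})$ parametrized rationally by a map $\Phi_{1D,n,m}(\psi)=(c(\psi),\lambda(\psi))$; concretely, $\Phi_{1D,n,m}(\psi)$ is obtained from $\Phi_{2D,n,n+m}(\tfrac{1}{2\psi})$ via \eqref{eq:trialities:parametrization}, hence ultimately from the explicit formula \eqref{truncationcurve2B} for $\Phi_{2B,n,m}(\psi)$ by the substitutions \eqref{relationship2} and the affine-coset reparametrizations. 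Likewise, $\cW^s(\gs\gp_{2r})$ has its own truncation curve, given in Appendix A of \cite{KL}, with a rational parametrization in $s$. The statement is then reduced to finding all intersection points of these two plane curves and reading off the correspondence between $\psi$ and $s$ at each such point.

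The key steps, in order, are: (1) write down $\Phi_{1D,n,m}(\psi)$ explicitly from \eqref{truncationcurve2B} and the symmetry dictionary, using the central charge formula for $\cC^{\psi}_{1D}(n,m)$ recorded in Subsection \ref{subsec:affinecoset}; (2) write down the parametrization of the truncation curve of $\cW^s(\gs\gp_{2r})$ from \cite{KL}; (3) solve the resulting system $c_{1D,n,m}(\psi)=c_{\gs\gp_{2r}}(s)$, $\lambda_{1D,n,m}(\psi)=\lambda_{\gs\gp_{2r}}(s)$ for $\psi$ as a function of $(n,m,r)$ and for $s$ as a function of $(n,m,r)$ — this yields the six branches listed, each a rational root of the elimination ideal; (4) invoke \cite[Cor. 8.2]{KL} to conclude that, away from the finitely many degenerate central charges $c=0,1,-24,-\tfrac{22}{5},\tfrac12$, a coincidence of the simple quotients $\cC_{\psi,1D}(n,m)\cong\cW_s(\gs\gp_{2r})$ forces a common point on the two truncation curves; and (5) run the case-by-case check, as in Section 9 of \cite{KL}, to rule out spurious coincidences at the poles of $\lambda(\psi)$ and at the degenerate central charges, thereby showing the six families are exactly the coincidences claimed. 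The exclusions $r\neq m$ and $r\neq m+n$ in item (6), and the analogous genericity conditions appearing in Theorem \ref{coinc:typeC-1B}, arise precisely as the loci where the relevant parametrization degenerates or where two branches collide, and must be tracked through this analysis.

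The main obstacle is step (3): the curves are cut out by polynomials of fairly high degree in $c$ and $\lambda$ (the numerator $f$ in \eqref{truncationcurve2B} is degree five in each of $m,n,\psi$), so the elimination/resultant computation is heavy, and one must be careful that all rational solution branches are found and correctly matched — i.e. that the six displayed families are complete and that the pairing of $\psi$-values with $s$-values is the right one on each branch. This is a finite symbolic computation (best done with the Mathematica package of \cite{T} or an analogous tool, as already used elsewhere in the paper), but bookkeeping the branches and their domains of validity is where essentially all the work lies. Step (5), the exclusion of coincidences off the curves, is routine given \cite[Cor. 8.2]{KL} but still requires checking the degenerate central charges individually; I would handle these exactly as in the proofs of \cite[Thm. 9.1--9.4]{KL}. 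Since the proof is entirely parallel to those in \cite{KL} and to Theorem \ref{coinc:typeC-1B}, the write-up can legitimately be compressed: state the parametrizations, assert the intersection computation, cite \cite[Cor. 8.2]{KL}, and omit the routine verification, as the paper does for the companion statements.
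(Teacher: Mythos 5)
Your proposal is correct and takes essentially the same route as the paper: the authors obtain these coincidences precisely by intersecting the truncation curve $\Phi_{1D,n,m}$ (derived from the explicit formula \eqref{truncationcurve2B} via the symmetries of Theorem \ref{trialities:parametrization}) with the curve for $\cW^s(\gs\gp_{2r})$ from Appendix A of \cite{KL}, then invoking \cite[Thm. 8.1, Cor. 8.2]{KL} plus a case-by-case check at the degenerate central charges and the points where $\lambda$ is undefined, with the routine details omitted as in Section 9 of \cite{KL}.
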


\begin{thm}  \label{coinc:typeC-2B} ({\bf Type 2B}) We have the following coincidences  
$$\cC_{\psi, 2B}(n,m) \cong \cW_s(\gs\gp_{2r}),$$
for $m,n \geq 0$ and $r\geq 1$.

\begin{enumerate}

\item $\ \displaystyle \psi = \frac{1 + 2 m - 2 n + 2 r}{2 (1 + 2 m)} ,\qquad s = -(r+1) + \frac{1 + 2 m - 2 n + 2 r}{4 (r -n)}, \qquad r \neq n$, 

\smallskip

\item $\ \displaystyle \psi = \frac{1 + 2 m - 2 n}{2 (1 + 2 m + 2 r)},  \qquad s = -(r+1) + \frac{1 + 2 m + 2 r}{4 (n + r)}$, 
\smallskip

\item $\ \displaystyle \psi = \frac{m - n + r}{ 2 m -1},\qquad s = -(r+1) + \frac{1 - 2 n + 2 r}{4 (m - n + r)}, \qquad r \neq n-m$,

\smallskip

\item $\ \displaystyle \psi = \frac{m - n - r}{ 2 m - 2 r -1},\qquad s = -(r+1) + \frac{1 - 2 m + 2 r}{4 (n -m + r)},\qquad r \neq m-n$, 
\smallskip

\item $\ \displaystyle \psi = \frac{2 m - 2 n - 2 r -1}{4 (m - r)},\qquad s = -(r+1) + \frac{1 - 2 m + 2 n + 2 r}{4 (r -m)},\qquad r\neq m$, 
\smallskip

\item $\ \displaystyle \psi = \frac{2 m - 2 n - 1}{4 (m + r)} ,\qquad s = -(r+1) + \frac{1 + 2 n + 2 r}{4 (m + r)}$. 

\end{enumerate}
\end{thm}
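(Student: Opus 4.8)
The plan is to reduce the classification of coincidences to a finite computation with the truncation curves of Appendix~\ref{appendixA}. Recall from Section~\ref{sec:proofmain} that $\cC^{\psi}_{2B}(n,m)$ is a one-parameter quotient of the universal even spin algebra $\cW^{\text{ev}}(c,\lambda)$, with truncation curve parametrized by $\Phi_{2B,n,m}(\psi) = \big(c_{2B,n,m}(\psi),\lambda_{2B,n,m}(\psi)\big)$ as in \eqref{truncationcurve2B}. Likewise, by \cite{KL}, each $\cW^s(\gs\gp_{2r})$ is a one-parameter quotient of $\cW^{\text{ev}}(c,\lambda)$ whose truncation curve is the explicit rational curve $s \mapsto \big(c(s),\lambda(s)\big)$ given in Appendix~A of \cite{KL}. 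Both families specialize, at generic parameter values, to the simple quotient of $\cW^{\text{ev},I}(c,\lambda)$ for the maximal ideal $I = (c-c_0,\lambda-\lambda_0)$ cut out by the corresponding point of the curve.

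The first step is to invoke the coincidence criterion \cite[Thm.~8.1, Cor.~8.2]{KL}: two simple quotients of $\cW^{\text{ev}}(c,\lambda)$ at distinct maximal ideals can be isomorphic only if they share the same central charge and that central charge lies in the degenerate set $\{0,1,-24,-\frac{22}{5},\frac12\}$. Hence, outside these finitely many central charges, a coincidence $\cC_{\psi,2B}(n,m) \cong \cW_s(\gs\gp_{2r})$ forces $\Phi_{2B,n,m}(\psi) = \big(c(s),\lambda(s)\big)$, so $\psi$ and $s$ must be parameter values mapping to a common point of the two truncation curves; and conversely, at any such common point the two simple quotients both equal the simple quotient of $\cW^{\text{ev},I}(c,\lambda)$ for the shared maximal ideal, hence coincide.

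The second step is the explicit intersection computation, which is essentially carried out in Appendix~\ref{appendixA}: the truncation curve of $\cC^{\psi}_{2B}(n,m)$ meets that of $\cW^s(\gs\gp_{2r})$ in the single point $(c,\lambda)$ displayed in \eqref{app:intersectionpoint}. It therefore remains, for fixed $n,m,r$, to list all $\psi$ with $\Phi_{2B,n,m}(\psi)$ equal to that point, and all $s$ with $\big(c(s),\lambda(s)\big)$ equal to that point. Since both maps are rational of bounded degree, these are finite lists (three preimages on the $2B$ side, two on the $\gs\gp_{2r}$ side), and pairing them off yields exactly the six families in the statement, the level $s = s(n,m,r)$ being read off from each pairing. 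Along the way one records the specializations that must be excluded, such as $r\neq n$ in family~(1), where $\psi$ or $s$ would be critical or the parametrization undefined.

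The last step, and the main obstacle, is the boundary analysis. At the degenerate central charges and at the finitely many $\psi$, $s$ where $\lambda_{2B,n,m}$ or $\lambda$ has a pole, the curve-intersection argument does not directly apply; there one argues separately by comparing central charges and strong generating types and using the explicit degenerations of $\cW^{\text{ev}}(c,\lambda)$ from \cite[Thm.~8.1]{KL}, to verify that no further coincidences arise and that the six families are not missing endpoints. This case-by-case bookkeeping is routine but delicate, and parallels the treatment of the special cases in Section~9 of \cite{KL}; everything else is forced once the truncation curves are known.
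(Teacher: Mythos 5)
Your overall strategy is the same as the paper's: by \cite[Thm.~8.1, Cor.~8.2]{KL}, away from the degenerate central charges $c=0,1,-24,-\tfrac{22}{5},\tfrac12$ and away from the points where $\lambda$ is undefined, coincidences of simple one-parameter quotients of $\cW^{\text{ev}}(c,\lambda)$ are exactly the intersection points of the truncation curves, and the remaining boundary cases are handled by a case-by-case analysis as in Section~9 of \cite{KL}. That part of your argument is fine.

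The genuine gap is in your second step. You assert that the curve $\Phi_{2B,n,m}$ of \eqref{truncationcurve2B} meets the type $C$ curve of \cite{KL} in the \emph{single} point \eqref{app:intersectionpoint}, and that the six families arise by pairing three $\psi$-preimages with two $s$-preimages of that one point. This is false, and the theorem itself shows it: the six families list six generically distinct values of $\psi$ and six generically distinct values of $s$, paired bijectively, not a $3\times 2$ product pattern. Moreover the six values of $s$ generically determine six distinct central charges: the central charge of $\cW_s(\gs\gp_{2r})$, as a function of $\psi_s=s+r+1$, takes the same value at $\psi_s\neq\psi_s'$ only when $\psi_s\psi_s'=\tfrac{r+1}{2(2r-1)}$, and one checks (e.g.\ at $(n,m,r)=(1,2,3)$, where the six values $\psi_s$ are $\tfrac98,\tfrac{11}{16},\tfrac{5}{16},\tfrac38,\tfrac54,\tfrac{9}{20}$) that no two of the listed levels are so related. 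Hence the two truncation curves intersect in six distinct points (generically), and \eqref{app:intersectionpoint} records only the one corresponding to family (2), which is the point used for the exhaustiveness argument in Section~\ref{sec:proofmain}. The computation that is actually required --- and is what the paper omits as routine --- is to solve $\Phi_{2B,n,m}(\psi)=\big(c(s),\lambda(s)\big)$ simultaneously in the two unknowns $(\psi,s)$ and list all solution families, each contributing one pair $(\psi,s)$; your procedure of taking preimages of the single displayed point would recover only family (2) (and whatever other parameters happen to land on that same point in degenerate specializations), not the full list, so the proof as proposed does not establish the statement.
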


\begin{thm}  \label{coinc:typeC-2C} ({\bf Type 2C}) We have the following coincidences  
$$\cC_{\psi,2C}(n,m)  \cong \cW_s(\gs\gp_{2r}),$$
for $m,n \geq 0$ and $r\geq 1$.

\begin{enumerate}

\item $\ \displaystyle \psi = \frac{1 + m + n + r}{1 + 2 m} ,\qquad s = -(r+1) + \frac{1 + m + n + r}{1 + 2 n + 2 r}$,

\smallskip

\item $\ \displaystyle \psi = \frac{1 + m + n}{1 + 2 m + 2 r} ,\qquad s = -(r+1) + \frac{1 + 2 m + 2 r}{2 (2 r  - 2 n -1)}$, 

\smallskip

\item $\ \displaystyle \psi = \frac{1 + 2 m + 2 n + 2 r}{2 (2 m -1)} ,\qquad s = -(r+1) + \frac{1 + n + r}{1 + 2 m + 2 n + 2 r}$, 

\smallskip

\item $\ \displaystyle \psi = \frac{m + n}{2 (m + r)} ,\qquad s = -(r+1) + \frac{r -n}{2 (m + r)}$, 

\smallskip

\item $\ \displaystyle \psi = \frac{m + n - r}{2 (m - r)},\qquad s = -(r+1) + \frac{r -m - n}{2 (r -m)},\qquad r \neq m$, 

\smallskip

\item $\ \displaystyle \psi = \frac{1 + 2 m + 2 n - 2 r}{2 ( 2 m - 2 r -1)},\qquad s = -(r+1) + \frac{1 - 2 m + 2 r}{2 (2 r  - 2 m - 2 n -1)}$.

\end{enumerate}
\end{thm}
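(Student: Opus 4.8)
The plan is to follow the scheme described in the preamble to this appendix and in Section \ref{subsec:exhaust}. By Theorem \ref{main}, the simple one-parameter vertex algebra $\cC^{\psi}_{2C}(n,m)$ is isomorphic to the simple quotient $\cW^{{\rm ev}}_{I_{2C,n,m}}(c,\lambda)$ of $\cW^{{\rm ev}}(c,\lambda)$, whose truncation curve $V(I_{2C,n,m}) \subseteq \mathbb{C}^2$ is rationally parametrized by a map $\Phi_{2C,n,m}(\psi) = \big(c_{2C,n,m}(\psi), \lambda_{2C,n,m}(\psi)\big)$; an explicit formula for this map can be extracted from the formula \eqref{truncationcurve2B} for $\Phi_{2B,n,m}(\psi)$ together with the symmetries \eqref{eq:trialities:parametrization} and \eqref{relationship1}--\eqref{relationship3}. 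Alternatively, using $\cC^{\psi}_{2C}(0,r) = \cW^{\psi-r-1}(\gs\gp_{2r})$, one sees that $\Phi_{2C,0,r}$ is, after the substitution $s = \psi - r - 1$, the truncation curve for the universal principal $\cW$-algebra $\cW^s(\gs\gp_{2r})$, which is also recorded in \cite[Appendix A]{KL}; denote the latter curve by $\Phi^{\gs\gp_{2r}}(s) = \big(c^{\gs\gp_{2r}}(s), \lambda^{\gs\gp_{2r}}(s)\big)$, and its defining ideal by $I^{\gs\gp_{2r}}$.

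The first step is to verify, for each of the six candidate families of arithmetic relations listed in the statement, that the two image points $\Phi_{2C,n,m}(\psi)$ and $\Phi^{\gs\gp_{2r}}(s)$ coincide. This is a direct, if lengthy, rational-function computation. Once it is done, \cite[Thm. 8.1]{KL} applies: since $I_{2C,n,m}$ and $I^{\gs\gp_{2r}}$ then contain a common point $(c,\lambda)$ of $\mathbb{C}^2$, and the simple quotient of $\cW^{{\rm ev}}(c_0,\lambda_0)$ at a fixed $(c_0,\lambda_0)$ is unique away from the finitely many central charges at which the algebra degenerates, we conclude $\cC_{\psi, 2C}(n,m) \cong \cW_s(\gs\gp_{2r})$ for the indicated $\psi$ and $s$. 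This establishes that each of the six families of coincidences genuinely occurs.

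For the converse — that these families, together with the degenerate points, exhaust all coincidences — I would argue as in Section 9 of \cite{KL}. By \cite[Cor. 8.2]{KL}, away from the central charges $c \in \{0, 1, -24, -\frac{22}{5}, \frac{1}{2}\}$ any isomorphism $\cC_{\psi, 2C}(n,m) \cong \cW_s(\gs\gp_{2r})$ forces $\Phi_{2C,n,m}(\psi) = \Phi^{\gs\gp_{2r}}(s)$, so one solves this system in $\psi$ and $s$ with $n,m,r$ as parameters. The equation on the $c$-coordinates is polynomial of low degree in each of $\psi$ and $s$, hence defines finitely many branches $s = s_j(\psi)$; substituting each into the equation on the $\lambda$-coordinates and clearing denominators yields a polynomial in $\psi$ whose factorization reproduces exactly the six families above. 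What remains is a finite, case-by-case inspection of the excluded central charges and of the values of $\psi$ (resp. $s$) at which $\lambda_{2C,n,m}$ (resp. $\lambda^{\gs\gp_{2r}}$) has a pole; at each such point one checks directly, using \cite[Eq. (3.6), (3.8)]{KL} and the recursive OPE structure of \cite[Thm. 3.9]{KL}, whether an extra coincidence appears. The main obstacle is the size of this elimination establishing completeness together with the boundary bookkeeping at the poles; since it is entirely parallel to the special cases carried out in \cite{KL}, we only indicate the structure of the argument and omit the computation.
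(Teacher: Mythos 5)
Your proposal follows essentially the same route as the paper: realize both algebras as simple quotients of $\cW^{\mathrm{ev}}(c,\lambda)$, verify the six families as intersection points of the truncation curves $\Phi_{2C,n,m}$ and the curve for $\cW^s(\gs\gp_{2r})$ from \cite[App.\ A]{KL}, and obtain exhaustiveness from \cite[Thm.\ 8.1, Cor.\ 8.2]{KL} plus a case-by-case check at the degenerate central charges and the poles of $\lambda$, exactly as the paper indicates (it likewise omits the elimination computation, citing Section 9 of \cite{KL}). The argument and its omitted computational content are the same as the paper's, so the proposal is correct.
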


\section{Coincidences with orbifolds of type $D$ principal $\cW$-algebras} \label{appendixC}

\begin{thm} \label{coinc:typeD-1B} ({\bf Type 1B}) We have the following coincidences 
$$\cC_{\psi, 1B}(n,m) \cong  \cW_s(\gs\go_{2r})^{\mathbb{Z}_2},$$ for $m,n \geq 0$ and $r\geq 2$.
\begin{enumerate}

\item $\ \displaystyle \psi = \frac{2 (m + n + r)}{1 + 2 m} ,\qquad s = -(2r-2) + \frac{2 n + 2 r -1}{2 (m + n + r)}$,

\smallskip

\item $\ \displaystyle \psi = \frac{1 + 2 m + 2 n}{2 (m + r)} ,\qquad s = -(2r-2) + \frac{2 r - 2 n -1}{2 (m + r)}$, 

\smallskip

\item $\ \displaystyle \psi = \frac{1 + m + n - r}{1 + m - r} ,\qquad s = -(2r-2) + \frac{r - m - n  -1}{r - m -1},\qquad r \neq m+1$.
\end{enumerate}
\end{thm}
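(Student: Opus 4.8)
The plan is to verify Theorem \ref{coinc:typeD-1B} by intersecting explicit truncation curves, exactly as described in the opening paragraph of Appendix \ref{appendixC}. By Theorem \ref{main} and Subsection \ref{subsec:exhaust} we know that $\cC^\psi_{1B}(n,m)$ is the simple quotient $\cW^{\mathrm{ev}}_{I_{1B,n,m}}(c,\lambda)$ of the universal even spin algebra, where the variety $V(I_{1B,n,m})$ is parametrized by $\Phi_{1B,n,m}(\psi)=(c(\psi),\lambda(\psi))$. By the relations \eqref{eq:trialities:parametrization} and \eqref{relationship1}–\eqref{relationship3} of Theorem \ref{trialities:parametrization}, the function $\Phi_{1B,n,m}$ is obtained from the explicit formula \eqref{truncationcurve2B} for $\Phi_{2B,n,m}$ (first pass to $\Phi_{1O,n,n+m}(1/\psi)$, then use \eqref{relationship1}). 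Likewise, the truncation curve for $\cW^s(\gs\go_{2r})^{\mathbb{Z}_2}$, hence for its simple quotient $\cW_s(\gs\go_{2r})^{\mathbb{Z}_2}$, is given in \cite[Thm. 6.3]{KL} (valid for $r\geq 2$ by the discussion following \eqref{virasoroasquotient}). So the first step is simply to write down both rational parametrizations and compute their intersection points in the $(c,\lambda)$-plane.

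Second, I would carry out the elimination: set the two $c$-expressions equal and the two $\lambda$-expressions equal, and solve the resulting system for the pair $(\psi,s)$ in terms of $n,m,r$. Because both curves are rational plane curves, eliminating one parameter produces a polynomial whose factorization yields the finitely many branches; the three families listed in the theorem should emerge as the three ``generic'' branches, the remaining factors corresponding either to the degenerate central charges $c=0,1,-24,-\tfrac{22}{5},\tfrac12$ (already accounted for by \cite[Thm. 8.1]{KL}) or to spurious solutions where the formula for $\lambda$ has a pole. This is best done with the same Mathematica setup \cite{T} used throughout the paper. Then, for each of the three branches, one substitutes back to confirm that $\Phi_{1B,n,m}(\psi)$ and the $\cW_s(\gs\go_{2r})^{\mathbb{Z}_2}$-curve genuinely agree at that point (not merely that the $c$-coordinates match), giving the claimed isomorphism of simple quotients via \cite[Cor. 8.2]{KL}.

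Third, I would dispose of the possible coincidences at points where $\lambda$ is undefined. Here the argument is the case-by-case analysis referred to in the appendix's preamble and modeled on Section 9 of \cite{KL}: at a value of $\psi$ where the denominator of $\lambda_{1B,n,m}(\psi)$ vanishes, one checks directly (using the recursive OPE structure of $\cW^{\mathrm{ev}}(c,\lambda)$ from \cite[Thm. 3.9]{KL}, as in Remark \ref{rem:collapsing}) whether the weight-$4$ field decouples, i.e. whether the algebra degenerates to something of type $\cW(2)$ or collapses further; these points do not produce new coincidences of the stated form, so they may be excluded. The same must be done for the $\cW_s(\gs\go_{2r})^{\mathbb{Z}_2}$ side.

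I expect the main obstacle to be purely computational bookkeeping rather than conceptual: the polynomial $f$ in \eqref{truncationcurve2B} is already a degree-five expression in $\psi$ with many monomials in $n,m$, and applying the substitution chain of Theorem \ref{trialities:parametrization} and then intersecting with the $\gs\go_{2r}$-curve produces large expressions whose factorization must be organized carefully so that exactly the three advertised branches survive. A secondary subtlety is getting the shifts in $r$ and the $\mathbb{Z}_2$-orbifold normalization right, and remembering the improvement noted after \eqref{virasoroasquotient} so that the $r=2$ case of \cite[Thm. 6.3]{KL} is legitimately available; the stated range $r\geq 2$ reflects exactly that the $\gs\go_{2r}^{\mathbb{Z}_2}$-curve is only defined there. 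Once the intersection points are in hand, the identification of simple quotients is immediate from \cite[Thm. 8.1, Cor. 8.2]{KL}, so no further vertex-algebraic input is needed.
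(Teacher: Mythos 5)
Your proposal follows essentially the same route as the paper, which itself only sketches the argument in the preamble to the appendices: realize $\cC_{\psi,1B}(n,m)$ as the simple quotient on the truncation curve obtained from \eqref{truncationcurve2B} via the symmetries of Theorem \ref{trialities:parametrization}, intersect with the $\cW_s(\gs\go_{2r})^{\mathbb{Z}_2}$ curve of \cite[Thm. 6.3]{KL} (extended to $r=2$), and invoke \cite[Thm. 8.1, Cor. 8.2]{KL} together with a case-by-case exclusion of degenerate and pole points. The details you outline, including the correct substitution chain $\Phi_{1B,n,m}(\psi)=\Phi_{1O,n,m+n}(1/\psi)=\Phi_{2B,n,m+n+\frac{1}{2}}(1/(2\psi))$ and the treatment of points where $\lambda$ is undefined, are exactly the omitted computation the paper refers to.
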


\begin{thm} \label{coinc:typeD-1D}  ({\bf Type 1D}) We have the following coincidences  
$$\cC_{\psi, 1D}(n,m) \cong \cW_s(\gs\go_{2r})^{\mathbb{Z}_2},$$ for $m,n \geq 0$ and $r\geq 2$.

\begin{enumerate}

\item $\ \displaystyle \psi = \frac{2 m + 2 n + 2 r -1}{1 + 2 m},\qquad s = -(2r-2) + \frac{2 (n + r-1)}{2 m + 2 n + 2 r -1}$,

\smallskip

\item $\ \displaystyle \psi = \frac{m + n}{m + r} ,\qquad s = -(2r-2) +\frac{r -n}{m + r}$, 

\smallskip

\item $\ \displaystyle \psi = \frac{1 + 2 m + 2 n - 2 r}{2 (1 + m - r)},\qquad s = -(2r-2) + \frac{2r - 2 m - 2 n  -1}{2 (r - m  -1)},\qquad r \neq m+1$. 
\end{enumerate}
\end{thm}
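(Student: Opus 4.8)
The plan is to reduce this to the intersection-theory machinery already assembled in the paper. By the central structural result (Theorem~\ref{main}, proved via Step~3 in Section~\ref{subsec:exhaust}), each $\cC^{\psi}_{1D}(n,m)$ is the simple one-parameter quotient $\cW^{\rm ev}_{I_{1D,n,m}}(c,\lambda)$ whose truncation curve $V(I_{1D,n,m})$ is parametrized by $\Phi_{1D,n,m}(\psi) = (c_{1D,n,m}(\psi),\lambda_{1D,n,m}(\psi))$. This parametrization is obtained, via the symmetries \eqref{eq:trialities:parametrization} and \eqref{relationship2}, from the explicit formula \eqref{truncationcurve2B} for $\Phi_{2B,n,m}(\psi)$ given in Appendix~\ref{appendixA}. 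On the other side, the truncation curve for $\cW^s(\gs\go_{2r})^{\mathbb{Z}_2}$ is recorded in \cite[Thm.~6.3]{KL} (extended to $r=2$ in Section~\ref{sec:VOA}). So the statement is really the assertion that for the three stated families of $(\psi,s)$, the points $\Phi_{1D,n,m}(\psi)$ lie on the curve for $\cW^s(\gs\go_{2r})^{\mathbb{Z}_2}$, and conversely that these exhaust the intersection points (up to the degenerate central charges $c=0,1,-24,-\tfrac{22}{5},\tfrac12$).

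\textbf{Key steps.} First I would write down the composite rational map $\psi \mapsto \Phi_{1D,n,m}(\psi)$ explicitly in terms of \eqref{truncationcurve2B}, \eqref{eq:trialities:parametrization}, and \eqref{relationship2}: concretely, $\Phi_{1D,n,m}(\psi) = \Phi_{2D,n,n+m}\big(\tfrac{1}{2\psi}\big) = \Phi_{2B,n-\frac12,n+m}\big(\tfrac{1}{2\psi}\big)$ by \eqref{relationship2}, so the formula for $c_{1D,n,m}$ and $\lambda_{1D,n,m}$ follows by substituting $n\mapsto n-\tfrac12$, $m\mapsto n+m$, $\psi\mapsto \tfrac{1}{2\psi}$ in \eqref{truncationcurve2B} (and this matches the central charge formula already given for $\cC^{\psi}_{1D}(n,m)$ in Section~\ref{subsec:affinecoset}, which serves as a useful consistency check). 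Second, I would substitute each of the three candidate $\psi$-values from the statement and verify directly that the resulting $(c,\lambda)$ equals the point obtained by plugging $s$ into the $\cW^s(\gs\go_{2r})^{\mathbb{Z}_2}$ parametrization of \cite[Thm.~6.3]{KL}; this is a finite symbolic computation in $m,n,r$. Third, for exhaustiveness, I would invoke \cite[Cor.~8.2]{KL}: two simple quotients of $\cW^{\rm ev}(c,\lambda)$ at parameters on their respective truncation curves coincide iff those curves meet, so the coincidences are governed by solving $\Phi_{1D,n,m}(\psi) = \Phi^{\gs\go_{2r}^{\mathbb{Z}_2}}(s)$; eliminating one variable yields a polynomial whose factorization produces exactly the three families, and one then checks no extra solutions hide at poles of $\lambda$ by the case-by-case argument indicated at the start of Appendix~\ref{appendixB}. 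Finally I would note that the passage from an isomorphism of one-parameter vertex algebras (generic $\psi$) to the stated isomorphism of simple quotients at the specific rational levels uses Theorem~\ref{thm:uniquenesswnm}(2) together with the fact, recorded in the discussion following \eqref{virasoroasquotient}, that the relevant simple $\cW$-algebras remain generated by their weight $2$ and $4$ fields at non-critical levels.

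\textbf{Main obstacle.} The conceptual content is entirely in the already-proved Theorem~\ref{main} and the Kanade--Linshaw coincidence criterion; the genuine work here is the symbolic verification that the three $(\psi,s)$-families in the statement are precisely the roots of the resultant of the two curve-parametrizations, and in particular that the exhaustiveness half does not acquire spurious branches. Thus the hard part will be the bookkeeping in the elimination: the polynomials coming from \eqref{truncationcurve2B} are of high degree in $m,n$, and one must be careful (i) to track which apparent solutions actually lie in the allowed ranges $m,n\ge 0$, $r\ge 2$, and (ii) to handle separately the loci where the formula for $\lambda_{1D,n,m}(\psi)$ or for $\lambda$ on the $\gs\go_{2r}^{\mathbb{Z}_2}$ side degenerates, where one falls back on the degenerate-central-charge analysis of \cite[Thm.~8.1]{KL} rather than on intersection of curves. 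None of this is deep, but it is where an error would most plausibly creep in, so I would organize the computation so that the three claimed families and the degenerate points visibly account for the full factorization of the resultant.
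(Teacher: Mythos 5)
Your proposal follows essentially the same route as the paper: identify $\cC^{\psi}_{1D}(n,m)$ with a one-parameter quotient of $\cW^{\rm ev}(c,\lambda)$ via Theorem \ref{main}, obtain its truncation curve from \eqref{truncationcurve2B} through the symmetries \eqref{eq:trialities:parametrization} and \eqref{relationship2}, and read off the coincidences as intersection points with the curve for $\cW^s(\gs\go_{2r})^{\mathbb{Z}_2}$ from \cite[Thm. 6.3]{KL}, using \cite[Thm. 8.1, Cor. 8.2]{KL} together with a separate check at the degenerate central charges and at poles of $\lambda$ --- precisely the (largely omitted) argument the paper indicates before Appendix \ref{appendixB}. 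One small correction: the passage to the isomorphism of simple quotients at the specific rational levels is already furnished by \cite[Thm. 8.1]{KL}, not by Theorem \ref{thm:uniquenesswnm}(2), which plays no role in this classification.
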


\begin{thm} \label{coinc:typeD-2B} ({\bf Type 2B}) We have the following coincidences  
$$\cC_{\psi, 2B}(n,m) \cong \cW_s(\gs\go_{2r})^{\mathbb{Z}_2},$$
for $m,n \geq 0$ and $r\geq 2$.
\begin{enumerate}

\item $\ \displaystyle \psi = \frac{2 m - 2 n + 2 r - 1}{4 m},\qquad s = -(2r-2) + \frac{2 r  - 2 n -1}{2 m - 2 n + 2 r -1}$,
\smallskip

\item $\ \displaystyle \psi =\frac{1 + 2 m - 2 n - 2 r}{2 (1 + 2 m - 2 r)}, \qquad s = -(2r-2) +\frac{2 r  - 2 m - 1}{2 n + 2 r - 2 m - 1}$, 
\smallskip

\item $\ \displaystyle \psi = \frac{m - n}{2 m + 2 r - 1},\qquad s = -(2r-2) + \frac{2 n + 2 r -1}{2 m + 2 r -1}$. 

\end{enumerate} \end{thm}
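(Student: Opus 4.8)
The plan is to run the same template used throughout Appendices \ref{appendixB}--\ref{appendixD} and in Section 9 of \cite{KL}. By Theorem \ref{thm:tildecpsi} together with Step 3 in the proof of Theorem \ref{main}, the simple quotient $\cC_{\psi,2B}(n,m)$ is the simple quotient of $\cW^{\mathrm{ev},I_{2B,n,m}}(c,\lambda)$, whose truncation curve $V(I_{2B,n,m})$ is the rational curve parametrized explicitly by \eqref{truncationcurve2B}. On the other side, $\cW_s(\gs\go_{2r})^{\mathbb{Z}_2}$ is the simple quotient of $\cW^{\mathrm{ev},J}(c,\lambda)$ where $V(J)$ is the truncation curve for $\cW^s(\gs\go_{2r})^{\mathbb{Z}_2}$ recorded in \cite[Thm. 6.3]{KL} (valid for all $r\geq 2$, as noted in our review of $\cW^{\mathrm{ev}}(c,\lambda)$). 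First I would invoke \cite[Thm. 8.1]{KL}: away from the degenerate central charges $c\in\{0,1,-24,-\tfrac{22}{5},\tfrac{1}{2}\}$, two simple one-parameter quotients of $\cW^{\mathrm{ev}}(c,\lambda)$ are isomorphic precisely when they share the same value of $(c,\lambda)$. Hence at a generic point an isomorphism $\cC_{\psi,2B}(n,m)\cong\cW_s(\gs\go_{2r})^{\mathbb{Z}_2}$ forces $(c,\lambda)$ to be a common point of the two truncation curves.

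The core step is then to intersect the curves. Substituting the parametrization $\bigl(c_{2B,n,m}(\psi),\lambda_{2B,n,m}(\psi)\bigr)$ from \eqref{truncationcurve2B} and the analogous $\bigl(c(s,r),\lambda(s,r)\bigr)$ for $\cW_s(\gs\go_{2r})^{\mathbb{Z}_2}$ into the pair of equations $c_{2B,n,m}(\psi)=c(s,r)$ and $\lambda_{2B,n,m}(\psi)=\lambda(s,r)$, and clearing denominators, produces a polynomial system in $\psi,s$ with coefficients in $\mathbb{Z}[n,m,r]$. The $c$-equation is a bi-quadratic relation between $\psi$ and $s$, and imposing the $\lambda$-equation cuts this down to a finite list of branches; solving with a computer algebra package as in \cite{KL} yields exactly the three families stated, each paired with the asserted formula for $s$. (The Feigin--Frenkel duals of these branches are already subsumed by the symmetry $\psi\mapsto\psi/(2\psi-1)$ in \eqref{eq:trialities:parametrization} relating $\Phi_{2B,n,m}$ to itself, so they do not produce new cases.) One then checks directly along each branch that the shared central charge avoids the five degenerate values for generic $n,m,r$, so the isomorphism of simple quotients indeed holds there, and by continuity (both sides being one-parameter vertex algebras) at the remaining values covered by the stated hypotheses.

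The only remaining point is to exclude spurious coincidences at the finitely many $\psi$ (resp. $s$) where the rational functions $\lambda_{2B,n,m}$ (resp. the $\lambda$-coordinate of the $\gs\go_{2r}$-curve) have poles, since \cite[Thm. 8.1]{KL} does not apply verbatim there. This is handled exactly as in \cite[Cor. 8.2]{KL} and the case analyses of Section 9 of \cite{KL}: at such a pole either the algebra collapses to one of the low-rank exceptional quotients of $\cW^{\mathrm{ev}}(c,\lambda)$, or its minimal strong generating type drops, and in either situation a short comparison of the weight of the lowest-weight singular vector — available from Corollary \ref{cor:singularvector} — rules out any additional isomorphism with $\cW_s(\gs\go_{2r})^{\mathbb{Z}_2}$. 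The main obstacle is purely computational: organizing and solving the large polynomial system obtained by equating the two explicit truncation curves, and bookkeeping the poles; there is no conceptual difficulty beyond what is already present in \cite{KL}, and the argument is uniform with the proofs of Theorems \ref{coinc:typeC-1B}--\ref{coinc:typeD-2B}.
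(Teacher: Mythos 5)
Your proposal follows essentially the same route as the paper: the authors also identify both sides as simple quotients of $\cW^{\mathrm{ev}}(c,\lambda)$, obtain the coincidences as intersection points of the explicit truncation curves via \cite[Thm. 8.1, Cor. 8.2]{KL}, and dispose of possible extra coincidences at degenerate central charges and at poles of $\lambda$ by a case-by-case analysis modeled on Section 9 of \cite{KL}, with the computational details omitted. Your write-up just makes explicit the polynomial elimination and the use of Corollary \ref{cor:singularvector} that the paper leaves implicit.
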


\begin{thm} \label{coinc:typeD-2C} ({\bf Type 2C}) We have the following coincidences  
$$\cC_{\psi, 2C}(n,m) \cong \cW_s(\gs\go_{2r})^{\mathbb{Z}_2},$$
for $m,n \geq 0$ and $r\geq 2$.
\begin{enumerate}

\item $\ \displaystyle \psi =\frac{m + n + r}{2 m} ,\qquad s = -(2r-2) + \frac{n + r}{m + n + r}$, 

\smallskip

\item $\ \displaystyle \psi = \frac{1 + 2 m + 2 n}{2 (2 m + 2 r -1)},\qquad s = -(2r-2) + \frac{2 ( r -n -1)}{2 m + 2 r -1}$, 

\smallskip

\item $\ \displaystyle \psi = \frac{1 + m + n - r}{1 + 2 m - 2 r},\qquad s = -(2r-2) +\frac{2 (r - m - n - 1)}{2 r - 2 m - 1}$.
\end{enumerate}
\end{thm}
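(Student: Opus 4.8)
The argument follows the template used throughout \cite{KL} for classifying coincidences among one-parameter quotients of $\cW^{\mathrm{ev}}(c,\lambda)$. By Theorem \ref{main} we have $\cC^{\psi}_{2C}(n,m) = \tilde{\cC}^{\psi}_{2C}(n,m)$, which by Corollary \ref{cor:extension} and the argument of Subsection \ref{subsec:exhaust} is the quotient of $\cW^{\mathrm{ev}}(c,\lambda)$ along the truncation curve parametrized by $\Phi_{2C,n,m}$; an explicit rational parametrization $\psi \mapsto (c_{2C,n,m}(\psi),\lambda_{2C,n,m}(\psi))$ is obtained from the formula \eqref{truncationcurve2B} for $\Phi_{2B,n,m}$ together with the symmetries \eqref{eq:trialities:parametrization} and \eqref{relationship3} (which recover $\Phi_{2C,n,m}$ from $\Phi_{1C,n,n+m}$ and then from $\Phi_{2B,\bullet,\bullet}$). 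On the other side, by \cite[Thm. 6.3]{KL}, which by the remarks in Section \ref{sec:VOA} is valid for $r\geq 2$, the orbifold $\cW^s(\gs\go_{2r})^{\mathbb{Z}_2}$ is realized as a quotient of $\cW^{\mathrm{ev}}(c,\lambda)$ with an explicit rational parametrization $s \mapsto (c^{D}_r(s),\lambda^{D}_r(s))$. Since both sides are simple one-parameter vertex algebras of type $\cW(2,4,\dots)$, \cite[Thm. 8.1]{KL} and \cite[Cor. 8.2]{KL} imply that, away from the degenerate central charges $c \in \{0,1,-24,-\frac{22}{5},\frac12\}$, the simple quotients $\cC_{\psi,2C}(n,m)$ and $\cW_s(\gs\go_{2r})^{\mathbb{Z}_2}$ are isomorphic exactly when the points $\Phi_{2C,n,m}(\psi)$ and $(c^{D}_r(s),\lambda^{D}_r(s))$ coincide, i.e.\ when the two truncation curves meet.

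Thus the proof reduces to solving the system
\[ c_{2C,n,m}(\psi) = c^{D}_r(s), \qquad \lambda_{2C,n,m}(\psi) = \lambda^{D}_r(s) \]
for $\psi$ and $s$ as functions of $n,m,r$. Since $c^{D}_r(s)$ is a rational function of $s$ of low degree, one first eliminates $s$ by solving the $c$-equation for $s$ and substituting into the $\lambda$-equation; this produces a single polynomial identity in $\psi$ (with parameters $n,m,r$) whose irreducible factors cut out the three branches in the statement. For each branch the corresponding value of $s$ is read off and simplified, yielding the displayed formulas for $\psi$ and $s$. One then checks that on each branch the point genuinely lies on both curves, not merely on the Zariski closure introduced by clearing denominators, and that the denominators $1+2m-2r$ and $2r-2m-1$ of branch (3) never vanish for nonnegative integers $m,r$, so that no exceptional cases need to be excluded. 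Finally, one carries out the finite case analysis of \cite[Section 9]{KL}: at the degenerate central charges and at the finitely many $\psi$ for which $\lambda_{2C,n,m}(\psi)$ or $\lambda^{D}_r(s)$ is undefined, a direct comparison of the (finitely many) candidate vertex algebras rules out any further coincidences. Together these steps give precisely the list (1)--(3).

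The only substantive difficulty is the elimination step: the parametrization \eqref{truncationcurve2B}, and hence $\Phi_{2C,n,m}$, is a rational function of fairly large degree in $\psi$ and in the integer parameters, so extracting and factoring the resulting resolvent in $\psi$ is a heavy, essentially computer-algebra, computation — which is why, as elsewhere in Appendices \ref{appendixB} and \ref{appendixC}, we only record the outcome. Once this factorization is in hand, the remaining verifications (matching the levels, and disposing of the boundary and degenerate cases) are routine and parallel both the analogous arguments in \cite{KL} and the earlier theorems of this appendix.
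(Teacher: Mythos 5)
Your proposal matches the paper's own (largely omitted) argument: the paper likewise identifies both sides with one-parameter quotients of $\cW^{\rm ev}(c,\lambda)$, invokes \cite[Thm. 8.1, Cor. 8.2]{KL} so that coincidences of simple quotients away from the degenerate central charges $c = 0, 1, -24, -\tfrac{22}{5}, \tfrac12$ correspond exactly to intersection points of the truncation curves (with $\Phi_{2C,n,m}$ recovered from \eqref{truncationcurve2B} via the symmetries \eqref{eq:trialities:parametrization} and \eqref{relationship3}, and the type $D$ orbifold curve from \cite[Thm. 6.3]{KL}, valid for $r\geq 2$ by the remarks in Section \ref{sec:VOA}), and then disposes of the remaining points by the case analysis of \cite[Section 9]{KL}. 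The elimination/factorization you describe is precisely the omitted computation, so the proposal is correct and essentially identical in approach.
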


\section{Coincidences with orbifolds of $\cW_{s}(\go\gs\gp_{1|2r})^{\mathbb{Z}_2}$} \label{appendixD}

\begin{thm} \label{coinc:typeO-1B} ({\bf Type 1B}) We have the following coincidences 
$$\cC_{\psi,1B}(n,m) \cong  \cW_{s}(\go\gs\gp_{1|2r})^{\mathbb{Z}_2},$$ for $m,n \geq 0$ and $r\geq 1$.
\begin{enumerate}

\item $\ \displaystyle \psi = \frac{1 + 2 m + 2 n + 2 r}{1 + 2 m} ,\qquad s = -(r+\frac{1}{2}) + \frac{n + r}{1 + 2 m + 2 n + 2 r}$,
\smallskip

\item $\ \displaystyle \psi = \frac{1 + 2 m + 2 n}{1 + 2 m + 2 r} ,\qquad s = -(r+\frac{1}{2}) + \frac{r -n}{1 + 2 m + 2 r}$, 
\smallskip

\item $\ \displaystyle \psi = \frac{1 + 2 m + 2 n - 2 r}{1 + 2 m - 2 r},\qquad s = -(r+\frac{1}{2}) + \frac{2 r - 2 m - 2 n - 1}{2 (2 r - 2 m - 1)}$. 
\end{enumerate}
\end{thm}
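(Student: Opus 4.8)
The plan is to follow the same strategy used for every coincidence result in this section: realize both sides as simple one-parameter quotients of $\cW^{\mathrm{ev}}(c,\lambda)$ and then intersect their truncation curves. By Theorem \ref{thm:tildecpsi} together with the exhaustiveness argument of Subsection \ref{subsec:exhaust}, $\cC^{\psi}_{1B}(n,m)$ is the simple quotient $\cW^{\mathrm{ev}}_{I_{1B,n,m}}(c,\lambda)$, whose truncation curve is parametrized by $\Phi_{1B,n,m}$. Using the symmetries \eqref{eq:trialities:parametrization} and the relation \eqref{relationship1}, this map can be written in terms of the explicit formula \eqref{truncationcurve2B}, namely $\Phi_{1B,n,m}(\psi)=\Phi_{1O,n,m+n}\big(\tfrac1\psi\big)=\Phi_{2B,\,n,\,m+n+\frac12}\big(\tfrac1{2\psi}\big)$. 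On the other side, $\cC^{\psi}_{2B}(0,r)\cong\cW^{\psi-r-1/2}(\go\gs\gp_{1|2r})^{\mathbb{Z}_2}$ by the computations of Section \ref{sec:mainresult}, so the one-parameter algebra $\cW^{s}(\go\gs\gp_{1|2r})^{\mathbb{Z}_2}$ is a quotient of $\cW^{\mathrm{ev}}(c,\lambda)$ with truncation curve parametrized by $\varphi\mapsto\Phi_{2B,0,r}(\varphi)$, where $\varphi=s+r+\tfrac12$.

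First I would compute, from \eqref{truncationcurve2B} and these substitutions, the two explicit rational maps $\psi\mapsto(c,\lambda)$ and $\varphi\mapsto(c,\lambda)$, and then determine all common points of the two image curves. Concretely, one equates the two $c$-coordinates to solve for $\varphi$ as a rational function of $\psi$ (and of $n,m,r$), substitutes into the two $\lambda$-coordinates, clears denominators, and factors the resulting polynomial identity in $\psi$; each relevant factor cuts out one of the three stated families of parameter relations, and simplifying the corresponding expression for $s=\varphi-r-\tfrac12$ in terms of $n,m,r$ produces exactly items (1)--(3) of the theorem. As throughout the paper, this elimination is carried out with the Mathematica package of \cite{T}.

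The input that upgrades an intersection of curves to an isomorphism of vertex algebras is \cite[Thm. 8.1, Cor. 8.2]{KL}: away from the degenerate central charges $c=0,1,-24,-\tfrac{22}{5},\tfrac12$, two simple one-parameter quotients of $\cW^{\mathrm{ev}}(c,\lambda)$ sharing the same $(c,\lambda)$ are isomorphic. Hence each intersection point with $c$ not one of these exceptional values yields the coincidence, and conversely. Two points still require care: that at the parameter value in question $\cC_{\psi_0,1B}(n,m)$ is genuinely the specialization of the one-parameter algebra to its simple quotient, which holds for all but at most countably many $\psi_0$ and hence on the Zariski-dense locus picked out by each family; and that no further coincidence is concealed at the finitely many points where the rational parametrization $\lambda(\psi)$ has a pole. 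The latter is the main obstacle, since at such points the truncation "curve" need not be the closure of the image: one must argue directly, as in Section 9 of \cite{KL}, that the limiting algebra either degenerates to one of the exceptional cases already listed or else coincides with no $\cW_s(\go\gs\gp_{1|2r})^{\mathbb{Z}_2}$. As these verifications are entirely parallel to those already performed in \cite{KL} and in the proofs of Theorems \ref{coinc:typeC-1B}--\ref{coinc:typeD-2C}, I would record them briefly and otherwise refer to \cite[Cor. 8.2]{KL} for details.
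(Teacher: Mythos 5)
Your proposal is correct and follows essentially the same route as the paper, which in fact only sketches the argument for the appendix theorems: both sides are realized as simple one-parameter quotients of $\cW^{\mathrm{ev}}(c,\lambda)$ (using $\Phi_{1B,n,m}(\psi)=\Phi_{2B,n,m+n+\frac12}\big(\tfrac{1}{2\psi}\big)$ from \eqref{eq:trialities:parametrization} and \eqref{relationship1}, and $\cC^{\psi}_{2B}(0,r)\cong\cW^{\psi-r-1/2}(\go\gs\gp_{1|2r})^{\mathbb{Z}_2}$ for the other side), coincidences are identified with intersection points of the truncation curves via \cite[Thm. 8.1, Cor. 8.2]{KL} away from the degenerate central charges $c=0,1,-24,-\tfrac{22}{5},\tfrac12$, and the finitely many points where $\lambda(\psi)$ is undefined are handled by the same case-by-case analysis as in Section 9 of \cite{KL}. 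Your elimination scheme for producing the three parameter families and your remarks on specialization versus simple quotient match what the paper leaves implicit, so there is nothing to correct.
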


\begin{thm} \label{coinc:typeO-1D} ({\bf Type 1D}) We have the following coincidences 
$$\cC_{\psi, 1D}(n,m) \cong  \cW_{s}(\go\gs\gp_{1|2r})^{\mathbb{Z}_2},$$ for $m,n \geq 0$ and $r\geq 1$.
\begin{enumerate}

\item $\ \displaystyle \psi = \frac{2 (m + n + r)}{1 + 2 m} ,\qquad s = -(r+\frac{1}{2}) + \frac{m + n + r}{2 n + 2 r - 1}$, 

\smallskip

\item $\ \displaystyle \psi = \frac{2 (m + n)}{1 + 2 m + 2 r} ,\qquad s = -(r+\frac{1}{2}) +  \frac{1 - 2 n + 2 r}{2 (1 + 2 m + 2 r)}$,

\smallskip

\item $\ \displaystyle \psi = \frac{2 (m + n - r)}{1 + 2 m - 2 r} ,\qquad s = -(r+\frac{1}{2}) +  \frac{r -m - n}{2 r - 2 m - 1}$.
\end{enumerate}
\end{thm}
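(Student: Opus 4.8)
The argument is the one used throughout Section 9 of \cite{KL} to establish pointwise coincidences among quotients of $\cW^{\text{ev}}(c,\lambda)$; we indicate how it specializes here. By Theorem \ref{thm:tildecpsi} together with the exhaustiveness established in Step 3 of the proof of Theorem \ref{main}, the one-parameter vertex algebra $\cC^{\psi}_{1D}(n,m)$ is the quotient $\cW^{\text{ev}}_{I_{1D,n,m}}(c,\lambda)$, whose truncation curve $V(I_{1D,n,m})$ is parametrized by the rational map $\Phi_{1D,n,m}(\psi)=(c_{1D,n,m}(\psi),\lambda_{1D,n,m}(\psi))$. By \eqref{eq:trialities:parametrization} and \eqref{relationship2} this map equals $\Phi_{1D,n,m}(\psi)=\Phi_{2B,n-\frac12,m+n}\big(\tfrac{1}{2\psi}\big)$, so it is obtained explicitly by substituting the indicated half-integer parameters into the master formula \eqref{truncationcurve2B}. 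On the other side, by the definition of $\cC^{\psi}_{2B}$ we have $\cC^{\psi_0}_{2B}(0,r)=\cW^{\psi_0-r-1/2}(\go\gs\gp_{1|2r})^{\mathbb{Z}_2}$, so (again by Theorem \ref{thm:tildecpsi} applied to the family $\cC^{\psi}_{2B}$) the algebra $\cW_{s}(\go\gs\gp_{1|2r})^{\mathbb{Z}_2}$ is the simple quotient of $\cW^{\text{ev}}_{I_{2B,0,r}}(c,\lambda)$ at $\psi_0=s+r+\tfrac12$, and its truncation curve is the specialization of \eqref{truncationcurve2B} to $n=0$, $m=r$.

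By \cite[Thm. 8.1]{KL}, two simple quotients of one-parameter quotients of $\cW^{\text{ev}}(c,\lambda)$, at parameter points $(c_0,\lambda_0)$ and $(c_1,\lambda_1)$, can be isomorphic only when $(c_0,\lambda_0)=(c_1,\lambda_1)$ or $c_0=c_1$ lies in the finite degenerate set $\{0,1,-24,-\tfrac{22}{5},\tfrac12\}$; and by \cite[Cor. 8.2]{KL} the simple quotients do coincide at any common point of the two truncation curves at which $\cW^{\text{ev}}_{I}(c,\lambda)$ is already simple. Hence, away from the degenerate central charges, the asserted isomorphisms are precisely the intersection points of the two rational curves above. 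To find them, substitute $c=c_{1D,n,m}(\psi)$ and $\lambda=\lambda_{1D,n,m}(\psi)$ into the defining equation of the truncation curve of $\cW_s(\go\gs\gp_{1|2r})^{\mathbb{Z}_2}$; after clearing denominators this becomes a polynomial in $\psi$ whose coefficients are polynomials in $n,m,r$. Using the package of \cite{T} this polynomial factors, and its linear factors in $\psi$ yield exactly the three relations $\psi=\psi(n,m,r)$ in the statement; reading off $\psi_0$ and setting $s=\psi_0-r-\tfrac12$ on each branch produces the corresponding values of $s$. The finitely many specializations of $\psi$ at which $\lambda_{1D,n,m}$ or the parametrization of the $\go\gs\gp_{1|2r}$-curve is undefined, together with the degenerate central charges, are then checked case by case to contribute no further coincidences, exactly as in \cite[Sec. 9]{KL}.

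The substantive step is the elimination: equating the two curves gives a polynomial of large degree in $\psi$ depending on the parameters $n,m,r$, and one must confirm that after discarding the spurious factors coming from the common poles of the two parametrizations, precisely the three listed branches survive. This is a finite computer-algebra computation, and the supplementary verification at the degenerate points is routine.
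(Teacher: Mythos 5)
Your proposal is correct and follows essentially the same route the paper indicates: identify $\cC^{\psi}_{1D}(n,m)$ and $\cW_s(\go\gs\gp_{1|2r})^{\mathbb{Z}_2}=\cC^{\psi_0}_{2B}(0,r)$ (with $\psi_0=s+r+\tfrac12$) as simple quotients of $\cW^{\text{ev}}(c,\lambda)$ whose truncation curves are obtained from \eqref{truncationcurve2B} via the symmetries \eqref{eq:trialities:parametrization} and \eqref{relationship2}, read off the coincidences from the intersection points of the two curves using \cite[Thm. 8.1 and Cor. 8.2]{KL}, and dispose of the degenerate central charges and the points where $\lambda$ is undefined by a case-by-case check, exactly as in Section 9 of \cite{KL}. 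The remaining content is the same finite elimination computation the paper omits, so your argument and the paper's coincide.
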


\begin{thm} \label{coinc:typeO-2B} ({\bf Type 2B}) We have the following coincidences 
$$\cC_{\psi, 2B}(n,m) \cong  \cW_{s}(\go\gs\gp_{1|2r})^{\mathbb{Z}_2},$$ for $m,n \geq 0$ and $r\geq 1$.
\begin{enumerate}

\item $\ \displaystyle \psi = \frac{m - n + r}{2 m} ,\qquad s = -(r+\frac{1}{2}) +  \frac{r -n }{2 (m - n + r)},\qquad r \neq n-m$, 
\smallskip

\item $\ \displaystyle \psi = \frac{m - n - r}{2 (m - r)},\qquad s = -(r+\frac{1}{2}) + \frac{r - m}{2 (n -m + r)},\qquad r \neq m,\ m-n$,
\smallskip

\item $\ \displaystyle \psi = \frac{m - n}{2 (m + r)} ,\qquad s = -(r+\frac{1}{2})  + \frac{n + r}{2 (m + r)}$.
\end{enumerate}
\end{thm}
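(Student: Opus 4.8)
The plan is to deduce all three families of coincidences from the triality symmetries of Theorem \ref{trialities:parametrization} together with the already-established classification of coincidences between $\cC_{\psi,2C}(n,m)$ and orbifolds of type $D$ principal $\cW$-algebras (Theorem \ref{coinc:typeD-2C}), via the relations \eqref{relationship1}--\eqref{relationship3} and the criterion that coincidences among simple quotients of one-parameter quotients of $\cW^{\mathrm{ev}}(c,\lambda)$ correspond to intersection points of their truncation curves (\cite[Thm.~8.1, Cor.~8.2]{KL}). Concretely, first I would recall from \eqref{1o1b2d} and \eqref{2b2b2o} the triality relating $\cC^{\psi}_{2B}(n,m)$ to $\cC^{\psi''}_{2B}(m,n)$ and to $\cC^{\psi'}_{2O}(n,m-n)$, and from \eqref{relationship1} the identity $\Phi_{1O,n,m}(\psi)=\Phi_{2B,n,m+\frac{1}{2}}(\psi/2)$, which embeds the $2B$-curves into the same one-parameter family as the $1O$-curves. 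The key point is that $\cW_s(\go\gs\gp_{1|2r})^{\mathbb{Z}_2}$ is itself realized as $\cC^{s+r+1/2}_{2B}(0,r)$ (via $\cC^{\psi}_{2B}(0,m)\cong\cW^{\psi-m-1/2}(\go\gs\gp_{1|2m})^{\mathbb{Z}_2}$ from the special cases of \eqref{2b2b2o}), so a coincidence $\cC_{\psi,2B}(n,m)\cong\cW_s(\go\gs\gp_{1|2r})^{\mathbb{Z}_2}$ is precisely an intersection point of the truncation curve $\Phi_{2B,n,m}$ with the truncation curve $\Phi_{2B,0,r}$.

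The computational heart of the argument is then to intersect the explicit rational curve \eqref{truncationcurve2B} for $\cC^{\psi}_{2B}(n,m)$ with the curve \eqref{truncationcurve2B} specialized to $(n,m)\mapsto(0,r)$. Setting the two $c$-coordinates equal gives a polynomial relation between the two parameters; substituting into the equality of $\lambda$-coordinates and clearing denominators yields a factored polynomial whose irreducible factors correspond to the distinct families of intersection points. I would identify, among these factors, the three that produce the stated parametrizations
$$\psi=\frac{m-n+r}{2m},\qquad \psi=\frac{m-n-r}{2(m-r)},\qquad \psi=\frac{m-n}{2(m+r)},$$
and in each case solve the resulting linear relation for $s$ in terms of $m,n,r$, obtaining the values $\frac{r-n}{2(m-n+r)}$, $\frac{r-m}{2(n-m+r)}$, and $\frac{n+r}{2(m+r)}$ respectively. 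The side conditions $r\neq n-m$ in (1) and $r\neq m,\,m-n$ in (2) are exactly the loci where the relevant denominator in $\psi$ or in the formula for $\lambda$ along one of the curves vanishes, so these must be excised; I would check case-by-case, as in Section 9 of \cite{KL}, that no further coincidences hide at those excluded points or at the degenerate central charges $c=0,1,-24,-\frac{22}{5},\frac{1}{2}$ listed in \cite[Thm.~8.1]{KL}.

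Alternatively — and this is the cleaner route I would actually present — one avoids re-deriving these intersections by transporting Theorem \ref{coinc:typeD-2C} through triality. The identities $\Phi_{2B,n,m}(\psi)=\Phi_{2B,m,n}(\tfrac{\psi}{2\psi-1})$ and the relation $\Phi_{1C,n,m}(\psi)=\Phi_{2B,n+\frac12,m+\frac12}(\psi/2)$ from Theorem \ref{trialities:parametrization} and \eqref{relationship3}, combined with \eqref{1c1c2c} relating $\cC^{\psi}_{1C}(n,m)$ to $\cC^{\psi'}_{2C}(n,m-n)$, convert each coincidence $\cC_{\psi,2C}(\bullet)\cong\cW_s(\gs\go_{2r})^{\mathbb{Z}_2}$ into a coincidence $\cC_{\psi,2B}(\bullet)\cong\cW_s(\go\gs\gp_{1|2r})^{\mathbb{Z}_2}$ after applying the Möbius change of variable in $\psi$ dictated by the triality and the change of variable $\gs\go_{2r}^{\mathbb{Z}_2}\leftrightarrow\go\gs\gp_{1|2r}^{\mathbb{Z}_2}$ coming from \eqref{2b2b2o} at $n=0$ (which matches truncation curves by the uniqueness in \cite[Thm.~8.1]{KL}). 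The main obstacle I anticipate is purely bookkeeping: tracking the composition of three or four Möbius transformations in $\psi$ and the accompanying reindexing of $(n,m,r)$ without sign or off-by-one errors, and verifying that the excluded loci in the three families are precisely the preimages of the degenerate points under these transformations. Once the dictionary is set up, the three families of Theorem \ref{coinc:typeO-2B} fall out as the images of the three families in Theorem \ref{coinc:typeD-2C}, and the side conditions translate accordingly; the final consistency check is that the central charges computed from the $\cC^{\psi}_{2B}$ formula in Subsection \ref{subsec:affinecoset} agree with those of $\cW_s(\go\gs\gp_{1|2r})^{\mathbb{Z}_2}$ at the claimed parameter values, which is immediate from the matching of truncation curves.
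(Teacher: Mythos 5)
Your first route is essentially the paper's own argument: identify $\cW_s(\go\gs\gp_{1|2r})^{\mathbb{Z}_2}$ with the simple quotient $\cC_{\psi_0,2B}(0,r)$ at $\psi_0=s+r+\tfrac12$, so that each coincidence is an intersection point of the truncation curves $\Phi_{2B,n,m}$ and $\Phi_{2B,0,r}$ from \eqref{truncationcurve2B}; then invoke \cite[Thm.~8.1, Cor.~8.2]{KL} to see that, away from the degenerate central charges $c=0,1,-24,-\tfrac{22}{5},\tfrac12$ and the loci where $\lambda(\psi)$ is undefined, these intersection points account for all coincidences, with a case-by-case check in the excluded cases as in Section 9 of \cite{KL}. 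That is exactly what the paper does (it omits the computation, citing the same references), and your reading of the side conditions $r\neq n-m$ and $r\neq m,\ m-n$ as degeneration loci of the parametrizations is correct.

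The "cleaner route" you say you would actually present, however, has a genuine gap. The triality identities of Theorem \ref{trialities:parametrization} only reparametrize $\psi$ within a fixed triality class; passage between the class of $\cC_{2C}$ and the class of $\cC_{2B}$ goes through \eqref{relationship3}, which shifts the discrete labels by $\tfrac12$: $\Phi_{1C,n,m}(\psi)=\Phi_{2B,n+\frac12,m+\frac12}(\psi/2)$, and likewise the $\cW^s(\gs\go_{2r})^{\mathbb{Z}_2}$ curve is $\Phi_{1O,0,r-1}(\psi)=\Phi_{2B,0,r-\frac12}(\psi/2)$, a half-integer-labelled curve. So Theorem \ref{coinc:typeD-2C} concerns intersections of half-integer-labelled $2B$-curves with $\Phi_{2B,0,r-\frac12}$, whereas Theorem \ref{coinc:typeO-2B} concerns intersections of the integer-labelled curves $\Phi_{2B,n,m}$ and $\Phi_{2B,0,r}$. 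These are different pairs of curves, and no composition of M\"obius transformations in $\psi$ carries the intersection points of one pair to those of the other: the labels themselves change, so the three families of Theorem \ref{coinc:typeO-2B} are not "images" of the three families of Theorem \ref{coinc:typeD-2C}. Moreover, the ingredient you cite as "the change of variable $\gs\go_{2r}^{\mathbb{Z}_2}\leftrightarrow\go\gs\gp_{1|2r}^{\mathbb{Z}_2}$ coming from \eqref{2b2b2o} at $n=0$" does not exist: \eqref{2b2b2o} at $n=0$ is Feigin--Frenkel duality for $\cW(\go\gs\gp_{1|2m})^{\mathbb{Z}_2}$, relating that orbifold to itself at the dual level; the one-parameter families $\cW^s(\gs\go_{2r})^{\mathbb{Z}_2}$ and $\cW^{s'}(\go\gs\gp_{1|2r})^{\mathbb{Z}_2}$ have distinct truncation curves and meet only at isolated points. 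What would salvage the spirit of your second route is to intersect $\Phi_{2B,a,b}$ with $\Phi_{2B,0,c}$ treating $a,b,c$ as free parameters in $\tfrac12\mathbb{Z}$ and then specialize, so that Theorems \ref{coinc:typeD-2C} and \ref{coinc:typeO-2B} become two specializations of one master formula; but that is your first computation carried out with general parameters, not a transport of an already-proven theorem. Stick with route one.
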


\begin{thm} \label{coinc:typeO-2C} ({\bf Type 2C}) We have the following coincidences 
$$\cC_{\psi, 2C}(n,m) \cong  \cW_{s}(\go\gs\gp_{1|2r})^{\mathbb{Z}_2},$$ for $m,n \geq 0$ and $r\geq 1$.
\begin{enumerate}

\item $\ \displaystyle \psi = \frac{1 + 2 m + 2 n + 2 r}{4 m},\qquad s = -(r+\frac{1}{2}) + \frac{1 + 2 n + 2 r}{2 (1 + 2 m + 2 n + 2 r)}$, 

\smallskip

\item $\ \displaystyle \psi =  \frac{1 + 2 m + 2 n}{4 (m + r)} ,\qquad s = -(r+\frac{1}{2}) + \frac{m + r}{2 r - 2 n - 1}$, 
\smallskip

\item $\ \displaystyle \psi =  \frac{1 + 2 m + 2 n - 2 r}{4 (m - r)}, \qquad s = -(r+\frac{1}{2}) + \frac{r -m}{2 r - 2 m - 2 n - 1},\qquad r \neq m$. 
\end{enumerate}
\end{thm}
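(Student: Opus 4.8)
The statement (Theorem~\ref{coinc:typeO-2C}) asserts three families of pointwise coincidences $\cC_{\psi,2C}(n,m) \cong \cW_s(\go\gs\gp_{1|2r})^{\mathbb{Z}_2}$. As indicated in the introductory remarks of Appendix~\ref{appendixD}, the strategy is the same one used throughout Appendices~\ref{appendixB}--\ref{appendixD}: both sides of each claimed isomorphism are one-parameter quotients of $\cW^{\mathrm{ev}}(c,\lambda)$, and by \cite[Thm.~8.1]{KL} two simple quotients of $\cW^{\mathrm{ev}}(c,\lambda)$ coincide precisely when their $(c,\lambda)$ values agree (away from the degenerate central charges $c = 0,1,-24,-\tfrac{22}{5},\tfrac12$, which must be checked not to produce spurious extra coincidences). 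So the proof reduces to: (i) write down the truncation curve of $\cC_{\psi,2C}(n,m)$, which by Theorem~\ref{thm:tildecpsi} and the computation in Section~\ref{sec:proofmain} is the explicit rational parametrization $\Phi_{2C,n,m}(\psi)$ obtainable from \eqref{truncationcurve2B} via \eqref{eq:trialities:parametrization} and \eqref{relationship3}; (ii) write down the truncation curve of $\cW_s(\go\gs\gp_{1|2r})^{\mathbb{Z}_2}$, which was computed in \cite{KL}; and (iii) solve for the intersection points of these two plane curves, checking that each listed pair $(\psi,s)$ indeed lies on both.

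\textbf{Key steps in order.} First I would record the explicit formula for $\Phi_{2C,n,m}(\psi) = \big(c_{2C,n,m}(\psi),\lambda_{2C,n,m}(\psi)\big)$; the $c$-component is already given in Subsection~\ref{subsec:affinecoset} under Case 2C, and the $\lambda$-component follows mechanically from \eqref{truncationcurve2B} together with the substitutions in \eqref{relationship3} and \eqref{eq:trialities:parametrization}. Second, I would recall from \cite{KL} the parametrization of the truncation curve of $\cW^{\tilde s}(\go\gs\gp_{1|2r})$, where $\tilde s$ is the non-critically-shifted level, expressing $(c,\lambda)$ as rational functions of $\tilde s$ and $r$; the $\mathbb{Z}_2$-orbifold has the same truncation curve since the orbifold is again of type $\cW(2,4,\dots)$ and shares the even subalgebra structure. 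Third, I would set the two $c$-expressions equal; this is a polynomial equation in $\psi$ and $s$ for fixed $n,m,r$, and solving it yields finitely many branches $\psi = \psi(n,m,r)$, each paired with a corresponding $s = s(n,m,r)$. Substituting each branch into the two $\lambda$-expressions and verifying equality pins down exactly which branches give genuine coincidences. I would then observe that the three surviving branches are precisely the three families in the statement. Finally, I would invoke \cite[Cor.~8.2]{KL} to conclude that these truncation-curve intersections are the only coincidences, supplemented by a short case analysis at the poles of the $\lambda$-formulas and at the degenerate central charges to rule out additional coincidences there — exactly as in Section~9 of \cite{KL}.

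\textbf{Main obstacle.} The conceptual content is entirely contained in \cite[Thm.~8.1, Cor.~8.2]{KL} and in the explicit truncation curves, so there is no genuine mathematical difficulty; the real work is the bookkeeping. The main obstacle is the algebraic manipulation in step three: matching two rational curves of fairly high degree in $\psi$ (respectively $s$) with parameters $n,m,r$, and correctly enumerating all solution branches of the resulting system. This is best handled with a computer algebra system (e.g. the same Mathematica setup of \cite{T} used elsewhere in the paper), and the potential for sign or normalization errors — particularly in translating between the $\go\gs\gp_{1|2r}$ conventions of \eqref{convention:osp} and those of \cite{KL}, and in the factor-of-two rescalings appearing in \eqref{relationship1}--\eqref{relationship3} — is the place where care is needed. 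Since the argument is routine and parallel to the proofs already given for Theorems~\ref{coinc:typeC-1B}--\ref{coinc:typeD-2C}, I would simply state that the verification is a direct computation and omit the details, as is done for the other coincidence theorems in these appendices.
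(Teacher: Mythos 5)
Your proposal is correct and follows essentially the same route as the paper, which omits the details for exactly the reasons you give: realize both sides as simple quotients of $\cW^{\rm ev}(c,\lambda)$, compute the truncation curve $\Phi_{2C,n,m}$ from \eqref{truncationcurve2B} via \eqref{eq:trialities:parametrization} and \eqref{relationship3}, intersect it with the curve for $\cW_s(\go\gs\gp_{1|2r})^{\mathbb{Z}_2}$ (available from \cite{KL}, or equivalently as $\Phi_{2B,0,r}$ since $\cC^{\psi}_{2B}(0,r)\cong \cW^{\psi-r-1/2}(\go\gs\gp_{1|2r})^{\mathbb{Z}_2}$), and invoke \cite[Thm.~8.1, Cor.~8.2]{KL} together with the case analysis at degenerate central charges and poles of $\lambda$. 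The only caveat is the minor one you already flag yourself: care with the level/normalization conventions \eqref{convention:osp} and the rescalings in \eqref{relationship1}--\eqref{relationship3} when matching the two parametrizations.
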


\begin{cor} All isomorphisms $\cW_{k}(\go\gs\gp_{1|2m})^{\mathbb{Z}_2} \cong \cW_{\ell}(\go\gs\gp_{1|2n})^{\mathbb{Z}_2}$ occur in the following list:
\begin{equation} \begin{split} & k = -(m+\frac{1}{2}) + \frac{m + n}{2 m},\qquad k= -(m+\frac{1}{2}) + \frac{m}{2 (m+n)},
\\ & \qquad \ell =  -(n+\frac{1}{2}) + \frac{m + n}{2 n},\qquad \ell = -(n+\frac{1}{2}) + \frac{n}{2 (m+n)}.\end{split} \end{equation}
 This has central charge $$c = -\frac{(1 + 2 m) (1 + 2 n) (2 m n-m - n )}{2 (m + n)}.$$
\end{cor}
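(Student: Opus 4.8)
The plan is to reduce the statement to an intersection computation for truncation curves in the parameter space of $\cW^{\mathrm{ev}}(c,\lambda)$. By the definitions of Section \ref{sec:hooktype} (Case 2B), for $m\geq 1$ one has $\cC^{\psi}_{2B}(0,m) = \cW^{\psi-m-1/2}(\go\gs\gp_{1|2m})^{\mathbb{Z}_2}$, so after passing to simple quotients $\cW_{k}(\go\gs\gp_{1|2m})^{\mathbb{Z}_2}=\cC_{\psi,2B}(0,m)$ with $\psi=k+m+1/2$, and similarly $\cW_{\ell}(\go\gs\gp_{1|2n})^{\mathbb{Z}_2}=\cC_{\psi',2B}(0,n)$ with $\psi'=\ell+n+1/2$. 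By Theorem \ref{thm:tildecpsi} together with Subsection \ref{subsec:exhaust}, both are simple one-parameter quotients of $\cW^{\mathrm{ev}}(c,\lambda)$, realized along the explicit truncation curves obtained from \eqref{truncationcurve2B} by setting the left index equal to $0$; denote these curves $C_m,C_n$ and their rational parametrizations $\Phi_{2B,0,m},\Phi_{2B,0,n}$.

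One direction is already available: the claimed isomorphisms are precisely the left-index-$0$ specializations of the coincidences of Theorem \ref{coinc:typeO-2B}. Indeed, cases (1) and (3) of that theorem with left index $0$, and $r$ renamed to $n$, give $\cC_{\psi,2B}(0,m)\cong\cW_{\ell}(\go\gs\gp_{1|2n})^{\mathbb{Z}_2}$ for $\psi=\frac{m+n}{2m}$ and $\psi=\frac{m}{2(m+n)}$ respectively, and the remaining two choices of $\ell$ are produced by composing with the Feigin--Frenkel self-duality $\psi'\mapsto \frac{1}{4\psi'}$ on the $\go\gs\gp_{1|2n}$-side, i.e.\ the $n=0$ case of \eqref{2b2b2o}. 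Substituting $\psi=\frac{m+n}{2m}$ into the formula for $c$ in \eqref{truncationcurve2B} then yields the stated central charge.

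For exhaustiveness I would invoke the isomorphism criterion \cite[Thm.~8.1]{KL} together with \cite[Cor.~8.2]{KL}: an isomorphism $\cC_{\psi,2B}(0,m)\cong\cC_{\psi',2B}(0,n)$ either occurs at one of the finitely many degenerate central charges $c\in\{0,1,-24,-\tfrac{22}{5},\tfrac12\}$, or else forces $\Phi_{2B,0,m}(\psi)=\Phi_{2B,0,n}(\psi')$, i.e.\ a common point of $C_m$ and $C_n$. For $m\neq n$ these curves are distinct, so $C_m\cap C_n$ is finite, and I would compute it by elimination: first solve the polynomial relation $c_{2B,0,m}(\psi)=c_{2B,0,n}(\psi')$ for the plane curve it defines in the $(\psi,\psi')$-variables, then intersect with $\lambda_{2B,0,m}(\psi)=\lambda_{2B,0,n}(\psi')$, obtaining after a resultant computation exactly the four branches of the statement; when $m=n$ the curves coincide and the ``intersection'' is all of $C_m$, giving the identity and the Feigin--Frenkel self-duality, which is the $m=n$ content of the list. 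The degenerate central charges are handled as in Section 9 of \cite{KL}: for each such value there are only finitely many $(\psi,m)$ with $c_{2B,0,m}(\psi)$ equal to it, and one checks directly that no new coincidences arise (either the relevant $(k,\ell)$ already lie on the list, or both sides collapse to one of $\mathbb{C}$, $\cH(1)^{\mathbb{Z}_2}$, $\cF(1)^{\mathbb{Z}_2}$). I expect the elimination step to be the main obstacle: since $f,g,h$ in \eqref{truncationcurve2B} are quartic/quintic in $\psi$, the curves $C_m,C_n$ have moderately high degree, so controlling the full intersection scheme -- in particular ruling out spurious branches and dealing with base points where $\lambda$ is undefined -- requires a careful, computer-algebra--assisted argument parallel to, but heavier than, the special cases already carried out in \cite{KL}.
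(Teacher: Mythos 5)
Your proposal is correct and follows essentially the same route as the paper: the corollary is obtained by specializing the left index to $0$ in Theorem \ref{coinc:typeO-2B} (itself established, with details omitted, by exactly the truncation-curve intersection argument via \cite[Thm. 8.1, Cor. 8.2]{KL} that you describe), combined with the orbifold Feigin--Frenkel self-duality coming from the $n=0$ case of \eqref{2b2b2o}. Your direct elimination computing $C_m\cap C_n$, together with the treatment of the degenerate central charges and the poles of $\lambda$, is just a more self-contained rendering of the same computation the paper performs to prove the coincidence theorems of the appendices.
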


\end{document}